\renewcommand\lq{\leqslant}
\newcommand\gq{\geqslant}
\newcommand\E{\mathcal{E}}
\newcommand\diag{\operatorname{diag}}
\newcommand\ds{\displaystyle}
\renewcommand\S{\mathbb{S}}
\newcommand\C{\mathbb C}
\newcommand\K{\mathcal{K}}
\newcommand\F{\mathcal{F}}
\newcommand\JR{\mathcal{J}_{\Ga}^{red}}
\newcommand\JM{\mathcal{J}_{\Ga}^{max}}
\newcommand\JJ{\mathcal{J}}
\renewcommand\H{\mathcal{H}}
\newcommand\N{\mathbb N}
\newcommand\ZZ{\mathcal Z}
\newcommand\Id{\mathcal{I}d}
\newcommand\BB{\mathcal B}
\newcommand\R{\mathbb R}
\newcommand\st{\text{ such that }}
\newcommand\Z{\mathbb Z}
\newcommand\T{\mathcal{T}}
\newcommand\TT{\mathcal{T}}
\newcommand\MM{\mathcal{M}}
\renewcommand\L{\mathcal{L}}
\newcommand\D{\mathcal D}
\newcommand\DD{\mathcal D}
\newcommand\G{\mathcal{G}}
\newcommand\ts{{\otimes}}
\newcommand\rtr{{\rtimes_{red}}}
\newcommand\rtm{{\rtimes_{max}}}
\newcommand{\aeq}{\stackrel{(\lambda,h)}{\sim}}
\newcommand{\aeqp}{\stackrel{(\lambda',h')}{\sim}}
\newcommand\Ga{{\Gamma}}
\newcommand\ga{{\gamma}}
\newcommand\lto{{\longrightarrow}}
\newcommand\defi{{\stackrel{\text{def}}{=\!=}}}
\newcommand\AG{{A\rtimes_{red}\Ga}}
\newcommand\M{{M}}
\newcommand\U{\operatorname{U}}
\renewcommand\P{\operatorname{P}}
\newcommand\ka{\kappa}
\newcommand\ue{\operatorname{U}_n^{\varepsilon,r}}
\newcommand\pe{\operatorname{P}_n^{\varepsilon,r}}
\newcommand\eps{\varepsilon}
\newcommand\erp{$\eps$-$r$-projection }
\newcommand\eru{$\eps$-$r$-unitary }
\theoremstyle{plain}
\newtheorem{theorem}{Theorem}[section]
\newtheorem{proposition}[theorem]{Proposition}
\newtheorem{corollary}[theorem]{Corollary}
\newtheorem{lemma}[theorem]{Lemma}
\newtheorem{remark}[theorem]{Remark}
\newtheorem{definition}[theorem]{Definition}
\newtheorem{example}[theorem]{Example}
\newtheorem{notation}[theorem]{Notation}
\begin{document}

\title[On a  quantitative   operator $K$-theory]{On   quantitative operator $K$-theory}

 \author[H. Oyono-Oyono]{Herv\'e Oyono-Oyono}
 \address{Universit\'e de Lorraine, Metz , France}
 \email{herve.oyono@math.univ-bpclermont.fr}
\author[G. Yu]{Guoliang Yu }
 \address{Vanderbilt University, Nashville, USA}
 \email{guoliang.yu@vanderbilt.edu}
\thanks{Oyono-Oyono is partially supported by the ANR ``Kind'' and Yu is partially supported by  a grant from the
U.S. National Science Foundation.}
\subjclass[2000]{19K35,46L80,58J22}
\keywords{Baum-Connes Conjecture, Coarse Geometry, Group and  Crossed product $C^*$-algebras
  Novikov Conjecture, Operator Algebra $K$-theory, Roe Algebras}
\begin{abstract}
In this paper, we develop a  quantitative $K$-theory for filtered $C^*$-algebras.
Particularly interesting examples of filtered $C^*$-algebras include group $C^*$-algebras, crossed product
$C^\ast$-algebras and Roe algebras. We prove a  quantitative version of the six term exact sequence
and a quantitative Bott periodicity.
We apply the quantitative $K$-theory to formulate  a quantitative version of the
Baum-Connes conjecture and prove that the  quantitative Baum-Connes conjecture holds for a large class of groups.
\end{abstract}
\maketitle

%

\tableofcontents

\section{Introduction}

  The receptacles of higher indices of elliptic differential operators are   $K$-theory of $C^*$-algebras which  encode   the (large  scale) geometry of the underlying spaces. The following examples are important for purpose of applications to geometry and topology.
\begin{itemize}
\item $K$-theory of group $C^*$-algebras is a receptacle for higher index theory of equivariant elliptic differential operators on covering space  \cite{A,BCH,CM,kas};
\item $K$-theory of crossed product $C^*$-algebras  and more generally groupoid $C^*$-algebras for foliations serve as receptacles for longitudinally elliptic operators  \cite{C, CS};
\item the higher indices of elliptic operators on noncompact Riemannian manifolds live in $K$-theory of Roe algebras \cite{r}.
\end{itemize}
The local nature of differential operators implies that these higher indices can be defined in term of idempotents and invertible elements with finite propagation. Using homotopy invariance of the $K$-theory for $C^*$-algebras,  these higher indices give rise to topological invariants.
\medskip

In the context of Roe algebras, a quantitative operator $K$-theory was introduced to compute the higher indices of elliptic operators for noncompact spaces with finite asymptotic dimension \cite{y2}.
The aim of this paper is to develop a quantitative  $K$-theory  for general $C^\ast$-algebras equipped with a filtration. The filtration  structure  allows us to define  the concept of  propagation. Examples of $C^\ast$-algebras with filtrations include group $C^\ast$-algebras, crossed product $C^\ast$-algebras and Roe algebras.  The quantitative $K$-theory for $C^\ast$-algebras with filtrations  is then defined in terms of homotopy of quasi-projections and quasi-unitaries with propagation and norm controls. We introduce controlled morphisms to study quantitative operator $K$-theory. In particular, we derive a quantitative  version  of the six term exact sequence. In the case of crossed product algebras, we also define a quantitative version
of the Kasparov transformation  compatible with Kasparov product.
We end this paper by using the quantitative $K$-theory to formulate  a quantitative version of the Baum-Connes conjecture and prove it  for a large class of groups.

This paper is organized as follows: In section $1$,
 we collect a few notations and definitions including the concept of filtered $C^*$-algebras. We use the concepts of 
almost unitary and almost projection to define a quantitative $K$-theory for
 filtered $C^*$-algebras and we study its elementary properties.
 In section $2$, we introduce the notion of controlled morphism in quantitative $K$-theory.
Section $3$ is devoted to extensions of filtered $C^*$-algebras and to a controlled exact sequence
for quantitative $K$-theory.
  In section $4$, we  prove  a controlled version of the Bott periodicity and as a consequence, we obtain a controlled version of the six-term exact sequence in $K$-theory.
 In section $5$, we apply  $KK$-theory to study the quantitative
$K$-theory of crossed product $C^\ast$-algebras and discuss its application to $K$-amenability. 
Finally in section 8, we formulate a quantitative Baum-Connes conjecture
 and prove  the quantitative Baum-Connes conjecture for a large class of groups.

\section{Quantitative $K$-theory}

In this section, we introduce a notion of quantitative $K$-theory for $C^*$-algebras with a filtration.
Let us fix first some notations about  $C^*$-algebras we shall use throughout  this paper.
\begin{itemize}
\item If $B$ is a $C^*$-algebra and if $b_1,\ldots,b_k$ are respectively
elements of \linebreak $\M_{n_1}(B),\ldots,\M_{n_k}(B)$, we denote by
$\diag(b_1,\ldots,b_k)$ the block diagonal
matrix $\begin{pmatrix}b_1&&\\&\ddots&\\&&&b_k\end{pmatrix}$ of
$\M_{n_1+\cdots+n_k}(B)$.
\item If $X$ is a locally compact space and $B$ is a $C^*$-algebra, we
  denote by $C_0(X,B)$  the $C^*$-algebra of $B$-valued continuous
  functions on $X$ vanishing at infinity. The special cases of
  $X=(0,1]$, $X=[0,1)$, $X=(0,1)$ and $X=[0,1]$,  will be respectively denoted   by $CB$, $B[0,1)$,
  $SB$ and $B[0,1]$.
\item For a separable Hilbert space $\H$, we denote by $\K(\H)$ the
$C^*$-algebra of compact operators on $\H$.
\item If $A$ and $B$ are $C^*$-algebras, we will denote by $A\otimes B$
their spatial tensor product.
\end{itemize}

\subsection{Filtered $C^*$-algebras}

\begin{definition}
A filtered $C^*$-algebra $A$ is a $C^*$-algebra equipped with a family
$(A_r)_{r>0}$ of  linear subspaces   indexed by positive numbers such that:
\begin{itemize}
\item $A_r\subset A_{r'}$ if $r\lq r'$;
\item $A_r$ is stable by involution;
\item $A_r\cdot A_{r'}\subset A_{r+r'}$;
\item the subalgebra $\ds\bigcup_{r>0}A_r$ is dense in $A$.
\end{itemize}
If $A$ is unital, we also require that the identity  $1$ is an element of $ A_r$ for every positive
number $r$. The elements of $A_r$ are said to have {\bf propagation $r$}.
\end{definition}
\begin{itemize}
\item Let $A$ and $A'$ be respectively  $C^*$-algebras filtered by
$(A_r)_{r>0}$ and  $(A'_r)_{r>0}$. A homomorphism of $C^*$
-algebras $\phi:A\lto A'$
is a filtered homomorphism (or a homomorphism of  filtered $C^*$-algebras) if  $\phi(A_r)\subset A'_{r}$ for any
positive number $r$.
\item If $A$ is a  filtered $C^*$-algebra and $X$ is a locally compact
space,
then $C_0(X,A)$ is a  $C^*$-algebra  filtered by $(C_0(X,A_r))_{r>0}$. In
particular
 the algebras $CA$, $A[0,1]$, $A[0,1)$  and   $SA$ are  filtered $C^*$-algebras.
\item
If $A$ is a non unital filtered $C^*$-algebra, then its  unitarization $\widetilde{A}$ is
filtered by $(A_r+\C)_{r>0}$. We define for $A$ non-unital the homomorphism
$$\rho_A:\widetilde{A}\to\C;\, a+z\mapsto z$$ for $a\in A$ and $z\in\C$.
\end{itemize}

Prominent examples of filtered $C^*$-algebra are provided by  Roe algebras associated to
proper metric spaces, i.e.  metric spaces such that closed balls of given radius are compact.   Recall that for such a metric
space $(X,d)$,
a $X$-module  is a Hilbert space $H_X$ together with a $*$-representation
$\rho_X$ of $C_0(X)$ in $H_X$ (we shall  write $f$ instead of
$\rho_X(f)$). If the representation is non-degenerate, the
$X$-module is said to be non-degenerate.
A $X$-module is called standard if no non-zero function of $C_0(X)$
acts as a compact operator on $H_X$.

The following concepts were introduced by Roe in his work on index theory of elliptic operators on noncompact spaces \cite{r}.

\begin{definition}
Let $H_X$ be a standard non-degenerate $X$-module and let $T$ be a
bounded operator on $H_X$.
\begin{enumerate}
\item The support of $T$ is the complement of the open subset of
  $X\times X$
\begin{equation*}\begin{split}\{(x,y)\in X\times X\text{ s.t.  there exist }& f\text{ and }
g\text{ in }C_0(X)\text{  satisfying }\\
&f(x)\neq 0,\,g(y)\neq 0\text{
  and }f\cdot T\cdot g=0\}.\end{split}\end{equation*}
\item The operator $T$  is said to
  have finite propagation (in this case    propagation less than $r$) if  there exists a real $r$ such that for any $x$ and $y$ in $X$
  with  $d(x,y)>r$, then $(x,y)$ is not in the support of $T$.
\item The operator $T$ is said to be locally compact if $f\cdot T$ and
  $T\cdot f$ are compact for any $f$ in
  $C_0(X)$. We then define   $C[X]$ as the set of locally compact and finite
  propagation bounded operators of $H_X$, and for every $r>0$, we define   $C[X]_r$ as  the set of element of $C[X]$  with
propagation less than $r$.
\end{enumerate}
\end{definition}
We clearly have  $C[X]_r\cdot C[X]_{r'}\subset  C[X]_{r+r'}$.
We can check that up to
(non-canonical) isomorphism, $C[X]$ does not depend on the choice of
$H_X$.

\begin{definition}
The  Roe algebra $C^*(X)$ is  the norm closure of $C[X]$ in
the algebra $\L(H_X)$ of bounded operators on $H_X$. The Roe algebra in then filtered by $(C[X]_r)_{r>0}$.
\end{definition}
Although $C^*(X)$
is not canonically defined, it was proved in \cite{hry} that  up to canonical
isomorphisms, its $K$-theory
does not depend on the choice  of a
non-degenerate standard $X$-module. Furthermore, $K_*(C^*(X))$ is the natural
receptacle for higher  indices of elliptic
operators with support on  $X$ \cite{r}.

\medskip
If $X$ has bounded geometry,
then the Roe algebra admits a maximal version  \cite{gwy} filtered by  $(C[X]_r)_{r>0}$.
Other   important examples are  reduced and maximal crossed product  of a $C^*$-algebra by an action of a discrete group by
automorphisms. These  examples  will be studied in detail in section \ref{section-cross-product}.

\subsection{Almost projections/unitaries}
Let $A$ be a unital filtered $C^*$-algebra. For any  positive
numbers $r$ and $\eps$, we call
\begin{itemize}
\item an element $u$ in $A$  a $\eps$-$r$-unitary if $u$
  belongs to $A_r$,  $\|u^*\cdot
  u-1\|<\eps$
and  $\|u\cdot u^*-1\|<\eps$. The set of $\eps$-$r$-unitaries on $A$ will be denoted by $\operatorname{U}^{\varepsilon,r}(A)$.
\item an element $p$ in $A$   a $\eps$-$r$-projection    if $p$
  belongs to $A_r$,
  $p=p^*$ and  $\|p^2-p\|<\eps$. The set of $\eps$-$r$-projections on $A$ will be denoted by $\operatorname{P}^{\varepsilon,r}(A)$.
\end{itemize} For $n$ integer, we set  $\ue(A)=\operatorname{U}^{\varepsilon,r}(M_n(A))$ and
$\pe(A)=\operatorname{P}^{\varepsilon,r}(M_n(A))$.

For any  unital filtered $C^*$-algebra $A$, any
 positive  numbers $\eps$ and $r$ and  any positive integer $n$, we consider inclusions
$$\P_n^{\eps,r}(A)\hookrightarrow \P_{n+1}^{\eps,r}(A);\,p\mapsto
\begin{pmatrix}p&0\\0&0\end{pmatrix}$$ and
$$\U_n^{\eps,r}(A)\hookrightarrow \U_{n+1}^{\eps,r}(A);\,u\mapsto
\begin{pmatrix}u&0\\0&1\end{pmatrix}.$$ This allows us to  define
 $$\U_{\infty}^{\eps,r}(A)=\bigcup_{n\in\N}\ue(A)$$ and
$$\P_{\infty}^{\eps,r}(A)=\bigcup_{n\in\N}\pe(A).$$

\begin{remark}\label{rem-prod} Let $r$ and $\eps$ be positive  numbers with
  $\eps<1/4$;
\begin{enumerate}
\item If $p$ is an \erp in $A$, then the spectrum of $p$ is included in \\ $\left(\frac{1-\sqrt{1+4\eps}}{2},\frac{1-\sqrt{1-4\eps}}{2}\right)\cup
\left(\frac{1+\sqrt{1-4\eps}}{2},\frac{1+\sqrt{1+4\eps}}{2}\right)$ and thus $\|p\|<1+\eps$.
\item If $u$ is  an \eru in $A$, then
  $$1-\eps<\|u\|<1+\eps/2,$$ $$1-\eps/2<\|u^{-1}\|<1+\eps,$$ $$\|u^*-u^{-1}\|<(1+\eps)\eps.$$
\item Let $\ka_{0,\eps}:\R\to\R$ be  a continuous function
  such that
 $\ka_{0,\eps}(t)=0$ if $t\lq \frac{1-\sqrt{1-4\eps}}{2}$ and
 $\ka_{0,\eps}(t)=1$ if $t\gq \frac{1+\sqrt{1-4\eps}}{2}$. If
 $p$ is an \erp in $A$, then $\ka_{0,\eps}(p)$ is a projection such that
 $\|p-\ka_{0,\eps}(p)\|< 2\eps$ which moreover does not depends on the choice
 of $\ka_{0,\eps}$. From now on, we shall denote this projection by $\kappa_0(p)$.
\item If $u$ is an \eru in $A$, set
   $\ka_1(u)=u(u^*u)^{-1/2}$. Then $\ka_1(u)$ is a unitary such that
$\|u-\ka_1(u)\|<\eps$.
\item If $p$ is an \erp in $A$  and $q$ is a projection
 in $A$ such that $\|p-q\|<1-2\eps$, then  $\ka_0(p)$ and $q$ are
 homotopic projections \cite[Chapter 5]{we}.
\item If $u$ and $v$ are $\eps$-$r$-unitaries in $A$, then $uv$ is an
  $\eps(2+\eps)$-$2r$-unitary  in $A$.
\end{enumerate}

\end{remark}
\begin{definition} Let $A$ be a  $C^*$-algebra filtered by $(A_r)_{r>0}$.
 \begin{itemize}
  \item Let $p_0$ and $p_1$ be $\eps$-$r$-projections. We say that $p_0$ and $p_1$ are homotopic
$\eps$-$r$-projections if there exists a $\eps$-$r$-projection $q$  in $A[0,1]$ such that $q(0)=p_0$ and $q(1)=p_1$.
In this case, $q$ is called a homotopy of $\eps$-$r$-projections  in $A$ and will be denoted by $(q_t)_{t\in[0,1]}$.
 \item If $A$ is unital, let $u_0$ and $u_1$ be $\eps$-$r$-unitaries. We say that $u_0$ and $u_1$ are homotopic
$\eps$-$r$-unitaries  if there exists  an $\eps$-$r$-unitary $v$ in $A[0,1]$ such that $v(0)=u_0$ and $v(1)=u_1$.
In this case, $v$ is  called a homotopy of $\eps$-$r$-unitaries  in $A$ and will be denoted by $(v_t)_{t\in[0,1]}$.
\end{itemize}
\end{definition}

\begin{example}\label{example-homotopy}
 Let $p$ be a $\eps$-projection in a filtered unital  $C^*$-algebra  $A$. Set $c_t=\cos \pi t/2$
and $s_t=\sin \pi t/2$  for $t\in[0,1]$  and let us
  considerer the homotopy of projections $
  (h_t)_{t\in[0,1]}$ with $h_t=\begin{pmatrix} c_t^2& c_ts_t\\
c_ts_t&s^2_t\end{pmatrix}$ in $\M_2(\C)$ between
  $\diag(1,0)$ and $\diag(0,1)$. Set  $(q_t)_{t\in[0,1]}=(\diag(p,0)+(1-p)\otimes h_t)_{t\in[0,1]}$. 
Since $q_t^2-q_t=s^2_t(p^2-p)\otimes I_2$,  we see that  $(q_t)_{t\in[0,1]}$ is a
   homotopy of $\eps$-$r$-projections
  between $\diag(1,0)$ and $\diag( p,1-p)$ in $M_2(A)$.
\end{example}

Next result will be frequently used  throughout  the paper and is quite easy to  prove.
\begin{lemma}\label{lemma-almost-closed}
 Let $A$ be a  $C^*$-algebra filtered by $(A_r)_{r>0}$.
\begin{enumerate}

\item  If $p$  is an \erp in $A$ and  $q$ is a self-adjoint element of $A_r$ such
  that
   $\|p-q\|<\frac{\eps-\|p^2-p\|}{4}$, then $q$ is \erp. In
   particular, if $p$ is an \erp in $A$ and if $q$ is a self-adjoint  element in
   $A_r$ such that $\|p-q\|< \eps$, then $q$ is a
   $5\eps$-$r$-projection in $A$ and $p$ and $q$ are connected by a
   homotopy of $5\eps$-$r$-projections.
\item  If $A$ is unital and if $u$ is an \eru and  $v$ is an element of $A_r$ such that
  $\|u-v\|<\frac{\eps-\|u^*u-1\|}{3}$, then $v$ is an \eru. In
  particular, if $u$ is an \eru and $v$ is an element of $A_r$ such that
  $\|u-v\|<\eps$, then $v$ is an $4\eps$-$r$-unitary  in $A$ and $u$
  and $v$ are connected by a homotopy of $4\eps$-$r$-unitaries.
\end{enumerate}
\end{lemma}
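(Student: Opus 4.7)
The plan is to establish both parts of the lemma by direct algebraic expansion of the relevant defects ($q^2-q$ for a projection, and $v^*v-1$, $vv^*-1$ for a unitary), and then to produce the homotopies via the straight-line path between the two elements, which lies in $A_r$ because $A_r$ is a linear subspace.

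For (i), I would set $\delta=q-p$, which is self-adjoint since both $p$ and $q$ are. Expanding
\[
q^2-q=(p^2-p)+p\delta+\delta p+\delta^2-\delta
\]
and applying the triangle inequality gives
\[
\|q^2-q\|\leq \|p^2-p\|+(2\|p\|+1+\|\delta\|)\|\delta\|.
\]
The bound $\|p\|<1+\eps<5/4$ from Remark~\ref{rem-prod}(i), together with $\|\delta\|$ small, forces the coefficient $2\|p\|+1+\|\delta\|$ to stay below $4$, so the hypothesis $\|\delta\|<(\eps-\|p^2-p\|)/4$ yields $\|q^2-q\|<\eps$. The ``in particular'' clause then follows by running the argument with $5\eps$ in place of $\eps$: when $\|p-q\|<\eps$ and $\|p^2-p\|<\eps$, one has $\|p-q\|<(5\eps-\|p^2-p\|)/4$, so $q$ is a $5\eps$-$r$-projection.

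For (ii), the same scheme applies with $v=u+\delta$, but without self-adjointness of $\delta$. Writing
\[
v^*v-1=(u^*u-1)+u^*\delta+\delta^*u+\delta^*\delta
\]
(and analogously for $vv^*-1$), the triangle inequality and the bound $\|u\|<1+\eps/2$ from Remark~\ref{rem-prod}(ii) give
\[
\|v^*v-1\|\leq \|u^*u-1\|+(2\|u\|+\|\delta\|)\|\delta\|,
\]
and the coefficient $2\|u\|+\|\delta\|$ stays below $3$ under the standing assumption $\eps<1/4$. Hence $\|\delta\|<(\eps-\|u^*u-1\|)/3$ forces $\|v^*v-1\|<\eps$, and symmetrically $\|vv^*-1\|<\eps$; the ``in particular'' clause follows by enlarging $\eps$ to $4\eps$. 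For the homotopy statements in both parts, the linear path $q_t=(1-t)p+tq$ (resp.\ $v_t=(1-t)u+tv$) lies in $A_r$, satisfies $\|q_t-p\|<\eps$ (resp.\ $\|v_t-u\|<\eps$) for every $t\in[0,1]$, and therefore consists of $5\eps$-$r$-projections (resp.\ $4\eps$-$r$-unitaries) by the first halves just proved. No genuine obstacle arises beyond tracking the numerical constants $4$ and $3$; the only subtle point is ensuring these constants survive the assumption $\eps<1/4$ implicit in Remark~\ref{rem-prod}.
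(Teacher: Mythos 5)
The paper gives no proof of this lemma (it is dismissed as ``quite easy to prove''), and your direct expansion of the defects combined with the straight-line homotopy is exactly the intended elementary argument. Part (i), both ``in particular'' clauses, and the homotopy statements are fine as written: the uniform bound $\|q_t-p\|\lq\|q-p\|$ (resp. $\|v_t-u\|\lq\|v-u\|$) along the segment gives a sup-norm estimate on the defect, hence an element of $\P^{5\eps,r}(A[0,1])$ (resp. $\U^{4\eps,r}(A[0,1])$).

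The one step that does not follow as written is the word ``symmetrically'' in part (ii). The hypothesis $\|u-v\|<\frac{\eps-\|u^*u-1\|}{3}$ controls only the defect $u^*u-1$, so your estimate yields $\|vv^*-1\|\lq\|uu^*-1\|+(2\|u\|+\|\delta\|)\|\delta\|<\|uu^*-1\|+\eps-\|u^*u-1\|$, which is $<\eps$ only if $\|uu^*-1\|\lq\|u^*u-1\|$; this is not a formal symmetry of the hypothesis, since in general the two defects of a non-normal element differ. The conclusion is nonetheless true: because $\eps<1/4$, the element $u$ is invertible (Remark \ref{rem-prod}(ii)), hence $\sigma(u^*u)=\sigma(uu^*)$, and as both are self-adjoint, $\|u^*u-1\|=\|uu^*-1\|$; with this equality your bound closes. (For the ``$4\eps$'' clause and the homotopy --- the forms actually used later in the paper --- no such care is needed: one simply bounds each of $\|u^*u-1\|$, $\|uu^*-1\|$ by $\eps$ and adds $3\|\delta\|<3\eps$.) You should also state explicitly that the norm bounds $\|p\|<1+\eps$ and $\|u\|<1+\eps/2$ quoted from Remark \ref{rem-prod} presuppose the standing assumption $\eps<1/4$, which is indeed in force throughout the paper and is what keeps your constants $4$ and $3$ valid.
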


\begin{lemma}\label{lem-conjugate-proj}
There exists a real $\lambda>4$ such that for any positive number
$\eps$  with $\eps<1/{\lambda}$, any positive real $r$, any
$\eps$-$r$-projection $p$ and $\eps$-$r$-unitary $W$ in a filtered unital
$C^*$-algebra $A$,  the following
assertions  hold:
\begin{enumerate}
\item $WpW^*$ is a $\lambda\eps$-$3r$-projection of $A$;
\item $\diag(WpW^*,1)$ and   $\diag(p,1)$ are homotopic  $\lambda\eps$-$3r$-projections.
\end{enumerate}
\end{lemma}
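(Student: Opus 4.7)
For part (i), since $W, p, W^* \in A_r$ the product $WpW^*$ lies in $A_{3r}$ and is self-adjoint. Inserting $W^*W \approx 1$ gives the decomposition
\[
(WpW^*)^2 - WpW^* = Wp(W^*W - 1)pW^* + W(p^2 - p)W^*,
\]
and the norm bounds of Remark~\ref{rem-prod} ($\|W\| < 1 + \eps/2$, $\|p\| < 1 + \eps$, $\|W^*W - 1\| < \eps$, $\|p^2 - p\| < \eps$) bound the right-hand side by $C\eps$ for an explicit constant $C$ depending only on $\eps < 1/\lambda < 1/4$. Choosing $\lambda > 4$ larger than $C$ settles (i).

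For part (ii), I would concatenate three paths in $\M_2(A)$. The first is the linear interpolation
\[
y_s = \diag(WpW^*,\ 1 + s(W^*W - 1)),\qquad s \in [0, 1],
\]
from $\diag(WpW^*, 1)$ to $\diag(WpW^*, W^*W)$; its bottom-right entry $a_s = 1 + s(W^*W - 1)$ satisfies $\|a_s^2 - a_s\| \le 2\eps$, so combined with the top-left estimate from (i), each $y_s$ is a $\lambda\eps$-$3r$-projection.

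The key middle stage uses the $2 \times 2$ matrix path
\[
W_t = \begin{pmatrix} c_t W & -s_t \\ s_t & c_t W^* \end{pmatrix} \in \M_2(A_r),\qquad c_t = \cos\tfrac{\pi t}{2},\ s_t = \sin\tfrac{\pi t}{2},
\]
and conjugates $\diag(p, 1)$ by it to form $q_t = W_t \diag(p, 1) W_t^*$. A block computation shows $W_t^* W_t = \diag(c_t^2 W^*W + s_t^2,\ s_t^2 + c_t^2 WW^*)$, and analogously for $W_t W_t^*$, each within $\eps$ of the identity; hence $W_t$ is an $\eps$-$r$-unitary in $\M_2(A)$. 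Applying (i) inside $\M_2(A)$ to $W_t$ and the $\eps$-$r$-projection $\diag(p, 1)$ shows $q_t$ is a $\lambda\eps$-$3r$-projection, with $q_0 = \diag(WpW^*, W^*W)$ and $q_1 = \diag(1, p)$. The third stage conjugates $\diag(1, p)$ by the genuine scalar rotation $\bigl(\begin{smallmatrix} c_t & -s_t \\ s_t & c_t \end{smallmatrix}\bigr) \in \M_2(\C) \subset \M_2(A_r)$ to reach $\diag(p, 1)$; since this rotation is exactly unitary, each intermediate point is an $\eps$-$3r$-projection. Concatenating the three stages yields the required homotopy.

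The only real obstacle is choosing a single universal constant $\lambda > 4$ that dominates every error estimate appearing in (i) and in the first two stages at once. Since all bounds above are polynomial in $\eps$ under the hypothesis $\eps < 1/\lambda < 1/4$, this is resolved by fixing $\lambda$ sufficiently large at the very end of the argument.
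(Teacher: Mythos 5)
Your proof is correct and follows essentially the same route as the paper: part (i) via the same algebraic decomposition and norm estimates from Remark \ref{rem-prod}, and part (ii) via a rotation homotopy of $\eps$-$r$-unitaries conjugating the projection (controlled by part (i) applied in $\M_2(A)$) together with a linear ray absorbing the defect of $W$ being only almost unitary. The paper merely packages this in two stages --- conjugating $\diag(p,1)$ by $\left(\begin{smallmatrix} c_t &-s_t\\ s_t& c_t \end{smallmatrix}\right)\diag(W,1)\left(\begin{smallmatrix} c_t &s_t\\ -s_t& c_t \end{smallmatrix}\right)$ to pass from $\diag(WpW^*,1)$ to $\diag(p,WW^*)$ and then a ray to $\diag(p,1)$ --- whereas you use three stages, but the idea is identical.
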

\begin{proof}
The first point is straightforward to check from remark \ref{rem-prod}. For the second point,  with notations of
example \ref{example-homotopy},       use  the homotopy of
$\eps$-$r$-unitaries
$$\left(\begin{smallmatrix}W c_t^2 + s_t^2
    &(W-1) s_t c_t \\ (W-1) s_t
    c_t& W s^2_t+c_t^2 \end{smallmatrix}\right)_{t\in[0,1]}=\left(\left(\begin{smallmatrix} c_t
    &-s_t\\ s_t& c_t \end{smallmatrix}\right)\cdot \diag(W,1)  \cdot   \left(\begin{smallmatrix} c_t
    &s_t\\ -s_t& c_t \end{smallmatrix}\right) \right)_{t\in[0,1]}$$
    to connect by conjugation $\diag(WpW^*,1)$ to   $\diag(p,WW^*)$ and then connect to   $\diag(p,1)$
    by a ray.
\end{proof}

Recall that if two projections in a unital $C^*$-algebra are close enough in norm,
then there are conjugated by a canonical unitary. To state  a similar
result in term of $\eps$-$r$-projections and $\eps$-$r$-unitaries, we will need the definition of a control pair.
\begin{definition}\label{def-control-pair}
 A control pair is a pair $(\lambda,h)$, where
\begin{itemize}
 \item $\lambda >1$;
\item  $h:(0,\frac{1}{4\lambda})\to (0,+\infty);\, \eps\mapsto h_\eps$  is a map such that there exists a non-increasing map
$g:(0,\frac{1}{4\lambda})\to (0,+\infty)$, with $h\lq g$.
\end{itemize}
\end{definition}

\begin{lemma}\label{lem-conjugate}
There exists a control pair $(\lambda,h)$   such that  the following holds:

for every positive number $r$, any $\eps$ in $(0,\frac{1}{4\lambda})$  and any $\eps$-$r$-projections $p$ and
$q$ of a filtered unital $C^*$-algebra $A$ satisfying  $\|p-q\|<1/16$, there
exists  an $\lambda\eps$-$h_{\eps} r$-unitary $W$ in $A$  such that
$\|WpW^*-q\|\lq \lambda\eps$.
\end{lemma}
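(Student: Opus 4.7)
The strategy is to adapt to the quantitative setting the classical construction of a unitary conjugating two close projections. Classically, if $p,q$ are honest projections with $\|p-q\|<1$, then $z = qp + (1-q)(1-p)$ satisfies $zp=qz$ and $z^*z = 1-(p-q)^2$, so its polar unitary $W_{\mathrm{ideal}} = z(z^*z)^{-1/2}$ conjugates $p$ to $q$. The essential quantitative difficulty is that $(z^*z)^{-1/2}$ has no finite propagation a priori, so it must be replaced by a polynomial approximation whose degree depends on $\eps$.

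First I set $z = qp + (1-q)(1-p) \in A_{2r}$; direct expansion, using $\|p^2-p\|<\eps$, $\|q^2-q\|<\eps$ and Remark~\ref{rem-prod}, produces absolute constants $C_1,C_2$ with
\[
\|z^*z - (1-(p-q)^2)\| \lq C_1\eps, \qquad \|zp - qz\| \lq C_2\eps.
\]
Combined with $\|p-q\|^2 < 1/256$, this gives $\|1-z^*z\| < 1/256 + C_1\eps < 1/2$ as soon as $\eps < 1/(4\lambda)$ with $\lambda$ sufficiently large. Next I approximate $(z^*z)^{-1/2}$ via the binomial series $(1-x)^{-1/2} = \sum_{n\gq 0} a_n x^n$ with $a_n = \binom{2n}{n} 4^{-n} \lq 1$, which converges geometrically on $\|x\| \lq 1/2$. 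Choose $N=N(\eps) = O(\log(1/\eps))$, non-increasing in $\eps$, so that $\sum_{n>N} a_n (1/2)^n < \eps$. Setting $P_N(x) = \sum_{n=0}^N a_n x^n$ and
\[
W = z \cdot P_N(1 - z^*z),
\]
we obtain $W \in A_{(4N+2)r}$, so $h_\eps := 4N(\eps)+2$ produces the required control pair together with $\lambda$. Since $W_{\mathrm{ideal}} = z(z^*z)^{-1/2}$ is an honest unitary in $A$ (because $z^*z$ is invertible) and $\|W - W_{\mathrm{ideal}}\| \lq (1+O(\eps))\eps$ by the tail estimate, the bounds $\|W^*W-1\|,\|WW^*-1\| = O(\eps)$ follow, making $W$ an $\lambda\eps$-$h_\eps r$-unitary for $\lambda$ chosen large.

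The heart of the argument is the bound $\|WpW^*-q\| \lq \lambda\eps$. Set $y = z^*z$ and $u = 1-y$. Writing $z^*zp - pz^*z = z^*(zp-qz) + (z^*q-pz^*)z$ and using $\|zp-qz\| \lq C_2\eps$ together with its adjoint, one gets $\|[y,p]\| = \|[u,p]\| = O(\eps)$. The identity $[u^n,p] = \sum_{k=0}^{n-1} u^k [u,p] u^{n-1-k}$ gives $\|[u^n,p]\| \lq n\|u\|^{n-1}\|[u,p]\|$, and summing against the $a_n$'s on $\|u\| < 1/2$ yields
\[
\|[y^{-1/2},p]\| \lq \Bigl(\sum_{n\gq 1} n a_n (1/2)^{n-1}\Bigr) \|[u,p]\| \lq 4\|[u,p]\| = O(\eps).
\]
Consequently $W_{\mathrm{ideal}} p = z y^{-1/2} p = zp\, y^{-1/2} + O(\eps) = qz\, y^{-1/2} + O(\eps) = qW_{\mathrm{ideal}} + O(\eps)$, and unitarity of $W_{\mathrm{ideal}}$ gives $\|W_{\mathrm{ideal}} p W_{\mathrm{ideal}}^* - q\| = O(\eps)$. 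A triangle inequality using $\|W - W_{\mathrm{ideal}}\| = O(\eps)$ then delivers $\|WpW^* - q\| \lq \lambda\eps$.

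The main obstacle is precisely this commutator step: the polynomial degree $N(\eps)$ must grow as $\eps\to 0$, so a naive bound on $\|[P_N(u),p]\|$ would blow up with $N$. What rescues the argument is that the hypothesis $\|p-q\| < 1/16$ forces the strict inequality $\|1-z^*z\| < 1/2$ with room to spare, and the weighted series $\sum n a_n (1/2)^{n-1}$ converges to an absolute constant independent of $N$. Thus the errors remain $O(\eps)$ uniformly in $N$, and the propagation cost is concentrated entirely in the factor $h_\eps$ rather than in any amplification of the norm error.
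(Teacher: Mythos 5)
Your proof is correct and follows essentially the same route as the paper: the paper takes $z'=(2p-1)(2q-1)+1$, which is exactly twice your intertwiner (up to the order of $p$ and $q$), and likewise replaces the inverse square root in the polar decomposition by a truncated power series of $(1+t)^{-1/2}$ with $n_\eps=O(\log(1/\eps))$ terms, yielding the same propagation $(4n_\eps+2)r$. Your explicit uniform-in-$N$ commutator estimate $\|[P_N(u),p]\|\lq 4\|[u,p]\|$ just spells out the norm control that the paper leaves implicit in its choice of ``suitable $\lambda$''.
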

\begin{proof}
We follow the proof of \cite[Proposition 5.2.6]{we}. If we
set $$z=(2\kappa_0(p)-1)(2\kappa_0(q)-1)+1,$$ 
\begin{itemize}
\item then \begin{eqnarray*}
\|z-2\|&\lq &2\|\kappa_0(p)-\kappa_0(q)\| \\
&\lq & 8\eps+2\|p-q\|
\end{eqnarray*} and hence $z$ is invertible for $\eps<1/16$.
\item Moreover, if  we set $U=z|z^{-1}|$ and  since $z\kappa_0(q)=\kappa_0(p)z$, then we have
$\kappa_0(q)=U\kappa_0(p)U^*$.
\end{itemize}
Let us define $z'=(2p-1)(2q-1)+1$. Then we have $\|z-z'\|\lq 9\eps$
and $\|z'\|\lq 3$. If $\eps$ is small enough, then
$\|z'^*z'-4\|\lq 2$ and hence the spectrum of $z'^*z'$ is in
$[2,6]$. Let us consider the expansion in power serie $\sum_{k\in\N}a_k t^k$ of
$t\mapsto (1+t)^{-1/2}$ on $(0,1)$ and let $n_{\eps}$ be the smallest
integer such that $\sum_{ n_{\eps}\lq k} |a_k|/2^k\lq\eps$.
Let us set then $W=z'/2\sum_{k=0}^{n_\eps} a_k
(\frac{z'^*z'-4}{4})^k$. Then  for
a  suitable $\lambda$ (not depending on $A,\,p,\,q$ or $\eps$), we
get that $W$ is a $\lambda\eps$-$(4n_\eps+2)r$-unitary which satisfies the
required condition.
\end{proof}
\begin{remark}
 The order of $h$ when $\eps$ goes to zero in lemma \ref{lem-conjugate} is $C\eps^{-3/2}$ for some constant $C$.
\end{remark}

\subsection{Definition of quantitative $K$-theory}

For a unital filtered $C^*$-algebra $A$, we define the
following
equivalence relations on $\P_\infty^{\eps,r}(A)\times\N$ and on  $\U_\infty^{\eps,r}(A)$:
\begin{itemize}
\item if $p$ and $q$ are elements of $\P_\infty^{\eps,r}(A)$, $l$ and
  $l'$ are positive integers, $(p,l)\sim(q,l')$ if there exists a
  positive integer $k$ and an element $h$ of
  $\P_\infty^{\eps,r}(A[0,1])$ such that $h(0)=\diag(p,I_{k+l'})$
and $h(1)=\diag(q,I_{k+l})$.
\item if $u$ and $v$ are elements of $\U_\infty^{\eps,r}(A)$, $u\sim v$ if
  there exists an element $h$ of
  $\U_\infty^{\eps,r}(A[0,1])$ such that $h(0)=u$
and $h(1)=v$.
\end{itemize}
If $p$ is an  element of $\P_\infty^{\eps,r}(A)$ and  $l$ is an integer, we
denote by $[p,l]_{\eps,r}$ the equivalence class of $(p,l)$ modulo  $\sim$
and if $u$ is an element of $\U_\infty^{\eps,r}(A)$ we denote by
$[u]_{\eps,r}$ its  equivalence class  modulo  $\sim$.
\begin{definition} Let $r$ and $\eps$ be positive numbers with
  $\eps<1/4$.
We define:
\begin{enumerate}
\item $K_0^{\eps,r}(A)=\P_\infty^{\eps,r}(A)\times\N/\sim$ for $A$ unital and
$$K_0^{\eps,r}(A)=\{[p,l]_{\eps,r}\in \P^{\eps,r}(\tilde{A})\times\N/\sim \st
\dim \kappa_0(\rho_{A}(p))=l\}$$ for $A$ non unital.
\item $K_1^{\eps,r}(A)=\U_\infty^{\eps,r}(\tilde{A})/\sim$ (with
  $A=\tilde{A}$ if $A$ is already unital).
\end{enumerate}
\end{definition}

\begin{remark}
 We shall see in lemma \ref{lem-k+} that as it is the case   for $K$-theory,   $K_*^{\eps,r}(\bullet)$ can indeed be defined
 in a uniform way for unital and non-unital filtered $C^*$-algebras.
\end{remark}

It is straightforward to check that for any unital filtered $C^*$-algebra $A$,  if $p$ is an $\eps$-$r$-projection in $A$
and  $u$ is an $\eps$-$r$-unitary  in $A$,  then
$\diag(p,0)$ and
$\diag(0,p)$ are homotopic $\eps$-$r$-projections in
$M_2(A)$ and   $\diag(u,1)$ and
  $\diag(1,u)$ are homotopic $\eps$-$r$-unitaries in
$M_2(A)$. Thus we obtain the following:
\begin{lemma}
 Let $A$ be a filtered $C^*$-algebra. Then $K_0^{\eps,r}(A)$ and $K_1^{\eps,r}(A)$
are equipped with a structure of abelian semi-group such that
$$[p,l]_{\eps,r}+[p',l']_{\eps,r}=[\diag(p,p'),l+l']_{\eps,r}$$ and
$$[u]_{\eps,r}+[u']_{\eps,r}=[\diag(u,v)]_{\eps,r},$$ for any  $[p,l]_{\eps,r}$ and $[p',l']_{\eps,r}$ in $K_0^{\eps,r}(A)$ and
any  $[u]_{\eps,r}$ and $[u']_{\eps,r}$ in $K_1^{\eps,r}(A)$.
\end{lemma}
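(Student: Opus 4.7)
The plan is to verify associativity, commutativity, and well-definedness of the proposed operations on equivalence classes, using only conjugation by \emph{genuine} unitaries of propagation $0$. The key observation is that permutation matrices (whose entries lie in $\{0,1\}\subset A_r$ for every $r>0$, since $A$ is unital) and the block-rotation matrices $u_t=\bigl(\begin{smallmatrix}c_t I & -s_t I\\ s_t I & c_t I\end{smallmatrix}\bigr)$ from example \ref{example-homotopy} are genuine unitaries sitting in every $M_n(A)_r$, so conjugation by them preserves the $\eps$-$r$-projection (resp.\ $\eps$-$r$-unitary) condition on the nose, without any loss of $\eps$ or $r$, and when $u_t$ varies continuously it automatically produces a homotopy of such elements.

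Associativity is immediate since block-diagonalisation is strictly associative in $M_\infty(A)$. Commutativity on $K_0^{\eps,r}(A)$ comes from the fact that $t\mapsto u_t\diag(p,p')u_t^*$ is a homotopy of $\eps$-$r$-projections in $M_{2n}(A)$ joining $\diag(p,p')$ at $t=0$ to $\diag(p',p)$ at $t=1$; the identical path applied to $\diag(u,v)$ yields commutativity on $K_1^{\eps,r}(A)$.

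For well-definedness of the $K_0^{\eps,r}$-sum, suppose $(p,l)\sim(q,l')$ via a homotopy $h$ with padding integer $k$ and $(p',l_1)\sim(q',l_1')$ via $h'$ with padding $k'$. Then $\diag(h,h')$ is a homotopy of $\eps$-$r$-projections connecting $\diag(p,I_{k+l'},p',I_{k'+l_1'})$ to $\diag(q,I_{k+l},q',I_{k'+l_1})$. Conjugating the whole path by the constant block-permutation unitary that moves the $p'$ (resp.\ $q'$) block to sit immediately after the $p$ (resp.\ $q$) block produces a homotopy from $\diag(\diag(p,p'),I_{k+k'+l'+l_1'})$ to $\diag(\diag(q,q'),I_{k+k'+l+l_1})$, witnessing $(\diag(p,p'),l+l_1)\sim(\diag(q,q'),l'+l_1')$. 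The argument for $K_1^{\eps,r}$ is verbatim the same, with no identity blocks to track. In the non-unital $K_0^{\eps,r}(A)$ case one runs everything in $\tilde{A}$, observing that the condition $\dim\kappa_0(\rho_A(p))=l$ is additive under $\diag$ and is preserved both by homotopies of $\eps$-$r$-projections (via Remark \ref{rem-prod}(v)) and by permutation conjugation.

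I expect no serious obstacle: the entire argument is formal bookkeeping, and no numerical degradation of $\eps$ or $r$ occurs precisely because every unitary used for conjugation lies in every filtration layer and is genuinely unitary. The only mildly delicate point is matching the identity-block sizes, which dictates the choice of the combined padding integer $k+k'$ in the well-definedness step.
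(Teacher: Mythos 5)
Your argument is correct and is essentially the paper's own: the paper gives no detailed proof beyond the observation immediately preceding the lemma that the rotation homotopy of example \ref{example-homotopy} connects $\diag(p,0)$ to $\diag(0,p)$ and $\diag(u,1)$ to $\diag(1,u)$, all remaining verifications being declared straightforward, and your conjugations by scalar rotation/permutation unitaries are exactly that verification, since such conjugations preserve propagation and the $\eps$-estimates on the nose. The only caveat is that your single constant permutation aligns the blocks at both endpoints only after normalizing the representatives to a common matrix size (e.g.\ padding $p$ and $q$ by zeros, which changes nothing in $\P_\infty^{\eps,r}$, or replacing the constant permutation by conjugation along a path in the scalar unitaries), a repair that stays entirely within your own toolkit.
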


According to example \ref{example-homotopy}, for every unital filtered $C^*$-algebra $A$, any
 $\eps$-$r$-projection $p$ in $M_n(A)$ and   any integer $l$ with $n\gq l$, we see that  $[I_n-p,n-l]_{\eps,r}$
 is an inverse for $[p,l]_{\eps,r}$. Hence we obtain:
\begin{lemma} If $A$ is a  filtered $C^*$-algebra, then $K_0^{\eps,r}(A)$ is an abelian group.
 \end{lemma}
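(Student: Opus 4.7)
The plan is to exploit the hint supplied by the sentence preceding the lemma: for any representative $(p,l)$ with $p \in \P_n^{\eps,r}(A)$ and $n \geq l$, we claim that $[I_n - p, n-l]_{\eps,r}$ is the additive inverse of $[p,l]_{\eps,r}$. First I would reduce to this situation: given an arbitrary class $[p,l]_{\eps,r}$, replace $p$ by $\diag(p, 0_m)$ with $m$ large enough that the padded matrix lives in $M_n(A)$ for some $n \geq l$; this manifestly does not change the class.

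Next I would compute the sum in the semigroup,
\[
[p,l]_{\eps,r} + [I_n - p, n-l]_{\eps,r} = [\diag(p, I_n - p), n]_{\eps,r},
\]
and attack the right-hand side using the construction of example \ref{example-homotopy} applied to $p \in M_n(A)$ (reading $h_t \in M_2(\C)$ as acting on $M_n(A)$ by tensor product). The path
\[
q_t = \diag(p, 0_n) + (I_n - p) \otimes h_t
\]
is a homotopy of $\eps$-$r$-projections in $M_{2n}(A[0,1])$ from $\diag(I_n, 0_n)$ to $\diag(p, I_n - p)$: the identity $q_t^2 - q_t = s_t^2 (p^2-p) \otimes I_2$ keeps the defect below $\eps$, and because the entries of $h_t$ are scalars they contribute no propagation, so the path lies in $M_{2n}(A_r[0,1])$.

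The remaining point is $[\diag(I_n, 0_n), n]_{\eps,r} = 0$. Taking $k = 0$, $q = 0_n \in M_n(A)$, $l' = 0$ in the definition of $\sim$, this reduces to exhibiting a homotopy of $\eps$-$r$-projections in $M_{2n}(A[0,1])$ from $\diag(I_n, 0_n)$ to $\diag(0_n, I_n)$; conjugating by the scalar block rotation
\[
R_t = \begin{pmatrix} \cos(\pi t/2)\, I_n & -\sin(\pi t/2)\, I_n \\ \sin(\pi t/2)\, I_n & \cos(\pi t/2)\, I_n \end{pmatrix}
\]
gives an \emph{honest} path of projections $R_t \diag(I_n, 0_n) R_t^*$ whose entries, being scalar combinations of the entries of $\diag(I_n, 0_n)$, have propagation $0$ (using $1 \in A_r$).

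For the non-unital case, everything takes place in $\tilde A$, so the inverse exists in $K_0^{\eps,r}(\tilde A)$; the only thing to check is that the candidate $[I_n - p, n - l]_{\eps,r}$ actually belongs to the subset $K_0^{\eps,r}(A)$. This follows from $\rho_A(I_n - p) = I_n - \rho_A(p)$ together with the elementary identity $\kappa_0(I - q) = I - \kappa_0(q)$ for the scalar $\eps$-$r$-projection $q = \rho_A(p) \in M_n(\C)$, yielding $\dim \kappa_0(\rho_A(I_n - p)) = n - l$. I do not anticipate a serious obstacle; the main care is bookkeeping matrix sizes and the integer indices in the equivalence relation, which is precisely what forces the introduction of the pair $(p,l)$ rather than $p$ alone.
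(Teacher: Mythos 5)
Your proof is correct and follows the paper's own route: the paper also obtains the inverse $[I_n-p,n-l]_{\eps,r}$ directly from the homotopy of Example \ref{example-homotopy} between $\diag(1,0)$ and $\diag(p,1-p)$, with your rotation argument and the non-unital dimension check being exactly the routine bookkeeping the paper leaves implicit.
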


Although $K_1^{\eps,r}(A)$ is not a group, it is very close to be one. 
\begin{lemma}\label{lem-k1-almost-group}
 Let  $A$ be  a filtered $C^*$-algebra. Then for any $\eps$-$r$-unitary $u$ in $M_n(\tilde{A})$
 (with $A=\tilde{A}$ if $A$ is already unital), we have
  $[u]_{3\eps,2r}+[u^*]_{3\eps,2r}=0$ in $K_1^{3\eps,2r}(A)$.
\end{lemma}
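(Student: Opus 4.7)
The sum rule for $K_1^{\eps,r}$ gives
$[u]_{3\eps,2r}+[u^*]_{3\eps,2r}=[\diag(u,u^*)]_{3\eps,2r}$
(after noting that any $\eps$-$r$-unitary is, a fortiori, a $3\eps$-$2r$-unitary, and that $\diag(u,u^*)$ is an $\eps$-$r$-unitary since the diagonal blocks $u^*u$ and $uu^*$ are both within $\eps$ of the identity). So the plan is to exhibit a homotopy of $3\eps$-$2r$-unitaries in $M_{2n}(\widetilde{A})$ connecting $\diag(u,u^*)$ to $I_{2n}$, which will then give the identity element of $K_1^{3\eps,2r}(A)$.

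The approach is the standard rotation trick from classical $K$-theory, tuned to control propagation and the almost-unitary estimate. For $t\in[0,1]$ set
\[
R_t=\begin{pmatrix}\cos(\pi t/2)&-\sin(\pi t/2)\\ \sin(\pi t/2)&\cos(\pi t/2)\end{pmatrix}\ts I_n,\qquad w_t=\diag(u,1)\cdot R_t\cdot \diag(1,u^*)\cdot R_t^*.
\]
A direct check gives $w_0=\diag(u,u^*)$ and $w_1=\diag(uu^*,1)$. Since $R_t$ has scalar entries, it has propagation $0$, so each $w_t$ lies in $M_{2n}(A_{2r}+\C)$, i.e.\ has propagation at most $2r$. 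Moreover $R_t\diag(1,u^*)R_t^*$ is an $\eps$-$r$-unitary (exact unitary conjugation preserves the almost-unitary estimate), so Remark \ref{rem-prod}(vi) makes $w_t=\diag(u,1)\cdot\bigl(R_t\diag(1,u^*)R_t^*\bigr)$ an $\eps(2+\eps)$-$2r$-unitary; since $\eps<1/4$, this is a $3\eps$-$2r$-unitary. Hence $(w_t)_{t\in[0,1]}$ is a homotopy of $3\eps$-$2r$-unitaries from $\diag(u,u^*)$ to $\diag(uu^*,1)$.

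To finish, I connect $\diag(uu^*,1)$ to $I_{2n}$ by the straight-line homotopy
\[
y_t=\diag\bigl((1-t)uu^*+t,\;1\bigr),\qquad t\in[0,1].
\]
Each $y_t$ is self-adjoint and has propagation $2r$. Writing $uu^*=1+e$ with $\|e\|<\eps$, one has $(1-t)uu^*+t=1+(1-t)e$, so
\[
\|y_t^*y_t-I_{2n}\|=\|2(1-t)e+(1-t)^2e^2\|\lq 2\eps+\eps^2<3\eps,
\]
showing $y_t$ is a $3\eps$-$2r$-unitary. Concatenating $(w_t)$ with $(y_t)$ yields the desired homotopy from $\diag(u,u^*)$ to $I_{2n}$, proving $[\diag(u,u^*)]_{3\eps,2r}=0$ in $K_1^{3\eps,2r}(A)$.

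There is no substantive obstacle: the only care required is the numerical bookkeeping to stay within the $3\eps$-$2r$ window, and this is forced by the quadratic term coming from Remark \ref{rem-prod}(vi) in the first phase and by the $2\eps+\eps^2$ estimate in the second phase, both comfortably below $3\eps$ when $\eps<1/4$.
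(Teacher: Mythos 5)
Your proof is correct and takes essentially the same route as the paper's: the rotation trick together with Remark \ref{rem-prod}(vi) to connect $\diag(u,u^*)$ to $\diag(uu^*,1)$ through $3\eps$-$2r$-unitaries, followed by the straight-line homotopy from $uu^*$ to $1$, which the paper obtains by invoking Lemma \ref{lemma-almost-closed} and you verify by the explicit estimate $2\eps+\eps^2<3\eps$.
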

\begin{proof}
 If $u$ is an $\eps$-$r$-unitary in a
unital filtered $C^*$-algebra $A$,
then according to point (vi) of remark \ref{rem-prod}, we see that $\left(\diag(1,u)\left(\begin{smallmatrix} c_t
    &-s_t\\ s_t& c_t \end{smallmatrix}\right)\cdot \diag(1,u^*)  \cdot   \left(\begin{smallmatrix} c_t
    &s_t\\ -s_t& c_t \end{smallmatrix}\right) \right)_{t\in[0,1]}$ is a homotopy of $3\eps$-$2r$-unitaries between 
$\diag(u,u^*)$ and $\diag(uu^*,1)$. Since $\| uu^*-1\|<\eps$,   we deduce from   lemma
\ref{lemma-almost-closed} that $uu^*$ and $1$ are 
       homotopic   $3\eps$-$2r$-unitaries and hence we get the lemma.
\end{proof}

\begin{remark}
 According to lemma \ref{lem-k1-almost-group}, if we define the equivalence relation on $\U_\infty^{\eps,r}(A)$ to be homotopy within
$\U_\infty^{3\eps,2r}(A)$, then $K_1^{\eps,r}(A)$ can be endowed with an abelian group structure.
\end{remark}

We have for any filtered $C^*$-algebra $A$ and any  positive numbers
$r$, $r'$, $\eps$ and $\eps'$  with
  $\eps\lq\eps'<1/4$ and $r\lq r'$  natural semi-group homomorphisms
\begin{itemize}
\item $\iota_0^{\eps,r}:K_0^{\eps,r}(A)\lto K_0(A);\,
[p,l]_{\eps,r}\mapsto [\kappa_0(p)]-[I_l]$;
\item $\iota_1^{\eps,r}:K_1^{\eps,r}(A)\lto K_1(A);\,
  [u]_{\eps,r}\mapsto [u]$;
\item $\iota_*^{\eps,r}=\iota_0^{\eps,r}\oplus \iota_1^{\eps,r}$;
\item $\iota_0^{\eps,\eps',r,r'}:K_0^{\eps,r}(A)\lto K_0^{\eps',r'}(A);\,
[p,l]_{\eps,r}\mapsto [p,l]_{\eps',r'};$
\item $\iota_1^{\eps,\eps',r,r'}:K_1^{\eps,r}(A)\lto K_1^{\eps',r'}(A);\,
  [u]_{\eps,r}\mapsto [u]_{\eps',r'}$.
\item $\iota_*^{\eps,\eps',r,r'}=\iota_1^{\eps,\eps',r,r'}\oplus\iota_1^{\eps,\eps',r,r'}$
\end{itemize}
If some of the indices $r,r'$ or $\eps,\eps'$ are equal, we shall not
repeat it in $\iota_*^{\eps,\eps',r,r'}$.
\begin{remark}\label{rem-hom} Let $p_0$ and $p_1$ be two $\eps$-$r$-projections in a filtered $C^*$-algebra such that
$\ka_0(p_0)$ and $\ka_0(p_1)$ are homotopic projections. Then for any $\eps$ in $(0,1/4)$, this homotopy 
can be approximated  for some $r'$
by a $\eps$-$r'$-projection. Hence, using   point (iii)
   of remark \ref{rem-prod}, there exists a homotopy $(q_t)_{t\in[0,1]}$ of $\eps$-$r'$ projections in $A$ such that
$\|p_0-q_0\|<3\eps$ and $\|p_1-q_1\|<3\eps$. We can indeed assume that $r'\gq r$ and thus by 
   lemma \ref{lemma-almost-closed}, we get that $p_0$ and $p_1$ are homotopic as $15\eps$-$r'$-projections. Proceeding in
the same way for the odd case we eventually obtain:

 there exists $\lambda>1$ such that for any    filtered $C^*$-algebra $A$, any $\eps\in(0,\frac{1}{4\lambda})$ and any positive number $r$,
the following holds:

 \medskip

 Let $x$ and $x'$ be elements in $K_*^{\eps,r}(A)$ such that
$\iota_*^{\eps,r}(x)=\iota_*^{\eps,r}(x')$ in $K_*(A)$, then there
exists a positive number  $r'$ with $r'>r$ such that
$\iota_*^{\eps,\lambda\eps,r,r'}(x)=\iota_*^{\eps,\lambda\eps,r,r'}(x')$ in
$K_*^{\lambda\eps,r'}(A)$.
\end{remark}

\begin{lemma}\label{lem-hom-proj}
Let $p$ be a matrix in $\M_n(\C)$ such that $p=p^*$ and
$\|p^2-p\|<\eps$ for some  $\eps$ in $(0,1/4)$. Then there is a continuous path
$(p_t)_{t\in[0,1]}$ in $\M_n(\C)$ such that
\begin{itemize}
\item $p_0=p$;
\item $p_1=I_k$ with $k=\dim\kappa_0(p)$;
\item $p_t^*=p_t$ and $\|p_t^2-p_t\|<\eps$ for every $t$ in $[0,1]$.
\end{itemize}\end{lemma}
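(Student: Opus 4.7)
The plan is to build the homotopy in two stages: first deform $p$ via continuous functional calculus to the projection $\ka_0(p)$, then rotate this rank-$k$ projection to $I_k = \diag(1,\dots,1,0,\dots,0)$ in $\M_n(\C)$ via a continuous path of unitary conjugations.

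For the first stage, point (i) of Remark \ref{rem-prod} gives $\sigma(p) \subset J_0 \cup J_1$, where $J_0 = \bigl(\frac{1-\sqrt{1+4\eps}}{2}, \frac{1-\sqrt{1-4\eps}}{2}\bigr)$ is an open interval containing $0$ and $J_1 = \bigl(\frac{1+\sqrt{1-4\eps}}{2}, \frac{1+\sqrt{1+4\eps}}{2}\bigr)$ is an open interval containing $1$; moreover $|r^2 - r| < \eps$ for every $r \in J_0 \cup J_1$. I would define a continuous family $\phi_t : J_0 \cup J_1 \to \R$ by
\[
\phi_t(s) = \begin{cases} (1-t) s & \text{if } s \in J_0, \\ (1-t) s + t & \text{if } s \in J_1, \end{cases}
\]
for $t \in [0,1]$, and set $q_t = \phi_t(p)$ via functional calculus. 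Then $t \mapsto q_t$ is a continuous path of self-adjoint matrices with $q_0 = p$ and $q_1 = \ka_0(p)$, the spectral projection of $p$ onto its $J_1$-part, which is a rank-$k$ projection.

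The only point to check is that $\|q_t^2 - q_t\| = \max_{s \in \sigma(p)} |\phi_t(s)^2 - \phi_t(s)| < \eps$. For $s \in J_0$ the linear path $t \mapsto \phi_t(s)$ connects $s$ to $0$ and stays in the convex set $J_0$; likewise for $s \in J_1$ it stays in the convex set $J_1$. Hence $\phi_t(s) \in J_0 \cup J_1$ and the defining property of these intervals gives the required bound.

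For the second stage, $\ka_0(p)$ is a rank-$k$ projection, so I can pick a unitary $V \in \M_n(\C)$ with $V \ka_0(p) V^* = I_k$, and then a continuous path $(V_t)_{t \in [0,1]}$ in the connected group $\U(n)$ with $V_0 = 1$ and $V_1 = V$. The path $r_t = V_t \ka_0(p) V_t^*$ consists of genuine projections, so $\|r_t^2 - r_t\| = 0 < \eps$. Concatenating $(q_t)$ with $(r_t)$ and reparametrizing over $[0,1]$ produces the required path. The main obstacle is the spectral check in the first stage, but it reduces cleanly to the convexity of $J_0$ and $J_1$ together with the fact that they contain $0$ and $1$ respectively.
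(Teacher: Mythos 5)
Your proposal is correct and follows essentially the same route as the paper: deform the spectrum linearly to $\{0,1\}$ (your functional-calculus map $\phi_t$ is exactly the paper's linear interpolation of eigenvalues after diagonalization, with the norm bound justified by convexity of the spectral intervals rather than monotonicity of $\lambda\mapsto\lambda^2-\lambda$, which amounts to the same thing), and then connect the rank-$k$ projection $\kappa_0(p)$ to $I_k$ by a path of genuine projections.
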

\begin{proof}
The selfadjoint matrix $p$ satisfies
$\|p^2-p\|<\eps$ if and only if the eigenvalues of $p$ satisfy the
inequality $$-\eps<\lambda^2-\lambda<\eps,$$ i.e.
$$\lambda\in
\left(\frac{1-\sqrt{1+4\eps}}{2},\frac{1-\sqrt{1-4\eps}}{2}\right)\bigcup
\left(\frac{\sqrt{1-4\eps}+1}{2},\frac{\sqrt{1+4\eps}+1}{2}\right).$$
Let $\lambda_1,\ldots,\lambda_k$ be the eigenvalues of $p$ lying in
$\left(\frac{1-\sqrt{1+4\eps}}{2},\frac{1-\sqrt{1-4\eps}}{2}\right)$
and let \linebreak $\lambda_{k+1},\ldots,\lambda_n$ be the eigenvalues of $p$
lying in
$\left(\frac{\sqrt{1-4\eps}+1}{2},\frac{\sqrt{1+4\eps}+1}{2}\right)$.
We set for $t\in[0,1]$
\begin{itemize}
\item $\lambda_{i,t}=t\lambda_i$ for  $i=1,\ldots,k$;
\item $\lambda_{i,t}=t\lambda_i+1-t$ for  $i=k+1,\ldots,n$.
\end{itemize}
Since $\lambda\mapsto\lambda^2-\lambda$ is decreasing  on
$\left(\frac{1-\sqrt{1+4\eps}}{2},\frac{1-\sqrt{1-4\eps}}{2}\right)$
and increasing on \linebreak
$\left(\frac{\sqrt{1-4\eps}+1}{2},\frac{\sqrt{1+4\eps}+1}{2}\right)$
then,  $$-\eps<\lambda_{i,t}^2-\lambda_{i,t}<\eps$$ for all $t$ in
$[0,1]$ and $i=1,\ldots,n$.
If we set
$p_t=u\cdot\diag(\lambda_{1,t},\ldots,\lambda_{n,t})\cdot u^*$ where $u$ is a
unitary matrix of $\M_n(\C) $ such that
$p=u\cdot\diag(\lambda_{1},\ldots,\lambda_{n})\cdot u^*$,
then
\begin{itemize}
\item $p_0=p$;
\item $p_1=\kappa_0(p)$;
\item $p_t^*=p_t$ and $\|p_t^2-p_t\|<\eps$ for every $t$ in $[0,1]$.
\end{itemize}
Since there is a homotopy of projections in  $\M_n(\C)$ between
$\kappa_0(p)$ and $I_k$ with $k=\dim\kappa_0(p)$, we get the result.
\end{proof}

As a consequence we obtain:
\begin{corollary}For any positive numbers  $r$ and $\eps$ with $\eps<1/4$, then
$$K_0^{\eps,r}(\C)\to\Z;\,[p,l]_{\eps,r}\mapsto \dim\kappa_0(p)-l$$
is an isomorphism.
\end{corollary}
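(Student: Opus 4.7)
The plan is to check the three standard ingredients: well-definedness, surjectivity, and injectivity. The fact that the prescribed map $\Phi\colon [p,l]_{\eps,r}\mapsto \dim\ka_0(p)-l$ is a semigroup (hence group) homomorphism is immediate from $\ka_0(\diag(p,q))=\diag(\ka_0(p),\ka_0(q))$ and the definition of addition.

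For \emph{well-definedness}, suppose $(p,l)\sim(q,l')$, so that there exist $k\gq 0$ and a homotopy $h\in\P_\infty^{\eps,r}(\C[0,1])$ with $h(0)=\diag(p,I_{k+l'})$ and $h(1)=\diag(q,I_{k+l})$ living in some $M_N(\C[0,1])$. Applying the functional calculus of Remark \ref{rem-prod}(iii) pointwise yields a continuous path $(\ka_0(h(t)))_{t\in[0,1]}$ of genuine projections in $M_N(\C)$. Since rank is a homotopy invariant of projections in $M_N(\C)$, one reads off $\dim\ka_0(p)+k+l'=\dim\ka_0(q)+k+l$, i.e.\ $\Phi$ is well defined.

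For \emph{surjectivity}, I would simply exhibit $\Phi([I_n,0]_{\eps,r})=n$ for $n\gq 0$ and $\Phi([0_n,n]_{\eps,r})=-n$ for $n\gq 0$, where $0_n$ denotes the zero matrix in $M_n(\C)$ (which is trivially an $\eps$-$r$-projection).

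For \emph{injectivity}, the workhorse is Lemma \ref{lem-hom-proj}. Given $[p,l]_{\eps,r}$ and $[q,l']_{\eps,r}$ with $\Phi$-images equal, set $k=\dim\ka_0(p)$ and $k'=\dim\ka_0(q)$, so that $k-l=k'-l'$. Lemma \ref{lem-hom-proj} (applied inside the scalar matrix algebras $M_n(\C)$ and $M_m(\C)$ respectively) gives homotopies of $\eps$-$r$-projections from $p$ to $\diag(I_k,0_{n-k})$ and from $q$ to $\diag(I_{k'},0_{m-k'})$; taking $k=0$ and $h(t)=\diag(p_t,I_l)$ in the equivalence relation upgrades these to $[p,l]_{\eps,r}=[\diag(I_k,0_{n-k}),l]_{\eps,r}$ and analogously for $q$. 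After padding with enough zero rows/columns to a common size and connecting projections of identical rank in $M_N(\C)$ by an honest path of projections, one further reduces to $[I_k,l]_{\eps,r}$ and $[I_{k'},l']_{\eps,r}$. Finally, choosing the parameter ``$k$'' in the equivalence relation equal to $0$, one has $\diag(I_k,I_{l'})=I_{k+l'}=I_{k'+l}=\diag(I_{k'},I_l)$, so the constant path gives the required homotopy and $[I_k,l]_{\eps,r}=[I_{k'},l']_{\eps,r}$.

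The only mildly delicate point is the bookkeeping with matrix sizes in the equivalence relation on $\P_\infty^{\eps,r}(\C)\times\N$; once one observes that projections in $M_N(\C)$ of equal rank are always connected by a path of projections, every step collapses to standard linear algebra and the already established functorial behaviour of $\ka_0$.
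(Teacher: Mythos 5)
Your proof is correct and follows the same route the paper intends: the corollary is stated there as an immediate consequence of Lemma \ref{lem-hom-proj}, and your argument is exactly the natural fleshing-out of that lemma (rank invariance via $\kappa_0$ along homotopies for well-definedness, reduction of any $\eps$-$r$-projection over $\C$ to some $I_k$ for injectivity, obvious surjectivity). The bookkeeping with matrix sizes and the stabilizing integer in the equivalence relation is handled correctly, so nothing is missing.
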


\begin{lemma}\label{lem-hom-uni}
Let $u$ be a matrix in $\M_n(\C)$ such that $\|u^*u-I_n\|<\eps$ and
$\|uu^*-I_n\|<\eps$ for $\eps$ in $(0,1/4)$. Then there is a continuous path
$(u_t)_{t\in[0,1]}$ in $\M_n(\C)$ such that
\begin{itemize}
\item $u_0=u$;
\item $u_1=I_n$;
\item $\|u_t^*u_t-I_n\|<\eps$ and
$\|u_tu_t^*-I_n\|<\eps$  for every $t$ in $[0,1]$.
\end{itemize}
\end{lemma}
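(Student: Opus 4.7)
The plan is to build the homotopy in two stages using the polar decomposition of $u$. First I would observe that $u$ is invertible: since $\|u^*u-I_n\|<\eps<1$, the positive matrix $u^*u$ is invertible in $\M_n(\C)$, and hence so is $u$. This yields a polar decomposition $u=v|u|$ with $v\in\M_n(\C)$ unitary and $|u|=(u^*u)^{1/2}>0$. From $\||u|^2-I_n\|=\|u^*u-I_n\|<\eps$ and the functional calculus, the spectrum of the self-adjoint matrix $|u|$ lies in the interval
$$I\defi(\sqrt{1-\eps},\sqrt{1+\eps}),$$
which contains $1$.

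For the first half of the homotopy, I would linearly interpolate the positive factor. For $t\in[0,1/2]$ set
$$u_t=v\bigl((1-2t)|u|+2tI_n\bigr).$$
Since $|u|$ commutes with $I_n$, the spectrum of the bracketed matrix consists of convex combinations $(1-2t)\mu+2t$ with $\mu\in\operatorname{spec}(|u|)\subset I$; as $1\in I$, these combinations remain in $I$. Squaring, the spectrum of $u_t^*u_t=((1-2t)|u|+2tI_n)^2$ lies in $(1-\eps,1+\eps)$, so $\|u_t^*u_t-I_n\|<\eps$. Moreover $u_tu_t^*=v((1-2t)|u|+2tI_n)^2v^*$ is a unitary conjugate of $u_t^*u_t$, so $\|u_tu_t^*-I_n\|<\eps$ as well. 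At $t=1/2$, the factor collapses to $I_n$, leaving $u_{1/2}=v$, a unitary.

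For the second half, I would use path-connectedness of $\U(n)$ to pick any continuous path of unitary matrices from $v$ to $I_n$ and take $u_t$ along this path for $t\in[1/2,1]$. Along such a path $u_t^*u_t=u_tu_t^*=I_n$, so the required inequalities hold trivially. Concatenating at $t=1/2$ produces a continuous path in $\M_n(\C)$ with $u_0=u$ and $u_1=I_n$ of the required type. No step is really an obstacle: the polar decomposition and functional calculus are routine, and the only substantive point is the convexity observation that $1\in I$, which is precisely what guarantees the straight-line interpolation of singular values remains admissible.
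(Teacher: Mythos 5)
Your proof is correct and follows essentially the same route as the paper: deform the positive part of $u$ to the identity while keeping the singular values in the admissible interval (the paper does this via $u_t=u\,(u^*u)^{-t/2}$, i.e.\ the power interpolation $\lambda_i^{1-t}$, whereas you interpolate $|u|$ linearly to $I_n$ in the polar decomposition), arriving at the unitary polar factor, and then conclude by path-connectedness of $\U_n(\C)$. The only difference is the choice of interpolation of the singular values, which is immaterial.
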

\begin{proof}
Since $u$ is invertible, $u^*u$ and $uu^*$ have the same eigenvalues $\lambda_1,\ldots,\lambda_n$,
and thus $\|u_t^*u_t-I_n\|<\eps$ and
$\|u_tu_t^*-I_n\|<\eps$ if and only if
$\lambda_i\in(1-\eps,1+\eps)$ for $i=1,\ldots,n$. Let us set
\begin{itemize}
\item $h_t=w\cdot\diag( \lambda_1^{-t/2},\ldots, \lambda_n^{-t/2})\cdot w^*$ where $w$
 is a
unitary matrix of $\M_n(\C) $ such that
$u^*u=w\cdot\diag(\lambda_1,\ldots,\lambda_n)\cdot w^*$;
\item
$v_t=u\cdot h_t$ for all $t\in[0,1]$.
Then $v_t^*v_t=w\cdot\diag( \lambda_1^{1-t},\ldots,
\lambda_n^{1-t})\cdot w^*$.
\end{itemize} Since
$|\lambda_i^{1-t}-1|<\eps$ for all all $t\in[0,1]$, we get that
$\|v_t^*v_t-I_n\|<\eps$ and
$\|v_tv_t^*-I_n\|<\eps$  for every $t$ in $[0,1]$. The matrix $v_1$ is
unitary and the result then follows from path-connectness of $\U_n(\C)$.
\end{proof}

As a consequence we obtain:
\begin{corollary}
For any positive numbers $r$ and $\eps$  with $\eps<1/4$, then we have
$K_1^{\eps,r}(\C)=\{0\}$.
\end{corollary}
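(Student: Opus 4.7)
The plan is to observe that $\C$ is trivially filtered (with $\C_r = \C$ for every $r > 0$), so in the group $K_1^{\eps,r}(\C) = \U_\infty^{\eps,r}(\C)/\sim$ the propagation constraint is automatically satisfied by every element of $M_n(\C)$. Consequently, to show that the group is trivial it suffices to prove that every class $[u]_{\eps,r}$ with $u \in \U_n^{\eps,r}(\C)$ equals $[I_n]_{\eps,r}$.

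For this, I would directly invoke Lemma \ref{lem-hom-uni}: given any $u$ in $\U_n^{\eps,r}(\C)$, that lemma produces a continuous path $(u_t)_{t \in [0,1]}$ in $M_n(\C)$ with $u_0 = u$, $u_1 = I_n$, and such that $\|u_t^*u_t - I_n\| < \eps$ and $\|u_tu_t^* - I_n\| < \eps$ for every $t$. Regarded as an element of $M_n(\C[0,1]) = M_n(\C)[0,1]$, the path $(u_t)_{t \in [0,1]}$ is an $\eps$-$r$-unitary in the filtered algebra $\C[0,1]$, so it constitutes a homotopy of $\eps$-$r$-unitaries between $u$ and $I_n$. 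Hence $[u]_{\eps,r} = [I_n]_{\eps,r}$.

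Finally, since the neutral element of the semi-group $K_1^{\eps,r}(\C)$ is represented by $I_n$ for any $n$ (because $\diag(I_n,1) = I_{n+1}$ under the stabilization maps), we conclude that $K_1^{\eps,r}(\C) = \{0\}$.

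The argument is entirely routine once Lemma \ref{lem-hom-uni} is in hand; there is no genuine obstacle, as the only subtlety, namely controlling the propagation along the homotopy, is vacuous in the case of the trivially filtered algebra $\C$.
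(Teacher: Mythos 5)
Your proof is correct and is exactly the argument the paper intends: the corollary is stated as an immediate consequence of Lemma \ref{lem-hom-uni}, whose path $(u_t)_{t\in[0,1]}$ gives a homotopy of $\eps$-$r$-unitaries (propagation being vacuous for the trivially filtered algebra $\C$) connecting any $u$ in $\U_n^{\eps,r}(\C)$ to $I_n$. Nothing to add.
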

\subsection{Elementary properties of quantitative $K$-theory}
Let $A_1$ and $A_2$ be two unital $C^*$-algebras respectively filtered by $(A_{1,r})_{r>0}$ and $(A_{2,r})_{r>0}$ and consider
$A_1\oplus A_2$ filtered by $(A_{1,r}\oplus A_{2,r})_{r>0}$. Since we have identifications
 $\P_\infty^{\eps,r}(A_1\oplus A_2)\cong \P_\infty^{\eps,r}(A_1)\times \P_\infty^{\eps,r}( A_2)$  and
 $\U_\infty^{\eps,r}(A_1\oplus A_2)\cong \U_\infty^{\eps,r}(A_1)\times \U_\infty^{\eps,r}( A_2)$ induced by the inclusions
$A_1\hookrightarrow A_1\oplus A_2$ and $A_2\hookrightarrow A_1\oplus A_2$, we see that we have isomorphisms
$K_0^{\eps,r}(A_1)\oplus K_0^{\eps,r}(A_2)\stackrel{\sim}{\longrightarrow}K_0^{\eps,r}(A_1\oplus A_2)$ and
$K_1^{\eps,r}(A_1)\oplus K_1^{\eps,r}(A_2)\stackrel{\sim}{\longrightarrow}K_1^{\eps,r}(A_1\oplus A_2)$.

\begin{lemma}\label{lem-split} Let $A$ be a filtered non unital
  $C^*$-algebra and let $\eps$ and $r$ be positive numbers with $\eps<1/4$.
We have a natural splitting
$$K_0^{\eps,r}(\tilde{A})\stackrel{\cong}{\longrightarrow}K_0^{\eps,r}(A)\oplus\Z.$$
\end{lemma}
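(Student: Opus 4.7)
The plan is to deduce the splitting from the split filtered short exact sequence
$$0 \lto A \lto \widetilde{A} \xrightarrow{\rho_A} \C \lto 0$$
with section $s:\C \lto \widetilde{A}$, $z \mapsto z\cdot 1_{\widetilde{A}}$. Both $\rho_A$ and $s$ are filtered $*$-homomorphisms for the filtration $(A_r+\C)_{r>0}$ on $\widetilde{A}$ and the (trivial) filtration on $\C$, and they satisfy $\rho_A\circ s=\operatorname{id}_\C$. By functoriality of $K_0^{\eps,r}$ under filtered homomorphisms, they induce semigroup homomorphisms
$$\rho_{A,*}:K_0^{\eps,r}(\widetilde{A})\lto K_0^{\eps,r}(\C),\qquad s_*:K_0^{\eps,r}(\C)\lto K_0^{\eps,r}(\widetilde{A})$$
with $\rho_{A,*}\circ s_* = \operatorname{id}$.

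Next, identify $K_0^{\eps,r}(\C)\cong \Z$ by the corollary immediately preceding this lemma, via $[q,k]_{\eps,r}\mapsto \dim\kappa_0(q)-k$. With this identification, $\rho_{A,*}([p,l]_{\eps,r}) = \dim\kappa_0(\rho_A(p))-l$, so by the very definition of $K_0^{\eps,r}(A)$ in the non-unital case, one has the equality of subsets
$$K_0^{\eps,r}(A)=\ker\rho_{A,*}\subset K_0^{\eps,r}(\widetilde{A}).$$
A short check shows $K_0^{\eps,r}(A)$ is in fact a subgroup: closure under the semigroup law follows from $\kappa_0(\diag(\rho_A(p),\rho_A(q)))=\diag(\kappa_0\rho_A(p),\kappa_0\rho_A(q))$ and additivity of dimension, while existence of inverses uses the explicit formula $[I_n-p,n-l]_{\eps,r}=-[p,l]_{\eps,r}$ (for $p\in \P_n^{\eps,r}(\widetilde{A})$) inherited from example \ref{example-homotopy}, which indeed lies in $K_0^{\eps,r}(A)$ because $\dim\kappa_0(I_n-\rho_A(p))=n-\dim\kappa_0(\rho_A(p))=n-l$.

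Combining these ingredients, we obtain a split short exact sequence of abelian groups
$$0\lto K_0^{\eps,r}(A)\lto K_0^{\eps,r}(\widetilde{A})\xrightarrow{\rho_{A,*}} \Z\lto 0,$$
which yields the desired natural isomorphism
$$K_0^{\eps,r}(\widetilde{A})\iso K_0^{\eps,r}(A)\oplus \Z,\qquad x\mapsto\bigl(x-s_*\rho_{A,*}(x),\,\rho_{A,*}(x)\bigr),$$
with inverse $(y,n)\mapsto y+s_*(n)$. There is no real obstacle here; the only point that requires a line of verification (rather than being immediate from the definitions and functoriality) is that $K_0^{\eps,r}(A)$ is closed under inverses inside $K_0^{\eps,r}(\widetilde{A})$, handled by the computation above.
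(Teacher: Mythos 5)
Your proof is correct and is essentially the paper's argument in slightly more structural clothing: the paper simply writes out the two mutually inverse maps $[p,l]_{\eps,r}\mapsto([p,\dim\kappa_0(\rho_A(p))]_{\eps,r},\dim\kappa_0(\rho_A(p))-l)$ and $([p,l]_{\eps,r},k-k')\mapsto[\diag(p,I_k),l+k']_{\eps,r}$, which coincide with your $x\mapsto(x-s_*\rho_{A,*}(x),\rho_{A,*}(x))$ and $(y,n)\mapsto y+s_*(n)$ once one unwinds $s_*$ and the identification $K_0^{\eps,r}(\C)\cong\Z$. The inputs you use (functoriality for the unital filtered homomorphisms $\rho_A$ and $s$, the computation of $K_0^{\eps,r}(\C)$, and the identification of $K_0^{\eps,r}(A)$ with $\ker\rho_{A,*}$) are exactly those underlying the paper's formulas, so there is no substantive difference.
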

\begin{proof}
Viewing $A$ as a subalgebra of $\tilde{A}$, the group  homomorphisms
\begin{eqnarray*}
K_0^{\eps,r}(\tilde{A})&{\longrightarrow}&K_0^{\eps,r}(A)\oplus\Z\\
\lbrack p,l\rbrack_{\eps,r}&\mapsto&\left(\lbrack p,\dim
\kappa_0(\rho_A(p))\rbrack_{\eps,r},\dim \kappa_0(\rho_A(p))-l\right)
\end{eqnarray*}

and
\begin{eqnarray*}
K_0^{\eps,r}(A)\oplus\Z&{\longrightarrow}&K_0^{\eps,r}(\tilde{A})\\
\left(\lbrack p,l\rbrack_{\eps,r},k-k'\right)&\mapsto&\left\lbrack\begin{pmatrix}p&0\\0&I_k\end{pmatrix},l+k'\right\rbrack_{\eps,r}
\end{eqnarray*}
are inverse one of the other.
\end{proof}
Let us set $A^+=A\oplus\C$ equipped with  the multiplication
$$(a,x)\cdot(b,y)=(ab+xb+ya,xy)$$  for $a$ and $b$ in $A$ and $x$ and
$y$ in $\C$. Notice that
\begin{itemize}
\item $A^+$ is isomorphic to $A\oplus\C$
with the algebra structure provided by the direct sum if $A$ is unital;
\item $A^+  =\tilde{A}$ if $A$ is not unital.
\end{itemize}Let us define also $\rho_A$  in the unital
case by $\rho_A:A^+\to\C;\, (a,x)\mapsto x$.
We know that in usual $K$-theory, we can equivalently define for $A$ unital
the $\Z_2$-graded group $K_*(A)$ as
$A^+$ by $$K_0(A)=\ker \rho_{A,*}:K_0(A^+)\to K_0(\C)\cong \Z$$ and
$$K_1(A)=K_1(A^+).$$
Let us check that this is also the case for our  $\Z_2$-graded semi-groups ${K_*}^{\eps,r}(A)$.
If the $C^*$-algebra $A$ is filtered by $(A_r)_{r>0}$, then $A^+$ is
filtered by $(A_r+\C)_{r>0}$. Let us define for a unital filtered
algebra $A$
$${K_0'}^{\eps,r}(A)=\{[p,l]_{\eps,r}\in \P^{\eps,r}({A}^+)\times \N/\sim \st
\dim \kappa_0(\rho_{A}(p))=l\}$$ and
 $${K_1'}^{\eps,r}(A)=\U^{\eps,r}({A}^+)/\sim.$$
Proceeding  as we did in  the proof of lemma \ref{lem-split}, we obtain
a natural splitting $$K_0^{\eps,r}({A}^+)\stackrel{\cong}{\longrightarrow}{K_0'}^{\eps,r}(A)\oplus\Z.$$
But then, using the identification $A^+\cong A\oplus \C$ and in view of lemmas \ref{lem-hom-proj} and \ref{lem-hom-uni}, we get

\begin{lemma}\label{lem-k+}
The $\Z_2$-graded semi-groups
$K_*^{\eps,r}(A)$ and ${K_*'}^{\eps,r}(A)$  are naturally isomorphic.
\end{lemma}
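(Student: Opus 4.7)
The non-unital case is tautological: by definition $A^+ = \tilde{A}$ when $A$ is non-unital, so ${K_*'}^{\eps,r}(A) = K_*^{\eps,r}(A)$ already agrees on the nose. The substance of the lemma is the unital case, which my plan addresses below.

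The first step is to observe that when $A$ is unital, the map $\phi\colon A^+ \to A \oplus \C,\ (a,x) \mapsto (a + x\cdot 1_A, x)$, is an isomorphism of filtered $C^*$-algebras (the filtration is preserved because the standing convention forces $1_A \in A_r$ for every $r > 0$), and under $\phi$ the augmentation $\rho_A$ corresponds exactly to projection onto the second summand. For $K_1$ this finishes the argument immediately: ${K_1'}^{\eps,r}(A) = \U_\infty^{\eps,r}(A^+)/\sim$ by definition, and $\phi$ together with the direct sum decomposition recalled at the start of the subsection identifies this with $K_1^{\eps,r}(A) \oplus K_1^{\eps,r}(\C)$; the second summand vanishes by the corollary to lemma \ref{lem-hom-uni}.

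For $K_0$, the plan is to compare two natural decompositions of $K_0^{\eps,r}(A^+)$. First, the argument of lemma \ref{lem-split} uses only that $\rho_A$ is a filtered $*$-homomorphism with values in $\C$, so it adapts verbatim to $A^+$ and yields a natural splitting $K_0^{\eps,r}(A^+) \cong {K_0'}^{\eps,r}(A) \oplus \Z$, with the $\Z$-summand recording $\dim\kappa_0(\rho_A(p)) - l$. Second, pushing forward along $\phi$ and invoking the direct sum decomposition together with the corollary to lemma \ref{lem-hom-proj} (giving $K_0^{\eps,r}(\C) \cong \Z$) yields $K_0^{\eps,r}(A^+) \cong K_0^{\eps,r}(A) \oplus \Z$. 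Both $\Z$ summands are computed from $\dim\kappa_0 \circ \rho_A$ (transported through $\phi$), so the two decompositions coincide on the $\Z$ factor, and the induced isomorphism on complements is the desired natural identification ${K_0'}^{\eps,r}(A) \cong K_0^{\eps,r}(A)$.

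The only non-trivial point is the bookkeeping needed to show that the $\Z$ summands in the two decompositions really do correspond, or equivalently that the composite ${K_0'}^{\eps,r}(A) \hookrightarrow K_0^{\eps,r}(A^+) \xrightarrow{\phi_*} K_0^{\eps,r}(A \oplus \C) \twoheadrightarrow K_0^{\eps,r}(A)$ is bijective. This amounts to observing that under $\phi$ an $\eps$-$r$-projection $p$ in $M_n(A^+)$ with $\dim\kappa_0(\rho_A(p)) = l$ corresponds to a pair $(p_1, p_2)$ in which $p_2 \in M_n(\C)$ has $\dim\kappa_0(p_2) = l$ and is therefore homotopic to $\diag(I_l, 0)$ by lemma \ref{lem-hom-proj}; no new analytic ideas are required beyond this unwinding.
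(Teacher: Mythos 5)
Your proposal is correct and follows essentially the same route as the paper: the paper's (brief) argument is precisely to split off a $\Z$ summand from $K_0^{\eps,r}(A^+)$ as in lemma \ref{lem-split}, to use the filtered identification $A^+\cong A\oplus\C$ together with the direct-sum decomposition of quantitative $K$-theory, and to invoke lemmas \ref{lem-hom-proj} and \ref{lem-hom-uni} (i.e.\ $K_0^{\eps,r}(\C)\cong\Z$ and $K_1^{\eps,r}(\C)=0$) to conclude. Your write-up simply makes explicit the bookkeeping (that both $\Z$-coordinates are $\dim\kappa_0(\rho_A(p))-l$, so the two splittings have the same kernel) that the paper leaves to the reader.
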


This allows us to state functoriallity properties for quantitative $K$-theory.
 If  $\phi:A\to B$ is a  homomorphism of unital  filtered $C^*$-algebras, then since $\phi$ preserve $\eps$-$r$-projections
 and $\eps$-$r$-unitaries, it obviously induces  for any positive number $r$ and any
$\eps\in(0,1/4)$ a
semi-group homomorphism $$\phi_*^{\eps,r}:K_*^{\eps,r}(A)\longrightarrow
K_*^{\eps,r}(B).$$  In  the non unital case, we can extend any  homomorphism  $\phi:A\to B$ to a
homomorphism  $\phi^+:A^+\to B^+$ of unital  filtered $C^*$-algebras and then we use  lemmas \ref{lem-split}
and \ref{lem-k+} to define $\phi_*^{\eps,r}:K_*^{\eps,r}(A)\longrightarrow
K_*^{\eps,r}(B).$ Hence, for any positive number $r$ and any
$\eps\in(0,1/4)$, we get that $K_0^{\eps,r}(\bullet)$ (resp. $K_1^{\eps,r}(\bullet)$) is a covariant additive functor from the category
of filtered $C^*$-algebras (together with filtered homomorphism) to the category of abelian groups (resp. semi-groups).
\begin{definition}\
\begin{enumerate}
\item Let $A$ and $B$ be filtered $C^*$-algebras. Then two homomorphisms
  of filtered $C^*$-algebras
  $\psi_0:A\to B$ and $\psi_1:A\to B$ are homotopic
if there exists a path of
homomorphisms of filtered $C^*$-algebras $\psi_t:A\to B$ for $0\lq t \lq 1$ between $\psi_0$ and $\psi_1$  and such that
$t\mapsto \psi_t$ is continuous for the pointwise norm
  convergence.
\item
A filtered $C^*$-algebra $A$ is said to be contractible if the identity map and
the zero map of $A$ are homotopic.
\end{enumerate}\end{definition}

\begin{example}
If $A$ is a filtered $C^*$-algebra $A$, then the cone of $A$
$$CA=\{f\in C([0,1],A)\text{ such that } f(0)=0\}$$ is a contractible
filtered $C^*$-algebra.
\end{example}

We have then the following obvious result:
\begin{lemma}\label{lem-contractible}
If $\phi:A\to B$ and $\phi':A\to B$ are two homotopic homomorphisms of
filtered $C^*$-algebras, then $\phi_*^{\eps,r}={\phi'}_*^{\eps,r}$  for every positive numbers $\eps$ and $r$ with
$\eps<1/4$. In particular,
if $A$ is a contractible filtered $C^*$-algebra, then
$K_*^{\eps,r}({A})=0$ for every positive numbers $\eps$ and $r$ with
$\eps<1/4$.
\end{lemma}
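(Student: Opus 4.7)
The plan is to route the argument through the filtered path algebra $B[0,1]$. A homotopy between $\phi$ and $\phi'$ is by definition a pointwise-norm-continuous family $(\psi_t)_{t\in[0,1]}$ of filtered $\ast$-homomorphisms, and it assembles into a single filtered $\ast$-homomorphism $\Psi:A\to B[0,1]$ by setting $\Psi(a)(t)=\psi_t(a)$. The filtration $B[0,1]_r=C([0,1],B_r)$ is preserved because each $\psi_t$ respects the filtration, and $t\mapsto\psi_t(a)$ is continuous by hypothesis. Composing $\Psi$ with the filtered evaluation maps $\mathrm{ev}_0,\mathrm{ev}_1:B[0,1]\to B$ recovers $\phi$ and $\phi'$ respectively. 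By functoriality of $K_*^{\eps,r}$ (including its extension to the non-unital case via unitization), it therefore suffices to show $(\mathrm{ev}_0)_*^{\eps,r}=(\mathrm{ev}_1)_*^{\eps,r}$ on $K_*^{\eps,r}(B[0,1])$.

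This latter equality is built into the definitions rather than being a computation. An $\eps$-$r$-projection $q\in M_n(B[0,1])$ is, tautologically, a homotopy of $\eps$-$r$-projections in $M_n(B)$ from $\mathrm{ev}_0(q)=q(0)$ to $\mathrm{ev}_1(q)=q(1)$, and similarly $\eps$-$r$-unitaries in $M_n(B[0,1])$ are homotopies of $\eps$-$r$-unitaries in $M_n(B)$. Hence $(\mathrm{ev}_0(q),l)\sim(\mathrm{ev}_1(q),l)$ and $\mathrm{ev}_0(v)\sim\mathrm{ev}_1(v)$ directly from the definition of $\sim$, with no deterioration in either parameter. The non-unital case is handled by running the same path-algebra argument on the unitized homotopy $\psi_t^+$ acting on $A^+$ and then restricting through the splitting of lemma \ref{lem-split}.

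For contractibility, specialize the above to $\phi=\mathrm{Id}_A$ and $\phi'=0$. Passing to unitizations, $(\mathrm{Id}_A)^+$ is the identity of $A^+$, while $0^+:A^+\to A^+$ is the composition of $\rho_A:A^+\to\C$ with the canonical inclusion $\C\hookrightarrow A^+$. Hence for any $[p,l]_{\eps,r}\in K_0^{\eps,r}(A)$ we obtain $[p,l]_{\eps,r}=[\rho_A(p),l]_{\eps,r}$, and the constraint $\dim\kappa_0(\rho_A(p))=l$ built into the definition of $K_0^{\eps,r}(A)$, together with lemma \ref{lem-hom-proj} and the block-rotation homotopy of example \ref{example-homotopy}, shows $(\rho_A(p),l)\sim(0,0)$. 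The $K_1$ case is analogous via lemma \ref{lem-hom-uni} and the vanishing $K_1^{\eps,r}(\C)=0$; the unital case is degenerate since a continuous path of projections from $1_A$ to $0$ would force $A=0$. I do not foresee any real obstacle here: the filtered path algebra is tailor-made for the argument, and since the filtration $B[0,1]_r=C([0,1],B_r)$ is literal, all propagation and norm controls are preserved without any loss in $\eps$ or $r$.
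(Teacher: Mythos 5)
Your proof is correct, and it is precisely the argument the paper leaves implicit: the paper states this lemma without proof (calling it obvious), the point being exactly what you spell out, namely that a homotopy of filtered homomorphisms assembles into a filtered homomorphism $A\to B[0,1]$ and that quasi-projections and quasi-unitaries over $B[0,1]$ are, by the very definition of $\sim$, homotopies between their evaluations, with no loss in $\eps$ or $r$. Your treatment of the contractible case (the zero map factoring through the scalar part, then lemmas \ref{lem-hom-proj} and \ref{lem-hom-uni}) is likewise the intended route and is sound.
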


Let $A$ be a $C^*$-algebra filtered by $(A_r)_{r>0}$ and let $(B_k)_{k\in\N}$
be an increasing sequence of $C^*$-subalgebras of $A$ such that
$\ds\bigcup_{k\in\N}B_k$ is dense in $A$. Assume that $\bigcup_{r>0}
B_k\cap A_r$ is dense in $B_k$  for every
integer $k$.  Then for every integer $k$,
the $C^*$-algebra $B_k$ is filtered by $(B_k\cap A_r)_{r>0}$. If $A$
is unital, then $B_k$ is unital for some $k$, and thus we will assume
without loss of generality that $B_k$ is unital for every integer $k$.
\begin{proposition}\label{prop-lim}
Let $A$ be a  unital  $C^*$-algebra filtered  by $(A_r)_{r>0}$ and let $(B_k)_{k\in\N}$
be an increasing sequence of $C^*$-subalgebra of $A$ such that
\begin{itemize}
\item  $\ds\bigcup_{r>0}
(B_k\cap A_r)$ is dense in $B_k$  for every
integer $k$,
\item  $\ds \bigcup_{k\in\N}
(B_k\cap A_r)$ is dense in $A_r$  for every positive number $r$.
\end{itemize}
 Then  the $\Z_2$-graded semi-groups
$K_*^{\eps,r}({A})$ and
 $\ds\lim_k
K_*^{\eps,r}({B_k})$ are isomorphic.
\end{proposition}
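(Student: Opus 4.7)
The plan is to build the natural homomorphism $\ds\lim_k K_*^{\eps,r}(B_k)\to K_*^{\eps,r}(A)$ from the filtered inclusions $B_k\hookrightarrow A$ (each of which sends $B_k\cap A_r$ into $A_r$) and to prove it is both surjective and injective. Lemma \ref{lemma-almost-closed} is the basic workhorse: it lets me upgrade a small self-adjoint perturbation of an \erp to an \erp, and similarly for \eru s, while producing a linear homotopy between the original and the perturbation inside the ambient algebra.

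For surjectivity, take a class $[p,l]_{\eps,r}\in K_0^{\eps,r}(A)$ represented by some $p\in\pe(A)$. The slack $\eta=\eps-\|p^2-p\|$ is strictly positive, and the joint density of $\bigcup_k \M_n(B_k\cap A_r)$ in $\M_n(A_r)$ produces $k$ and a self-adjoint $q\in\M_n(B_k\cap A_r)$ with $\|p-q\|<\eta/4$. Lemma \ref{lemma-almost-closed}(i) makes $q$ itself an \erp and the straight-line path $tp+(1-t)q$ a homotopy of $\eps$-$r$-projections in $\M_n(A)$; hence $[q,l]_{\eps,r}\in K_0^{\eps,r}(B_k)$ maps to $[p,l]_{\eps,r}$. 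The analogous argument, using Lemma \ref{lemma-almost-closed}(ii), handles $K_1^{\eps,r}$.

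Injectivity is the delicate step. Suppose two classes in $K_*^{\eps,r}(B_k)$ have the same image in $K_*^{\eps,r}(A)$; after stabilizing I may represent them by $\eps$-$r$-projections $p_0,p_1\in\M_n(B_k\cap A_r)$ related by a homotopy $(q_t)_{t\in[0,1]}$ of $\eps$-$r$-projections in $\M_n(A)$. By compactness of $[0,1]$ the quantity $\eta=\eps-\max_{t\in[0,1]}\|q_t^2-q_t\|$ is strictly positive and $t\mapsto q_t$ is uniformly continuous. I choose a partition $0=t_0<t_1<\cdots<t_N=1$ with $\|q_s-q_{t_i}\|<\eta/16$ for $s\in[t_{i-1},t_{i+1}]$, and then use the joint density assumption to find $k'\gq k$ together with self-adjoint $r_i\in\M_n(B_{k'}\cap A_r)$ satisfying $r_0=p_0$, $r_N=p_1$, and $\|r_i-q_{t_i}\|<\eta/16$ for the intermediate $i$. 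The piecewise linear path $\tilde q_t$ interpolating the $r_i$ stays inside $\M_n(B_{k'}\cap A_r)$ (a linear subspace), is continuous in $t$, satisfies $\|\tilde q_t-q_t\|<\eta/4$ throughout $[0,1]$, and by Lemma \ref{lemma-almost-closed}(i) is a homotopy of $\eps$-$r$-projections inside $B_{k'}$ between $p_0$ and $p_1$. This witnesses the equality of the two classes already in $K_0^{\eps,r}(B_{k'})$, proving injectivity; the $K_1$ case is identical with Lemma \ref{lemma-almost-closed}(ii).

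The main obstacle is exactly this uniform-in-$t$ approximation of an entire homotopy rather than of a single element: compactness of $[0,1]$ yields the uniform positive slack $\eta$, and the piecewise linear interpolation through discretely chosen approximations, combined with the quantitative perturbation Lemma \ref{lemma-almost-closed}, keeps every point of the approximated path inside the \erp (respectively \eru) regime while matching the prescribed endpoints.
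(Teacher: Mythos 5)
Your proof is correct and follows essentially the same route as the paper: approximate the almost projections (resp.\ almost unitaries) by self-adjoint (resp.\ arbitrary) elements of $\M_n(B_k\cap A_r)$ and use Lemma \ref{lemma-almost-closed} together with straight-line homotopies to transfer classes between $A$ and the $B_k$. The only difference is organizational: the paper builds an explicit inverse of the canonical map $\ds\lim_k K_*^{\eps,r}(B_k)\to K_*^{\eps,r}(A)$ and leaves the approximation of entire homotopies implicit, whereas you prove injectivity and surjectivity of that map directly and spell out the homotopy-approximation step (compactness of $[0,1]$, a fine partition, piecewise-linear interpolation through approximants), which is in fact the most delicate point.
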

\begin{proof} In particular, we see that $\ds\bigcup_{k\in\N}B_k$ is dense in $A$.
Let us  denote by $$\Upsilon_{*,\eps,r}:\lim_k K_*^{\eps,r}({B_k})\to
K_*^{\eps,r}({A})$$ the homomorphism of semi-group induced by the family of inclusions
$B_k\hookrightarrow A$ where $k$ runs through integers.
We give the proof in the even case, the odd case being analogous.
Let $p$ be an element of $\pe(A)$ and let $\delta=\|p^2-p\|>0$ and
choose $\alpha<\frac{\eps-\delta}{12}$. Since
$\ds{\bigcup_{k\in\N}(B_k\cap A_r)}$ is dense in $A_r$, there is an integer $k$ and a
selfadjoint element $q$ of $\M_n(B_k\cap A_r)$ such that $\|p-q\|<\alpha$. According to lemma \ref{lem-hom-proj}, $q$ is a
$\eps$-$r$ projection. Let   $q'$ be  another selfadjoint element of $\M_n(B_k\cap A_r)$
  such that $\|p-q'\|<\alpha$. Then $\|q-q'\|<2\alpha$ and if we set
  $q_t=(1-t)q+tq'$ for $t\in[0,1]$, then
\begin{eqnarray*}
\|q_t^2-q_t\|&\lq&\|q_t^2-q_tq\|+\|q_tq-q^2\|+\|q^2-q\|+\|q-q_t\|\\
&\lq&\|q_t-q\|(\|q_t\|+\|q\|+1)+4\alpha+\delta\\
&\lq&12\alpha+\delta\\
&<&\eps,
\end{eqnarray*} and thus $q$ and $q'$ are homotopic in
$\P_n^{\eps,r}(B_k)$.
Therefore, for $p\in\pe(A)$ and $q$ in some $\M_n(B_k\cap A_r)$
satisfying $\|q-p\|< \frac{\|p^2-p\|}{12}$, we define  $\Upsilon'_{0,\eps,r}([p,l]_{\eps,r})$
to be the image  of $[q,l]_{\eps,r}$ in $\ds\lim_k
K_*^{\eps,r}({B_k})$. Then $\Upsilon'_{0,\eps,r}$ is a group homomorphism  and is  an inverse for $\Upsilon_{0,\eps,r}$. We proceed similarly in the odd case.
\end{proof}
\subsection{Morita equivalence}

For any  unital  filtered algebra  $A$,  we get an identification between
$\P^{\eps,r}_n(\M_k(A))$ and $\P^{\eps,r}_{nk}(A)$ and therefore  between
$\P_\infty^{\eps,r}(\M_k(A))$ and $\P_\infty^{\eps,r}(A)$. This
identification gives rise to a  natural  group isomorphism between $K_0^{\eps,r}(A)$ and
$K_0^{\eps,r}(\M_k(A))$, and  this isomorphism is
induced by the inclusion of $C^*$-algebras
$$\iota_A:A\hookrightarrow\M_k(A);\,a\mapsto\diag(a,0).$$ Namely, if we
set $e_{1,1}=\diag(1,0,\ldots,0)\in\M_k(\C)$, definition of the functoriality yields
$${\iota^{\eps,r}_{A,*}}[p,l]_{\eps,r}=[p\otimes e_{1,1}+I_l\otimes
(I_k-e_{1,1}),l]_{\eps,r}\in  K_0^{\eps,r}(\M_k(A))$$ for any $p$ in $\pe(A)$ and any
integer $l$ with $l\lq n$. We can verify that
$$(\iota^{\eps,r}_{A,*})^{-1}[q,l]_{\eps,r}=[q,kl]_{\eps,r}$$ for any $q$ in $\pe(\M_k(A))$ and any
integer $l$ with $l\lq n$, where on the right hand side of the equality, the matrix
$q$ of $\M_n(\M_k(A))$ is viewed as a matrix of $\M_{nk}(A)$.

In a similar way, we  obtain in the odd case an identification between
$\U_\infty^{\eps,r}(\M_k(A))$ and $\U_\infty^{\eps,r}(A)$ providing a natural semi-group isomorphism between $K_1^{\eps,r}(A)$ and
$K_1^{\eps,r}(\M_k(A))$. This isomorphism is also induced by the  inclusion
$\iota_A$ and we have $$\iota_{A,*}[x]_{\eps,r}=[x\otimes
e_{1,1}+I_n\otimes (I_k-e_{1,1})]_{\eps,r}\in
K_1^{\eps,r}(\M_k(A))$$ for any $x$ in $\ue(A)$.

Let us deal now with the non-unital case. For usual  $K$-theory, Morita equivalence for non-unital $C^*$-algebra 
 can be deduced from the unital case  by using
the six-term exact sequence associated to the split extension $0\to A\to \tilde{A}\to \C\to0$. But for  quantitative $K$-theory this
splitting only gives rise (in term of section \ref{subsection-controlled-morphism})      
 to a controlled isomorphism (see corollary \ref{cor-split-ext}).
In order to really have a genuine isomorphism, we have to go through the  tedious following computation. If $B$ is a
non-unital $C^*$-algebra, let us   identify $\M_k(\tilde{B})$ with
$\M_k(B)\oplus \M_k(\C)$ equipped with the product $$(b,\lambda)\cdot
(b',\lambda')=(bb'+\lambda b'+b\lambda',\lambda\lambda')$$ for $b$ and
$b'$ in $\M_k({B})$ and $\lambda$ and $\lambda'$ in $\M_k(\C)$.
Under this identification, if $A$ is not unital, let us check that the
semi-group homomorphism  $$\Phi_1:K_1^{\eps,r}(\tilde{A})\to
K_1^{\eps,r}(\widetilde{\M_k(A)});\, [(x,\lambda)]_{\eps,r}\mapsto
[(x\otimes e_{1,1},\lambda]_{\eps,r}$$
induced by the inclusion $\iota_A$ is  invertible
with  inverse  given by the composition
$$\Psi_1:K_1^{\eps,r}(\widetilde{\M_k(A)})\to
K_1^{\eps,r}({\M_k(\tilde{A}}))\stackrel{\cong}{\to}K_1^{\eps,r}(\tilde{A}),$$
  where the first homomorphism of the composition is induced by the inclusion
  $$\widetilde{\M_k(A)}\to \M_k(\tilde{A});\, (a,z)\mapsto (a,z
  I_k).$$
Let $(x,\lambda)$ be an element of $\ue(\tilde{A})$, with
$x\in\M_n(A)$ and $\lambda\in\M_n(\C)$. Then
$$\Psi_1\circ\Phi_1[(x,\lambda)]_{\eps,r}=[(x\otimes e_{1,1},\lambda\otimes
I_k)]_{\eps,r},$$ where we use the identification
$M_{nk}(\C)\cong M_{n}(\C)\otimes M_{k}(\C)$ to see $x\otimes e_{1,1}$ and $\lambda\otimes
I_k$   respectively as matrices in $\M_{nk}(A)$ and
$\M_{nk}(\C)$. According to lemma \ref{lem-hom-uni},  as a $\eps$-$r$-unitary of $M_n(\C)$,
$\lambda$  is homotopic to $I_n$. Hence
$$[(x\otimes e_{1,1},\lambda\otimes I_{k})]_{\eps,r}=[(x\otimes e_{1,1},\lambda\otimes e_{1,1}+
I_n\ts I_{k-1})] $$ and from this we get that $\Psi_1\circ\Phi_1$ is induced in $K$-theory by the
inclusion map $\tilde{A} \hookrightarrow
\M_k(\tilde{A});\,a\mapsto\diag(a,0) $ which is  the identity homomorphism (according to the unital
  case).
Conversely, let $(y,\lambda)$ be an element
in $\ue(\widetilde{\M_k(A)})$ with $$y\in M_n(\M_k(A))\cong M_n(A)\otimes \M_k(\C)$$
and $\lambda\in\M_n(\C)$. Then
$$\Phi_1\circ\Psi_1[(y,\lambda)]_{\eps,r}=[(y\otimes
e_{1,1},\lambda\otimes I_k)]_{\eps,r},$$ where
\begin{itemize}
\item  $y\otimes
e_{1,1}$ belongs to $\M_n(\M_k(A))\otimes \M_k(\C)\cong \M_n(A)\otimes \M_k(\C)
\otimes \M_k(\C)$ (the first two  factors provide  the copy of
$\M_n(\M_k(A))$ where $y$ lies  in and $e_{1,1}$ lies in  the last
factor).
\item $\lambda\otimes I_k$ belongs to the algebra $M_n(\M_k(\C))\cong M_n(\C)\otimes
  \M_k(\C)$ that multiplies $\M_n(A)\otimes \M_k(\C)
\otimes \M_k(\C)$ on the first two  factors.
\end{itemize}
Let $$\sigma:\M_n(A)\otimes \M_k(\C)
\otimes \M_k(\C)\to \M_n(A)\otimes \M_k(\C)
\otimes \M_k(\C)$$ be the $C^*$-algebra  homomorphism induced by the flip of
$\M_k(\C)
\otimes \M_k(\C)$. This flip can be realized by conjugation of
a unitary $U$ in $\M_k(\C)
\otimes \M_k(\C)\cong \M_{k^2}(\C)$. Let $(U_t)_{t\in[0,1]}$ be a
homotopy in $\U_{k^2}(\C)$  between $U$ and $I_{k^2}$.
Let us define
$$\mathcal{A}=\{(x,z\otimes I_k);\,x\in
\M_n(A)\otimes\M_k(\C)\otimes\M_k(\C),\,z\in\M_n(\C)\}\subset\M_n(\widetilde{\M_k(A)})\otimes\M_k(\C),$$
where $z\otimes I_k$ is viewed as $z\otimes I_k\otimes I_k$ in
$$\M_n(\widetilde{\M_k(A)})\otimes\M_k(\C)\cong \M_n(\C)\otimes\widetilde{\M_k(A)}\otimes\M_k(\C).$$
Then   for any
$t\in[0,1]$, $$\mathcal{A}\to\mathcal{A};\,(x,z\otimes I_k)\mapsto((I_n\otimes
U_t)\cdot x\cdot (I_n\otimes
U_t)^{-1},z\otimes I_k)$$ is an automorphism of $C^*$-algebra.
Hence,  $$\big((I_n\otimes
U_t)\cdot (y\otimes
e_{1,1})\cdot (I_n\otimes
U_t^{-1}),\lambda\otimes I_k\big)_{t\in[0,1]}$$ is a path in
  $\U_{nk}^{\eps,r}(\widetilde{\M_k(A)})$ between $(y\otimes
e_{1,1},\lambda\otimes I_k)$ and $(\sigma(y\otimes
e_{1,1}),\lambda\otimes I_k)$. The range of $\sigma(y\otimes e_{1,1})$
being in the range of the projection $I_n\otimes e_{1,1}\otimes I_k$, we have an orthogonal sum  decomposition
$$(\sigma(y\otimes
e_{1,1}),\lambda\otimes I_k)=(\sigma(y\otimes
e_{1,1}),\lambda\otimes e_{1,1})+(0,\lambda\otimes (I_k-e_{1,1}))$$ (recall that $\lambda\otimes e_{1,1}$ and
$\lambda\otimes (I_k-e_{1,1})$  multiply $\M_n(A)\otimes \M_k(\C)
\otimes \M_k(\C)$ on the first two  factors).
By lemma \ref{lem-hom-uni}, $\lambda$ is homotopic to $I_n$ in
$\ue(\C)$ and thus   $(\sigma(y\otimes
e_{1,1}),\lambda\otimes I_k)$ is
homotopic to $(\sigma(y\otimes
e_{1,1}),\lambda\otimes e_{1,1})+(0,I_n\otimes (I_k-e_{1,1}))$  in
$\U_{nk}^{\eps,r}(\widetilde{\M_k(A)}))$ which can be viewed as
$$\diag((y,\lambda),(0,I_{k(k-1)})$$ in
$\M_k(\M_n(\widetilde{\M_k(A)})$. From this we deduce  that $[(y,\lambda)]_{\eps,r}=[(y\otimes
e_{1,1},\lambda\otimes I_k)]_{\eps,r}$
in $K_1^{\eps,r}(\widetilde{\M_k(A)})$.

For the even case, by an analogous computation, we can check that the
group homomorphisms
 $$K_0^{\eps,r}(\tilde{A})\to
K_0^{\eps,r}(\widetilde{\M_k(A)});\, [(p,q),l)]_{\eps,r}\mapsto
[(p\otimes e_{1,1}),q,l]_{\eps,r}$$
and
$$K_0^{\eps,r}(\widetilde{\M_k(A)})\to
K_0^{\eps,r}(\tilde{A});\, [(p,q),l)]_{\eps,r}\mapsto[(p,q\otimes I_k),kl]_{\eps,r}, $$   respectively induce by restriction
homomorphisms $\Phi_0:K_0^{\eps,r}({A})\to
K_0^{\eps,r}({\M_k(A)})$ and  $\Psi_0:K_0^{\eps,r}({\M_k(A)})\to
K_0^{\eps,r}({A})$ which are
inverse of each other, where in
the right hand side of the last formula, we have viewed
$p\in\M_n(\M_k(A))$ as a matrix in $\M_{nk}(A)$ and $q\otimes I_k\in
\M_n(\C)\otimes\M_k(\C)$  as a matrix in $\M_{nk}(\C)$. Since $\Phi_0$
is induced by $\iota_A$, we get from lemma \ref{lem-split} that
$\iota^{\eps,r}_{A,*}:K_0^{\eps,r}({A})\to
K_0^{\eps,r}({\M_k(A)})$ is an isomorphism.
\medskip

Let $A$ be a $C^*$-algebra filtered by $(A_r)_{r>0}$. Then $\K(\H)\otimes
A$ is filtered by $(\K(\H)\otimes A_r)_{r>0}$ and
applying proposition \ref{prop-lim} to the increasing family
$(\M_k(A)^+)_{k\in\N}$ of $C^*$-subalgebras of $\widetilde{\K(\H)\otimes
  A}$, lemmas \ref{lem-split} and \ref{lem-k+}, and the discussion
above,  we deduce the Morita equivalence for $K_*^{\eps,r}(\bullet)$.
\begin{proposition}\label{prop-morita}
If $A$ is a filtered algebra and $\H$ is a separable Hilbert space, then the homomorphism
$$A\to \K(\H)\otimes A;\,a\mapsto \begin{pmatrix}a&&\\&0&\\&&\ddots\end{pmatrix}$$
induces a ($\Z_2$-graded) semi-group isomorphism (the Morita equivalence)
$$\MM_A^{\eps,r}:K_*^{\eps,r}(A)\to K_*^{\eps,r}(\K(\H)\otimes A)$$
for any positive number $r$ and any
$\eps\in(0,1/4)$.
\end{proposition}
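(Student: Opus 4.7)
The plan is to reduce the statement to the matrix-algebra Morita equivalence already established just above by passing to an inductive limit through Proposition~\ref{prop-lim}.

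First, I would equip $\K(\H)\ts A$ with the filtration $(\K(\H)\ts A_r)_{r>0}$ and its unitization with $(\K(\H)\ts A_r+\C)_{r>0}$. After identifying $\K(\H)$ with the norm closure of $\bigcup_k \M_k(\C)$, the family $(\M_k(A)^+)_{k\in\N}$ is an increasing chain of unital $C^\ast$-subalgebras of $\widetilde{\K(\H)\ts A}$. The hypotheses of Proposition~\ref{prop-lim} reduce to checking that $\bigcup_{r>0}(\M_k(A_r)+\C)$ is dense in $\M_k(A)^+$ for every $k$ and that $\bigcup_{k\in\N}(\M_k(A_r)+\C)$ is dense in $\K(\H)\ts A_r+\C$ for every $r>0$; both are immediate from the density of $\bigcup_{r>0}A_r$ in $A$. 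Proposition~\ref{prop-lim} then delivers a natural isomorphism
$$K_*^{\eps,r}(\widetilde{\K(\H)\ts A})\cong\lim_k K_*^{\eps,r}(\M_k(A)^+).$$

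Next, combining Lemmas~\ref{lem-split} and~\ref{lem-k+} identifies each $K_*^{\eps,r}(\M_k(A)^+)$ with $K_*^{\eps,r}(\M_k(A))\oplus K_*^{\eps,r}(\C)$, and analogously for $\widetilde{\K(\H)\ts A}$. Peeling off these scalar summands compatibly under the inductive limit reduces the isomorphism above to
$$K_*^{\eps,r}(\K(\H)\ts A)\cong\lim_k K_*^{\eps,r}(\M_k(A)).$$
At this point, the detailed Morita computation carried out immediately before the proposition, in both the unital and non-unital settings, shows that each map $\iota_A:A\hookrightarrow\M_k(A);\,a\mapsto\diag(a,0)$ induces an isomorphism $K_*^{\eps,r}(A)\iso K_*^{\eps,r}(\M_k(A))$. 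Since these isomorphisms are patently compatible with the transition maps $\M_k(A)\hookrightarrow\M_{k+1}(A)$ of the inductive system, passing to the limit yields the desired isomorphism, and tracking the composite shows it is induced by $a\mapsto\diag(a,0,0,\ldots)$, i.e.\ coincides with $\MM_A^{\eps,r}$.

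The main point requiring care is the compatibility of the scalar-summand splittings coming from Lemmas~\ref{lem-split} and~\ref{lem-k+} with all the inclusions $\M_k(A)^+\hookrightarrow\M_{k+1}(A)^+\hookrightarrow\widetilde{\K(\H)\ts A}$. This hinges on the fact that the dimension invariant $\dim\ka_0(\rho(\cdot))$ is preserved under the corner-type inclusions, which is immediate from the definitions of the various $\rho$ maps; once this bookkeeping is unwound, the proposition follows with no further analytic input.
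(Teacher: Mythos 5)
Your argument is exactly the paper's: the paper deduces the proposition by applying Proposition~\ref{prop-lim} to the increasing family $(\M_k(A)^+)_{k\in\N}$ of subalgebras of $\widetilde{\K(\H)\otimes A}$, combined with Lemmas~\ref{lem-split} and~\ref{lem-k+} and the matrix-algebra Morita computation carried out just before the statement. Your write-up simply fills in the density checks, the splitting off of the scalar summands, and the compatibility with the corner inclusions that the paper leaves implicit, so it is correct and takes essentially the same route.
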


\subsection{Lipschitz homotopies}


\begin{definition}
If $A$ is a $C^*$-algebra and $C$ a positive integer, then a map
$h=[0,1]\to A$
is called $C$-Lipschitz if for every $t$ and $s$ in $[0,1]$, then
$\|h(t)-h(s)\|\lq C|t-s|$.
\end{definition}

\begin{proposition}\label{prop-bounded-hom}There exists a number $C$ such that for any
   unital filtered $C^*$-algebra $A$  and any positive number $r$ and
   $\eps<1/4$ then  :
\begin{enumerate}
\item if  $p_0$ and $p_1$ are homotopic in $\pe(A)$, then there
  exist   integers $k$ and $l$  and a $C$-Lipschitz  homotopy in $\P^{\eps,r}_{n+k+l}(A)$  between
$\diag(p_0,I_k,0_l)$ and $\diag(p_1,I_k,0_l)$.
\item if  $u_0$ and $u_1$ are homotopic in $\ue(A)$ then there
  exist an integer $k$ and a  $C$-Lipschitz  homotopy in $\U_{n+k}^{3\eps,2r}(A)$  between
$\diag(u_0,I_k)$ and $\diag(u_1,I_k)$.

\end{enumerate}
\end{proposition}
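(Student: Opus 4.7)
I propose to prove (i) first; part (ii) follows by the same strategy using the scalar rotations of Lemma~\ref{lem-conjugate-proj} together with the $3\eps$-$2r$ bookkeeping of Lemma~\ref{lem-k1-almost-group}. The key tool is Example~\ref{example-homotopy}: for any $\eps$-$r$-projection $p\in\M_n(A)$, it furnishes a homotopy inside $\P^{\eps,r}_{2n}(A)$ connecting $\diag(1_n,0_n)$ to $\diag(p,1_n-p)$ with \emph{no} $\eps$- or $r$-loss (since the rotation matrix $h_t$ has scalar entries and the explicit calculation gives $s_t^2\|p^2-p\|\lq\|p^2-p\|$ as the defect), which is Lipschitz with a universal constant $C_0$ coming from the derivative of $h_t$ (of the order of $2\pi(1+\eps)$).

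Applying this building block backwards for $p_0$ and forwards for $p_1$ and joining at the common midpoint $\diag(1_n,0_n)$ immediately produces a $2C_0$-Lipschitz homotopy in $\P^{\eps,r}_{2n}(A)$ between $\diag(p_0,1_n-p_0)$ and $\diag(p_1,1_n-p_1)$; notably, this preliminary step does not use that $p_0$ and $p_1$ are homotopic. The hypothesis enters in the next step, where these stabilized endpoints must be transformed into the target form $\diag(p_j,I_k,0_l)$: the correct $k$ and $l$ are dictated by the common $K$-theoretic class of $\kappa_0(p_0)$ and $\kappa_0(p_1)$, which are forced to agree precisely because of the assumed homotopy.

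To carry out this conversion without accumulating a Lipschitz factor of $N$, I would subdivide $[0,1]$ via uniform continuity into $N$ pieces with $\|q_{t_{i-1}}-q_{t_i}\|$ small, and then assemble the required homotopy \emph{spatially} in a larger matrix algebra $\M_{n+k+l}(A)$ with $k,l$ growing linearly in $N$: one pair of $n$-slots per subinterval hosts one Example block, scalar permutation unitaries (which preserve $\P^{\eps,r}$ exactly under conjugation) do the routing between pairs, and at any instant only a single Example block is in motion. This keeps the overall Lipschitz constant bounded by $C_0$ independently of $N$, at the price of letting the stabilization dimension absorb the oscillations of the original homotopy. The main technical obstacle, as I see it, is arranging this spatial concatenation so that the ``hand-offs'' between consecutive active pairs remain within $\P^{\eps,r}$ with no $\eps$-loss; this is handled by choosing the subdivision fine enough relative to the slack $\inf_t(\eps-\|q_t^2-q_t\|)>0$, which exists and is strictly positive by compactness of $[0,1]$.
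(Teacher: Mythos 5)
There is a genuine gap at the heart of your scheme, namely the claim in your last paragraph that running the $N$ elementary moves one at a time (``at any instant only a single Example block is in motion'') keeps the Lipschitz constant bounded by $C_0$ independently of $N$. Each Example rotation, from $\diag(I_n,0_n)$ to $\diag(q_{t_i},I_n-q_{t_i})$, displaces the matrix by a norm distance of order $1$, so a homotopy that performs $N$ such moves consecutively has total length of order $N$; any parametrization of it by $[0,1]$ therefore has Lipschitz constant of order $N$, not $C_0$. What actually saves the constant is \emph{parallelism}, and this is exactly how the paper argues: since the norm of a block-diagonal matrix is the maximum of the norms of its blocks, all the Example rotations can be run \emph{simultaneously} at a $2$-Lipschitz cost, and the hand-off between consecutive values of the subdivided homotopy is a single straight-line segment performed simultaneously in all slots, from $\diag(q_{t_0},I_n-q_{t_1},q_{t_1},\ldots,I_n-q_{t_k},q_{t_k})$ to $\diag(q_{t_0},I_n-q_{t_0},\ldots,q_{t_{k-1}},I_n-q_{t_{k-1}},q_{t_k})$; it stays inside $\P^{\eps,r}$ by Lemma \ref{lemma-almost-closed}(i) because the partition is chosen fine enough relative to the slack $\inf_t\bigl(\eps-\|q_t^2-q_t\|\bigr)$, and it is $1$-Lipschitz. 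The whole homotopy is then a concatenation of a \emph{bounded} number (five) of stages, each with a universal Lipschitz constant, so reparametrizing to $[0,1]$ only multiplies by that bounded number; the integer $N$ is absorbed into the matrix size $k,l$, never into $C$.

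Your middle paragraph contains a second, independent error: the proposed conversion of $\diag(p_j,1_n-p_j)$ into $\diag(p_j,I_k,0_l)$ ``dictated by the common $K$-theoretic class'' is not available. In a general unital $C^*$-algebra the complement $1_n-p_j$ is not homotopic to any scalar projection $\diag(I_k,0_l)$ (take $A=C(S^2)$ and $p_0=p_1$ an almost-projection near the Bott projection), and equality of $[\kappa_0(p_0)]$ and $[\kappa_0(p_1)]$ gives no such trivialization; moreover, the hypothesis must be used quantitatively, since equal $K$-classes would at best yield a homotopy after degrading $(\eps,r)$ (cf.\ remark \ref{rem-hom}), whereas the statement asserts a homotopy within the same $\P^{\eps,r}_{n+k+l}(A)$. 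Fortunately this step is also unnecessary: in the paper's construction the complements $I_n-q_{t_i}$ are never trivialized but are absorbed pairwise by the shift, and the chain starts at $\diag(q_{t_0},I_{n(k-1)},0)$ and ends at $\diag(q_{t_k},I_{n(k-1)},0)$, so the same scalar padding serves both endpoints automatically. Finally, your one-line reduction of (ii) to (i) is too quick: the unitary case rests on the telescoping products $u_{t_i}^*u_{t_{i-1}}$ and Lemma \ref{lemma-almost-closed}(ii), which is precisely where the degradation to $3\eps$-$2r$ in the statement comes from.
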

\begin{proof}\
\begin{enumerate}
\item  Notice first that  if  $p$ is an $\eps$-$r$-projection in  $A$, then the homotopy of $\eps$-$r$-projections
of  $M_2(A)$ between  $\begin{pmatrix}1& 0\\0&0\end{pmatrix}$ and $\begin{pmatrix} p&
      0\\0&1-p\end{pmatrix}$ in example
 \ref{example-homotopy} is
  $2$-Lipschitz.

 Let $(p_t)_{t\in [0,1]}$ be a homotopy between $p_0$ and $p_1$
  in $\pe(A)$. Set $\alpha=\inf_{t\in[0,1]}\frac{\eps-\|p_t^2-p_t\|}{4}$ ant let $t_0=0<t_1<\ldots<t_k=1$ be a partition of
  $[0,1]$ such that $\|p_{t_i}-p_{t_{i-1}}\|<\alpha$ for
  $i\in\{1,\ldots,k\}$.
 We construct a  homotopy of $\eps$-$r$-projections with the required property
between $\diag(p_0,I_{n(k-1)},0)$ and $\diag(p_1,I_{n(k-1)},0)$
  in
$M_{n(2k-1)}(A)$ as the composition of the
following homotopies.
\begin{itemize}
\item We can connect $\diag(p_{t_0},I_{n(p-1)},0)$ and
  $\diag(p_{t_0},I_{n},0,\ldots,I_{n},0)$  within
$\P^{\eps,r}_{n(2k-1)}(A)$ by  a  $2$-Lipschitz homotopy.
\item As we noticed at the beginning of the proof, we can connect \linebreak
  $\diag(p_{t_0},I_{n},0,\ldots,I_{n},0)$ and $\diag(p_{t_0},I_{n}-p_{t_1},p_{t_1},\ldots,I_{n}-p_{t_k},p_{t_k})$   within
$\P^{\eps,r}_{n(2k-1)}(A)$ by   a  $2$-Lipschitz homotopy.
\item 
The $\eps$-$r$-projections $\diag(p_{t_0},I_{n}-p_{t_1},p_{t_1},\ldots,I_{n}-p_{t_k},p_{t_k})$
and
\linebreak
$\diag(p_{t_0},I_{n}-p_{t_0},\ldots,p_{t_{k-1}},I_{n}-p_{t_{k-1}},p_{t_k})$ satisfy the norm estimate of the assumption of 
lemma \ref{lemma-almost-closed}(i) and hence then can be connected  within
$\P^{\eps,r}_{n(2k-1)}(A)$  by a ray which is clearly a 
$1$-Lipschitz homotopy.
\item Using once again the homotopy of example  \ref{example-homotopy}, we see that
$\diag(p_{t_0},I_{n}-p_{t_0},\ldots,p_{t_{k-1}},I_{n}-p_{t_{k-1}},p_{t_k})$  and $\diag(0,I_{n},\ldots,0,I_{n},p_{t_k})$
are connected  within
$\P^{\eps,r}_{n(2k-1)}(A)$ by  a $2$-Lipschitz homotopy.
\item Eventually, $\diag(0,I_{n},\ldots,0,I_{n},p_{t_k})$ and $\diag(p_{t_k},I_{n(k-1)},0)$
are connected  within
$\P^{\eps,r}_{n(2k-1)}(A)$ by  a  $2$-Lipschitz homotopy.
\end{itemize}
\item Let $(u_t)_{t\in [0,1]}$ be a homotopy between $u_0$ and $u_1$
  in $\ue(A)$.  Set $\alpha=\inf_{t\in[0,1]}\frac{\eps-\|u^*_tu_t-I_n\|}{3}$ and let $t_0=0<t_1<\ldots<t_p=1$ be a partition of
  $[0,1]$ such that $\|u_{t_i}-u_{t_{i-1}}\|<\alpha$ for
  $i\in\{1,\ldots,p\}$.
 We construct a homotopy with the required property
between $\diag(u_0,I_{2np})$ and $\diag(u_1,I_{2np})$
  within
$\U^{3\eps,2r}_{n(2p+1)}(A)$ as the composition of the
following homotopies.
\begin{itemize}
\item Since  $I_{np}$ and $\diag(u_{t_1}^*u_{t_1},\ldots,u_{t_p}^*u_{t_p})$ 
satisfy the norm estimate of the assumption of 
lemma \ref{lemma-almost-closed}(ii), then $\diag(u_{t_0},I_{np})$ is a $3\eps$-$2r$-unitary that can be connected to 
 $\diag(u_{t_0},u_{t_1}^*u_{t_1},\ldots,u_{t_p}^*u_{t_p})$  in $\U^{3\eps,2r}_{n(p+1)}(A)$ by a $1$-Lipschitz  homotopy.
\item Proceeding as in the first point of lemma \ref{lem-conjugate-proj},  we see that \linebreak
  $\diag(I_n,u_{t_1}^*,\ldots,u_{t_p}^*,I_{np})$  and $\diag(u_{t_1}^*,\ldots,u_{t_p}^*,I_{n(p+1)})$ can be connected  within
$\U^{\eps,r}_{n(2p+1)}(A)$ by a $2$-Lipschitz homotopy
and thus, in view of remark \ref{rem-prod},
$$\diag(u_{t_0},u_{t_1}^*u_{t_1},\ldots,u_{t_p}^*u_{t_p},I_{np})=$$
 $$\diag(I_n,u_{t_1}^*,\ldots,u_{t_p}^*,I_{np})\cdot\diag(u_{t_0},u_{t_1},\ldots,u_{t_p},I_{np})$$
 and
$$\diag(u_{t_1}^*,\ldots,u_{t_p}^*,I_{n(p+1)})\cdot\diag(u_{t_0},u_{t_1},\ldots,u_{t_p},I_{np})=$$
$$\diag(u_{t_1}^*u_{t_0},\ldots,u_{t_p}^*u_{t_{p-1}},u_{t_p},I_{np})$$
can be  connected  within
$\U^{3\eps,2r}_{n(2p+1)}(A)$ by a   $4$-Lipschitz homotopy.

\item   Since $\|u_{t_i}^*u_{t_{i-1}}-I_n\|<{\eps}$, we get by using once again   lemma \ref{lemma-almost-closed}(ii) that  
$\diag(u_{t_1}^*u_{t_0},\ldots,u_{t_p}^*u_{t_{p-1}},u_{t_p},I_{np})$
   and
$\diag(I_{np},u_{t_p},I_{np})$  
  can be connected 
  within $\U^{3\eps,2r}_{n(2p+1)}(A)$ by  a $1$-Lipschitz
  homotopy. 

\item  Eventually, $\diag(I_{np},u_{t_p},I_{np})$ can be connected  to  $
\diag(u_{t_p},I_{2np})$  within
$\U^{3\eps,2r}_{(2p+1)n}(A)$  by  a $2$-Lipschitz homotopy.
\end{itemize}
\end{enumerate}
\end{proof}

\begin{corollary}\label{cor-conjugate} There exists a control pair
  $(\alpha_h,k_h)$    such  that the following holds:

For any    unital
   filtered $C^*$-algebra $A$, any  positive numbers $\eps$ and $r$ with $\eps<\frac{1}{4\alpha_h}$ and any
  homotopic $\eps$-$r$-projections $q_0$ and $q_1$
   in $\pe(A)$, then there is for some integers $k$ and $l$  an $\alpha_h\eps$-$k_{h,\eps}r$-unitary
$W$ in  $\U^{\alpha_h\eps,k_{h,\eps}r}_{n+k+l}(A)$ such that
$$\|\diag(q_0,I_k,0_l)-W\diag(q_1,I_k,0_l)W^*\|<\alpha_h\eps.$$
\end{corollary}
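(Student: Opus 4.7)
My plan is to reduce to Lemma \ref{lem-conjugate} by subdividing a Lipschitz homotopy into finitely many short steps, conjugating step by step, and then multiplying the resulting unitaries. Let $C$ denote the universal constant from Proposition \ref{prop-bounded-hom} and fix $\eps$ and $r$ and homotopic $\eps$-$r$-projections $q_0,q_1\in\pe(A)$. By Proposition \ref{prop-bounded-hom}(i), there exist integers $k,l$ and a $C$-Lipschitz homotopy $(p_t)_{t\in[0,1]}$ in $\P^{\eps,r}_{n+k+l}(A)$ with $p_0=\diag(q_0,I_k,0_l)$ and $p_1=\diag(q_1,I_k,0_l)$.

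Next I choose a partition $0=t_0<t_1<\cdots<t_N=1$ with $t_i-t_{i-1}\lq 1/(16C+1)$, so that $\|p_{t_{i}}-p_{t_{i-1}}\|<1/16$ for every $i$. Note that $N$ can be chosen to depend only on $C$, hence is a universal constant. With $(\lambda,h)$ the control pair of Lemma \ref{lem-conjugate}, and provided $\alpha_h$ is large enough so that $\eps<1/(4\lambda)$, we apply Lemma \ref{lem-conjugate} on each subinterval to produce a $\lambda\eps$-$h_\eps r$-unitary $V_i\in\M_{n+k+l}(A)$ with
$$\|V_i\,p_{t_i}\,V_i^*-p_{t_{i-1}}\|\lq \lambda\eps, \qquad i=1,\dots,N.$$
I then set $W=V_1V_2\cdots V_N$. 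By iterating remark \ref{rem-prod}(vi) $N-1$ times, $W$ is an $\eps'$-$Nh_\eps r$-unitary where $\eps'$ depends only on $\lambda\eps$ and $N$, and is bounded above by a constant (depending only on $C$) times $\eps$ when $\eps$ is small.

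For the norm estimate, I write a telescoping sum
$$W\,p_{t_N}\,W^*-p_{t_0}=\sum_{i=1}^{N}V_1\cdots V_{i-1}\bigl(V_i\,p_{t_i}\,V_i^*-p_{t_{i-1}}\bigr)V_{i-1}^*\cdots V_1^*,$$
and bound each term by $\lambda\eps\cdot\prod_{j<i}\|V_j\|^2$. Since remark \ref{rem-prod}(ii) gives $\|V_j\|<1+\lambda\eps/2$ and $N$ is universal, the total is at most a universal constant times $\eps$. Choosing $\alpha_h$ larger than all these universal constants and taking $k_{h,\eps}=Nh_\eps$ then yields the desired $\alpha_h\eps$-$k_{h,\eps}r$-unitary $W$ with $\|\diag(q_0,I_k,0_l)-W\diag(q_1,I_k,0_l)W^*\|<\alpha_h\eps$.

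The only delicate point will be verifying that $N$ can truly be taken to depend only on $C$ (not on $A$, $\eps$, or $r$), so that the compound propagation $Nh_\eps r$ and the compound norm defect remain within the form prescribed by a control pair. This is immediate here because the Lipschitz constant in Proposition \ref{prop-bounded-hom} is universal; once that is granted, the bookkeeping of constants in remark \ref{rem-prod}(ii),(vi) is routine.
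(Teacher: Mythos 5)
Your argument is correct and follows essentially the same route as the paper's proof: use Proposition \ref{prop-bounded-hom} to replace the given homotopy (after stabilization by $I_k$ and $0_l$) by a $C$-Lipschitz one, subdivide $[0,1]$ into a universally bounded number of steps so that consecutive projections are within $1/16$, apply Lemma \ref{lem-conjugate} on each step, and multiply the resulting almost-unitaries, with the propagation $Nh_\eps r$ and the accumulated error controlled because $N$ depends only on the universal constant $C$. Your telescoping estimate just makes explicit the bound the paper states as $\|Wq_0W^*-q_1\|<2^{p}\lambda\eps$, so there is nothing further to add.
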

\begin{proof}
According to proposition \ref{prop-bounded-hom}, we can assume that
$q_0$ and $q_1$ are connected by a $C$-Lipschitz homotopy $(q_t)_{t\in[0,1]}$,
for some universal constant $C$. Let
$t_0=0<t_1<\cdots<t_p=1$ be a partition of $[0,1]$ such that
$1/32C<|t_i-t_{i-1}|<1/16C$. With notation of  lemma
\ref{lem-conjugate}, pick  for  every integer $i$ in $\{1,\ldots,p\}$ a
$\lambda\eps$-$l_\eps$-unitary $W_i$ in $A$ such that
$\|W_iq_{t_{i-1}}W_i^*-q_{t_i}\|<\lambda\eps$. If we set $W=W_p\cdots W_1$, then
$W$ is a $3^p\lambda\eps$-$pl_\eps r$-unitary such that
$\|Wq_0W^*-q_1\|<2^{p}\lambda\eps$. Since $p<2C$, we get the result.
\end{proof}

\section{Controlled morphisms}
As we shall see in section \ref{sec-exact-sequence}, usual maps in $K$-theory
such as boundary maps factorize  through semi-group homomorphism of quantitative $K$-theory groups with expansion of norm
control and propagation
controlled by a control pair.  This motivates the notion of controlled morphisms for quantitative $K$-theory in this section.

Recall that a controlled pair  is a pair $(\lambda,h)$, where
\begin{itemize}
 \item $\lambda >1$;
\item  $h:(0,\frac{1}{4\lambda})\to (0,+\infty);\, \eps\mapsto h_\eps$  is a map such that there exists a non-increasing map
$g:(0,\frac{1}{4\lambda})\to (0,+\infty)$, with $h\lq g$.
\end{itemize}
 The set of control pairs is equipped with a partial order:
 $(\lambda,h)\lq (\lambda',h')$ if $\lambda\lq\lambda'$ and $h_\eps\lq h'_\eps$
for all $\eps\in (0,\frac{1}{4\lambda'})$
\subsection{Definition and main properties}\label{subsection-controlled-morphism}  
For any filtered $C^*$-algebra $A$, let us define the families $\K_0(A)=(K_0^{\eps,r}(A))_{0<\eps<1/4,r>0}$,
$\K_1(A)=(K_1^{\eps,r}(A))_{0<\eps <1/4,r>0}$ and $\K_*(A)=(K_*^{\eps,r}(A))_{0<\eps<1/4,r>0}$.
\begin{definition}
 Let $(\lambda,h)$ be a controlled pair, let $A$ and
$B$ be filtered $C^*$-algebras, and let $i,\,j$ be elements of  $\{0,1,*\}$. A $(\lambda,h)$-controlled morphism
$$\F:\K_i(A)\to\K_j(B)$$ is a family $\F=(F^{\eps,r})_{0<\eps<\frac{1}{4\lambda},r>0}$ of semigroups homomorphisms
 $$F^{\eps,r}:K_i^{\eps,r}(A)\to K_j^{\lambda\eps,h_\eps r}(B)$$ such that for any positive
numbers $\eps,\,\eps',\,r$ and $r'$ with
$0<\eps\lq\eps'<\frac{1}{4\lambda}$ and $h_\eps r\lq h_{\eps'}r'$, we have
$$F^{\eps',r'}\circ \iota_i^{\eps,\eps',r,r'}=\iota_j^{\lambda\eps,\lambda\eps', h_\eps r,h_{\eps'}r'}\circ F^{\eps,r}.$$
\end{definition}
If it is not necessary  to specify the control pair, we will just say that $\F$ is a controlled morphism.

Let $A$ and $B$ be filtered algebras. Then
it  is straightforward to check that if $\F:\K_i(A)\to\K_j(B)$ is a $(\lambda,h)$-controlled morphism,
then there is   group homomorphism
$F:K_i(A)\to K_j(B)$ uniquely defined by  $F\circ \iota_i^{\eps,r}=\iota_{j}^{\lambda\eps,h_\eps r}\circ F^{\eps,r}$.
The homomorphism $F$ will be called the $(\lambda,h)$-controlled homomorphism induced by $\F$.
A homomorphism $F:K_i(A)\to K_j(B)$ is called $(\lambda,h)$-controlled  if  it is induced by a $(\lambda,h)$-controlled morphism. If we don't need to specify
the control pair $(\lambda,h)$, we will just say that $F$ is a controlled homomorphism.
\begin{example}\label{example-controlled-morphism}
\begin{enumerate}\
 \item  Let $A=(A_r)_{r>0}$ and $B=(B_r)_{r>0}$ be two filtered
$C^*$-algebras and let
 $f:A\to B$ be a homomorphism. Assume that there exists   $d>0$ such that $f(A_r)\subset B_{dr}$ for all positive $r$. Then $f$
gives rise to a bunch of semi-group homomorphisms
$\left(f_*^{\eps,r}:K_*^{\eps,r}(A)\to K_*^{\eps,dr}(B)\right)_{0<\eps <\frac{1}{4},r>0}$
and hence to a
$(1,d)$-controlled morphism $f_*:\K_*(A)\to\K_*(B)$.
\item The bunch of semi-group isomorphisms
$$\left( \MM_A^{\eps,r}:K_*^{\eps,r}(A)\to K_*^{\eps,r}(\K(\H)\ts A) \right)_{0<\eps <\frac{1}{4},r>0}$$ of proposition \ref{prop-morita}
defines a
$(1,1)$-controlled morphism $$\MM_A:\K_*(A)\to\K_*(\K(\H)\ts A)$$ and $$\MM_A^{-1}:\K_*(\K(\H)\ts A)\to\K_*(A)$$ inducing
the Morita equivalence in $K$-theory.
\end{enumerate}
\end{example}
If $(\lambda,h)$ and $(\lambda',h')$ are two control pairs, define
$$h*h': (0,\frac{1}{4\lambda\lambda'})\to (0,+\infty);\, \eps\mapsto h_{\lambda'\eps}h'_\eps.$$
Then $(\lambda\lambda',h*h')$  is a control pair.
Let $A$, $B_1$ and $B_2$ be filtered $C^*$-algebras, let  $i,j$ and $l$ be in $\{0,1,*\}$ and let
$\F=(F^{\eps,r})_{0<\eps<\frac{1}{4\alpha_\F},r>0}:\K_i(A)\to \K_j(B_1)$  be
a $(\alpha_\F,k_\F)$-controlled morphism, let   $\G=(G^{\eps,r})_{0<\eps<\frac{1}{4\alpha_\G},r>0}:\K_j(B_1)\to \K_l(B_2)$
be a $(\alpha_\G,k_\G)$-controlled morphism. Then $\G\circ\F:\K_i(A)\to \K_l(B_2)$ is the $(\alpha_\G\alpha_F,k_\G*k_\F)$-controlled
morphism defined by the family $(\G^{\alpha_\F\eps,k_{\F,\eps}r}\circ \F^{\eps,r})_{0<\eps<\frac{1}{4\alpha_\F\alpha_\G},r>0}$.
 \begin{remark}\label{rem-morita-natural}
  The Morita equivalence for quantitative $K$-theory is natural, i.e $$\MM_B\circ f=(Id_{\K(\H))}\ts f)\circ \MM_A$$
for any homomorphism $f:A\to B$ of
filtered $C^*$-algebras.
 \end{remark}

\begin{notation}
 Let $A$ and $B$ be filtered $C^*$-algebras, let $(\lambda,h)$ be a control pair,
 and
let $\F=(F^{\eps,r})_{0<\eps<\frac{1}{4\alpha_\F},r>0}:\K_i(A)\to \K_j(B)$ (resp. $\G=(G^{\eps,r})_{0<\eps<\frac{1}{4\alpha_\G},r>0}$) be a
$(\alpha_\F,k_\F)$-controlled morphism (resp. a $(\alpha_\G,k_\G)$-controlled morphism). Then we write
$\F\aeq\G$ if
\begin{itemize}
 \item $(\alpha_\F,k_\F)\lq (\lambda,h)$ and $(\alpha_\G,k_\G)\lq (\lambda,h)$.
\item for every $\eps$ in $(0,\frac{1}{4\lambda})$ and $r>0$, then
$$\iota^{\alpha_\F\eps,\lambda\eps,k_{\F,\eps}r,h_\eps r}_j\circ F^{\eps,r}=\iota^{\alpha_\G\eps,\lambda\eps,k_{\G,\eps}r,h_\eps r}_j\circ G^{\eps,r}.$$
\end{itemize}
\end{notation}

If $\F$ and $\G$ are controlled morphisms such that $\F\aeq\G$  for a control pair $(\lambda,h)$, then $\F$ and $\G$ induce the same morphism in $K$-theory.


\begin{remark}\label{rem-composition}
 Let $\F:\K_i(A_2)\to \K_j(B_1)$  (resp. $\F':\K_i(A_2)\to \K_j(B_1)$) be
a $(\alpha_\F,k_\F)$-controlled (resp. a $(\alpha_{\F'},k_{\F'})$-controlled) morphisms and let
$\G:\K_{i'}(A_1)\to \K_i(A_2)$  (resp.  $\G':\K_j(B_1)\to \K_l(B_2)$) be
a $(\alpha_\G,k_\G)$-controlled (resp. a $(\alpha_{\G'},k_{\G'})$-controlled)  morphism. Assume that
$\F\aeq\F'$ for a control pair $(\lambda,h)$, then
\begin{itemize}
 \item  $\G'\circ\F\stackrel{(\alpha_{\G'}\lambda,k_{\G'}*h)}{\sim}\G'\circ\F';$
\item  $\F\circ \G \stackrel{(\alpha_{\G}\lambda,h*k_{\G})}{\sim}\F'\circ \G.$
\end{itemize}
\end{remark}
If $i$ is an element in $\{0,1,*\}$ and $A$ a filtered $C^*$-algebra, we denote by $\Id_{K_i(A)}$ the
controlled morphism induced by $Id_A$.

Let $\F:\K_i(A_1)\to \K_{i'}(B_1)$,  $\F':\K_j(A_2)\to \K_l(B_2)$,  $\G:\K_i(A_1)\to \K_j(A_2)$ and
$\G':\K_{i'}(B_1)\to \K_l(B_2)$ be controlled
morphisms and let $(\lambda,h)$ be a control pair. Then the diagram
$$\begin{CD}
\K_{i'}(B_1)@>\G'>> \K_l(B_2)  \\
         @A\F AA
         @AA\F'A\\
 \K_i(A_1)@>\G>> \K_j(A_2)
\end{CD}$$
is called ${\mathbf{(\lambda,h)}}${\bf -commutative} (or  $\mathbf{(\lambda,h)}${\bf-commutes})  if $\G' \circ \F\aeq \F'\circ \G$.

 \begin{definition}
  Let $(\lambda,h)$ be a control pair, and let $\F:\K_i(A)\to \K_j(B)$ be a $(\alpha_\F,k_\F)$-controlled morphism with
$(\alpha_\F,k_\F)\lq (\lambda,h)$.
\begin{itemize}
 \item $ \F$ is called left $(\lambda,h)$-invertible if there exists a controlled morphism $$\G:\K_j(B)\to\K_i(A)$$ such that
$\G\circ\F\aeq \Id_{K_i(A)}$. The controlled morphism $\G$ is then called a left $(\lambda,h)$-inverse for $\F$. 
Notice that definition of $\aeq$  implies that $(\alpha_\F\alpha_\G,k_\F*k_\G)\lq (\lambda,h)$.
\item $ \F$ is called right $(\lambda,h)$-invertible if there exists a controlled morphism $$\G:\K_j(B)\to\K_i(A)$$
such that $\F\circ\G\aeq \Id_{K_i(B)}$.
The controlled morphism $\G$ is then called a
right $(\lambda,h)$-inverse for $\F$.
\item $ \F$ is called  $(\lambda,h)$-invertible or a $(\lambda,h)$-isomorphism if there exists a controlled morphism 
$$\G:\K_j(B)\to\K_i(A)$$ which is
 a left $(\lambda,h)$-inverse and a right  $(\lambda,h)$-inverse for $\F$. The controlled morphism $\G$ is then called a
$(\lambda,h)$-inverse for $\F$ (notice that we have in this case necessarily $(\alpha_\G,k_\G)\lq (\lambda,h)$).
\end{itemize}

 \end{definition}

We can check easily that indeed, if  $ \F$ is  left $(\lambda,h)$-invertible and right $(\lambda,h)$-invertible,
then there exists a control pair $(\lambda',h')$ with $(\lambda,h)\lq(\lambda',h')$, depending only on $(\lambda,h)$
such that $\F$ is $(\lambda',h')$-invertible.
\begin{definition}
  Let $(\lambda,h)$ be a control pair  and let $\F:\K_i(A)\to \K_j(B)$ be a $(\alpha_\F,k_\F)$-controlled
morphism.
\begin{itemize}
 \item
 $\F$ is called $(\lambda,h)$-injective if  $(\alpha_\F,k_\F)\lq(\lambda,h)$ and for any $0<\eps<\frac{1}{4\lambda}$, any $r>0$  and  any $x$ in
$K_i^{\eps,r}(A)$, then $F^{\eps,r}(x)=0$ in $K_j^{\alpha_\F\eps,k_{\F,\eps}r}(B)$ implies that
$\iota_{i}^{\eps,\lambda\eps,r,h_\eps r}(x)=0$ in $K_i^{\lambda\eps,h_\eps r}(A)$;
\item $\F$ is called $(\lambda,h)$-surjective, if for any $0<\eps<\frac{1}{4\lambda\alpha_\F}$, any $r>0$  and  any $y$ in
$K_j^{\eps,r}(B)$, there exists an element $x$ in $K_i^{\lambda\eps,h_\eps r}(A)$
such that $F^{\lambda\eps,h_{\lambda\eps}r}(x)=\iota_j^{\eps,\alpha_\F\lambda\eps,r,k_{\F,\lambda\eps}h_\eps r}(y)$ in
$K_j^{\alpha_\F\lambda\eps,k_{\F,\lambda\eps}h_\eps r}(B)$.
\end{itemize}\end{definition}

\begin{remark}\label{rem-inj-surj-inv}\
\begin{enumerate}
              \item If $\F:\K_1(A)\to \K_i(B)$ is a   $(\lambda,h)$-injective controlled morphism. Then according to
lemma \ref{lem-k1-almost-group}, there exists a control pair
$(\lambda',h')$ with $(\lambda,h)\lq(\lambda',h')$ depending only on $(\lambda,h)$ such that
for any $0<\eps<\frac{1}{4\lambda'}$, any $r>0$  and  any $x$ and $x'$  in
$K_1^{\eps,r}(A)$ , then $F^{\eps,r}(x)=F^{\eps,r}(x')$ in $K_i^{\alpha_\F\eps,k_{\F,\eps}r}(B)$ implies that
$\iota_{1}^{\eps,\lambda'\eps,r,h'_\eps r}(x)=\iota_{1}^{\eps,\lambda'\eps,r,h'_\eps r}(x')$ in $K_1^{\lambda'\eps,h'_\eps r}(A)$;
\item It is straightforward to check that if  $\F$ is left $(\lambda,h)$-invertible, then $\F$ is $(\lambda,h)$-injective and that if $\F$ is
 right  $(\lambda,h)$-invertible, then there  exists a control       pair
$(\lambda',h')$ with $(\lambda,h)\lq(\lambda',h')$, depending only on $(\lambda,h)$ such  that    $\F$ is $(\lambda',h')$-surjective.
\item On the other hand, if       $\F$ is $(\lambda,h)$-injective  and $(\lambda,h)$-surjective, then there  exists a control       pair
$(\lambda',h')$ with $(\lambda,h)\lq(\lambda',h')$, depending only on $(\lambda,h)$ such  that  $\F$ is a $(\lambda',h')$-isomorphism.
\end{enumerate}
\end{remark}
\subsection{Controlled exact sequences}
\begin{definition}
Let $(\lambda,h)$ be a control pair,
\begin{itemize}
 \item
Let
$\F=(F^{\eps,r})_{0<\eps<\frac{1}{4\alpha_\F},r>0}:\K_i(A)\to \K_j(B_1)$  be
a $(\alpha_\F,k_\F)$-controlled morphism, and let   $\G=(G^{\eps,r})_{0<\eps<\frac{1}{4\alpha_\G},r>0}:\K_j(B_1)\to \K_l(B_2)$
be a $(\alpha_\G,k_\G)$-controlled morphism, where  $i,j$ and $l$ are  in $\{0,1,*\}$ and
  $A$, $B_1$ and $B_2$ are  filtered $C^*$-algebras.
Then the composition
$$\K_i(A)\stackrel{\F}{\to}\K_j(B_1)\stackrel{\G}{\to}\K_l(B_2)$$ is said to be $(\lambda,h)$-exact at $\K_j(B_1)$ if
 $\G\circ\F=0$ and if
 for any $0<\eps<\frac{1}{4\max\{\lambda\alpha_\F,\alpha_\G\}}$, any $r>0$  and  any $y$ in
$K_j^{\eps,r}(B_1)$  such that  $G^{\eps,r}(y)=0$ in $K_j^{\alpha_\G\eps,k_{\G,\eps}r}(B_2)$, there exists an
  element $x$ in $K_i^{\lambda\eps,h_\eps r}(A)$
such that $$F^{\lambda\eps,h_{\lambda\eps}r}(x)=\iota_j^{\eps,\alpha_\F\lambda\eps,r,k_{\F,\lambda\eps}h_\eps r}(y)$$ in
$K_j^{\alpha_\F\lambda\eps,k_{\F,\lambda\eps}h_\eps r}(B_1)$.
\item  A sequence of controlled morphisms
$$\cdots\K_{i_{k-1}}(A_{k-1})\stackrel{\F_{k-1}}{\to}\K_{i_{k}}(A_{k})\stackrel{\F_{k}}{\to}
\K_{i_{k+1}}(A_{k+1})\stackrel{\F_{k+1}}{\to}\K_{i_{k+2}}(A_{k+2})\cdots$$ is called $(\lambda,h)$-exact if for every $k$,
the composition   $$\K_{i_{k-1}}(A_{k-1})\stackrel{\F_{k-1}}{\to}\K_{i_{k}}(A_{k})\stackrel{\F_{k}}{\to}
\K_{i_{k+1}}(A_{k+1})$$ is $(\lambda,h)$-exact at $\K_{i_{k}}(A_{k})$.
\end{itemize}
\end{definition}

\begin{remark}
 If the composition $\K_i(A)\stackrel{\F}{\to}\K_1(B_1)\stackrel{\G}{\to}\K_l(B_2)$ is $(\lambda,h)$-exact, then according to
lemma \ref{lem-k1-almost-group}, there exists a control pair
$(\lambda',h')$ with $(\lambda,h)\lq(\lambda',h')$ depending only on $(\lambda,h)$, such that
for any $0<\eps<\frac{1}{4\max\{\lambda'\alpha_\F,\alpha_\G\}}$, any $r>0$  and  any $y$ and $y'$  in
$K_1^{\eps,r}(B_1)$ , then $G^{\eps,r}(y)=G^{\eps,r}(y')$ in $K_j^{\alpha_\F\eps,k_{\F,\eps}r}(B)$ implies that there exists an
  element $x$ in $K_i^{\lambda'\eps,h'_\eps r}(A)$
such that $$\iota_j^{\eps,\alpha_{\F}\lambda'\eps,r,k_{\F,\lambda'\eps}h'_\eps r}(y')=
\iota_j^{\eps,\alpha_\F\lambda'\eps,r,k_{\F,\lambda'\eps}h'_\eps r}(y)+F^{\lambda'\eps,h'_{\eps}r}(x)$$ in
$K_1^{\alpha_\F\lambda'\eps,k_{\F,\lambda\eps}h'_\eps r}(B_1)$.
\end{remark}

\section{Extensions of filtered $C^*$-algebras}\label{sec-exact-sequence}
The aim of this section is to establish  a   controlled exact sequence for quantitative $K$-theory with respect to extension of filtered
$C^*$-algebras  admitting  a completely positive cross section that preserves the filtration. We also prove that for these extensions,
the boundary maps are induced by controlled morphisms. As in $K$-theory, one is a map of  exponential type and the other
  is  an index type map, and the later in turn fits in a long $(\lambda,h)$-controlled exact sequence for some universal control pair $(\lambda,h)$.

\subsection{Semi-split filtered extensions}

Let $A$ be a  $C^*$-algebra filtered by $(A_r)_{r>0}$ and let
$J$ be an ideal of $A$. Then  $A/J$ is
filtered by  $((A/J)_r)_{r>0}$, where $(A/J)_r$ is the image of $A_r$ under the projection $A\to A/J$.
 Assume that the $C^*$-algebra extension
$$0\to J\to A\to A/J\stackrel{q}{\to} 0$$ admits a contractive  filtered  cross-section
$s:A/J\to A$, i.e  such that $s((A/ J)_r))\subset A_r$ for any
positive number. For any $x\in
J$ and any number $\eps>0$ there exists a positive number $r$ and  an
element $a$ of  $A_r$
such that $\|x-a\|<\eps$. Let us set $y=a-s\circ q(a)$. Then $y$
belongs to $A_r\cap J$ and moreover
\begin{eqnarray*}
\|y-x\|&=&\|a-x+s\circ q(x-a) \|\\
&\lq&\|a-x\|+\|s\circ q(a-x) \|\\
&\lq&\|a-x\|+\| q(x-a) \|\\
&\lq&2\eps.
\end{eqnarray*}
Hence,  $\ds\bigcup_{r>0} (A_r\cap J)$ is dense in $J$ and therefore $J$ is
filtered by  $(A_r\cap J)_{r>0}$.

\begin{definition}
Let $A$ be a $C^*$-algebra filtered by $(A_r)_{r>0}$ and let $J$ be an
ideal of $A$. The  extension of $C^*$-algebras
$$0\to J\to A\to A/J\to 0$$ is said to be filtered and semi-split (or a semi-split extension of filtered $C^*$-algebras) if there
exists a completely positive cross-section  $$s:A/J\to A$$ such that
$$s((A/ J)_r))\subset A_r$$  for any
number $r>0$. Such a cross-section is said to be semi-split and filtered.
\end{definition}
We have the following analogous of the lifting property for unitaries of the neutral  component.
\begin{lemma}\label{lem-lift} There exists a control pair $(\alpha_e,k_e)$  such that for any
semi-split extension of filtered $C^*$-algebras $$0\longrightarrow J \longrightarrow A \stackrel{q}{\longrightarrow}
A/J\longrightarrow 0$$  with $A$ unital,  the following holds:
 for every  positive numbers  $r$ and  $\eps$ with $\eps<\frac{1}{4\alpha_e}$  and  any $\eps$-$r$-unitary $V$
 homotopic to $I_n$ in
  $\U^{\eps,r}_{n}(A/J)$, then for some integer $j$, there exists a
  $\alpha_e\eps$-$k_{e,\eps} r$-unitary $W$  homotopic to $I_{n+j}$ in
  $\U^{\alpha_e\eps,k_{e,\eps} r}_{n+j}(A)$ and such that
$\|q(W)-\diag(V,I_j)\|<\alpha_e\eps$.
\end{lemma}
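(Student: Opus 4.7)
The plan is to discretize a Lipschitz homotopy from $V$ to the identity in $A/J$, lift each small increment through the completely positive filtered cross-section $s$, and form the product of these lifts. First, I apply Proposition \ref{prop-bounded-hom}(ii) to the given homotopy to obtain, for some integer $k$, a $C$-Lipschitz path $(V_t)_{t\in[0,1]}$ in $\U^{3\eps,2r}_{n+k}(A/J)$ connecting $I_{n+k}$ to $\diag(V,I_k)$, with $C$ a universal constant. I then choose a partition $0=t_0<t_1<\cdots<t_p=1$ of step size of order $\eps/C$, so that $p$ is of order $1/\eps$ and each increment $y_i:=V_{t_{i-1}}^*V_{t_i}-I_{n+k}$ lies in $M_{n+k}((A/J)_{4r})$ with $\|y_i\|=O(\eps)$, using the Lipschitz bound together with $\|V_{t_{i-1}}^*V_{t_{i-1}}-I\|<3\eps$.

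Next, I set $z_i:=s(y_i)\in M_{n+k}(A_{4r})$, so that $q(I_{n+k}+z_i)=V_{t_{i-1}}^*V_{t_i}$ exactly. Complete contractivity of $s$ gives $\|z_i\|\leq\|y_i\|$; the Schwarz inequality for cp maps yields $\|z_i^*z_i\|,\|z_iz_i^*\|\leq\|y_i\|^2=O(\eps^2)$; and $\|z_i+z_i^*\|\leq\|y_i+y_i^*\|$, which is of order $\eps$ because $V_{t_{i-1}}^*V_{t_i}$ is an $O(\eps)$-$4r$-almost-unitary close to the identity (being a product of two $3\eps$-$2r$-unitaries which are close to each other by construction). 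Together these show each $I_{n+k}+z_i$ is an $O(\eps)$-$4r$-almost-unitary in $M_{n+k}(A)$. Setting $W:=(I_{n+k}+z_1)(I_{n+k}+z_2)\cdots(I_{n+k}+z_p)$, ordered so that $q(W)=V_{t_0}^*V_{t_1}\cdot V_{t_1}^*V_{t_2}\cdots V_{t_{p-1}}^*V_{t_p}$ telescopes via $V_{t_j}V_{t_j}^*\approx I_{n+k}$ to approximate $\diag(V,I_k)$, the propagation of $W$ is at most $4rp$, forcing $k_{e,\eps}$ of order $1/\eps$. A bookkeeping estimate, using $\|W\|\leq\exp(O(p\eps))=O(1)$ together with the near-skew-adjointness of the $z_i$, then confirms $W$ is an $\alpha_e\eps$-$k_{e,\eps}r$-unitary with $\|q(W)-\diag(V,I_k)\|<\alpha_e\eps$ for a universal $\alpha_e$. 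The required homotopy between $W$ and $I_{n+k}$ comes from straight-line-deforming each factor $I_{n+k}+z_i$ to the identity along $t\mapsto I_{n+k}+tz_i$ successively, which remains inside $\U^{\alpha_e\eps,k_{e,\eps}r}_{n+k}(A)$; taking $j=k$ finishes the construction.

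The main obstacle is controlling the accumulation of $O(\eps)$ errors across the $p=O(1/\eps)$ factors, which naively pushes $\|W^*W-I\|$ and $\|q(W)-\diag(V,I_k)\|$ up to order one. Balancing partition step against number of factors is delicate: one needs step size of order $\eps$ so that each $I+z_i$ is an $O(\eps)$-almost-unitary, yet the resulting product of $O(1/\eps)$ factors must still have $W^*W-I$ of order $\eps$. This works only because the cp Schwarz inequality forces every quadratic term $z_i^*z_i$ to be of order $\eps^2$, while the near-skew-adjointness of the $z_i$ (inherited from $V_{t_{i-1}}^*V_{t_i}$ being an almost-unitary close to the identity) keeps the linear contributions controlled; together with the bounded product estimate $\|W\|=O(1)$, this yields total errors of order $\eps$ in both $W^*W-I$ and $q(W)-\diag(V,I_k)$.
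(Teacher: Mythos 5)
There is a genuine gap in your error analysis, and it is exactly at the point you flag as the ``main obstacle.'' The per-factor unitarity defect of your lifts is governed by the unitarity defect of the path $(V_t)$, not by the partition step, so refining the partition does not help. Concretely, each $V_t$ is only a $3\eps$-$2r$-unitary, so for $u_i=V_{t_{i-1}}^*V_{t_i}=I+y_i$ one has $\|u_i^*u_i-I\|$ of order $\eps$ no matter how small $|t_i-t_{i-1}|$ is; hence $\|y_i+y_i^*\|$ (and therefore $\|z_i+z_i^*\|$) is of order $\eps$, as you yourself compute. With $p$ of order $1/\eps$ factors, the linear contributions $z_i+z_i^*$ have no reason to cancel, and the accumulated defect of $W=\prod(I+z_i)$ is of order $p\cdot\eps=O(1)$, not $O(\eps)$. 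The same accumulation kills the estimate on $\|q(W)-\diag(V,I_j)\|$: telescoping $\prod_i V_{t_{i-1}}^*V_{t_i}$ down to $\diag(V,I_k)$ costs $O(\eps)$ per insertion of $V_{t_i}V_{t_i}^*\approx I$, and there are $O(1/\eps)$ insertions. A concrete counterexample to your bookkeeping: take $V_t=(1+\eps)U_t$ with $U_t$ a Lipschitz path of genuine unitaries and $s$ unital; then every $y_i+y_i^*\approx 4\eps I$, so $(I+z_i)^*(I+z_i)\approx(1+4\eps)I$ and $W^*W\approx(1+4\eps)^{1/\eps}I\approx e^4 I$, so $\|W^*W-I\|$ does not tend to $0$ with $\eps$ and cannot be bounded by $\alpha_e\eps$ for any universal $\alpha_e$. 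The Schwarz inequality only makes the quadratic terms $z_i^*z_i$ of order $\eps^2$; it does nothing for the linear terms, which are the ones that accumulate.

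The paper avoids this by doing the opposite balancing: it takes a partition of \emph{fixed} mesh (step of order $1/C$, hence a number of steps $p\lq 16C$ bounded independently of $\eps$), so that each increment $V_{i-1}V_i^*$ is within $1/2$ of $I_n$, and then lifts each increment not by $I+s(y_i)$ but by a truncated $\exp\circ\, s\circ\log$: with polynomial approximations $P_\eps,Q_\eps$ of $\exp$ and $\log$ of degree $l_\eps$, one sets $W_i^t=P_\eps\bigl(s\bigl(tQ_\eps(I_n-V_{i-1}V_i^*)\bigr)\bigr)$. The key structural point your construction lacks is that a completely positive section is $*$-preserving, so it sends the (skew-adjoint) logarithm of a nearby unitary to a skew-adjoint element, whose exponential is \emph{exactly} unitary; $W_i^1$ is then $O(\eps)$-close to a genuine unitary, hence an $O(\eps)$-almost-unitary regardless of the $O(\eps)$ defect of the $V_t$'s, with propagation $2l_\eps r$. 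Multiplying only a bounded number ($\lq 16C$) of such factors keeps both $\|W^*W-I\|$ and $\|q(W)-\diag(V,I_j)\|$ of order $\eps$, with $k_{e,\eps}$ absorbing the degree $l_\eps$. If you want to salvage your scheme, you would have to build this kind of exact-unitarity (or at least $O(\eps^2)$ per-factor defect) into each lift before multiplying $O(1/\eps)$ of them; as written, the step ``total errors of order $\eps$ in both $W^*W-I$ and $q(W)-\diag(V,I_k)$'' is false.
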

\begin{proof}
According to proposition \ref{prop-bounded-hom}, we can assume that
$V$ and $I_n$ are connected by a $C$-Lipschitz homotopy $(V_t)_{t\in[0,1]}$,
for some universal constant $C$. Let
$t_0=0<t_1<\cdots<t_p=1$ be a partition of $[0,1]$ such that
$1/16C<|t_i-t_{i-1}|<1/8C$. Then we get that $\|V_{i-1}-V_i\|< 1/8$
and hence $\|V_{i-1}V_i^*-I_n\|< 1/2$. Let $l_\eps$ be the smallest integer such that
$\sum_{k\gq l_\eps+1} 2^{-k}/k<\eps$ and $\sum_{k\gq l_\eps+1} \log ^k
2/k!<\eps$  and let us consider the polynomial functions
$P_\eps(x)=\sum_{k=0}^{l_\eps}x^k/k!$ and
$Q_\eps(x)=-\sum_{k=1}^{l_\eps}x^k/k$.
We get then  $\|V_{i-1}V_i^*-P_\eps\circ Q_\eps(1-V_{i-1}V_i^*)\|\lq 3\eps$. Choose a completely positive section $s(A/J)\to A$ such
that $s(1)=1$ and let us  set $W_i^t=P_\eps(s(tQ_\eps(I_n-V_{i-1}V_i^*)))$
for $t$ in $[0,1]$ and $i$ in $\{1,\ldots,p\}$. Since
$V_{i-1}V_i^*$ is closed to the unitary
$V_{i-1}V_i^*(V_{i}V_{i-1}^*V_{i-1}V_i^*)^{-1/2}$, then $W_i^t$ is
uniformly  (in $t$  and $i$) closed to $\exp ts(\log
(V_{i-1}V_{i}^*(V_{i}V_{i-1}^*V_{i-1}V_i^*)^{-1/2}))$ which is unitary (the logarithm is  well defined since
$V_{i-1}V_i^*(V_{i}V_{i-1}^*V_{i-1}V_i^*)^{-1/2}$ is closed to $I_n$)
and hence $W_i^t$ is a $\alpha\eps$-$2l_\eps r$-unitary for some
universal $\alpha$. Hence  $W_i^1$ is a $\alpha\eps$-$2l_\eps
r$-unitary in  $\U^{\eps,r}_{n}(A)$   homotopic to $I_n$ and such that
$\|q(W_i^1)-V_{i-1}V_i^*\|<3\alpha\eps$. If we set now $W=W_1^1\cdots W_p^1$ and since
$p\lq 16C$, then $W$ satisfies the required property.
\end{proof}

\begin{lemma}\label{lemma-bound}
 There exists a control pair $(\alpha,k)$ such that for any semi-split extension of filtered $C^*$-algebras
$0\to J\to A\to A/J\to 0$ with $A$ unital,
any semi-split
filtered cross section $s:A/J\to A$ with $s(1)=1$ and any $\eps$-$r$-projection $p$ in $A/J$ with $0<\eps<\frac{1}{4\lambda}$, there exists an element
$y_p$ in $J_{k_\eps r}$ such that
   $\|1+y_p-e^{2\imath\pi s(k_0(p))}\|<\alpha\eps/3$. In particular $1+y_p$ is a $\alpha\eps$-$k_\eps r$-unitary of  $J^+$;
\end{lemma}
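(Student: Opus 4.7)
The strategy is to lift $p$ to $A$ by setting $\tilde p = s(p) \in A_r$ (which is self-adjoint because $s$ is $*$-preserving and $p^* = p$), then approximate the unitary $e^{2\pi\imath \tilde p}$ by a truncated Taylor polynomial of bounded degree. Since $s$ is only positive and not multiplicative, $e^{2\pi\imath \tilde p}$ will not be exactly $1$ modulo $J$, and we will correct it by subtracting a further lift $s(P_\eps(p)-1)$ whose norm is controlled by the defect of $p$ from being a genuine projection.

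Concretely, choose $N_\eps$ to be the smallest integer such that $\sum_{k>N_\eps}(4\pi)^k/k!<\eps$, set $k_\eps=N_\eps$ (which gives a non-increasing function of $\eps$), and define the Taylor polynomial $P_\eps(t)=\sum_{k=0}^{N_\eps}(2\pi\imath t)^k/k!$. Now define
\[
y_p \defi P_\eps(\tilde p) - 1 - s\bigl(P_\eps(p) - 1\bigr).
\]
Since $\tilde p \in A_r$ and the cross-section $s$ is filtered, each summand lies in $A_{N_\eps r}$, so $y_p\in A_{k_\eps r}$. Using that the quotient map $q$ is multiplicative while $s$ is a set-theoretic section, $q(y_p) = P_\eps(p) - 1 - (P_\eps(p)-1)=0$, hence $y_p \in J \cap A_{k_\eps r}= J_{k_\eps r}$ as desired.

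The key estimate is $\|1+y_p - e^{2\pi\imath s(\kappa_0(p))}\| < \alpha\eps/3$. Writing $1+y_p = P_\eps(\tilde p) - s(P_\eps(p)-1)$, we control three successive terms:
\textbf{(a)} $\|P_\eps(\tilde p) - e^{2\pi\imath \tilde p}\| \lq \sum_{k>N_\eps}(2\pi\|\tilde p\|)^k/k! < \eps$, since $\|\tilde p\|\lq \|p\|<1+\eps<2$;
\textbf{(b)} for the corrective term, $\|s(P_\eps(p)-1)\|\lq \|P_\eps(p)-1\|\lq \|P_\eps(p)-e^{2\pi\imath p}\| + \|e^{2\pi\imath p}-1\|$, and using that $\kappa_0(p)$ is a genuine projection commuting with $p-\kappa_0(p)$, $\|e^{2\pi\imath p}-1\|=\|e^{2\pi\imath(p-\kappa_0(p))}-1\|\lq 2\pi\|p-\kappa_0(p)\|<4\pi\eps$, yielding $\|s(P_\eps(p)-1)\|<(1+4\pi)\eps$;
\textbf{(c)} applying the standard bound $\|e^{\imath X}-e^{\imath Y}\|\lq \|X-Y\|$ for self-adjoint $X,Y$ (via $e^{\imath X}-e^{\imath Y}=\int_0^1 e^{\imath tX}\imath(X-Y)e^{\imath(1-t)Y}\,dt$) to $X=2\pi s(p)$, $Y=2\pi s(\kappa_0(p))$, and using contractivity of $s$, we obtain $\|e^{2\pi\imath \tilde p}-e^{2\pi\imath s(\kappa_0(p))}\|\lq 2\pi\|s(p-\kappa_0(p))\|<4\pi\eps$. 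Summing the three bounds yields a linear-in-$\eps$ estimate with a universal constant $C'$, and choosing $\alpha\defi 3C'$ (independent of the extension and of $p$) gives $\|1+y_p-e^{2\pi\imath s(\kappa_0(p))}\|<\alpha\eps/3$.

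Finally, since $e^{2\pi\imath s(\kappa_0(p))}$ is a genuine unitary, the triangle inequality forces $\|(1+y_p)^*(1+y_p)-1\|$ and $\|(1+y_p)(1+y_p)^*-1\|$ to be bounded by $(\alpha\eps/3)(2+\alpha\eps/3)<\alpha\eps$ after possibly enlarging $\alpha$; hence $1+y_p$ is an $\alpha\eps$-$k_\eps r$-unitary of $J^+$. The only real obstacle is bookkeeping: ensuring every constant appearing in (a)--(c) is genuinely independent of $A$, $J$, $s$, $r$, and $p$, so that $(\alpha,k_\eps)$ is a universal control pair in the sense of Definition \ref{def-control-pair}; this is the reason for isolating the uniform Lipschitz bound for the exponential and using contractivity of $s$, both of which produce constants that do not depend on the extension.
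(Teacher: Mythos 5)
Your proposal is correct and follows essentially the same route as the paper: your $y_p = P_\eps(s(p)) - s(P_\eps(p)-1)$ coincides (since $s(1)=1$ and $s$ is linear) with the paper's $z_p - s\circ q(z_p)$ for the truncated exponential $z_p$, and both arguments reduce the estimate to $\|p-\kappa_0(p)\|<2\eps$ together with contractivity of $s$ and a tail bound fixing the truncation degree $k_\eps$. The only difference is bookkeeping — you use the Lipschitz bound $\|e^{\imath X}-e^{\imath Y}\|\lq\|X-Y\|$ and the identity $e^{2\imath\pi\kappa_0(p)}=1$, while the paper compares the series termwise and applies $q$ to control the correction — which changes nothing essential.
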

\begin{proof}
Let $l_\eps$ be the smallest integer such that
  $$\displaystyle\sum_{l=l_\eps+1}^{+\infty}10^l/l!<\eps.$$
Let us define  $z_p=\displaystyle\sum_{l=0}^{l_\eps}\frac{(2\imath\pi s(p))^l}{l!}$.
Then $z_p$ belongs to $\M_n(A_{l_\eps r})$ and we have
 \begin{eqnarray*}
\left\|z_p-e^{2\imath\pi
    s(\kappa_0(p))}\right\|&=&\left\|\sum_{l=0}^{l_\eps}\frac{(2\imath\pi
    s(p))^l}{l!}-\sum_{l=0}^{+\infty}\frac{(2\imath\pi s(\kappa_0(p))^l}{l!}\right\|\\
&\lq&\left\|\sum_{l=0}^{l_\eps}\frac{(2\imath\pi s(p))^l-(2\imath\pi s(\kappa_0(p)^l)}{l!}\right\|+\left\|\sum_{l=l_\eps+1}^{+\infty}\frac{(2\imath\pi s(\kappa_0(p))^l}{l!}\right\|\\
&\lq&\|s(p)-s(\kappa_0(p))\|e^{10}+\eps\\
&\lq&(2e^{10}+1)\eps.
\end{eqnarray*}
 If we set $y_p=z_p-s\circ q(z_p)$, then
$y_p\in\M_n(J\cap A_{l_\eps r})$ and
\begin{eqnarray*} \|z_p-(1+y_p)\| &=&\|s\circ q(z_p)-1\|\\
&\lq&\left\|q\left(z_p-e^{2\imath\pi
    s(\kappa_0(p))}\right)\right\|\\
&<&\lambda\eps,
\end{eqnarray*}
with $\lambda=(2e^{10}+1)$.
 Therefore we have  $\|1+y_p-e^{2\imath\pi
    s(\kappa_0(p))}\|<2\lambda\eps$. The end of the statement is then a consequence of lemma \ref{lemma-almost-closed}.

\end{proof}


\subsection{Controlled boundary maps}\label{subsection-controlled-boundary-maps}
For any extension $0\to J\to A\to A/J\to 0$ of $C^*$-algebras we denote by $\partial_{J,A}:K_{*}(A/J)\to K_{*}(J)$ the associated
(odd degree) boundary map.
\begin{proposition}\label{prop-bound}
There exists a control pair $(\alpha_\DD,k_\DD)$  such that for any    semi-split  extension of filtered
$C^*$-algebras
 $$0\longrightarrow J \longrightarrow A \stackrel{q}{\longrightarrow}
A/J\longrightarrow 0,$$ there exists a $(\alpha_\DD,k_\DD)$-controlled  morphism of odd degree
 $$\DD_{J,A}=(\partial_ {J,A}^{\eps,r})_{0<\eps\frac{1}{4\alpha_\DD},r}: \K_*(A/J)\to
  \K_*(J)$$
 which induces in $K$-theory   $\partial_{J,A}:K_{*}(A/J)\to K_{*}(J)$.

\end{proposition}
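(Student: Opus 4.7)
The plan is to construct $\DD_{J,A}$ in each degree separately, as a controlled version of the classical exponential and index boundary maps, using the filtered cross-section $s$ to realize all operations with controlled propagation cost. I first reduce to the case where $A$ (hence also $A/J$) is unital and $s$ is unital, by passing to unitizations and invoking Lemmas \ref{lem-split} and \ref{lem-k+}. For the even-degree piece, given $[p,l]_{\eps,r}\in K_0^{\eps,r}(A/J)$ with $p\in\P_n^{\eps,r}(A/J)$, Lemma \ref{lemma-bound} produces $y_p\in\M_n(J\cap A_{k_\eps r})$ such that $u_p\defi 1+y_p$ is an $\alpha\eps$-$k_\eps r$-unitary in $\M_n(J^+)$, norm-close to $e^{2\imath\pi s(\ka_0(p))}$; I define
$$\partial_{J,A}^{\eps,r}[p,l]_{\eps,r}\defi [u_p]_{\alpha\eps,k_\eps r}\in K_1^{\alpha\eps,k_\eps r}(J).$$
Well-definedness is obtained by applying $s$ pointwise to a homotopy of $\eps$-$r$-projections in $A/J[0,1]$ to produce a homotopy of almost unitaries in $J^+[0,1]$; two choices of truncation index or of approximant give norm-$\eps$-close almost unitaries, which are then homotopic via Lemma \ref{lemma-almost-closed}. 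The estimate $\|u_p-e^{2\imath\pi s(\ka_0(p))}\|<\alpha\eps$ identifies the induced $K$-theory map with the classical exponential boundary.

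For the odd-degree piece, given $[u]_{\eps,r}$ with $u\in\U_n^{\eps,r}(\widetilde{A/J})$, Lemma \ref{lem-k1-almost-group} furnishes a homotopy of $3\eps$-$2r$-unitaries between $\diag(u,u^*)$ and $I_{2n}$. Applying Lemma \ref{lem-lift} lifts this homotopy, after stabilization by some $I_j$, to an almost unitary $W\in\U_{2n+j}^{\eps',r'}(\widetilde{A})$ satisfying $\|q(W)-\diag(u,u^*,I_j)\|<\eps'$ for controlled $(\eps',r')$. Then $P\defi W\diag(I_n,0_{n+j})W^*$ is an $\eps''$-$r''$-projection by Lemma \ref{lem-conjugate-proj}, and $q(P)$ is norm-close to the scalar $\diag(I_n,0_{n+j})$; so, under the identification $\widetilde{J}\cong\{a\in\widetilde{A}:q(a)\in\C\cdot 1\}$, Lemma \ref{lemma-almost-closed} produces an almost projection $P'\in\M_{2n+j}(\widetilde{J})$ at controlled norm distance from $P$. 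I set $\partial_{J,A}^{\eps,r}[u]_{\eps,r}\defi [P',n]\in K_0^{\lambda\eps,h_\eps r}(J)$; the norm estimates guarantee this descends to the classical index class.

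Additivity and compatibility with the structural maps $\iota^{\eps,\eps',r,r'}$ follow because the whole construction is monotone in $(\eps,r)$ and the operations involved (polynomial evaluation in Lemma \ref{lemma-bound}, conjugation, stabilization) preserve block-sum decompositions up to controlled error. The main obstacle is verifying independence of the index construction from the chosen Lipschitz homotopy of Proposition \ref{prop-bounded-hom}, from its almost-unitary lift $W$, and from the stabilization integer $j$. Two lifts $W_0,W_1$ of $\diag(u,u^*,I_j)$ project close to the same almost unitary, so $W_1W_0^{-1}$ is an almost unitary in $\widetilde{A}$ whose image under $q$ is close to $I_{2n+j}$; applying Lemma \ref{lem-lift} to a homotopy of this image with the identity, combined with Corollary \ref{cor-conjugate}, controllably conjugates $P_0'$ to $P_1'$ inside $\M_{2n+j}(\widetilde{J})$. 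Similar controlled-homotopy arguments, combining Lemmas \ref{lemma-almost-closed}, \ref{lem-conjugate-proj}, \ref{lem-conjugate}, \ref{lem-lift} and Corollary \ref{cor-conjugate}, handle the remaining coherences, and the final universal control pair $(\alpha_\DD,k_\DD)$ is obtained by tracking the cumulative growth of $\eps$ and $r$ through this chain of lemmas.
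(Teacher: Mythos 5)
Your construction coincides with the paper's own proof: the even case is exactly Lemma \ref{lemma-bound} (applied also to $A/J[0,1]$ for homotopy invariance), and the odd case is the same Wegge--Olsen-style index construction, taking $v=u^*$ via Lemma \ref{lem-k1-almost-group}, lifting with Lemma \ref{lem-lift}, conjugating $\diag(I_n,0)$, and correcting into $\M_{2n+j}(\widetilde{J})$ --- though note that this correction must be written explicitly as $P'=P-s\circ q\bigl(P-\diag(I_n,0)\bigr)$ using the completely positive filtered section, since Lemma \ref{lemma-almost-closed} only certifies that this self-adjoint element of controlled propagation is an almost projection, it does not produce it. Apart from this presentational point and a slightly different organization of the well-definedness checks (the paper settles independence of the lift and homotopy invariance in $u$ by Lemma \ref{lemma-almost-closed} together with Lemma \ref{lem-lift} applied to $0\to J[0,1]\to A[0,1]\to A/J[0,1]\to 0$, rather than by your conjugation argument with $W_1W_0^{-1}$), your route is essentially the same as the paper's.
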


\begin{proof}
 Let $s:A/J\to A$ be a semi-split filtered cross-section. Let us first prove the result when $A$ is unital.
\begin{enumerate}
\item Let $p$ be an element of  $\pe(A/J)$. Then
  $\partial_{J,A}([\kappa_0(p)])$ is the class of $e^{2\imath\pi
    s(\kappa_0(p))}$ in $K_1(J)$. Fix a control pair $(\alpha,k)$ as in  lemma \ref{lemma-bound} and
pick any $y_p$ in $M_n(J_{k_\eps r})$ such that
   $\|1+y_p-e^{2\imath\pi s(k_0(p))}\|<\alpha\eps/3$. Then  $1+y_p$ is an $\alpha\eps$-$k_\eps r$-unitary of  $M_n(J^+)$, and according to
lemma  \ref{lemma-almost-closed}, any two such $\alpha\eps$-$k_\eps r$-unitaries are homotopic in $U_n^{3\alpha\eps,k_\eps r}(J^+)$.
Applying lemma  \ref{lemma-bound} to $A/J[0,1]$, we see that the map

$$\pe(A/J)\longrightarrow \U_n^{3\alpha\eps,k_\eps r}(J^+);\,p\mapsto 1+y_p$$ preserves homotopies and hence gives rise
to a bunch of well defined semi-group homomorphism
$$\partial_ {J,A}^{\eps,r}:K_0^{\eps,r}(A/J)\longrightarrow
  K_1^{3\alpha\eps,k_\eps r}(J);\, [p,l]_{\eps,r}\mapsto
  [1+y_p]_{3\alpha\eps,k_\eps r}$$ which in the even case satisfies the required properties for a controlled homomorphism.
 \item In the odd case, we follow the route of \cite[Chapter 8]{we}.
For any element $u$ of  $\ue(A/J)$, pick any element $v$ in some
  $\U_j^{\eps,r}(A/J)$ such that $\diag(u,v)$ is homotopic to
  $I_{n+j}$ in
 $\U_{n+j}^{3\eps,2r}(A/J)$ (we can choose in view of lemma \ref{lem-k1-almost-group}           
 $v=u^*$). According to lemma \ref{lem-lift}, and up to replace $v$
 by $\diag(v,I_k)$ for some integer $k$, there exists an element $w$
 in $\U_{n+j}^{3\alpha_e\eps,2k_{e,3\eps}r}(A)$ such that
 $\|q(w)-\diag(u,v)\|\lq 3\alpha_e\eps$.
Let us set $x=w\diag(I_n,0)w^*$. Then $x$ is an element in
  $\P_{n+j}^{6\alpha_e\eps,4k_{e,3\eps}r}(A)$ such that
  $\|q(x)-\diag(I_n,0)\|\lq 9\alpha_e\eps$.

Let us  set now
$h=x-\diag(I_n,0)-s\circ q(x-\diag(I_n,0))$.
Then
$h$ is a self-adjoint  element of  $\M_{2n}(A_{4k_{e,3\eps}r}\cap J)$ such
that $$\left\|x-\diag(I_n,0)-h\right\|\lq
9\alpha_e\eps,$$ and therefore
$h+\diag(I_n,0)$ belongs to
$\P_{n+j}^{45\alpha_e\eps,4k_{e,3\eps}r}(J)$.
Define then $$\partial_{J,A}^{\eps,r}([u]_{\eps,r})=
\left[h+\diag(I_n,0),n\right]_{450\alpha_e\eps,4k_{e,3\eps}r}.$$
It is straightforward to check that (compare with \cite[Chapter 8]{we}).
\begin{itemize}
\item two choice of elements satisfying the conclusion of   lemma \ref{lem-lift} relatively to $\diag(u,v)$ give
  rise to homotopic elements
  $\P_{n+j}^{450\alpha_e\eps,4k_{e,3\eps}r}(J)$ (this is a consequence
  of lemma \ref{lemma-almost-closed}).
\item Replacing $u$ by $\diag(u,I_m)$
and $v$ by $\diag(v,I_k)$ gives also rise to  the same element of $K_0^{450\alpha_e\eps,4k_{e,3\eps}r}(J)$.
\end{itemize}

Applying now lemma \ref{lem-lift} to the exact sequence
$$0\to J[0,1]\to A[0,1]\to A/J[0,1]\to 0,$$  we get that
$\partial_{J,A}^{\eps,r}([u]_{\eps,r})$
\begin{itemize}
\item only depends on the class of $u$ in $K_1^{\eps,r}(A/J)$;
\item does not depend on the choice of $v$ such that $\diag(u,v)$ is
  connected to $I_{n+j}$ in $\U_{n+j}^{\eps,r}(A/J)$.
\end{itemize}

\end{enumerate}
\begin{itemize}
\item If $A$ is not unital, use the exact sequence
$$0\to J\to \widetilde{A}\to \widetilde{A/J}\to 0$$ to define
$\partial_{J,A}^{\eps,r}$ as the composition
$$K_0^{\eps,r}(A/J)\hookrightarrow K_0^{\eps,r}(\widetilde{A/J})
\stackrel{\partial_{J,\widetilde{A}}^{\eps,r}}{\longrightarrow}
K_1^{450\alpha_e\eps,4k_{s,3\eps}r}(J),$$ where the inclusion in the
composition is induced by the inclusion $A/J\hookrightarrow
\widetilde{A/J}\cong\widetilde{A}/J$.
\item Since the set of filtered semi-split cross-section
$s:A/J\to A$ such that $s((A/J)_r)\subset A_r$ is convex, the
definition of $\partial_{J,A}^{\eps,r}$ actually does not depend on the
choice of such a section.
\item Using lemma \ref{lemma-almost-closed}, it is plain to check that for a suitable control pair
$(\alpha_\DD,k_\DD)$, then
 $\DD_{J,A}=(\partial_ {J,A}^{\eps,r})_{0<\eps\frac{1}{4\alpha_\DD},r}$ is a $(\alpha_\DD,k_\DD)$-controlled morphism
 inducing the (odd degree) boundary map $\partial_{J,A}:K_*(A/J)\to K_*(J)$.
\end{itemize}

\end{proof}

 For a semi-split extension of filtered $C^*$-algebras
 $$0\longrightarrow J \longrightarrow A \stackrel{q}{\longrightarrow}
A/J\longrightarrow 0,$$ we set  $\DD^0_{J,A}:\K_0(A/J)\to\K_1(J)$, for the restriction
of $\DD_{J,A}$ to $\K_0(A/J)$ and  $\DD^1_{J,A}:\K_1(A/J)\to\K_0(J)$, for   the restriction
of $\DD_{J,A}$ to $\K_1(A/J)$.

\begin{remark}\label{rem-fonct}\
\begin{enumerate}
\item Let $A$ and $B$ be two filtered $C^*$-algebras and let
  $\phi:A\to B$ be a filtered homomorphism. Let $I$ and $J$ be respectively
  ideals in $A$ and $B$ and assume that
\begin{itemize}
\item $\phi(I)\subset J$;
\item there exists   semi-split filtered cross-sections   $s:A/I\to A$  and  $s':B/J\to J$   such that
$s'\circ\tilde{\phi}=\phi\circ s$, where $\tilde{\phi}:A/I\to
  B/J$ is the homomorphism induced by $\phi$,
\end{itemize}
then
 $\DD_{J,B} \circ
\tilde{\phi}_*=
\phi_*\circ\DD_{I,A}$.
\item Let
 $0\longrightarrow J \longrightarrow A \stackrel{q}{\longrightarrow}
A/J\longrightarrow 0$ be a split extension of filtered
$C^*$-algebras, i.e there exists a homomorphism of filtered $C^*$-algebras $s:A/J\to A$ such that $q\circ s=Id_{A/J}$. Then we have
$\DD_{J,A}=0$.
\end{enumerate}
\end{remark}
For  a filtered $C^*$-algebra  $A$, we have defined  the suspension and the
cone  respectively as  $SA=C_0((0,1),A)$ and
$CA=C_0((0,1],A)$. Then $SA$ and $CA$ are filtered $C^*$-algebras and
evaluation at the value $1$ gives rise to a semi-split filtered extension of $C^*$-algebras
\begin{equation}\label{equation-bott-extension} 0\to SA\to CA\to A\to 0\end{equation} and in the even case, the corresponding
boundary $\partial_{SA,CA}:K_0(A)\to K_1(SA)$ implements
the suspension isomorphism and has the following easy description when
$A$ is unital:
 if $p$ is a projection, then $\partial_{SA,CA}[p]$ is the class in
 $K_1(SA)$ of the path of unitaries $$[0,1]\to U_n(A);\, t\mapsto pe^{2\imath\pi t}+1-p.$$
Let us  show that we have an analogous description in term of
 almost projection. Notice that if $q$ is an $\eps$-$r$-projection in
 $A$, then $$z_q:[0,1]\to A;\, t\mapsto qe^{2\imath\pi t}+1-q$$ is
 a $5\eps$-$r$-unitary in $\widetilde{SA}$. Using this, we can define a $(5,1)$-controlled morphism
 $\mathcal{Z_A}=(Z_A^{\eps,r})_{0<\eps<1/20,r>0}:\K_0(A)\to\K_1(SA)$ in the following way:
 \begin{itemize}
 \item  for any $q$ in $\P_n^{\eps,r}(A)$ and any integer $k$ let us set   
$$V_{q,k}:[0,1]\to \U_n^{5\eps,r}(\widetilde{SA}): t\mapsto \diag(e^{-2k\imath\pi
  t},1,\ldots,1)\cdot(1-q+qe^{2\imath\pi t});$$
  \item define then  $Z_A^{\eps,r}([q,k]_{\eps,r})=[V_{q,k}]_{5\eps,r}$.
  \end{itemize}

\begin{proposition} There exists a  control pair $(\lambda,h)$ such that for any filtered $C^*$-algebra $A$,
 then  $\DD^0_{CA,SA}\aeq \mathcal{Z_A}$.
\end{proposition}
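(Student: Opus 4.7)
The plan is to exhibit an explicit representative of $\DD^0_{CA,SA}[q,k]$ coming from Proposition 3.3/Lemma 3.4 and compare it with $V_{q,k}$ via controlled homotopies. First I would fix a convenient unital, CP, filtered section for the cone extension: since the construction of $\DD$ does not depend on the choice of such a section (convexity argument at the end of the proof of Proposition 3.3), I may take $s: \widetilde{A}\to\widetilde{CA}$ defined by $s(a+z\cdot 1_{\widetilde{A}})(t)=ta+z\cdot 1_{\widetilde{CA}}$, which is filtered with no propagation loss and satisfies $s(1)=1$.

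Next, for $[q,k]_{\eps,r}\in K_0^{\eps,r}(A)$ with $q\in\mathrm{P}_n^{\eps,r}(A)$, I send it into $K_0^{\eps,r}(\widetilde A)$ by the representative $\underline p=\diag(q,\,e\cdot I_k)$ with $l=k$, where $e=1_{\widetilde A}-1_A$ (for $A$ unital; a parallel choice works via the inclusion for $A$ non-unital). The ideals $A$ and $\C\cdot e$ of $\widetilde A$ are orthogonal, so the exponential splits block-diagonally:
\[
   e^{2\pi i\, s(\kappa_0(\underline p))}(t) \;=\; \diag\bigl(e^{2\pi i t\,\kappa_0(q)},\; e^{2\pi i(1-t\cdot 1_A)}\,I_k\bigr).
\]
Since $(1-t\cdot 1_A)^m=1+((1-t)^m-1)\,1_A$ and $e^{2\pi i\cdot 1_{\widetilde{CA}}}=1$, one computes $e^{2\pi i(1-t\cdot 1_A)}=e^{-2\pi i t\cdot 1_A}$, i.e.\ the element $e^{-2\pi it}$ of $\widetilde{SA}$ via the canonical inclusion $z\mapsto z\cdot 1_A$. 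After the polynomial truncation from Lemma 3.4, this produces $y$ such that $\DD^0_{CA,SA}[q,k]$ is represented by $W(t):=\diag(e^{2\pi it\,\kappa_0(q)},\,e^{-2\pi it}I_k)\in M_{n+k}(\widetilde{SA})$ up to an $O(\eps)$ perturbation with propagation $O(l_\eps r)$.

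Then, working in $M_{n+k}(\widetilde{SA})$, I factor
\[
   \diag(V_{q,k},I_k)=\underbrace{\diag\bigl(\diag(e^{-2k\pi it},1,\ldots,1),\,I_k\bigr)}_{U_1}\cdot \underbrace{\diag\bigl(1+q(e^{2\pi it}-1),\,I_k\bigr)}_{U_2},
\]
and note $W=\diag(I_n,\,e^{-2\pi it}I_k)\cdot\diag(e^{2\pi it\,\kappa_0(q)},\,I_k)$ as a product of commuting block-diagonal factors. I construct two independent controlled homotopies. (a) From $U_1$ to $\diag(I_n,e^{-2\pi it}I_k)$: both represent $-k\cdot\partial[1_A]$ in $K_1(SA)$; a homotopy through true unitaries of propagation $0$ is obtained by first rotating the winding factor from position $(1,1)$ into the bottom $k$-block (the rotation of Lemma 1.9/Example 1.7) and then deforming $(e^{-2\pi it})^k$ in a single slot to $k$ copies of $e^{-2\pi it}$ across the last $k$ slots via $\exp(-2\pi it\,A_s)$ with $A_s=(1-s)\diag(k,0,\ldots,0)+s\,I_k\in M_k(\C)$. (b) From $U_2$ to $\diag(e^{2\pi it\,\kappa_0(q)},I_k)$: since $\|q-\kappa_0(q)\|<2\eps$ (Remark 0.9(iii)), one has $\|U_2-\diag(e^{2\pi it\,\kappa_0(q)},I_k)\|\leq 4\eps$, and Lemma 0.10 applied to the polynomial approximation used in $\DD$ produces a homotopy of $O(\eps)$-$O(l_\eps r)$-unitaries.

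Concatenating these gives $\diag(V_{q,k},I_k)\sim W$ in $\mathrm{U}^{\lambda\eps,h_\eps r}_{n+k}(\widetilde{SA})$ for a universal control pair $(\lambda,h)$ depending only on $(\alpha_\DD,k_\DD)$ and on the constants from (a) and (b); the odd-case argument of Section~1 then yields $\DD^0_{CA,SA}\aeq\mathcal{Z}_A$. The main obstacle will be making the Eilenberg-swindle-style move in step (a) fully explicit while verifying that all involved matrices lie in $M_k(\C)\subset M_k(\widetilde{SA})$ (so propagation is $0$ in the $A$-filtration and does not interfere), together with handling the non-unital $A$ case uniformly by passing through $\widetilde A$; once the representatives are written out carefully, both become routine.
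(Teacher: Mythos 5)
Your overall strategy is the same as the paper's: fix the canonical filtered completely positive section $a\mapsto(t\mapsto ta)$ of the cone extension, apply the truncated-exponential construction of the controlled boundary to a representative of $[q,k]_{\eps,r}$, observe that the resulting element of $M_\bullet(\widetilde{SA})$ is, up to an $O(\eps)$-perturbation with propagation $O(l_\eps r)$, the product of the loop $1-q+qe^{2i\pi t}$ with a scalar loop of winding $-k$, conclude equality of controlled classes by the almost-closedness lemma, and finally relocate the winding by a homotopy of scalar unitary loops with the same winding number. The only real difference is bookkeeping: the paper doubles the matrix size so that $q$ commutes with $\diag(I_k,0)$ and puts the winding on the first $k$ diagonal entries, whereas you stabilize to $M_{n+k}$ with the representative $\diag(q,eI_k)$ of the image of $[q,k]_{\eps,r}$ in $K_0^{\eps,r}(\widetilde A)$, so the exponential splits block-diagonally; this is a legitimate and arguably cleaner way to achieve the same commutation.

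Two points in your step (a), however, do not work as written. First, the interpolation $\exp(-2\pi i t A_s)$ with $A_s=(1-s)\diag(k,0,\ldots,0)+sI_k$ is not a homotopy inside $M_k(\widetilde{SA})$: for intermediate $s$ the eigenvalues of $A_s$ are not integers, so $t\mapsto\exp(-2\pi i tA_s)$ does not return to $I_k$ at $t=1$, hence these paths fail the condition that an element of $\widetilde{SA}$ take the same scalar value at $t=0$ and $t=1$. You must instead use the standard fact that two unitary loops in $U_m(\C)$ with equal winding number are homotopic through loops (e.g. by iterating the $2\times 2$ rotation trick of the paper's example on homotopies, exactly as the paper does when it connects $t\mapsto\diag(e^{-2k\pi i t},1,\ldots,1)$ to the loop with winding spread over $k$ entries). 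Second, the inclusion ``$M_k(\C)\subset M_k(\widetilde{SA})$'' is only valid for constant matrices: a scalar-valued loop with nonzero winding is not an element of $M_k(\widetilde{SA})$, so all the scalar loops (including the bottom block of your $W$, which your own computation correctly produces as $1+(e^{-2\pi i t}-1)1_A$, and the winding factor of $V_{q,k}$) must be carried on $\C\,1_A$ in the unital case, resp.\ on the scalar corner of $\widetilde A$ after passing to $\widetilde A$ in the non-unital case; with that reading the homotopy stays in $M_{n+k}(\widetilde{SA})$ and has arbitrarily small propagation, and the rest of your argument, including step (b) via $\|q-\kappa_0(q)\|<2\eps$ and the almost-closedness lemma, goes through and reproduces the paper's proof.
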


\begin{proof}Let $[q,k]_{\eps,r}$ be an element of $K_0^{\eps,r}(A)$, with $q$ in $\P_n^{\eps;r}(A)$ and $k$ integer.
We can assume without loss of generality that $n\gq k$. Namely, up to
replace $n$ by $2n$ and using a homotopy between
$\diag(q,0)$ and $\diag(0,q)$ in $\P_{2n}^{\eps,r}(A)$, we can indeed assume
            that $q$ and  $\diag(I_k,0)$  commute.
As in the proof of proposition  \ref{prop-bound}, define $l_\eps$ as the
smallest integer such that $\sum_{l=l_\eps+1}^{\infty}
10^l/l^!<\eps$. Let us consider the following paths in   $M_n(A)$

$$z:[0,1]\lto M_n(A); t\mapsto
\sum_{l=0}^{l_\eps}(2\imath\pi (tq+(1-t)\diag(I_k,0)))^l/l!$$ and

$$z':[0,1]\lto M_n(A); t\mapsto \exp (2\imath\pi\diag(-t
    I_k,0))(1-q+e^{2\imath\pi t}q).$$
Since $q$ and
$I_k$ commutes, then
\begin{eqnarray*}\exp (2\imath\pi(\diag(-t
    I_k,0)+tq))&=&\exp (2\imath\pi\diag(-t
    I_k,0))\cdot \exp(2\imath \pi
tq)\\
\end{eqnarray*}
and hence 
$$z(t)=\exp (2\imath\pi\diag(-t
    I_k,0))\exp (2\imath\pi tq)-\sum_{l=l_\eps+1}^{\infty}(2\imath\pi (tq+(1-t)\diag(I_k,0)))^l/l!.$$ 
We get therefore
\begin{eqnarray*}
\|z(t)-z'(t)\|&\lq&\eps+\|qe^{2\imath\pi t}+(1-q)-\exp 2\imath\pi t q\| \\
&\lq&\eps+2\|\kappa_0(q)- q\|+ \|\exp 2\imath\pi t\kappa_0(q)- \exp
2\imath\pi t q\|\\
&\lq& \eps(5+4e^{4\pi}).
\end{eqnarray*}
Let us set $$y:[0,1]:\lto M_n(A);\,t\mapsto z(t)-1-(1-t)\diag(I_k,0)\sum_{l=1}^{l_\eps}(2\imath\pi )^l/l!-t\sum_{l=1}^{l_\eps}(2\imath\pi q)^l/l!.$$
For some $\alpha_s\gq\alpha_\partial$, we get then that $1+y$ and
$z'$  are homotopic elements in $U_n^{\alpha_s\eps,k_{\partial,\eps}r}(\widetilde{SA})$. Using the semi-split filtered cross-section
$A\to CA;\,a\mapsto [t\mapsto ta]$ for the extension of equation (\ref{equation-bott-extension}), we get in 
view of the proof of proposition \ref{prop-bound},
$$\iota_1^{\alpha_\partial\eps,\alpha_s\eps,k_{\partial,\eps}r}\circ\partial_{SA,CA}^{\eps,r}([q,k]_{\eps,r})=[1+y]_{\alpha_s\eps,k_{\partial,\eps}r},$$
and thus we deduce  $$\iota_1^{\alpha_\partial\eps,\alpha_s\eps,k_{\partial,\eps}r}\circ\partial_{SA,CA}^{\eps,r}([q,k]_{\eps,r})=[z']_{\alpha_s\eps,k_{\partial,\eps}r}.$$
We get the result by using a homotopy of
unitaries in $M_n(\widetilde{SA})$ between
 $$t\mapsto \diag(e^{-2k\pi
  t},1,\ldots,1)$$ and $t\mapsto \exp (2\imath\pi\diag(-t
    I_k,I_{n-k}))$.
\end{proof}
The inverse of the suspension isomorphism is provided, up to  Morita
equivalence  by the  Toeplitz extension:
let us consider the unilateral shift $S$ on $\ell^2(\N)$, i.e the
operator defined on the canonical basis $(e_n)_{n\in\N}$ of $\ell^2(\N)$ by
$S(e_n)=e_{n+1}$ for all integer $n$. Then the Toeplitz algebra
$\T$ is the $C^*$-subalgebra of $\L(\ell^2(\N))$ generated by $S$.
The algebra of compact operators $\K(\ell^2(\N))$ is an ideal of  $\T$
and we get an extension  of $C^*$-algebras
$$0\to\K(\ell^2(\N))\to \T \stackrel{\rho}{\to} C(\S_1)\to 0,$$ called
the Toeplitz extension, where $\S_1$ denote the
unit circle. Let us define $\T_0=\rho^{-1}(C_0((0,1))$, where $C_0(0,1)
$ is viewed as a subalgebra of $C(\S_1)$. We obtain then an extension of  $C^*$-algebras
$$0\to\K(\ell^2(\N))\to \T _0\stackrel{\rho}{\to} C_0(0,1)\to 0.$$ For
any $C^*$-algebra $A$, we can tensorize this exact sequence to obtain an
extension
$$0\to\K(\ell^2(\N))\otimes A\to \T _0\otimes A{\to}SA\to 0$$ which is filtered and semi-split when $A$ is a filtered $C^*$-algebra.
\begin{proposition}\label{prop-toeplitz} There exists a control pair $(\lambda,h)$ such that
 $$\D^1_{\K(\ell^2(\N))\otimes A, \T_0\otimes A}\circ \ZZ_A\aeq \MM_A$$ for any unital filtered $C^*$-algebra $A$.
\end{proposition}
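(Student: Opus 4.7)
The plan is to compute the composition on a representative class $[q,k]_{\eps,r} \in K_0^{\eps,r}(A)$ with $q \in \P_n^{\eps,r}(A)$, and match it directly with $\MM_A([q,k]_{\eps,r})$. By definition of $\ZZ_A$ we have $\ZZ_A([q,k]_{\eps,r}) = [V_{q,k}]_{5\eps,r}$ with $V_{q,k}(t) = \diag(e^{-2\imath\pi kt},1,\ldots,1)(1-q+qe^{2\imath\pi t})$. I will lift $V_{q,k}$ to the Toeplitz--Bott element
\[
T := \diag(S^{*k},I_{n-1})(1-q+qS) \in M_n(\widetilde{\T_0\otimes A}),
\]
noting that $S-1$ and $S^{*k}-1$ lie in $\T_0$ with zero propagation on the $A$-factor, so $T-I_n$ has propagation at most $r$. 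Using $S^*S=1$, $SS^*=I-e_{00}$ and $\|q^2-q\|<\eps$, a direct computation gives $\|T^*T-I_n\|=O(\eps)$ and $\|TT^*-(I_n-\Pi_{q,k})\|=O(\eps)$, where $\Pi_{q,k}$ is an explicit almost projection in $M_n(\K(\ell^2(\N))\otimes A)$ built from $e_{00}\otimes q$ and the rank-$k$ truncation projection $I_n-S^kS^{*k}$.

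Next I feed $T$ into the construction of proposition \ref{prop-bound}. By lemma \ref{lem-k1-almost-group}, $V_{q,k}^*$ is an approximate inverse for $V_{q,k}$, so $\diag(V_{q,k},V_{q,k}^*)$ is homotopic to $I_{2n}$ in $\U_{2n}^{15\eps,2r}(\widetilde{SA})$. The Kasparov $2\times 2$ matrix trick produces a unitary lift of $\diag(V_{q,k},V_{q,k}^*)$ when $T$ is an exact partial isometry; in the almost-isometric case I replace the square roots $(I_n-TT^*)^{1/2}$ and $(I_n-T^*T)^{1/2}$ by polynomial truncations as in the proofs of lemma \ref{lem-lift} and lemma \ref{lemma-bound}, with truncation degree $n_\eps$ depending only on $\eps$. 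This furnishes an almost unitary $W \in \U_{2n}^{\lambda\eps,h_\eps r}(\widetilde{\T_0\otimes A})$ lifting $\diag(V_{q,k},V_{q,k}^*)$ up to $O(\eps)$, whose propagation is of the order $n_\eps r$. Forming $x := W\diag(I_n,0)W^*$ yields $x\approx\diag(TT^*,0)\approx\diag(I_n-\Pi_{q,k},0)$ within $O(\eps)$, and by the construction of $\partial^1_{\K\otimes A,\T_0\otimes A}$ in proposition \ref{prop-bound} the class $[x,n]_{O(\eps),O(r)}$ in $K_0^{O(\eps),O(r)}(\K(\ell^2(\N))\otimes A)$ represents $\partial^1([V_{q,k}]_{5\eps,r})$.

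It remains to identify $[x,n]$ with $\MM_A([q,k]_{\eps,r})$. Applying example \ref{example-homotopy} I will rewrite $[\diag(I_n-\Pi_{q,k},0),n]$ in the canonical form $[\Pi_{q,k},0]$ after a standard sign and stabilisation manipulation. An inspection of the explicit shape of $\Pi_{q,k}$ shows that its $e_{00}\otimes q$ block reproduces the Morita image of $q$, while the truncation block $I_n-S^kS^{*k}$ exactly absorbs the $k$-shift appearing in $V_{q,k}$; after collecting the control-expansion constants from every step one obtains a universal control pair $(\lambda,h)$ such that $\D^1_{\K\otimes A,\T_0\otimes A}\circ\ZZ_A \aeq \MM_A$. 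The main obstacle is constructing $W$ with controlled propagation: the Kasparov trick naively involves spectrally-defined square roots that destroy the filtration, and the remedy, already implemented in the paper for lemma \ref{lem-lift} and lemma \ref{lemma-bound}, is to substitute truncated Taylor polynomials whose degree $n_\eps$ tends to infinity as $\eps\to 0$, which is precisely what produces the control function $h_\eps$ in the final control pair. A secondary bookkeeping step is the sign reconciliation between the raw boundary output $[\diag(I_n-\Pi_{q,k},0),n]$ and the Morita representative, which is a direct application of example \ref{example-homotopy}.
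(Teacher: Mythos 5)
Your central computation fails as soon as $k\neq 0$, and this is where the argument breaks. With $T=\diag(S^{*k},I_{n-1})(1-q+qS)$ the factor $\diag(S^{*k},I_{n-1})$ is a proper co-isometry, and its defect lands in $I_n-T^*T$, not in $I_n-TT^*$: for instance with $n=1$, $q=0$, $k=1$ (the class $[0,1]_{\eps,r}$) one has $V_{0,1}(t)=e^{-2\imath\pi t}$, $T=S^*$, hence $T^*T=SS^*=1-(1-SS^*)$ so $\|T^*T-1\|=1$, while $I-TT^*=0$; so neither $\|T^*T-I_n\|=O(\eps)$ nor $\|TT^*-(I_n-\Pi_{q,k})\|=O(\eps)$ holds. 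In general the two defects of $T$ carry the $[q]$-part and the rank-$k$ part separately (and conjugation by $\diag(S^{*k},I_{n-1})$ even truncates part of the $q\otimes(1-SS^*)$ block), so a single positive almost projection $\Pi_{q,k}$ of the shape you describe cannot be $I_n-TT^*$ up to $O(\eps)$, and consequently $x=W\diag(I_n,0)W^*$ is not $O(\eps)$-close to $\diag(TT^*,0)$: the corner $I_n-T^*T$ and the off-diagonal blocks $T(I_n-T^*T)^{1/2}$ are not small. There is also a format-level obstruction in your dilation step even where the defect is of size $O(\eps)$: the square root (or any polynomial surrogate of it) of an element of norm $\lq 5\eps$ has norm of order $\sqrt{\eps}$, so the image of your $W$ in $M_{2n}(\widetilde{SA})$ is only $O(\sqrt{\eps})$-close to $\diag(V_{q,k},V_{q,k}^*)$; that is not an admissible lift in the sense of proposition \ref{prop-bound} and cannot yield a $(\lambda,h)$-controlled (linear-in-$\eps$) identification.

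For comparison, the paper's proof avoids functional calculus and truncations entirely, and in particular the control function $h$ does not arise from a degree $n_\eps\to\infty$ as you predict. By additivity one reduces to classes $[q,0]_{\eps,r}$ with $n=1$; the matrix $\left(\begin{smallmatrix}S&1-SS^*\\0&S^*\end{smallmatrix}\right)$ is an \emph{exact} unitary of $M_2(\T)$ lifting $z\mapsto\diag(z,\bar z)$ (because $1-SS^*$ is a genuine projection), so $W=\left(\begin{smallmatrix}S&1-SS^*\\0&S^*\end{smallmatrix}\right)\otimes q+I_2\otimes(1-q)$ is a $5\eps$-$r$-unitary lift of $\diag(z_q,z_q^*)$ with no propagation growth, and a one-line estimate using $\|q(1-q)\|<\eps$ shows $W^*\diag(1,0)W$ is $O(\eps)$-close to $\diag(1,(1-SS^*)\otimes q)$, i.e.\ to the Morita image of $[q,0]_{\eps,r}$; the resulting control pair has $h$ bounded. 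If you insist on treating $[q,k]$ directly via $V_{q,k}$, you must keep track of both defects of $T$ and replace the square-root corner by an (almost) projection-valued corner, at which point you have essentially reconstructed the paper's explicit lift.
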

\begin{proof}
 Let $q$ be a
$\eps$-$r$-projection in $M_n(A)$.  We can assume indeed  without loss of generality that $n=1$.The Toeplitz
extension is semi-split by the section induced by the completely
positive map $s:C(\S_1)\longrightarrow \T;\, f\mapsto M_f$,
where if $\pi_0$ stands for the projection $L^2(\S_1)\cong\ell^2(\Z)\to
l^2(\N)$, then $M_f$ is the composition
$$l^2(\N)\hookrightarrow \ell^2(\Z)\cong L^2(\S_1)\stackrel{
  f\cdot}{\to} L^2(\S_1)\stackrel{\pi_0}{\to}l^2(\N),$$ ($f\cdot$ being the pointwise multiplication by $f$). 
Notice first  that $\left(\begin{smallmatrix}S&1-SS^*\\0&S^*\end{smallmatrix}\right)$   is a unitary lift 
 of  $\S_1\to M_2(\C);\, z\mapsto diag(z,\bar{z})$ in $M_2(\T)$ under the homomorphism induced by $\rho:\T\to C(\S_1)$.
Under the section
induced by $s$, we see that
 $z_q$ lifts to $1\otimes (1-q)+ S\otimes q$,  and hence
$$
W=\begin{pmatrix}S&1-SS^*\\0&S^*\end{pmatrix}\otimes q+I_2\otimes(1-q)$$ is a lift in $\U_2^{5\eps,r}(\T_0\otimes A)$ of
$\diag(z_q,z_q^*)$. Since $\|q(1-q)\|<\eps$, we see that $W^*\diag(1,0)W$ is closed to
$$\begin{pmatrix}S^*&0\\1-SS^*&S\end{pmatrix}\begin{pmatrix}1&0\\0&0\end{pmatrix}
\begin{pmatrix}S&1-SS^*\\0&S^*\end{pmatrix}\otimes q^2+
\begin{pmatrix}1&0\\0&0\end{pmatrix}\otimes (1-q)^2.$$ 
Hence, $W^*\diag(1,0)W$ is an element of
$\P_2^{10\eps,2r}(\T_0\otimes A)$ which is closed 
to
$\diag(1, (1-SS^*)\otimes q)$.
Since $$\mathcal{M}_A([q,0]_{\eps,r})=[\diag(0, (1-SS^*)\otimes
q)]_{\eps,r},$$ we get the existence of a positive real $\alpha_t$ such
that the proposition holds.
\end{proof}
\subsection{Long exact sequence}
We follow the route of \cite[Sections 6.3, 7.1 and 8.2]{we} to state for semi-split extensions of filtered $C^*$-algebras $(\lambda,h)$-exact long
exact sequences in quantitative $K$-theory, for some universal control pair $(\lambda,h)$.
\begin{proposition}\label{prop-half-exact} There exists a control pair $(\lambda,h)$ such that  for any   semi-split
extension  of filtered $C^*$-algebras   $$0  \longrightarrow J
\stackrel{\jmath}{\longrightarrow} A \stackrel{q}{\longrightarrow}
A/J\longrightarrow 0,$$ the composition
$$\K_*(J)\stackrel{j_*}{\to}\K_*(A)\stackrel{q_*}{\to}\K_*(A/J)$$ is $(\lambda,h)$-exact at $\K_*(A)$.
\end{proposition}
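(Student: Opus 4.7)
My plan is to verify the two components of half-exactness: first, the controlled vanishing $q_*\circ j_*\aeq 0$ of the composition, and second, the lifting property that any class annihilated by $q_*^{\eps,r}$ comes, after relaxation of controls, from $\K_*(J)$ via $j_*$. The vanishing is essentially immediate from the identification $\tilde J=J+\C\cdot 1_{\tilde A}$: for $x\in\P^{\eps,r}_n(\tilde J)$ or $\U^{\eps,r}_n(\tilde J)$, the image $q(x)$ is a scalar matrix in $\M_n(\C)$, which Lemmas \ref{lem-hom-proj} and \ref{lem-hom-uni} connect to $\diag(I_l,0)$ or $I_n$ through paths of the same propagation.

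For the lifting property, the general strategy is to use the controlled conjugation of Corollary \ref{cor-conjugate} or the controlled lifting of Lemma \ref{lem-lift} to transport the vanishing of $q_*^{\eps,r}$ from $A/J$ back to $\tilde A$, and then to apply the completely positive filtered cross section $s\colon A/J\to A$ to surgically modify the resulting element so that it lies in $\tilde J$. In the even case, the hypothesis that $[p,l]_{\eps,r}$ vanishes in $K_0^{\eps,r}(A/J)$ gives, after stabilization, a homotopy of $\eps$-$r$-projections from $\diag(q(p),I_k)$ to $\diag(0_n,I_{k+l})$. Corollary \ref{cor-conjugate} then furnishes, up to further block-stabilization, an almost-unitary $W'$ over $\widetilde{A/J}$ that conjugates one to the other. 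A look at the explicit construction $W_i'=z'/2\sum a_k(\cdots)^k$ in Lemma \ref{lem-conjugate} shows that $W'$ lies in the neutral component of its almost-unitary group (degenerating the homotopy to a constant deforms $W'$ continuously to the identity), so Lemma \ref{lem-lift} lifts $W'$ to $W$ in $\tilde A$ with $q(W)$ close to $\diag(W',I_j)$. Setting $\tilde p:=W^*\diag(p,I_k,I_{k_1},0_{l_1},0_j)W$ (an almost-projection by Lemma \ref{lem-conjugate-proj}), the correction
\[
\hat p:=\tilde p-s\bigl(q(\tilde p)-\diag(0_n,I_{k+l},I_{k_1},0_{l_1},0_j)\bigr)
\]
lies in $\M_N(\tilde J)$ and is an almost-projection of controlled parameters (by Lemma \ref{lemma-almost-closed}); its class in $K_0^{\lambda\eps,h_\eps r}(J)$ is mapped under $j_*$ to $\iota_0^{\eps,\lambda\eps,r,h_\eps r}([p,l]_{\eps,r})$, using Lemma \ref{lem-conjugate-proj}(ii) to cancel the conjugation by $W$ up to stabilization.

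The odd case is more direct. Vanishing of $[u]_{\eps,r}$ in $K_1^{\eps,r}(A/J)$ means $q(u)$ is homotopic to the identity after stabilization, so Lemma \ref{lem-lift} directly produces a lift $W$ in $\tilde A$, in the neutral component, with $q(W)$ close to $\diag(q(u),I_j)$. Then $v:=\diag(u,I_j)W^*$ is an almost-unitary with $q(v)$ close to $I_{n+j}$, and the correction $v':=v-s(q(v)-I_{n+j})$ satisfies $v'-I_{n+j}\in\M_{n+j}(J)$, so $v'\in\U^{\lambda\eps,h_\eps r}_{n+j}(\tilde J)$. Since $W$ sits in the neutral component, Lemma \ref{lem-k1-almost-group} yields $[v']=[\diag(u,I_j)]=\iota_1^{\eps,\lambda\eps,r,h_\eps r}([u]_{\eps,r})$ in $K_1^{\lambda\eps,h_\eps r}(A)$.

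The hard part will be the bookkeeping of controls: each invocation of Lemmas \ref{lem-lift}, \ref{lem-conjugate-proj}, \ref{lemma-almost-closed} and Corollary \ref{cor-conjugate} multiplies $\eps$ by a fixed constant and $r$ by a function of $\eps$, and I need to check that all these compositions fit into a single uniform control pair $(\lambda,h)$ independent of the extension. The conceptually most delicate point is verifying that the almost-unitary $W'$ produced by Corollary \ref{cor-conjugate} lies in the neutral component of the unitary group, which is what permits the use of Lemma \ref{lem-lift}; this requires unwinding the Lemma \ref{lem-conjugate} construction and using that a constant homotopy of almost-projections yields the identity almost-unitary, so that the continuous dependence on the homotopy provides the required path to $I$.
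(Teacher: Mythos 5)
Your proposal is correct and follows the paper's own strategy: reduce to $A$ unital, use the vanishing hypothesis together with Corollary \ref{cor-conjugate} (even case) and Lemma \ref{lem-lift} to produce a lift of the conjugating (or trivializing) almost-unitary in $A$, conjugate or multiply the given almost-projection/almost-unitary by it, push the result into $M_N(\tilde{J})$ with the completely positive filtered section $s$, and conclude via Lemmas \ref{lemma-almost-closed} and \ref{lem-conjugate-proj}. The only divergence is in the even case: where you argue that the conjugating element $W'$ from Corollary \ref{cor-conjugate} is itself nullhomotopic (your degeneration argument does work, at the cost of a worse control pair), the paper instead lifts $\diag(W,W^*)$, which is automatically homotopic to the identity by the rotation trick underlying Lemma \ref{lem-k1-almost-group}, so Lemma \ref{lem-lift} applies with no neutral-component analysis.
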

\begin{proof}
We can assume without loss of generality that $A$ is unital.
In the even case, let $y$ be an element of $K_0^{\eps,r}(A)$ such that
$q_*(y)=0$ in
$K_0^{\eps,r}(A/J)$, let $e$ be an $\eps$-$r$-projection in $M_n(A)$ and let $l$ be a
positive  integer such that $y=[e,k]_{\eps,r}$. Up to stabilization, we
can assume that $k\lq n$ and that  $q(e)$ is
homotopic to $p_k=\diag(I_k,0)$ as an  $\eps$-$r$-projection in
$M_n(A/J)$. According to corollary  \ref{cor-conjugate}, there exists
 up to stabilization a
$\alpha_h\eps$-$k_{h,\eps}r$-unitary $W$ of $M_n(A/J)$ such that
$$\|Wq(e)W^*-p_k\|\lq\alpha_h\eps.$$ The  $3\alpha_s\eps$-$2k_{h,\eps} r$-unitary  $\diag(W,W^*)$ of $M_{2n}(A/J)$ is
homotopic to $I_{2n}$. Let choose as in lemma  \ref{lem-lift}, a control pair $(\alpha,l)$,
an  integer $j$  and a
$\alpha\eps$-$l_\eps r$-unitary $V$ of $M_{2n+j}(A)$ such that
$$\|q(V)-\diag(W,W^*,I_{k+j})\|\lq\alpha\eps.$$ If we set
$e'=V\diag(e,0)V^*$, then $e'$ is a
$4\alpha\eps$-$2l_\eps r$-projection in $M_{2n+j}(A)$. If $s:A/J\to A$
is a semi-split filtered cross-section such that $s(1)=1$,  define $f=e'-s\circ q(e'-\diag(I_n,0))$. We see that $f$
belongs to $M_{2n+j}({J^+})$ and moreover, since $\|f-e'\|\lq
(4\alpha+\alpha_h)\eps$, then according to lemma \ref{lemma-almost-closed}, $f$ is for a
suitable $\lambda$ a
$\lambda\eps$-$2l_\eps r$-projection of  $M_{2n+k}({J^+})$ homotopic to $e'$. Then $x=[f,k]_{\lambda\eps,2l_\eps r}$ defines
a class in $K_0^{\lambda\eps,2l_\eps r}(J)$. As in the proof of  (ii) of lemma \ref{lem-conjugate-proj}   we can 
choose $\lambda$ big enough  so that  $\diag(e',I_{2n+j})$ and $\diag(e,0,I_{2n+j})$ are homotopic
$\lambda\eps$-$2k_{h,\eps} r$-projections  of
$M_{2n}(A)$ and hence we get the result in the even case.

For the odd case, let $y$ be an element in  $K_1^{\eps,r}(A)$ such
that $q_*(y)=0$ in $K_1^{\eps,r}(A/J)$ and let us choose an
$\eps$-$r$-unitary $V$ in some $M_n(A)$ such that $y=[V]_{\eps,r}$. In
view of lemma \ref{lem-lift} and  up to enlarge
the size of the matrix $V$, we can assume that $\|q(V)-q(W)\|\lq\alpha_e\eps$ with $W$ a
$\alpha_e\eps$-$k_{e,\eps} r$-unitary of $M_n(A)$ homotopic to $I_n$. Hence
$W^*V$ and $V$ are homotopic  $3\alpha_e\eps$-$(k_{e,\eps}+1)r$-unitary of $M_n(A)$.
If we
set
$$U=W^*V+s\circ q(I_n-W^*V),$$ then the coefficients of the matrix
$U-I_n$  lie  in $J$. Moreover, since $$\|U-W^*V\|\lq (2\alpha_e+1)\eps,$$ we
obtain that $U$ is a $\lambda\eps$-$(l_\eps+1)r$-unitary for some
$\lambda\gq 1$. Hence,  $x=[U]_{\lambda\eps, (k_{e,\eps}+1)r}$ defines a
class in $K_1^{\lambda\eps,(k_{e,\eps}+1)r}(J)$ with the required property.
\end{proof}
\begin{proposition}\label{prop-exact-boundary} There exists  a control pair
$(\lambda,h)$  such that for any   semi-split extension of filtered $C^*$-algebras
   $$0  \longrightarrow J
\stackrel{\jmath}{\longrightarrow} A \stackrel{q}{\longrightarrow}
A/J\longrightarrow 0,$$  the composition
$$\K_1(A)\stackrel{q_*}{\to}\K_1(A/J)\stackrel{\DD^1_{J,A}}{\to}\K_0(J)$$ is $(\lambda,h)$-exact at $\K_1(A/J)$.
\end{proposition}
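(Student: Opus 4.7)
The plan is to mimic the standard exact-$K$-theory argument (\cite[Chapter 8]{we}) in the spirit of Proposition~\ref{prop-half-exact}: given $[u]_{\eps,r} \in K_1^{\eps,r}(A/J)$ with vanishing boundary, I will produce a quasi-unitary in some $M_N(A)$ whose image under $q$ represents $[u]_{\eps,r}$ after relaxation of the control pair. Assume $A$ unital (else unitize). Following the construction in Proposition~\ref{prop-bound}, fix the quasi-unitary lift $w \in \U^{3\alpha_e\eps, 2k_{e,3\eps}r}_{2n+j}(A)$ of $\diag(u, u^*, I_j)$, the quasi-projection $x = w\diag(I_n, 0_{n+j})w^*$, and the correction $f = h + \diag(I_n, 0_{n+j}) \in M_{2n+j}(\tilde J)$ satisfying $\|f - x\| \leq 9\alpha_e\eps$; the class $\DD^1_{J,A}([u]_{\eps,r})$ is $[f, n]$ in the appropriate $K_0^{\alpha_\DD \eps, k_{\DD,\eps} r}(J)$.

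Triviality $[f,n] = 0$ gives, via the definition of the equivalence relation and appropriate padding, a homotopy of quasi-projections in $J^+$ between $\diag(f, I_{k+n})$ and $\diag(I_{2n+k}, 0_{n+j})$ for some integer $k$. Corollary~\ref{cor-conjugate} then produces, after further stabilization by $\diag(\cdot, I_{k'}, 0_{l'})$, a quasi-unitary $W \in M_M(J^+)$ with $\|W\tilde q_1 W^* - \tilde q_0\| = O(\eps)$, where $\tilde q_1 := \diag(I_{2n+k}, 0_{n+j}, I_{k'}, 0_{l'})$ and $\tilde q_0 := \diag(f, I_{k+n}, I_{k'}, 0_{l'})$ are of common size $M$. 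By further stabilizing $w$ to ensure $j > k$, the permutation $P \in M_M(\C)$ swapping $\{n+1, \ldots, 2n+k\}$ with $\{2n+j+1, \ldots, 3n+j+k\}$ is well-defined and satisfies $P\tilde q_1 P^* = \tilde p$, where $\tilde p := \diag(I_n, 0_{n+j}, I_{k+n}, I_{k'}, 0_{l'})$ obeys $\tilde w \tilde p \tilde w^* = \tilde x := \diag(x, I_{k+n}, I_{k'}, 0_{l'})$ with $\tilde w := \diag(w, I_{M-2n-j})$. Using $\|\tilde q_0 - \tilde x\| \leq 9\alpha_e\eps$ and the quasi-unitarity of $\tilde w$, the quasi-unitary $U := P^* \tilde w^* W \in M_M(A)$ satisfies $U\tilde q_1 U^* \approx \tilde q_1$ with error $O(\eps)$.

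The identity $[U, \tilde q_1] = -(U\tilde q_1 U^* - \tilde q_1)U + U\tilde q_1(U^*U - I)$ combined with $\|UU^* - I\| = O(\eps)$ gives $\|[U, \tilde q_1]\| = O(\eps)$, so the off-diagonal blocks of $U$ with respect to $\tilde q_1$ have norm $O(\eps)$. The block truncation $U' := \tilde q_1 U \tilde q_1 + (I-\tilde q_1)U(I-\tilde q_1)$ is therefore a quasi-unitary $O(\eps)$-close to $U$ (with a slightly worsened control pair), and after rearranging coordinates it decomposes as $\diag(Y_{11}, Y_{22})$ with $Y_{11} \in \U^{\lambda_1\eps, h_{1,\eps}r}_{2n+k+k'}(A)$ supported on the range of $\tilde q_1$. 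Writing $\rho_J(W) = P V$ where $V \in M_M(\C)$ is a scalar quasi-unitary approximately commuting with $\tilde q_1$ (from $\rho_J(W)\tilde q_1 \rho_J(W)^* \approx \tilde p = P\tilde q_1 P^*$), a direct computation using $P^* \diag(u^*, u, I_{M-2n}) P = \diag(u^*, I_{n+j}, u, I_{k+k'+l'})$ (which commutes exactly with $\tilde q_1$) gives $q(Y_{11}) \approx \diag(u^*, I_{n+k+k'}) \cdot V|_{\mathrm{range}(\tilde q_1)}$ up to $O(\eps)$. Since $V|_{\mathrm{range}(\tilde q_1)}$ is scalar and $K_1^{\eps',r'}(\C) = 0$, one concludes $[q(Y_{11}^*)] = [u]$ in $K_1^{\lambda\eps, h_\eps r}(A/J)$ for a universal control pair $(\lambda, h)$, so $[Y_{11}^*]_{\lambda\eps, h_\eps r}$ is a preimage of $\iota_1^{\eps, \lambda\eps, r, h_\eps r}([u]_{\eps,r})$ under $q_*$. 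The main technical difficulty is the bookkeeping: the triviality relaxation, commutator estimate, block truncation, stabilization used to achieve $j > k$, and the absorption of the scalar factor $V$ each contribute a universal multiplicative factor in $\eps$ and additive factor in $r$, which must compose to a single universal $(\lambda, h)$ depending only on $(\alpha_e, k_e)$, $(\alpha_h, k_h)$, and $(\alpha_\DD, k_\DD)$.
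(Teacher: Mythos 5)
Your argument is correct and is essentially the paper's own proof: you unwind the vanishing of $\DD^1_{J,A}([u]_{\eps,r})$ into a stabilized homotopy of quasi-projections over $\widetilde{J}$, apply corollary \ref{cor-conjugate} to produce an almost-implementing quasi-unitary over $\widetilde{J}$, combine it with the lift $w$ of lemma \ref{lem-lift} to get an element that almost commutes with a scalar projection, and compress to a corner whose image under $q$ represents $[u]$ after a controlled relaxation. The only divergence is bookkeeping: the paper pre-corrects the scalar part (multiplying by $\rho_J(V')$) and compresses to the $n\times n$ corner, whereas you carry the scalar factor along and dispose of it at the end via lemma \ref{lem-hom-uni}; both routes lead to the same universal control pair.
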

\begin{proof}
We can assume without loss of generality that $A$ is unital.
Let $y$ be an element of $K_1^{\eps,r}(A/J)$ such that $\partial_{J,A}^{\eps,r}(y)=0$ in
$K_0^{\alpha_\partial\eps,k_{\partial,\eps} r}(A/J)$ and let $U$ be an $\eps$-$r$-unitary
of $M_n(A/J)$ such that $y=[U]_{\eps,r}$. With notation of lemma
\ref{lem-lift}, let $j$ be an integer and $W$ be a $3\alpha_e\eps$-$2k_{e,3\eps}
r$-unitary in $M_{2n+j}(A)$ such that $$\|q(W)-\diag(U,U^*,I_j)\|\lq
\alpha\eps.$$ Set $x=W\diag(I_n,0)W^*$ and $h=x-\diag(I_n,0)-s\circ q(x-\diag(I_n,0)$ as in the proof of proposition
\ref{prop-bound}.  Since $\partial_{J,A}^{\eps,r}(y)=0$, we can up to take a larger $n$ assume that
$h+\diag(I_n,0)$ is homotopic to $\diag(I_n,0)$ as an $\alpha_\DD\eps$-$k_{\DD,\eps}r$-projection of $M_{2n+j}(\tilde{J})$. Since
$x$ is close to $h+\diag(I_n,0)$, we get from   corollary \ref{cor-conjugate} that up to take a larger $j$,
there exists  for a control pair $(\alpha,l)$, depending only on the control pairs $(\alpha_h,k_h)$ and $(\alpha_\D,k_\D)$ of corollary \ref{cor-conjugate}
and lemma \ref{lemma-bound},
 an $\alpha\eps$-$l_\eps r$-unitary $V'$ in
$M_{2n+j}(\widetilde{J})$ such that
$$|W\diag(I_n,0)W^*-V'\diag(I_n,0)V'^*\|\lq\alpha\eps.$$
Then $V=\rho_J(V')V'^{-1}W^*$ is a
$10(\alpha+\alpha_e)\eps$-$(l_\eps+k_{e,\eps})r$-unitary  in $M_{2n+j}(A)$ such that
 $$\|q(V)-\diag(U,U^*,I_j)\|\lq
\alpha\eps.$$
Since for a suitable constant $\alpha'$ depending only on $\alpha$ we have
$$\|\rho_J(V')\diag(I_n,0)\rho_J({V'^*})-\diag(I_n,0)\|\lq\alpha'\eps,$$
we obtain that
$$\|V\diag(I_n,0)V^*-\diag(I_n,0)\|\lq\alpha''\eps$$ and $$\|V^*\diag(I_n,0)V-\diag(I_n,0)\|\lq\alpha''\eps$$for
some  constant $\alpha''$ depending only on $\alpha'$ that we can choose indeed larger than $(10\alpha+\alpha_e)$. Hence
the $n\times n$-left upper corner $X$ of $V$ is a
$\alpha''\eps$-$(l_\eps+l'_\eps)r$-unitary  in
$M_{n}(A)$ such that $\|q(X)-U\|\lq\alpha''\eps$. Hence
we get the result.
\end{proof}
\begin{proposition}\label{prop-exact-range-bound} There exists  a control pair
$(\lambda,h)$  such that for any   semi-split
extension of   filtered $C^*$-algebras $$0  \longrightarrow J
\stackrel{\jmath}{\longrightarrow} A \stackrel{q}{\longrightarrow}
A/J\longrightarrow 0,$$  the composition
$$\K_1(A/J)\stackrel{\DD^1_{J,A}}{\to}\K_0(J)\stackrel{\jmath_*}{\to}\K_0(A)$$ is $(\lambda,h)$-exact in $\K_0(J)$.
\end{proposition}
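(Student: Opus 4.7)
The goal is to show that for $z\in K_0^{\eps,r}(J)$ with $\jmath_*^{\eps,r}(z)=0$ in $K_0^{\eps,r}(A)$, there exists $y\in K_1^{\lambda\eps,h_\eps r}(A/J)$ such that $\partial^{1,\lambda\eps,h_\eps r}_{J,A}(y)=\iota_*(z)$ for some universal control pair $(\lambda,h)$. That $\jmath_*\circ\DD^1_{J,A}=0$ up to controls follows directly from the construction of $\partial^1_{J,A}$ in Proposition~\ref{prop-bound}: if $y=[U]_{\eps,r}$ and $w\in M_{n+j}(\tilde{A})$ is an almost-unitary lift of $\diag(U,U^*)$, then the representative $[w\diag(I_n,0)w^*,n]$ of $\jmath_*\circ\partial^{1,\eps,r}_{J,A}[U]_{\eps,r}$ in $K_0^{\cdot,\cdot}(A)$ is almost unitarily conjugate via $w$ to the projection $\diag(I_n,0)$, so Lemma~\ref{lem-conjugate-proj} forces this class to vanish after stabilization.

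For the nontrivial containment, we mirror the strategy of Proposition~\ref{prop-exact-boundary}. Write $z=[e,l]_{\eps,r}$ with $e\in\P_n^{\eps,r}(\tilde{J})$. After enlarging controls via Remark~\ref{rem-hom}, we may assume that $\diag(e,I_k)$ and the scalar projection $p_0:=\diag(0_{n-l},I_{l+k})\in M_{n+k}(\C)$ are homotopic $\eps'$-$r'$-projections in $M_{n+k}(\tilde{A})$. Corollary~\ref{cor-conjugate} then supplies an almost unitary $W$ in $M_{n+k}(\tilde{A})$ with $\|W\cdot\diag(e,I_k)\cdot W^*-p_0\|\leq\alpha_h\eps'$, or equivalently $W^*p_0 W\approx\diag(e,I_k)$ in $M_{n+k}(\tilde{J})$ up to the same error.

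The preimage $y$ is extracted from $q(W)$: choose a scalar unitary $V\in\U_{n+k}(\C)$ coming from the spectral decomposition of $\kappa_0(\rho_J(e))$ so that $V\cdot q(\diag(e,I_k))\cdot V^*\approx p_0$. Then $U:=q(W)V^*$ satisfies $Up_0 U^*\approx p_0$, hence $U$ approximately commutes with $p_0$, and by Lemma~\ref{lemma-almost-closed} is close to a block-diagonal almost unitary $\diag(U_1,U_2)$ with respect to the partition $(n-l)+(l+k)$, where $U_2$ is an almost unitary in $M_{l+k}(\widetilde{A/J})$; we set $y:=[U_2^*]_{\lambda\eps,h_\eps r}$.

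To verify $\partial^1_{J,A}(y)=\iota_*(z)$ up to controls, we apply the construction of $\partial^1_{J,A}$ from Proposition~\ref{prop-bound} with the lift $w:=\pi\cdot W^*$ in $M_{n+k}(\tilde{A})$, where $\pi\in\U_{n+k}(\C)$ is a permutation taking $p_0$ to $\diag(I_{l+k},0_{n-l})$. The associated almost projection $x=w\cdot\diag(I_{l+k},0)\cdot w^*$ equals $W^*p_0 W\approx\diag(e,I_k)$, so after the small $J$-adjustment prescribed in the construction we obtain the class $[\diag(e,I_k),l+k]=[e,l]=z$ in $K_0^{\cdot,\cdot}(J)$. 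The principal obstacle is the detailed bookkeeping: aligning the $(n-l)+(l+k)$ block partition with $V$, accommodating the homotopy-to-identity requirement for $\diag(U_2^*,v)$ in the $\partial^1$ construction by further diagonal stabilization with a $\diag(U_2,U_2^*)$ factor via Lemma~\ref{lem-k1-almost-group}, and checking that the scalar-unitary conjugations (lying in $M_{n+k}(\C)\subset M_{n+k}(\tilde{J})$) do not alter the $K_0^{\cdot,\cdot}(J)$-class. The universal $(\lambda,h)$ arises by composing the control pairs of Corollary~\ref{cor-conjugate}, Lemma~\ref{lemma-almost-closed}, Proposition~\ref{prop-bound}, and Remark~\ref{rem-hom}.
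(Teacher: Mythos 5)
Your argument is essentially the paper's own proof: from $\jmath_*^{\eps,r}(z)=0$ you extract, after stabilization, a homotopy of almost projections between $\diag(e,I_k)$ and a scalar projection, Corollary \ref{cor-conjugate} produces the almost unitary $W$, the appropriate corner of $q(W)^*$ (your $U_2^*$) is the preimage, and the identity $\partial^1_{J,A}[U_2^*]=\iota_*(z)$ is checked by feeding $W^*$ back into the explicit construction of Proposition \ref{prop-bound}; the differences are cosmetic (you carry the scalar unitary $V$ instead of normalizing $\|q(e)-p_k\|\lq 2\eps$ at the outset, and you stabilize by $\diag(U_2,U_2^*)$ where the paper replaces $W$ by $\diag(W,W^*)$ to meet the homotopy-to-identity requirement). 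One repair in your last step: with $\pi p_0\pi^*=\diag(I_{l+k},0)$ the lift must be $w=W^*\pi^*$ rather than $\pi\cdot W^*$ (only then does $w\,\diag(I_{l+k},0)\,w^*$ equal $W^*p_0W$), and the scalar factor $V$ must be absorbed into the lift (or removed by normalizing $q(e)$ first) so that $q(w)$ is genuinely close to the block-diagonal $\diag(U_2^*,v)$ demanded by the boundary recipe---precisely the bookkeeping the paper's initial normalization and doubling of $W$ dispose of.
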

\begin{proof}It is enough to prove the result for $A$ unital.  Let $y$ be an element of $K_0^{\eps,r}(J)$ such that
$\jmath^{\eps,r}_*(y)=0$ in
$K_0^{\eps,r}(A)$,  let $e$ be an $\eps$-$r$-projection in $M_n({J^+})$ and $k$ be a
positive  integer such that $y=[e,k]_{\eps,r}$. If we set
$p_k=\diag(I_k,0)$, we can indeed assume without
loss of generality that $\|q(e)-p_k\|\lq 2\eps$ (where
${J^+}$ is viewed as a subalgebra of $A$).  Up to
stabilization, we can also assume that  $e$ is
homotopic to $p_k$ as an  $\eps$-$r$-projection in
$M_n(A)$. According to corollary  \ref{cor-conjugate},
there exists up to stabilization a
$\alpha_h\eps$-$k_{h,\eps} r$-unitary $W$ of $M_n(A)$ such that
$$\|e-Wp_kW^*\|\lq\alpha_h\eps.$$ Up to replace $n$ by
  $2n$, $W$ by
  $\diag(W,W^*)$ and  $e$ by $\diag(e,0)$, we can assume that $W$ is a
  $3\alpha_h\eps$-$2k_{h,\eps} r$-unitary  homotopic to $I_n$. Since
\begin{eqnarray*}
\|q(W)p_k q(W^*)-p_k\|&\lq&
\|q(W)p_kq(W^*)-q(e)\|+\|q(e)-p_k\|\\
&<& (2+\alpha_h)\eps,
\end{eqnarray*}  then
$$\|q(W^*)p_k q(W)-p_k\|< (2+4\alpha_h)\eps.$$ Hence for an $\alpha'>1$ depending only on $\alpha_h$,
the left-up $n\times n$ corner $V_1$ and the right bottom corner $V_2$
 of
$q(W)$ are   $\alpha'\eps$-$2k_{e,\eps} r$-unitaries of
$M_n({A/J})$ such that
$$\|q(W)q(W^*)-\diag(V_1,V_2)\diag(V_1,V_2)^*\|<(\alpha_h+\alpha')\eps$$ and
$$\|q(W^*)q(W)-\diag(V_1,V_2)^*\diag(V_1,V_2)\|<(\alpha_h+\alpha')\eps.$$ Hence $q(W)$ is close to $\diag(V_1,V_2)$ and hence
there is a $\lambda>1$ depending only on $\alpha_e$ such that as a
$\lambda\eps$-$2k_{h,\eps} r$-unitary of $M_n(A/J)$, then
$\diag(V_1,V_2)$ is homotopic to $q(W)$ and hence to $I_n$. We can indeed choose $\lambda$ big enough
such that if we set  $x=[V_1]_{\lambda\eps,2k_{e,\eps} r}$,
then \begin{eqnarray*}
\partial_{J,A}^{\lambda\eps,2k_{e,\eps}
  r}(x)&=&[e,k]_{\lambda\alpha_\partial\eps,k_{\partial,\alpha\eps}2k_{e,\eps r}}\\
&=&\iota_*^{\eps,r,\lambda\eps,2k_{e,\eps} r}(y).
\end{eqnarray*}
\end{proof}
From propositions \ref{prop-half-exact}, \ref{prop-exact-boundary} and \ref{prop-exact-range-bound}
 we can derive the analogue of the long exact
sequence in $K$-theory.
\begin{theorem}\label{th-long-exact-sequence}
 There exists  a control pair
$(\lambda,h)$  such that for any   semi-split
extension of filtered $C^*$-algebras   $$0  \longrightarrow J
\stackrel{\jmath}{\longrightarrow} A \stackrel{q}{\longrightarrow}
A/J\longrightarrow 0,$$  the sequence
$$\K_1(J)\stackrel{\jmath_*}{\longrightarrow}\K_1(A)\stackrel{q_*}{\longrightarrow}
\K_1(A/J)\stackrel{\DD_{J,A}}{\longrightarrow}\K_0(J)\stackrel{\jmath_*}{\longrightarrow}\K_0(A)\stackrel{q_*}{\longrightarrow}\K_0(A/J)$$
is $(\lambda,h)$-exact.
\end{theorem}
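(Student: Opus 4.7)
My plan is to assemble the six-term controlled exact sequence by combining the three preceding propositions, each of which handles exactness at one kind of position. Denote by $(\lambda_i,h_i)$ for $i=1,2,3$ the control pairs from Propositions \ref{prop-half-exact}, \ref{prop-exact-boundary}, and \ref{prop-exact-range-bound}, and let $(\lambda,h)$ be any coordinatewise upper bound in the partial order on control pairs. With this $(\lambda,h)$, the kernel-is-in-image half of exactness at each of the four interior positions is immediate: Proposition \ref{prop-half-exact} handles $\K_0(A)$ and $\K_1(A)$ (its statement and proof are parity-insensitive, so they apply in both degrees), Proposition \ref{prop-exact-boundary} handles $\K_1(A/J)$, and Proposition \ref{prop-exact-range-bound} handles $\K_0(J)$.

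The remaining task is to check that the three compositions $q_*\circ\jmath_*$, $\DD_{J,A}\circ q_*$, and $\jmath_*\circ\DD_{J,A}$ vanish as controlled morphisms. For $q_*\circ\jmath_*$, I would observe that for $p$ in $\pe(\tilde J)$, the composite $\tilde q\circ\tilde\jmath$ sends $p$ to the scalar matrix $\rho_J(p)\in \M_n(\C)\subset \M_n(\widetilde{A/J})$, whose class in $K_0^{\eps,r}(A/J)$ is zero by Lemma \ref{lem-hom-proj}; the odd case is handled analogously via Lemma \ref{lem-hom-uni}. For $\DD_{J,A}\circ q_*$, I would perform the construction of $\partial_{J,A}^{\eps,r}$ from the proof of Proposition \ref{prop-bound} on $q_*[u]_{\eps,r}=[q(u)]_{\eps,r}$ with the tautological choice $v=u^*$ and unitary lift $w=\diag(u,u^*)$ in $\M_{2n}(A)$; this makes the element $h=x-\diag(I_n,0)-s\circ q(x-\diag(I_n,0))$ norm-small, so $h+\diag(I_n,0)$ is connected to $\diag(I_n,0)$ by a ray that is a controlled homotopy by Lemma \ref{lemma-almost-closed}, yielding the trivial class. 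For $\jmath_*\circ\DD_{J,A}$, the same construction shows that under $\jmath_*$ the image $[h+\diag(I_n,0),n]$ in $K_0(A)$ is close in norm to $[w\diag(I_n,0)w^*,n]$, which Lemma \ref{lem-conjugate-proj} identifies with the trivial class $[\diag(I_n,0),n]=0$.

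The main, rather modest, obstacle is the uniformity of the resulting control pair: each of the three composition-vanishing calculations introduces its own expansion of norm and propagation, and each of the three propositions supplies its own control pair. To present everything as $(\lambda,h)$-exactness for a single $(\lambda,h)$ one takes one further coordinatewise upper bound in the set of control pairs, which is routine since the partial order on control pairs admits finite suprema. No substantively new idea is required beyond what has been developed in Sections 2--3, and the conclusion is that the assembled six-term sequence is $(\lambda,h)$-exact for a universal control pair $(\lambda,h)$ depending only on the control pairs already established in those sections.
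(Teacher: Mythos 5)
Your proposal is correct and takes essentially the same route as the paper, which obtains Theorem \ref{th-long-exact-sequence} precisely by assembling Propositions \ref{prop-half-exact}, \ref{prop-exact-boundary} and \ref{prop-exact-range-bound} (the first covering both degrees at $\K_*(A)$) and passing to a common control pair. Your extra verification that the three compositions $q_*\circ\jmath_*$, $\DD_{J,A}\circ q_*$ and $\jmath_*\circ\DD_{J,A}$ vanish is sound but is already implicit in the $(\lambda,h)$-exactness asserted by those propositions.
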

As a consequence, using the exact sequence
\begin{equation}
\label{equ-exact-sequence}
0\to SA\to CA\to A\to 0,
\end{equation} and in view of lemma \ref{lem-contractible} and point (iii) of remark \ref{rem-inj-surj-inv},
we deduce  in the setting of the semigroup
$K_*^{\eps,r}(\bullet)$ the analogue of the suspension isomorphism in $K$-theory.
\begin{corollary}\label{cor-suspension} Let $\DD^1_{A}=\DD^1_{SA,CA}:\K_1(A)\to\K_0(SA)$  be the controlled boundary
morphism associated to the semi-split and filtered  extension of equation (\ref{equ-exact-sequence}) for a filtered $C^*$-algebra $A$.
 \begin{itemize}
 \item There exists a control pair $(\lambda,h)$ such that
for any filtered $C^*$-algebra $A$, then $\DD^1_{A}$ is $(\lambda,h)$-invertible.
\item  Moreover,
we can choose a $(\lambda,h)$-inverse which is natural: there exists a control pair
 $(\alpha_\beta,k_\beta)$ and for any filtered $C^*$-algebra $A$  a  $(\lambda,h)$-controlled morphism
$\BB^0_A=(\beta_A^{\eps,r})_{0<\eps<\frac{1}{4\alpha_\beta},r>0}:\K_0(SA)\to\K_1(A)$  which is an
$(\lambda,h)$-inverse for $\DD^1_{A}$ and such that
$\BB^0_B\circ f_S= f\circ  \BB^0_A$  for any homomorphism $f:A\to B$ of filtered $C^*$-algebras,
where $f_S:SA\to SB$ is the suspension of
the homomorphism $f$.\end{itemize}
\end{corollary}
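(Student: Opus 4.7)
The plan is to combine the controlled six-term exact sequence of Theorem~\ref{th-long-exact-sequence} with the contractibility of the cone, and then replace the abstract inverse so obtained with an explicit natural formula coming from the Toeplitz extension. For the first bullet, I would apply Theorem~\ref{th-long-exact-sequence} to the semi-split filtered extension $0\to SA\to CA\to A\to 0$ (with the filtered cross-section $a\mapsto(t\mapsto ta)$). By Lemma~\ref{lem-contractible} the cone $CA$ is contractible, so $K_*^{\eps,r}(CA)=0$ for every admissible $\eps,r$, and the terms flanking $\DD^1_A$ in the $(\lambda,h)$-exact long sequence vanish. Controlled exactness at $\K_1(A)$ gives $(\lambda,h)$-injectivity of $\DD^1_A$, and at $\K_0(SA)$ gives $(\lambda,h)$-surjectivity, for some universal pair $(\lambda,h)$; point (iii) of Remark~\ref{rem-inj-surj-inv} then upgrades this to $(\lambda',h')$-invertibility for a larger universal control pair.

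For the second bullet, the inverse produced above depends on non-canonical lifting choices, so I would instead write down an explicit candidate by combining the Toeplitz boundary with Morita equivalence: set
$$\BB^0_A \;=\; \MM_A^{-1}\circ\D^0_{\K(\ell^2(\N))\otimes A,\,\T_0\otimes A}\,:\;\K_0(SA)\longrightarrow \K_1(A),$$
where $\D^0$ is the even-to-odd controlled boundary of the Toeplitz extension $0\to \K(\ell^2(\N))\otimes A\to \T_0\otimes A\to SA\to 0$. The Toeplitz extension and its canonical completely positive section are functorial in $A$, so Remark~\ref{rem-fonct}(i) gives strict naturality of $\D^0$, and Remark~\ref{rem-morita-natural} gives strict naturality of Morita equivalence; composing these yields the required identity $\BB^0_B\circ f_S=f\circ\BB^0_A$ for every filtered homomorphism $f:A\to B$.

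It remains to check that this explicit $\BB^0_A$ is in fact a $(\lambda',h')$-inverse of $\DD^1_A$. I would establish $\DD^1_A\circ\BB^0_A\aeq \Id_{\K_0(SA)}$ by an explicit computation parallel to Proposition~\ref{prop-toeplitz}, but in the opposite parity: lift a representative $\eps$-$r$-projection of $M_n(\widetilde{SA})$ through the Toeplitz section into $\T_0\otimes A$, push it through the Toeplitz boundary and $\MM_A^{-1}$, apply $\DD^1_A$, and construct a controlled homotopy back to the original class using the almost-projection techniques of Section~1 (Lemmas~\ref{lemma-almost-closed} and~\ref{lem-conjugate}). The reverse composition $\BB^0_A\circ\DD^1_A\aeq \Id_{\K_1(A)}$ then follows automatically from the $(\lambda,h)$-injectivity of $\DD^1_A$ established in Part~1, applied to the identity $\DD^1_A\circ(\BB^0_A\circ\DD^1_A)\aeq\DD^1_A$. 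The main obstacle is the Toeplitz calculation verifying $\DD^1_A\circ\BB^0_A\aeq\Id$: although formally parallel to Proposition~\ref{prop-toeplitz}, it runs in the opposite parity, and one must track propagation and norm errors carefully through two successive boundary constructions and the Morita isomorphism so that they combine into a single universal control pair independent of~$A$.
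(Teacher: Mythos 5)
Your first bullet follows the paper verbatim: the paper's entire proof is the observation that Theorem~\ref{th-long-exact-sequence} applied to $0\to SA\to CA\to A\to 0$, together with the vanishing of $\K_*(CA)$ from Lemma~\ref{lem-contractible}, gives controlled injectivity and surjectivity of $\DD^1_A$, and Remark~\ref{rem-inj-surj-inv}(iii) upgrades this to $(\lambda',h')$-invertibility. Where you genuinely diverge is the second bullet: the paper gives no construction at all of a natural inverse (the naturality claim is simply asserted after the abstract argument), whereas you propose the explicit candidate $\BB^0_A=\MM_A^{-1}\circ\DD^0_{\K(\ell^2(\N))\otimes A,\T_0\otimes A}$; this is a sensible choice, it is consistent with how the paper later assembles $\BB_A$ (Toeplitz boundary on $\K_1(SA)$, the corollary's $\BB^0_A$ on $\K_0(SA)$), and your naturality argument via Remark~\ref{rem-fonct}(i) applied to the cp section $\sigma\otimes\operatorname{id}_A$ of the tensorized Toeplitz extension, combined with Remark~\ref{rem-morita-natural}, is correct and yields the strict identity $\BB^0_B\circ f_S=f\circ\BB^0_A$, which the abstract route cannot produce. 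Your logical scheme for the inverse property is also sound: once $\DD^1_A\circ\BB^0_A\aeq\Id_{\K_0(SA)}$ is known, the left-inverse relation follows from the controlled injectivity of bullet one via the pair form of injectivity in Remark~\ref{rem-inj-surj-inv}(i) (legitimate here because the source is $\K_1(A)$, using Lemma~\ref{lem-k1-almost-group}), up to the usual enlargement of control pairs. The one place where your proposal carries real unfinished weight is the controlled computation $\DD^1_A\circ\BB^0_A\aeq\Id_{\K_0(SA)}$: unlike Proposition~\ref{prop-toeplitz}, whose input is an explicit loop $z_q$ built from an almost projection over $A$, your composition starts from an arbitrary almost projection over $\widetilde{SA}$, for which no normal form is available, and one must push the exponential of Lemma~\ref{lemma-bound} through the Morita reduction (finite-matrix approximation in the $\K(\ell^2(\N))$ direction) and then through the index-type boundary of Proposition~\ref{prop-bound}. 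It would be noticeably lighter to verify the other composition $\BB^0_A\circ\DD^1_A\aeq\Id_{\K_1(A)}$ directly — for the cone extension the lift of $\diag(u,u^*)$ is explicit via rotation matrices, so $\DD^1_A[u]$ is represented by a Bott-type almost projection to which the Toeplitz computation of Proposition~\ref{prop-toeplitz} transposes almost word for word — and then to obtain the right-inverse relation from the controlled surjectivity already established in bullet one, rather than the order you chose. With that adjustment (or with the full bookkeeping you acknowledge), your route is correct and in fact supplies a proof of the naturality statement that the paper leaves implicit.
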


\subsection{The mapping cones}\label{subsection-mapping-cones}
We end this section by proving that the mapping cones construction can be performed in the
 framework  of quantitative $K$-theory.
Let $$0\to J\to A\stackrel{q}{\to} A/J{\to}0$$ be a filtered semi-split extension of $C^*$-algebras. Let us set
$A/J[0,1)= C_0([0,1),A/J)$ and define the mapping cone of $q$:
$$C_q=\{(x,f)\in A\oplus A/J[0,1);\,\text{ such that }f(0)=q(x)\}.$$ Using a  semi-split filtered cross-section
 for $q$, we see that  $C_q$
is filtered by $$\left(C_q\cap(A_r\oplus A/J[0,1))_r\right)_{r>0}.$$
Let us set $$e_q:J\to C_q;\,x\mapsto (x,0)$$ and $$\phi_q:SA/J\to C_q;\,f\mapsto (0,f).$$ We have then a semi-split extension
 of filtered $C^*$-algebras
$$0\to J\stackrel{e_j}{\to}C_q\stackrel{\pi_2}{\to}A/J[0,1)\to 0,$$ where $\pi_2$ is the projection on the second factor of
$A\oplus A/J[0,1)$.
\begin{lemma}\label{lemma-eq-inv}There exists a control pair $(\lambda,h)$ such that  $e_{q,*}$ is $(\lambda,h)$-invertible for any
  semi-split extension of filtered $C^*$-algebras $0\to J\to A\stackrel{q}{\to} A/J\to 0$.
\end{lemma}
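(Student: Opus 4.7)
The plan is to apply the controlled long exact sequence of Theorem \ref{th-long-exact-sequence} to the semi-split filtered extension
$$0 \longrightarrow J \stackrel{e_q}{\longrightarrow} C_q \stackrel{\pi_2}{\longrightarrow} A/J[0,1) \longrightarrow 0,$$
and show that the quotient $A/J[0,1)$ has trivial quantitative $K$-theory in every degree, norm-control and propagation. This will collapse the long exact sequence into a controlled isomorphism implemented by $e_{q,*}$.

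First I would verify that the above extension is semi-split and filtered. Given a completely positive filtered cross-section $s : A/J \to A$ for $q$, the assignment
$$\sigma : A/J[0,1) \longrightarrow C_q, \qquad f \longmapsto (s(f(0)), f)$$
is completely positive (it is the composition of the $*$-homomorphism $f\mapsto (f(0),f)$ with $s\oplus \mathrm{id}$) and preserves the filtration induced on $C_q$ by $s$. Hence Theorem \ref{th-long-exact-sequence} applies and yields a $(\lambda,h)$-exact sequence (for a universal control pair) of the form
$$\K_*(A/J[0,1)) \stackrel{\DD}{\to} \K_*(J) \stackrel{e_{q,*}}{\to} \K_*(C_q) \stackrel{\pi_{2,*}}{\to} \K_*(A/J[0,1)).$$

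Next I would establish that $A/J[0,1)$ is contractible as a filtered $C^*$-algebra. The family of filtered homomorphisms
$$\psi_u : A/J[0,1) \longrightarrow A/J[0,1), \qquad \psi_u(f)(t) = f(u + (1-u)t), \quad u \in [0,1]$$
is continuous for the pointwise norm topology (functions in $A/J[0,1)$ vanish at $1$, hence are uniformly continuous on $[0,1]$), satisfies $\psi_0 = \mathrm{id}$ and $\psi_1 = 0$, and preserves the filtration $(C_0([0,1),(A/J)_r))_{r>0}$. Lemma \ref{lem-contractible} then gives $K_*^{\eps,r}(A/J[0,1)) = 0$ for every $\eps \in (0,1/4)$ and $r > 0$.

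With both outer terms vanishing, the $(\lambda,h)$-exactness at $\K_*(J)$ and $\K_*(C_q)$ translates directly into $(\lambda,h)$-injectivity and $(\lambda,h)$-surjectivity of $e_{q,*}$: for injectivity, any $x \in K_*^{\eps,r}(J)$ with $e_{q,*}^{\eps,r}(x) = 0$ lifts to a class in the (trivial) group $\K_{*+1}(A/J[0,1))$ via $\DD$ in the long exact sequence, forcing $\iota_*^{\eps,\lambda\eps,r,h_\eps r}(x) = 0$; for surjectivity, every $y \in K_*^{\eps,r}(C_q)$ has $\pi_{2,*}^{\eps,r}(y) = 0$ automatically, so it lifts through $e_{q,*}$ modulo the usual controls. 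Applying part (iii) of Remark \ref{rem-inj-surj-inv} produces a universal control pair $(\lambda',h')$ such that $e_{q,*}$ is $(\lambda',h')$-invertible, which is the desired conclusion. No step is really an obstacle; the only mild point of care is checking that the completely positive cross-section $\sigma$ genuinely respects the filtration of $C_q$ coming from $s$, but this is built into the construction of that filtration in the paper.
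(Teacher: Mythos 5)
Your even-degree argument is exactly the paper's, and your verification that $0\to J\stackrel{e_q}{\to}C_q\stackrel{\pi_2}{\to}A/J[0,1)\to 0$ is a semi-split filtered extension and that $A/J[0,1)$ is filtered-contractible is fine. The gap is in odd degree. Theorem \ref{th-long-exact-sequence} is only the truncated controlled sequence
$$\K_1(J)\stackrel{e_{q,*}}{\to}\K_1(C_q)\stackrel{\pi_{2,*}}{\to}\K_1(A/J[0,1))\stackrel{\DD}{\to}\K_0(J)\stackrel{e_{q,*}}{\to}\K_0(C_q)\stackrel{\pi_{2,*}}{\to}\K_0(A/J[0,1)),$$
with $(\lambda,h)$-exactness asserted only at the four interior terms. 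With the outer terms vanishing this gives controlled injectivity and surjectivity of $e_{q,*}$ on $\K_0$ and controlled surjectivity on $\K_1$, but \emph{not} controlled injectivity on $\K_1$: your step ``any $x\in K_*^{\eps,r}(J)$ killed by $e_{q,*}$ lifts to $\K_{*+1}(A/J[0,1))$ via $\DD$'' requires, for $*=1$, exactness at $\K_1(J)$ of $\K_0(A/J[0,1))\stackrel{\DD^0}{\to}\K_1(J)\to\K_1(C_q)$, which Theorem \ref{th-long-exact-sequence} does not provide. That missing piece is only supplied by the full cyclic six-term sequence (Theorem \ref{thm-six term}), which is proved later via controlled Bott periodicity; invoking it here would be circular, since the proof of Proposition \ref{prop-boundary-suspension}, and hence of Theorem \ref{thm-six term}, uses Lemma \ref{lemma-eq-inv} itself.

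The repair is the paper's: having settled the even case from Theorem \ref{th-long-exact-sequence} as you do (plus Remark \ref{rem-inj-surj-inv}(iii)), deduce the odd case by suspension. By Corollary \ref{cor-suspension} the controlled morphisms $\DD^1_J:\K_1(J)\to\K_0(SJ)$ and $\DD^1_{C_q}:\K_1(C_q)\to\K_0(SC_q)$ are controlled isomorphisms, natural in the algebra; identifying $SC_q$ with the mapping cone of the suspended map $Sq:SA\to SA/J$ and applying the even case to the suspended extension $0\to SJ\to SA\to SA/J\to 0$, naturality transports the controlled invertibility of $e_{Sq,*}$ on $\K_0$ to controlled invertibility of $e_{q,*}$ on $\K_1$, with a universal control pair.
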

\begin{proof}
The even case is a consequence of theorem \ref{th-long-exact-sequence}. We deduce the odd case from the even one using
corollary \ref{cor-suspension}.
\end{proof}
It is a standard fact in $K$-theory  that the boundary of an extension of $C^*$-algebras  $0\to J\to A\stackrel{q}{\to} A/J\to 0$
can be obtain
using the equality $$e_{q,*}\circ \partial_{J,A}=\phi_{q,*}\circ\partial_{A/J},$$ where $\partial_{A/J}=\partial_{SA/J,CA/J}$
stands for the boundary map of the
extension $$0\to SA/J\to CA/J\to A/J\to 0$$ (corresponding to the evaluation at $1$). We have a similar result in quantitative $K$-theory:
\begin{lemma}\label{lemma-mapping-connes}
 With above notations, we have $e_{q,*}\circ \D_{J,A}=\phi_{q,*}\circ\D_{A/J}$, where $\D_{A/J}$ stands for $\D_{SA/J,CA/J}$.
\end{lemma}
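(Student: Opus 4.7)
The plan is to realize both compositions $e_{q,*} \circ \D_{J,A}$ and $\phi_{q,*} \circ \D_{A/J}$ as the two components of a single controlled boundary associated with a ``combined'' semi-split extension, and then deduce the equality from controlled exactness together with a flip identification.

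First I would introduce the semi-split extension of filtered $C^*$-algebras
$$
(\E): \quad 0 \lto J \oplus SA/J \stackrel{j}{\lto} C_q \stackrel{\varpi}{\lto} A/J \lto 0,
$$
where $\varpi(a,f) = q(a) = f(0)$ and $j$ is the natural inclusion, equal to $e_q$ on the first summand and $\phi_q$ on the second. A filtered semi-split cross-section is $b \mapsto (s(b), [t \mapsto (1-t)b])$, built from a filtered semi-split cross-section $s : A/J \to A$. Let $\D_\E = \D_{J \oplus SA/J, C_q}$ be the associated controlled boundary.

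Next I would apply naturality of the boundary (remark \ref{rem-fonct}(i)) to two morphisms of extensions. The first projection $\pi_1 : C_q \to A$, $(a,f) \mapsto a$, maps $(\E)$ to $0 \to J \to A \to A/J \to 0$; its restriction to the ideals is $\pi_J : J \oplus SA/J \to J$, and the cross-sections match since $\pi_1(s(b), [t \mapsto (1-t)b]) = s(b)$. This gives $\pi_{J,*} \circ \D_\E = \D_{J,A}$. The ``flip-projection'' $\nu : C_q \to CA/J$, $(a, f) \mapsto [t \mapsto f(1-t)]$, maps $(\E)$ to $0 \to SA/J \to CA/J \to A/J \to 0$; its restriction to the ideals is $\sigma \circ \pi_{SA/J}$, where $\sigma : SA/J \to SA/J$, $g \mapsto g(1-\cdot)$, is the parameter flip, and the cross-sections match since $\nu(s(b), [t \mapsto (1-t)b]) = [t \mapsto tb]$. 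This yields $\pi_{SA/J,*} \circ \D_\E = \sigma_* \circ \D_{A/J}$ (using $\sigma^2 = \text{id}$). Under the canonical controlled splitting $\K_*(J \oplus SA/J) \cong \K_*(J) \oplus \K_*(SA/J)$ coming from the direct sum decomposition of $\eps$-$r$-projections and $\eps$-$r$-unitaries, the boundary $\D_\E$ thus decomposes (up to control) as the pair $(\D_{J,A}, \sigma_* \circ \D_{A/J})$. Controlled exactness at the ideal position (theorem \ref{th-long-exact-sequence}) then gives $j_* \circ \D_\E \aeq 0$, and since $j_*$ is $e_{q,*}$ on the $J$-summand and $\phi_{q,*}$ on the $SA/J$-summand, this becomes
$$
e_{q,*} \circ \D_{J,A} + \phi_{q,*} \circ \sigma_* \circ \D_{A/J} \aeq 0.
$$

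The remaining step, which I expect to be the main technical obstacle, is the controlled identity $\sigma_* \circ \D_{A/J} \aeq -\D_{A/J}$. The representatives of $\D_{A/J}([p,l]_{\eps,r})$ produced by proposition \ref{prop-bound} have the form $[1 + y'_p]$ with $y'_p$ close in norm to $[t \mapsto e^{2\imath \pi t \kappa_0(p)} - 1]$. Their flips $\sigma(1 + y'_p) = 1 + y'_p(1-\cdot)$ are close to $[t \mapsto e^{2\imath\pi(1-t)\kappa_0(p)}] = e^{2\imath\pi \kappa_0(p)} \cdot [t \mapsto e^{-2\imath\pi t \kappa_0(p)}] = [t \mapsto e^{-2\imath\pi t\kappa_0(p)}]$ since $\kappa_0(p)$ is a genuine projection satisfying $e^{2\imath\pi\kappa_0(p)} = 1$, and hence close to $(1+y'_p)^*$. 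Applying lemma \ref{lemma-almost-closed} to upgrade norm-closeness to a homotopy within $\eps'$-$r'$-unitaries, followed by lemma \ref{lem-k1-almost-group} (the identity $[u]_{\eps,r} + [u^*]_{3\eps,2r} = 0$), yields $\sigma_* \circ \D_{A/J} \aeq -\D_{A/J}$. Substituting into the previous display gives $e_{q,*} \circ \D_{J,A} \aeq \phi_{q,*} \circ \D_{A/J}$ as required; the universal control pair is obtained by composing those of $\D$, of the direct-sum decomposition, and of the constants in lemmas \ref{lemma-almost-closed} and \ref{lem-k1-almost-group}.
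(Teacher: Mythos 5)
Your strategy is genuinely different from the paper's: you package both boundaries into the single semi-split filtered extension $0\to J\oplus SA/J\to C_q\stackrel{\varpi}{\to} A/J\to 0$, compute the two components of its controlled boundary by naturality (remark \ref{rem-fonct}, applied to $\pi_1$ and to the flip map $\nu$, whose cross-section computations are correct), and then try to kill $j_*\circ\D_{\E}$ by controlled exactness. As written, though, there are genuine gaps. First, the step ``$j_*\circ\D_\E\aeq 0$'' is not justified in the degree you most need it: theorem \ref{th-long-exact-sequence} only covers the composition $\K_1(A/J)\to\K_0(J\oplus SA/J)\to\K_0(C_q)$; the composition $\K_0(A/J)\to\K_1(J\oplus SA/J)\to\K_1(C_q)$ appears in the paper only in the six-term sequence of theorem \ref{thm-six term}, which is proved later \emph{using} proposition \ref{prop-boundary-suspension}, whose proof rests on the very lemma you are proving -- so citing it here is circular, and you would have to supply a direct argument (essentially a parametrized variant of lemma \ref{lemma-bound}) for $\jmath_*\circ\DD^0\aeq 0$. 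Second, the key identity $\sigma_*\circ\D_{A/J}\aeq-\D_{A/J}$ is only argued for the even-degree boundary; the odd-degree case, needed for the $\K_1(A/J)\to\K_0(SA/J)$ half of the statement, does not follow from the same computation (the flip is not an automorphism of the cone, so you cannot simply flip the lift used in the index-type construction), and the bookkeeping ``$X+Y\aeq 0$ and $Y\aeq -Z$ imply $X\aeq Z$'' needs extra care since $K_1^{\eps,r}$ is only a semigroup (lemma \ref{lem-k1-almost-group} again). Third, even when completed, this route only yields $e_{q,*}\circ \D_{J,A}\aeq\phi_{q,*}\circ\D_{A/J}$ for some universal control pair, not the equality asserted in the lemma; the weaker form would still suffice for proposition \ref{prop-boundary-suspension}, but it is not the stated result.

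For comparison, the paper proves the equality directly, with no exactness input at all: for an $\eps$-$r$-projection $p$ (resp.\ an $\eps$-$r$-unitary $u$) it writes down an explicit path $\bigl(1+(x_t,f_t)\bigr)_{t\in[0,1]}$ of controlled unitaries in $C_q^+$ (resp.\ an explicit homotopy of controlled almost-projections over $C_q^+$), built from the same truncated exponentials as in lemma \ref{lemma-bound}, which interpolates between the representative produced by $0\to J\to A\to A/J\to 0$ at one endpoint and the one produced by $0\to SA/J\to CA/J\to A/J\to 0$ at the other, at the same controlled scale. If you repair the two missing ingredients above, you will find that the direct arguments you need are essentially these explicit homotopies, so the abstract detour through $\E$ does not actually save work.
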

\begin{proof}
 We can assume without loss of generality that $A$ is unital. Let us fix a  semi-split filtered cross-section $s:A/J\to A$  such that
 $s(1)=1$.
Let $p$ be an $\eps$-$r$ projection in $A/J$. Using the notations of the proof  of proposition \ref{lemma-bound},  define for $t$ in $[0,1]$
\begin{itemize}
 \item $\displaystyle x_t=\sum_{l=1}^{l_\eps}\frac{(2\imath\pi t s(p))^l-t(2\imath\pi)^ls(p^l)}{l!}$ in $A$;
\item $\displaystyle f_t:[0,1]\to
A/J:\sigma \mapsto \sum_{l=1}^{l_\eps}\frac{\left((2\imath\pi (1-\sigma)t +\sigma)p\right)^l-((1-\sigma)t +\sigma)(2\imath\pi p)^l}{l!}.$\end{itemize}
Then, $(1+(y_t,f_t))_{t\in[0,1]}$ is a path of $\alpha\eps$-$k_\eps r$ unitary in $C_q^+$ with  $x_0=0$ and $f_1=0$.
Moreover,
\begin{itemize}
 \item
$x_1$ belongs to $J$ and satisfies the conclusion of lemma  \ref{lemma-bound}  starting from the $\eps$-$r$-projection $p$ and 
with respect to  the semi-split extension of
filtered $C^*$-algebras
$0\to J\to A\stackrel{q}{\to} A/J\to 0$  and to the semi-split filtered cross-section $s$;
\item $f_0$ belongs to $SA/J$ and satisfies the conclusion of lemma  \ref{lemma-bound}  starting from the $\eps$-$r$-projection $p$
and with respect to  the semi-split extension of
filtered $C^*$-algebras
$0\to SA/J\to CA/J  {\to} A/J\to 0$ corresponding to evaluation at $1$
and to the semi-split filtered cross-section $A/J\mapsto CA/J; a\mapsto [t\mapsto ta]$.
\end{itemize}
Hence, following the construction of proposition \ref{prop-bound} in the even case, we obtain that
$e_{q,*}\circ \D_{J,A}$ and $\phi_{q,*}\circ\D_{A/J}$ coincide on $\K_0(A/J)$.

\smallskip

Let us check now the odd case. Let $u$ be an  $\eps$-$r$-unitary in $M_n(A/J)$. Pick any $\eps$-$r$-unitary in  some
  $M_j(A/J)$ such that $\diag(u,v)$ is homotopic to
  $I_{n+j}$ in
 $\U_{n+j}^{3\eps,2r}(A/J)$. According to lemma  \ref{lem-lift}, and up to replace $v$
 by $\diag(v,I_k)$ for some integer $k$, there exists an element $w$
 in $\U_{n+j}^{3\alpha_e\eps,2k_{e,3\eps}r}(A)$ homotopic to $I_{n+j}$ as a $3\alpha_e\eps$-$2k_{e,3\eps}r$-unitary and such that
 $\|q(w)-\diag(u,v)\|\lq 3\alpha_e\eps$. Let $(w_t)_{t\in[0,1]}$ be a path in $\U_{n+j}^{3\alpha_e\eps,2k_{e,3\eps}r}(A)$ with
$w_0=I_{n+j}$ and $w_1=w$ and set $y_t=q(w_t)\diag(I_n,0)q(w_t^*)$. As in the proof of proposition \ref{prop-bound},
we see that  $y_t$ is an element in
  $\P_{n+j}^{12\alpha_e\eps,4k_{e,3\eps}r}(A/J)$ such that
  $\|y_1-\diag(I_n,0)\|\lq 9\alpha_e\eps$. Define $$g:[0,1]\to M_{n+j}(A/J);\,t\mapsto  y_t-\diag(I_n,0)-t(y_1-\diag(I_n,0)).$$
Then $g+\diag(I_n,0)$ is the element of $\P_{n+j}^{12\alpha_e\eps,4k_{e,3\eps}r}(S^+A/J)$ that we get from $u$ and $v$ when
we perform
the construction of proposition \ref{prop-bound} in the odd case with respect to the extension
$0\to SA/J\to CA/J\to A/J\to 0$. Let us set now
$x_t=w_t\diag(I_n,0)w_t^*$ and  $h_t= x_t-\diag(I_n,0)-ts\circ q(x_1-\diag(I_n,0))$ for $t$ in $[0,1]$.
Then  $\diag(I_n,0)+h_t$ belongs to
 $\P_{n+j}^{12\alpha_e\eps,4k_{e,3\eps}r}(A)$ and $\diag(I_n,0)+h_1$ is the element of
$\P_{n+j}^{12\alpha_e\eps,4k_{e,3\eps}r}(J)$ that we get from $u$ and $v$ when we perform
the construction of proposition \ref{prop-bound} in the odd case with respect to the extension
$0\to J\to A\stackrel{q}{\to}A/J\to 0$. Eventually, if we define $$H_t:[0,1]\to  M_{n+j}(A/J);\, \sigma\mapsto g_{(1-\sigma)t+\sigma},$$
then $\left((h_t,H_t)+\diag(I_n,0)\right)_{t\in[0,1]}$ is a homotopy in $\P_{n+j}^{12\alpha_e\eps,4k_{e,3\eps}r}(C_q^+)$ between
  $\left((0,g)+\diag(I_n,0)\right)$ and  $\left((h_1,0)+\diag(I_n,0)\right)$. Thus we obtain the result in the odd case.

\end{proof}
As a consequence, we get that the controlled suspension morphism is compatible with the controlled boundary maps.
\begin{proposition}\label{prop-boundary-suspension}
There exists a control pair $(\lambda, h)$ such that for any semi-split  extension  of
filtered $C^*$-algebras  $0\to J\to A\to A/J\to 0$,  the following  diagrams are   $(\lambda, h)$-commutative:
$$\begin{CD}
\K_0(A/J) @>\DD_{A/J}>> \K_1(SA/J)  \\
         @V\DD_{J,A} VV
         @VV\DD_{SJ,SA}V\\
\K_1(J) @>\DD_{J}>> \K_0(SJ)
\end{CD}
$$
and
$$\begin{CD}
\K_1(A/J) @>\DD_{A/J}>> \K_0(SA/J)  \\
         @V\DD_{J,A} VV
         @VV\DD_{SJ,SA}V\\
\K_0(J) @>\DD_{J}>> \K_1(SJ)
\end{CD},
$$
where $\DD_{J}$ and $\DD_{A/J}$ stands respectively for the controlled suspension  morphisms $\DD_{SJ,CJ}$
and $\DD_{SA/J,CA/J}$.
\end{proposition}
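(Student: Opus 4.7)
The strategy is to cancel a controlled-injective morphism, applying Lemma \ref{lemma-mapping-connes} both to the original extension and to its suspension, and weaving the two identities together through the naturality of the cone--suspension controlled boundary supplied by Remark \ref{rem-fonct}(i).

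Lemma \ref{lemma-mapping-connes} applied to $0\to J\to A\to A/J\to 0$ gives
\[
e_{q,*}\circ \DD_{J,A}=\phi_{q,*}\circ \DD_{A/J}
\]
as controlled morphisms $\K_*(A/J)\to \K_{*+1}(C_q)$. The suspended extension $0\to SJ\to SA\to SA/J\to 0$ is again semi-split filtered (via the suspended cross-section); its mapping cone $C_{Sq}$ is canonically identified with $SC_q$ by exchanging the two suspension parameters, and under this identification $e_{Sq}$ corresponds to $Se_q$ while $\phi_{Sq}$ corresponds to $S\phi_q$ up to the flip of the two suspension coordinates. Applying Lemma \ref{lemma-mapping-connes} to the suspended extension then yields
\[
(Se_q)_*\circ \DD_{SJ,SA}\aeq (S\phi_q)_*\circ \DD_{SA/J}.
\]

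Next, for any filtered homomorphism $f\colon A'\to B'$, the standard filtered cross-section $a\mapsto(t\mapsto ta)$ of the cone extension is compatible with $Cf$, so Remark \ref{rem-fonct}(i) yields $\DD_{B'}\circ f_*=(Sf)_*\circ \DD_{A'}$. Applying this to $f=e_q$ and to $f=\phi_q$ and post-composing the first mapping-cone identity by $\DD_{C_q}$ produces
\[
(Se_q)_*\circ \DD_J\circ \DD_{J,A}\aeq (S\phi_q)_*\circ \DD_{SA/J}\circ \DD_{A/J},
\]
and substituting the suspended mapping-cone identity on the right-hand side gives
\[
(Se_q)_*\circ \DD_J\circ \DD_{J,A}\aeq (Se_q)_*\circ \DD_{SJ,SA}\circ \DD_{A/J}.
\]

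Finally, by Lemma \ref{lemma-eq-inv} the morphism $(Se_q)_*=e_{Sq,*}$ is $(\lambda,h)$-invertible for a universal control pair, hence in particular controlled-injective; cancelling it from the left yields the first diagram. The second diagram is proved by the identical argument, shifted in degree. The main obstacle is the bookkeeping: one must verify that the various control pairs furnished by Lemmas \ref{lemma-mapping-connes}, \ref{lemma-eq-inv}, and Remark \ref{rem-fonct}(i) combine into a single universal $(\lambda,h)$ independent of the extension, and also track the sign introduced by the flip of suspension coordinates in the identification $C_{Sq}\cong SC_q$ so that it is absorbed consistently on both sides of the equation.
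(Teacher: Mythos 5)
Your argument is essentially identical to the paper's proof: apply Lemma \ref{lemma-mapping-connes} to the extension and to its suspension, combine the two identities via the naturality of the cone boundary from Remark \ref{rem-fonct} and the identification $C_{Sq}\cong SC_q$, and cancel the morphism $e_{Sq,*}$, which is controlled-invertible by Lemma \ref{lemma-eq-inv}. The coordinate flip you flag is harmless and needs no sign tracking: it enters once in matching $\phi_{Sq}$ with $S\phi_q$ and once (tautologically, by relabelling the two suspension variables) in comparing the two orders of iterated suspension boundaries, so the two occurrences cancel and the identity holds as stated.
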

\begin{proof}
Let $q_s:SA\to SA/J$ the suspension of the homomorphism $q:A\to A/J$. Applying lemma \ref{lemma-mapping-connes} to
the extensions $0\to J\to A\to A/J\to 0$ and  $0\to SJ\to SA\to SA/J\to 0$ and  using the naturality of controlled boundary maps
mentioned in remark \ref{rem-fonct}, we get
\begin{eqnarray*}
 e_{q_s,*}\circ \D_{SJ,SA}\circ\D_{A/J}&=&\phi_{q_s,*}\circ\D_{SA/J}\circ\D_{A/J}\\
&=&\D_{SC_q}\circ\phi_{q,*}\circ\D_{A/J}\\
&=&\D_{SC_q}\circ e_{q,*}\circ\D_{J,A}\\
&=&e_{q_s,*}\circ\D_{J}\circ \D_{J,A}
\end{eqnarray*}
The proposition is then a consequence of lemma \ref{lemma-eq-inv}.

\end{proof}

\section{Controlled Bott periodicity}
The aim of this section is to prove that there exists a control pair $(\lambda,h)$ such that  given  a filtered $C^*$-algebra $A$,
then Bott periodicity $K_0(A)\stackrel{\cong}{\to} K_0(S^2A)$  is induced in $K$-theory by a
$(\lambda,h)$-isomorphism $\K_0(A)\to\K_0(S^2A)$.
As an application, we use  the controlled boundary  morphism of proposition \ref{prop-bound}       to close
 the  controlled exact  sequence  of \ref{th-long-exact-sequence} into a six-term $(\lambda,h)$-exact sequence for
 some universal control pair $(\lambda,h)$.
This will be achieved  by using  the full power of $KK$-theory.
\subsection{Tensorization in $KK$-theory}\label{subsection-tensorisation}
Let $A$ be  a $C^*$-algebra and let $B$ be a   $C^*$-algebra filtered by $(B_r)_{r>0}$. Within all this section, we will assume for sake
of simplicity that $B_r$ is closed for every positive number $r$ (which is the case for Roe algebras and crossed product algebras). Let us define $A\ts B_r$ as the closure in the spatial tensor
product $A\ts B$ of the algebraic tensor product of $A$ and $B_r$. Then the $C^*$-algebra $A\ts B$ is filtered by $(A\ts B_r)_{r>0}$.
Moreover, if $J$ is a semi-split ideal of $A$, i.e $0\to J\to A\to A/J\to 0$ is a semi-split extension of $C^*$algebras, then
$$0\to J\ts B\to A\ts B\to A/J\ts B\to 0$$ is a   semi-split extension of filtered $C^*$-algebras.
Recall from \cite{kas} that for $C^*$-algebras  $A_1$, $A_2$ and $D$, G. Kasparov defined a tensorization map
$$\tau_D:KK_*(A_1,A_2)\to KK_*(A_1\ts D,A_2\ts D)$$ in the following way: let $z$ be an element in $KK_*(A_1,A_2)$ represented by
a $K$-cycle $(\pi,T,\E)$, where
\begin{itemize}
\item $\E$ is a right $A_2$-Hilbert module;
 \item  $\pi$ is a
representation of $A_1$ into  the algebra $\L(\E)$ of adjointable operators of
$\E$;
\item $T$ is a self-adjoint operator on $\E$ satisfying  the $K$-cycle conditions, i.e. $[T,\pi(a)]$,
  $\pi(a)(T^2-\Id_{\E})$  are compact operators on $\E$ for any $a$ in $A_1$.
\end{itemize}
Then $\tau_D(z)\in KK_*(A_1\ts D,A_2\ts D)$ is represented by the $K$-cycle $(\pi\ts Id_D,T\ts Id_D,\E\ts D)$.

In what follows, we show that if $A_1$ and $A_2$  are $C^*$-algebras, if $B$ is a filtered $C^*$-algebra and
  if $z$ is an element in $KK_*(A_1,A_2)$, then the homomorphism $K_*(A_1\ts B)\to K_*(A_2\ts B)$ provided by
 left multiplication by $\tau_B(z)$
is induced by a controlled morphism. Moreover, we have some compatibility results  with respect to Kasparov product.
As an outcome, we obtain a controlled version of the Bott periodicity  that induces in $K$-theory the Bott periodicity.
\begin{proposition}\label {prop-tensor}
Let $A_1$ and $A_2$ be $C^*$-algebras, let  $B$ be a filtered $C^*$-algebra and let
$z$ be an element in $KK_1(A_1,A_2)$. Then there exists an $(\alpha_\D,k_\D)$-controlled morphism
 $$\T_B(z)=(\tau^{\eps,r}_B(z))_{0<\eps<\frac{1}{4\alpha_\DD},r>0}:\K_*(A_1\ts B)\to\K_*(A_2\ts B)$$ of degree $1$
inducing in $K$-theory the right multiplication by $\tau_B(z)$.
\end{proposition}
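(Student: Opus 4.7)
The plan is to realize $z \in KK_1(A_1,A_2)$ as a semi-split extension and apply the controlled boundary morphism of Proposition \ref{prop-bound} after tensoring with $B$. This converts the question about the Kasparov product into one about connecting maps, for which Section \ref{sec-exact-sequence} already provides a controlled lift.

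First, using Kasparov's identification of $KK_1(A_1,A_2)$ with classes of semi-split extensions of $C^*$-algebras \cite{kas}, I would choose a representative of $z$ of the form
$$0 \lto A_2 \ts \K(\H) \lto E_z \lto A_1 \lto 0,$$
for some separable Hilbert space $\H$; here $z$ corresponds, via the Morita identification $KK_*(\C, A_2 \ts \K(\H)) \cong K_*(A_2)$, to the connecting map of this extension. Next I would tensor with $B$. Since each $B_r$ is closed in $B$, the family $(E_z \ts B_r)_{r>0}$ filters $E_z \ts B$, and a completely positive splitting of $E_z\to A_1$ tensored with the identity of $B$ is a filtered completely positive splitting. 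Thus
$$0 \lto A_2 \ts \K(\H) \ts B \lto E_z \ts B \lto A_1 \ts B \lto 0$$
is a semi-split extension of filtered $C^*$-algebras, and Proposition \ref{prop-bound} yields an $(\alpha_\DD,k_\DD)$-controlled morphism of degree one
$$\DD_{A_2 \ts \K(\H) \ts B,\, E_z \ts B}:\K_*(A_1 \ts B) \lto \K_*(A_2 \ts \K(\H) \ts B).$$
Composing with the $(1,1)$-controlled inverse Morita equivalence $\MM_{A_2 \ts B}^{-1}$ of Example \ref{example-controlled-morphism}(ii), I would define
$$\T_B(z) \defi \MM_{A_2 \ts B}^{-1} \circ \DD_{A_2 \ts \K(\H) \ts B,\, E_z \ts B},$$
which by Remark \ref{rem-composition} (or direct inspection) is $(\alpha_\DD,k_\DD)$-controlled and of degree one, since Morita equivalence has trivial control.

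To finish I would identify the induced map in $K$-theory with right Kasparov product by $\tau_B(z)$. Proposition \ref{prop-bound} guarantees that $\DD_{A_2 \ts \K(\H) \ts B,\, E_z \ts B}$ induces, in ordinary $K$-theory, the usual connecting map of the tensored extension. It is a standard fact (compatibility of Kasparov's tensorization with the $\operatorname{Ext}$-picture of $KK_1$) that this connecting map, composed with the Morita identification, coincides with right multiplication by $\tau_B(z)$, because tensoring an extension representative of $z$ with $B$ produces an extension representative of $\tau_B(z)$. The expected main obstacle lies in this last step: one must cross-reference the analytic definition of $\tau_B$ in \cite{kas} with the extension picture in order to certify the $K$-theoretic identification, and then observe that independence of $\T_B(z)$ (at the $K$-theory level) of the chosen extension representing $z$ is a consequence, even though at the controlled level $\T_B(z)$ is only canonical up to the equivalence $\aeq$. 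The control pair $(\alpha_\DD,k_\DD)$ is universal by construction, so the final estimate is automatic.
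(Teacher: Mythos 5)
Your construction is correct and rests on the same mechanism as the paper's proof: produce from $z$ a semi-split extension of filtered $C^*$-algebras over $A_1\ts B$ with ideal $\K(\H)\ts A_2\ts B$, apply the controlled boundary morphism of Proposition \ref{prop-bound}, and compose with the inverse Morita equivalence $\MM_{A_2\ts B}^{-1}$; the control pair and the degree come out exactly as you say. The difference is how the extension is obtained. The paper stays inside the $K$-cycle picture: it fixes a representative $(\pi,T,\H\ts A_2)$ in standard form, sets $P_B=\tfrac{1+T\ts \operatorname{Id}_B}{2}$, and builds $E^{(\pi,T)}\subset A_1\ts B\oplus\L(\H\ts A_2\ts B)$ directly from the compression $P_B\pi_B(\cdot)P_B$, filtered using that $P_B$ has zero propagation; you instead invoke the $\operatorname{Ext}$-picture of $KK_1$ and tensor an abstract extension representative $E_z$ with $B$. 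These are equivalent up to standard identifications (indeed the paper's $E^{(\pi,T)}$ is the $B$-tensored extension attached to the Busby invariant of the cycle, cf.\ the identification $E^{(\pi_D,T_D)}\cong E^{(\pi,T)}\ts D$ in Remark \ref{rem-tensor}). What the explicit model buys, and what your route leaves aside, is twofold: first, the paper proves via Remark \ref{rem-fonct} and Lemma \ref{lem-contractible} that homotopic cycles give \emph{equal} controlled boundary morphisms, so $\TT_B(z)$ is genuinely well defined from $z$ and not merely up to $\aeq$ — this exact well-definedness is what later permits statements such as additivity and the naturality equalities of Proposition \ref{prop-bifunctoriality}; second, keeping the cycle explicit makes the $K$-theoretic identification of the boundary with right multiplication by $\tau_B(z)$ immediate from Kasparov's description, whereas you defer it to the (correct, but external) compatibility of $\tau_B$ with the $\operatorname{Ext}$-picture. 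For the bare existence statement of the proposition your argument is adequate; just be aware that the paper's stronger, representative-independent construction is the one actually used downstream.
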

\begin{proof}
Recall  that $z$ can be indeed  represented by a odd  $A_1$-$A_2$-$K$-cycle  $(\pi,T,\H\ts A_2)$,
where  $\H$ is a separable Hilbert space, $\pi$ is a
representation of $A_1$ in the algebra $\L(\H\ts A_2)$ of adjointable operators of
$\H\ts A_2$ and  $T$ is a self-adjoint operator in   $\L(\H\ts A_2)$ satisfying  the $K$-cycle conditions.
Let us set $P_B=\frac{\Id_{\H\otimes A_2\ts  B}+T\ts Id_B}{2}$, $\pi_B=\pi\ts Id_B$ and define the $C^*$-algebra
$$E^{(\pi,T)}=\{(x,y)\in A_1\ts B\bigoplus \mathcal{L}(\H\ts A_2\ts B)\text{ such
  that } P_B \cdot \pi_B(x) \cdot P_B-y \in \mathcal{K}(\H)\otimes A_2\ts
B\}.$$ Since $P_B$ has no propagation, the
$C^*$-algebra $E^{(\pi,T)}$ is filtered by $(E^{(\pi,T)}_r)_{r>0}$ with
 $$E^{(\pi,T)}_r=\{(x,\,P_B\cdot \pi_B (x)\cdot P_B+y);\, x\in A_1\ts B_r\text{ and } y\in \mathcal{K}(\H)\otimes A_2\ts
B_r\}.$$
The extension of filtered $C^*$-algebras
\begin{equation}\label{equ-sequence-Kcycle}0\longrightarrow \mathcal{K}(\H)\otimes A_2\ts
B\longrightarrow E^{(\pi,T)} \longrightarrow
A_1\otimes B\longrightarrow 0\end{equation} is  semi-split
by the cross-section $$s:A_1\ts B  \to E^{(\pi,T)};\, x \mapsto (x,P_B \cdot\pi_B(x)
\cdot P_B).$$

Let us show that the  associated controlled boundary (degree one) map
   $$\DD_{ \mathcal{K}(\H)\otimes A_2\ts
B,E^{(\pi,T)}}: \K_*(A_1\ts B)\to\K_*(\K(\H)\ts A_2\ts B)$$ only depends on the class  $z$ of
$(\pi,T,\H\otimes A_2)$ in $KK_1(A_1,A_2)$.
Assume that $(\pi,T,\H\otimes A_2[0,1])$ is a  $A_1$-$A_2[0,1]$-$K$
-cycle providing a homotopy between two
$A_1$-$A_2$-$K$-cycles $(\pi_0,T_0,\H\otimes A_2)$ and $(\pi_1,T_1,\H\otimes
A_2)$.
 For $t\in[0,1]$ we denote by
\begin{itemize}
 \item $e_t:A_2[0,1]\ \to A_2$ the evaluation at $t$;
 \item $F_t\in \mathcal{L}(\H\otimes A_2)$ the fiber at $t$ of
   an  operator $F\in\mathcal{L}(\H\otimes A_2[0,1])$;
\item $\pi_{t}:A_1\to \mathcal{L}(\H\otimes A_2)$ the representation  induced by
  $\pi$ at the fiber $t$;
\item $s_t:A_2\ts B \to E^{(\pi_t,T_t)};\, x\mapsto (x,P_{t,B}\cdot
  \pi_{t,B} (x) \cdot P_{t,B})\quad$ (with $P=\frac{T+1}{2}$);
\end{itemize}

Then the homomorphism
 $E^{(\pi,T)}\to E^{(\pi_t,T_t)};\, (x,y)\mapsto (x,y_t)$
satisfies the conditions of remark \ref{rem-fonct} (with  $s:A_2\ts B\to E^{(\pi,T)};\, x \mapsto (x,P_B\cdot\pi_B (x)
\cdot P_B)$ and  $s_t:A_2\ts B\to E^{(\pi,T_t)}$)  and thus
we get that
$$(\Id_{\mathcal{K}(\H)}\otimes e_t\ts Id_B
)_*\circ\DD_{\mathcal{K}(\H)\otimes
  A_1\ts B[0,1],E^{(\pi,T)}}=\DD_{\mathcal{K}(\H)\otimes A_1\ts B,E^{(\pi_t,T_t)}},$$ and according to lemma \ref{lem-contractible}, we deduce
 that $$\DD_{\mathcal{K}(\H)\otimes
A_1\ts B_2,E^{(\pi_0,T_0)}}=\DD_{\mathcal{K}(\H)\otimes
A_1\ts B,E^{(\pi_1,T_1)}}.$$ This shows  that for a $A_1$-$A_2$-$K$-cycle  $(\pi,T,\H\otimes A_2)$,
then $\DD_{\mathcal{K}(\H)\otimes
A_1\ts B,E^{(\pi,T)}}$ depends only on the class $z$ of
$(\pi,T,\H\otimes A_2)$ in $KK_1(A_1,A_2)$. Finally we define
$$\TT_B(z)=(\tau_B^{\eps,r}(z))_{0<\eps<\frac{1}{4\alpha_\DD}}\stackrel{\text{def}}{=\!=}\MM_{A_2\ts B}^{-1}\circ \DD_{\mathcal{K}(\H)\otimes
A_1\ts B,E^{(\pi,T)}},$$ where
\begin{itemize}
\item $(\pi,T,\H\ts A_2)$ is any
$A_1$-$A_2$-$K$-cycles representing $z$;
\item $\MM_{A_2\ts B}$ is  the Morita equivalence (see example \ref{example-controlled-morphism}).
\end{itemize}
  The result then follows from the observation that  up to the Morita equivalence
$$K_*(\mathcal{K}(\H)\otimes A_2\ts
B)\stackrel{\cong}{\to} K_*(
A_2\ts B),$$ the boundary $\partial_{\mathcal{K}(\H)\otimes
A_1\ts B,E^{(\pi,T)}}$ corresponding to the exact sequence  (\ref{equ-sequence-Kcycle}) is induced by right multiplication by $\tau_B(z)$.
\end{proof}

\begin{remark}\label{rem-tensor}  Let $B$ be a filtered $C^*$-algebra.
\begin{enumerate}
\item  For any $C^*$-algebras $A_1$ and $A_2$ and any  elements $z$ and $z'$ in $KK_1(A_1,A_2)$ then
$$\TT_B(z+z')=\TT_B(z)+\TT_B(z').$$
\item Let  $0\to J\to A\to A/J\to 0$ be  a semi-split extension of filtered
  $C^*$-algebras and let $[\partial_{J,A}]$ be the element of
  $KK_1(A/J,J)$ that implements the boundary map
  $\partial_{J,A}$. Then we have
$$\TT_B([\partial_{J,A}])=\DD_{J\ts B,A\ts B}.$$
\item For any  $C^*$-algebras $A_1$, $A_2$ and $D$ and any $K$-cycle $(\pi,T,\H\ts A_2)$ for $KK_1(A_1,A_2)$, we have a natural identification
between $E^{(\pi_D,T_D)}$ and $E^{(\pi,T)}\ts D$. Hence,
for any element $z$ in $KK_1(A_1,A_2)$ then $\TT_B(\tau_D(z))=\TT_{B\ts D}(z)$.
\end{enumerate}
\end{remark} For a   a filtered $C^*$-algebra  $B$ and a homomorphism  $f:A_1\to A_2$ of $C^*$-algebras,
we set
$f_{B}:A_1\ts B\to A_2\ts B$  for  the filtered homomorphism  induced by $f$.
\begin{proposition}\label{prop-bifunctoriality}
   Let $B$ be a filtered $C^*$-algebra and let $A_1$ and $A_2$ be two $C^*$-algebras.
\begin{enumerate}
 \item  For any $C^*$-algebra $A_1'$, any homomorphism of  $C^*$-algebras $f:A_1\to A'_1$ and any $z$ in $KK_1 (A'_1,A_2)$,
we have
$\T_B(f^*(z))=\T_B(z)\circ f_{B,*}$;
\item For any $C^*$-algebra
$A'_2$, any  homomorphism of $C^*$-algebras $g:A_2\to A'_2$ and any $z$ in $KK_1 (A_1,A_2)$, we have
 $\TT_B(g_*(z))=g_{B,*}\circ \TT_B(z)$.
\end{enumerate}
\end{proposition}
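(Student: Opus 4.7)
The plan is to derive both properties from the naturality of controlled boundary maps (remark \ref{rem-fonct}(i)) and of Morita equivalence (remark \ref{rem-morita-natural}), after exhibiting morphisms between the semi-split filtered extensions (\ref{equ-sequence-Kcycle}) used to define $\TT_B$ in proposition \ref{prop-tensor}. Since $\TT_B(z)$ depends only on the $KK$-class of its defining $K$-cycle, we may freely choose a convenient representative in each argument.

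For (i), fix a $K$-cycle $(\pi,T,\H\ts A_2)$ representing $z\in KK_1(A'_1,A_2)$; then $(\pi\circ f,T,\H\ts A_2)$ represents $f^*(z)$. The formula $(x,y)\mapsto(f_B(x),y)$ defines a filtered $*$-homomorphism $\tilde f:E^{(\pi\circ f,T)}\to E^{(\pi,T)}$ (well-defined because $(\pi\circ f)_B=\pi_B\circ f_B$), which restricts to the identity on the ideal $\K(\H)\ts A_2\ts B$, covers $f_B$ on the quotient, and intertwines the canonical cross-sections on both sides. Remark \ref{rem-fonct}(i) then gives
$$\DD_{\K(\H)\ts A_2\ts B,\,E^{(\pi,T)}}\circ f_{B,*}=\DD_{\K(\H)\ts A_2\ts B,\,E^{(\pi\circ f,T)}},$$
and composing on the left with $\MM_{A_2\ts B}^{-1}$ yields the desired identity $\TT_B(z)\circ f_{B,*}=\TT_B(f^*(z))$.

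For (ii), let $(\pi,T,\H\ts A_2)$ represent $z$. Under the natural identification $(\H\ts A_2)\ts_g A'_2\cong\H\ts A'_2$, the pushforward $g_*(z)$ is represented by $(\tilde g\circ\pi,\tilde g(T),\H\ts A'_2)$, where $\tilde g:\L(\H\ts A_2)\to\L(\H\ts A'_2)$ is the canonical $*$-homomorphism $S\mapsto S\ts_g 1$; its restriction to $\K(\H\ts A_2)\cong\K(\H)\ts A_2$ is precisely $\mathrm{Id}_{\K(\H)}\ts g$. Tensoring with $\mathrm{Id}_B$, the formula $(x,y)\mapsto(x,(\tilde g\ts\mathrm{Id}_B)(y))$ defines a filtered $*$-homomorphism $E^{(\pi,T)}\to E^{(\tilde g\circ\pi,\tilde g(T))}$ (the filtration is preserved because $\tilde g\ts\mathrm{Id}_B$ acts trivially on the $B$-factor), restricting to $\mathrm{Id}_{\K(\H)}\ts g\ts\mathrm{Id}_B$ on the ideal, to $\mathrm{Id}_{A_1\ts B}$ on the quotient, and carrying the canonical cross-section of the source to that of the target. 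Remark \ref{rem-fonct}(i) then yields
$$\DD_{\K(\H)\ts A'_2\ts B,\,E^{(\tilde g\circ\pi,\tilde g(T))}}=(\mathrm{Id}_{\K(\H)}\ts g_B)_*\circ\DD_{\K(\H)\ts A_2\ts B,\,E^{(\pi,T)}},$$
and combining with the naturality of Morita equivalence (remark \ref{rem-morita-natural}) to rewrite $\MM_{A'_2\ts B}^{-1}\circ(\mathrm{Id}_{\K(\H)}\ts g_B)_*$ as $g_{B,*}\circ\MM_{A_2\ts B}^{-1}$ produces $\TT_B(g_*(z))=g_{B,*}\circ\TT_B(z)$.

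The main obstacle, and only genuine subtlety, lies in (ii): one must verify that $\tilde g$ is a well-defined filtered $*$-homomorphism and that it sends $P_B\pi_B(x)P_B$ to $(\tilde g(P))_B(\tilde g\circ\pi)_B(x)(\tilde g(P))_B$. Both facts follow from the standard interior-tensor-product machinery for Hilbert modules, in which $S\mapsto S\ts_g 1$ is always a $*$-homomorphism on adjointable operators; the filtration behaves trivially because $\tilde g$ only modifies the $A_2$-factor. The remaining bookkeeping — commutativity of the diagrams of extensions and compatibility of the canonical cross-sections — is routine, so the naturality statements for $\DD$ and $\MM$ may be invoked verbatim.
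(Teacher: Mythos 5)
Part (i) of your argument is fine and coincides with the paper's proof: the map $(x,y)\mapsto(f_B(x),y)$ from $E^{f^*(\pi,T)}$ to $E^{(\pi,T)}$, the commutative diagram of extensions and remark \ref{rem-fonct} give exactly $\T_B(f^*(z))=\T_B(z)\circ f_{B,*}$.

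Part (ii), however, contains a genuine gap: the ``natural identification'' $(\H\ts A_2)\ts_g A'_2\cong\H\ts A'_2$ on which your whole argument rests is false for a general homomorphism $g:A_2\to A'_2$. The interior tensor product is canonically $\H\ts\overline{g(A_2)A'_2}$, which is only a (possibly proper, even zero) $A'_2$-submodule of $\H\ts A'_2$; it equals $\H\ts A'_2$ only when $g$ is nondegenerate, and the proposition makes no such hypothesis (take $g=0$, or $g$ the inclusion of a non-full corner). Consequently $g_*(z)$ is represented by a $K$-cycle on a non-standard module, so the construction of $\TT_B$ from proposition \ref{prop-tensor} -- which is defined, and shown to be well defined up to homotopy, only for cycles of the form $(\pi,T,\H\ts A'_2)$ -- cannot be applied to it directly; likewise your claims that $\tilde g$ restricts to $\mathrm{Id}_{\K(\H)}\ts g$ \emph{as operators on} $\H\ts A'_2$ and that remark \ref{rem-morita-natural} can be invoked verbatim both presuppose the false identification. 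This is precisely the difficulty the paper's proof is built to handle: it first replaces $(\pi,T,\H\ts A_2)$ by the degenerate enlargement $(\pi',T',(\H_1\oplus\H_2\oplus\H_3)\ts A_2)$, uses the Kasparov stabilization theorem to absorb $\H_2\ts A'_2\oplus\H_3\ts A_2\ts_{A_2}A'_2\cong\H\ts A'_2$ so that $g_*(z)$ is realized on a standard module, defines $g_E$ by correcting only the compact part $y-P'_B\pi'(x)P'_B$ (note that your formula $(x,y)\mapsto(x,(\tilde g\ts\mathrm{Id}_B)(y))$ also needs this correction, since $\tilde g\ts\mathrm{Id}_B$ is not defined on all of $\L(\H\ts A_2\ts B)$ with values in $\L(\H\ts A'_2\ts B)$), and then compares the two Morita equivalences not by naturality alone but by observing that $g_E$ restricts to $\mathrm{Id}_{\K(\H_1)}\ts g\ts\mathrm{Id}_B$ on a corner implementing them. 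Without the stabilization step and this corner argument, your proof of (ii) does not go through.
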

\begin{proof}\
 \begin{enumerate}

\item Let $
A'_1$ be a filtered $C^*$-algebra, let $f:A_1\to A'_1$ be a homomorphism of  $C^*$-algebras and let
  $(\pi,T,H\otimes A_2)$ be an odd  $A'_1$-$A_2$-$K$-cycle. With the
  notations of the proof of proposition \ref{prop-tensor}, the homomorphism
$$f^E:E^{f^*(\pi,T)}\to E^{(\pi,T)};\,(x,y)\mapsto (f_{B}(x),y )$$  fits in the  commutative
diagram
$$
\begin{CD}
0 @>>>   \K(\H)\otimes A_2\ts B  @>>>E^{f^*(\pi,T)}  @>>> A_1\ts B @>>>  0\\
 @.         @V=VV     @Vf^EVV         @VVf_B V\\
 0 @>>>  \K(\H)\otimes A_2\ts B  @>>>E^{(\pi,T)} @>>>  A'_1\ts B  @>>>  0
\end{CD}.
$$

Moreover $f_B$ and $f^E$ intertwines the semi-split and filtered
cross-sections $$A_1\ts B \to E^{f^*(\pi,T)};\, x \mapsto (x,P_B
\cdot\pi_B\circ f_B(x)\cdot P_B)$$ and
$$A'_1\ts B\to E^{(\pi,T)};\, x \mapsto (x,P_B
\cdot\pi_B(x)\cdot P_B)$$
 and thus, we get by remark \ref{rem-fonct} that
$$\T_B(f^*(z))=\T_B(z)\circ f_*$$  for all $z$ in $KK_1 (A'_1,A_2)$.

\item Let $
A'_2$ be a $C^*$-algebra and  let $g:A_2\to A'_2$ be a homomorphism of $C^*$-algebras. For any element
  $F$ in $\mathcal{L}(\H\otimes A_2)$,  let us denote by
 $$\tilde{F}=F\ts_{A_2} Id_{A'_2}\in \mathcal{L}(\H\otimes A_2\ts_{A_2} A'_2).$$Notice that $\H\otimes A_2\ts_{A_2} A'_2$ can be viewed
as a right $A'_2$-Hilbert-submodule of $\H\ts A'_2$ and under this identification, for any
 $F$ in $\K(\H)\otimes A_2$, then  $\tilde{F}$ is the restriction
to  $\H\otimes A_2\ts_{A_2} A'_2$
of the homomorphism
$(Id_{\K(\H)}\ts g)(F)$.
Let $z$ be an element of $KK_1(A_1,A_2)$
represented by a $K$-cycle
  $(\pi,T,\H\ts A_2)$. Consider the $A_1$-$A_2$-$K$-cycle $(\pi',T',\H'\ts A_2)$ with
$\H'=\H_1\oplus\H_2\oplus\H_3$, where $\H_1$, $\H_2$ and $\H_3$ are three copies of $\H$,
$\pi'=0\oplus 0 \oplus \pi$ and
$T'=Id_{\H_1\ts A_2}\oplus Id_{\H_2\ts A_2} \oplus T$. Then $(\pi',T',\H'\ts A_2)$  is again a  $K$-cycle representing  $z$ and $g_*(z)$
is represented by the $K$-cycle $(\pi'',T'',\E)$, where
\begin{itemize}
\item $\E=\H_1 \otimes A'_2\bigoplus \H_2\otimes A'_2\bigoplus \H_3\otimes A_2\ts_{A_2} A'_2$;
\item $\pi''=0\oplus 0\oplus \tilde{\pi}$;
\item $T''=\Id_{\H_1\ts A'_2}\oplus \Id_{\H_2\ts A'_2}\oplus \tilde{T}$.
\end{itemize}
 Using Kasparov stabilization theorem,
we get that $\H_2\otimes A'_2\bigoplus \H_3\otimes A_2\ts_{A_2} A'_2$
 is isomorphic as a right-$A'_2$-Hilbert module to $\H\otimes A'_2$
and hence, using this identification, we can represent $g_*(z)$ using a standard  right-$A'_2$-Hillbert module, as in the proof of
proposition \ref{prop-tensor}.
Then, under the  above identification   $\H_2\otimes A'_2\bigoplus \H_3\otimes A_2\ts_{A_2} A'_2\cong \H\otimes A'_2$,
\begin{eqnarray*}
 g_E:E^{(\pi,T)}&\to &E^{g_*(\pi,T)}\\
(x,y)&\mapsto& (x,P''_B\pi''(x)P''_B+(Id_{\K(\H')\ts B}\ts g)( y -P'_B\pi'(x)P'_B))\end{eqnarray*}
restricts to a
homomorphism $\K(\H_1\oplus\H_2\oplus \H_3)\ts A_2\ts B\to \K(\H_1\oplus \H)\ts A'_2\ts B$.

We get now a commutative diagram
$$
\begin{CD}
0 @>>>   \K(\H_1\oplus\H_2\oplus \H_3)\otimes A_2\ts B  @>>>E^{(\pi',T')}  @>>> A_1\ts B @>>>  0\\
 @.         @Vg_E VV     @Vg_EVV         @VV= V\\
 0 @>>>  \K(\H_1\oplus\H)\otimes A'_2\ts B @>>>E^{(\pi'',T'')} @>>>  A_1\ts B@>>>  0
\end{CD}.
$$
Hence, we get by remark \ref{rem-fonct} that
$$\DD_{\K(\H)\otimes A'_2\ts B,E^{(\pi'',T'')}}=g_{E,*}\circ \D_{\K(\H)\otimes A_2\ts B,E^{(\pi',T')}}.$$ But the restriction of
$g_E$ to the corner  $\K(\H_1)\ts A_2\ts B$  of   the $C^*$-algebra $\K(\H_1\oplus\H_2\oplus \H_3)\otimes A_2\ts B$ is $Id_{\K(\H_1)}\ts g\ts Id_{B}$.
Since  the Morita equivalence
 $$\MM_{A'_2\ts B}:\K_*(A'_2\ts B)\stackrel{\cong}{\to}\K_*(\K(\H_1\oplus\H)\ts A'_2\ts B)$$ can be implemented  by an inclusion
of  $A'_2\ts B$ in a corner of $\K(\H_1)\ts A'_2\ts B$, and similarly for the  Morita equivalence
 $$\MM_{A_2\ts B}:\K_*(A_2\ts B)\stackrel{\cong}{\to}\K_*(\K(\H_1\oplus\H_2\oplus \H_3)\ts A_2\ts B),$$
we deduce that the two following compositions coincide:
$$\K_* (A_2\ts B))\stackrel{g_{B,*}}{\longrightarrow}\K_*(A'_2\ts B)
\stackrel{\MM_{A'_2\ts B}}{\longrightarrow}\K_*(\K(\H_1\oplus\H)\ts(A'_2\ts B))
$$ and 
\begin{equation*}\begin{split} 
\K_*(A_2\ts B)\stackrel{\MM_{A_2\ts B}}{\longrightarrow}\K_*(\K(\H_1\oplus\H_2&\oplus\H_3)\ts A_2\ts B)\\
& \stackrel{g_{E,*}}{\longrightarrow}
\K_* (\K(\H_1\oplus\H)\ts A'_2\ts B).\end{split}\end{equation*} Hence we get 
$$\TT_B(g_*(z))=g_*\circ \TT_B(z)$$
for any $z$ in $KK_1(A_1,A_2)$.
\end{enumerate}
 \end{proof}

Let us now extend the definition of   $\TT_B$  to the even case.
Consider for a suitable control pair $(\alpha_\BB,k_\BB)$ and any filtered $C^*$-algebra $A$
the $(\alpha_\BB,k_\BB)$-controlled morphism of odd degree $\BB_A: \K_*(SA)\to \K_*(A)$ defined
\begin{itemize}
 \item  by $\BB_A^0$ on $\K_0(SA)$ as  in corollary \ref{cor-suspension};
\item  by $\MM_A^{-1}\circ \DD_{\K(\ell^2(\N))\otimes A, \T _0\otimes A}$ on
 $\K_1(SA)$  using the Toeplitz extension
$$0\to\K(\ell^2(\N))\otimes A\to \T _0\otimes A{\to}SA\to 0$$
 (see the discussion at the end of section \ref{subsection-controlled-boundary-maps}).
\end{itemize}
Then, according to corollary \ref{cor-suspension} and proposition \ref{prop-toeplitz}, there exists a control pair
$(\lambda,h)$ such that   $\BB_A$ is
a right $(\lambda,h)$-inverse for $\DD_{SA,CA}$ for  any filtered $C^*$-algebra $A$. Let us set $\alpha_\TT=\lambda\alpha_\BB$
and $k_\TT=h*k_\BB$.
\smallskip

Now, let  $B$ be  a filtered $C^*$-algebra, let $A_1$ and $A_2$ be $C^*$-algebras,
then define for any $z$ in $KK_0(A_1,A_2)$ the $(\alpha_\TT,k_\TT)$-controlled morphism
$$\TT_B(z)=(\tau_B^{\eps,r})_{0<\eps<\frac{1}{4\alpha_\TT},r>0}:\K_*(A_1\ts B)\to \K_*(A_2\ts B)$$ by
$$\TT_B(z)\defi \BB_{A_2\ts B}\circ
\TT_{B}(z\otimes_{A_2}[\partial_{A_2}])$$
where
\begin{itemize}
 \item

$[\partial_{A_2}]=[\partial_{SA_2,CA_2}]\in KK_1(A_2,SA_2)$ corresponds to the boundary of the exact sequence
$0\to SA_2\to CA_2\to A\to 0$;
\item $\otimes_{A_2}$ stands for Kasparov product.
 \end{itemize}
Up to compose on the left with $\iota_*^{\alpha_\DD\eps,\alpha_\TT\eps,k_\DD r,k_\TT r}$, we can
 in the odd case  define $\TT_B(\bullet)$ also as an
$(\alpha_\TT,k_\TT)$-controlled morphism.
\begin{theorem}\label{thm-tensor}
Let $B$ be a filtered $C^*$-algebra, let $A_1$ and $A_2$ be $C^*$-algebras
\begin{enumerate}
\item For any element $z$ in $KK_*(A_1,A_2)$, then $\TT_B(z):\K_*(A_1\ts B)\to \K_*(A_2\ts B)$ is
 a $(\alpha_\TT,k_\TT)$-controlled morphism
with same degree as $z$ that induces  in $K$-theory  right  multiplication by $\tau_B(z)$.
\item  For any  elements $z$ and $z'$ in $KK_*(A_1,A_2)$ then
$$\TT_B(z+z')=\TT_B(z)+\TT_B(z').$$
\item Let $A'_1$ be a filtered $C^*$-algebras and  let $f:A_1\to A'_1$ be a homomorphism of  $C^*$-algebras, then
$\T_B(f^*(z))=\T_B(z)\circ f_{B,*}$  for all $z$ in $KK_* (A'_1,A_2)$.
\item Let $
A'_2$ be a $C^*$-algebra and  let $g:A'_2\to A_2$ be a homomorphism of $C^*$-algebras then
$\TT_B(g_{*}(z))=g_{B,*}\circ \TT_B(z)$
for any $z$ in $KK_*(A_1,A'_2)$.
\item $\TT_B([Id_{A_1}])\stackrel{(\alpha_\TT,k_\TT)}{\sim} \Id _{\K_*({A_1}\ts B)}$.
\item For any $C^*$-algebra $D$ and any element $z$ in $KK_*(A_1,A_2)$, we have  $\TT_B(\tau_D(z))=\TT_{B\ts D}(z)$.
\end{enumerate}
\end{theorem}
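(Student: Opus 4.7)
The plan is to bootstrap from the odd case, which is already established: degree, additivity, functoriality in $A_1$, functoriality in $A_2$, and compatibility with $\tau_D$ for $z\in KK_1(A_1,A_2)$ are precisely Proposition \ref{prop-tensor}, Remark \ref{rem-tensor} and Proposition \ref{prop-bifunctoriality}. For $z\in KK_0(A_1,A_2)$ the statements are transferred to the even case through the defining formula
$$\TT_B(z)\defi \BB_{A_2\ts B}\circ \TT_{B}(z\otimes_{A_2}[\partial_{A_2}]),$$
using the fact that $\BB_{A_2\ts B}$ is a natural right $(\lambda,h)$-inverse of $\DD_{SA_2\ts B,CA_2\ts B}$ (Corollary \ref{cor-suspension} and Proposition \ref{prop-toeplitz}) and that, under $\tau_B(z\otimes_{A_2}[\partial_{A_2}])=\tau_B(z)\otimes_{A_2\ts B}\tau_B[\partial_{A_2}]$, right multiplication by $\tau_B(z\otimes_{A_2}[\partial_{A_2}])$ agrees in $K$-theory with the composition of the suspension isomorphism and right multiplication by $\tau_B(z)$. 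Point (i) is then a direct verification of degrees and $K$-theoretic content, and point (ii) follows immediately from additivity in the odd case and bilinearity of the Kasparov product.

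For (iii), one writes $(f^*(z))\otimes_{A_2}[\partial_{A_2}] = f^*\bigl(z\otimes_{A_2}[\partial_{A_2}]\bigr)$ and applies the odd-case identity $\TT_B(f^*(z'))=\TT_B(z')\circ f_{B,*}$ with $z'=z\otimes_{A_2}[\partial_{A_2}]$. For (iv), the key identity is the naturality of the boundary class
$$g_*(z)\otimes_{A'_2}[\partial_{A'_2}] \;=\; (Sg)_*\bigl(z\otimes_{A_2}[\partial_{A_2}]\bigr)\quad\text{in } KK_1(A_1,SA'_2),$$
which holds by associativity of the Kasparov product together with $g_*[\partial_{A_2}]=(Sg)^*[\partial_{A'_2}]$. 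Applying the odd case of (iv) to $(Sg)_*(z\otimes_{A_2}[\partial_{A_2}])$ yields $\TT_B((Sg)_*(\cdots))=(Sg)_{B,*}\circ \TT_B(z\otimes_{A_2}[\partial_{A_2}])$, after which the naturality of $\BB$ in the second bullet of Corollary \ref{cor-suspension} gives
$$\BB_{A'_2\ts B}\circ(Sg)_{B,*}=g_{B,*}\circ\BB_{A_2\ts B},$$
and hence the desired identity for $\TT_B(g_*(z))$. Bookkeeping of control pairs collapses at each step into the universal pair $(\alpha_\TT,k_\TT)=(\lambda\alpha_\BB,h\ast k_\BB)$.

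For (v), one uses $[Id_{A_1}]\otimes_{A_1}[\partial_{A_1}]=[\partial_{A_1}]$, so by definition
$$\TT_B([Id_{A_1}])=\BB_{A_1\ts B}\circ\TT_B([\partial_{A_1}])=\BB_{A_1\ts B}\circ\DD_{SA_1\ts B,CA_1\ts B}$$
by Remark \ref{rem-tensor}(ii); this composition is $(\alpha_\TT,k_\TT)$-equivalent to $\Id_{\K_*(A_1\ts B)}$ by the very definition of $\BB_{A_1\ts B}$ as a right $(\lambda,h)$-inverse of the suspension controlled morphism, combined with the $(\lambda,h)$-invertibility statement of Corollary \ref{cor-suspension}. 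For (vi), in the odd case the identity $\TT_B(\tau_D(z))=\TT_{B\ts D}(z)$ is Remark \ref{rem-tensor}(iii); the even case reduces to the odd one via
$$\tau_D(z)\otimes_{A_2\ts D}[\partial_{A_2\ts D}]\;=\;\tau_D\bigl(z\otimes_{A_2}[\partial_{A_2}]\bigr),$$
which follows from associativity of the Kasparov product and the identification $[\partial_{A_2\ts D}]=\tau_D[\partial_{A_2}]$, and from the natural identification of $\BB$ with its tensor-by-$D$ counterpart.

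The main obstacle I expect is point (iv): it is not purely formal because it requires the naturality of $\BB$ as a controlled morphism (not merely in $K$-theory), together with the $KK$-theoretic compatibility $g_*[\partial_{A_2}]=(Sg)^*[\partial_{A'_2}]$. All other parts are mechanical transfers from the odd case, and the control-pair bookkeeping only contributes constants absorbed into $(\alpha_\TT,k_\TT)$.
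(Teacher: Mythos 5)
Your proposal is correct and follows essentially the same route as the paper: the even case is reduced to the odd case through the defining formula $\TT_B(z)=\BB_{A_2\ts B}\circ\TT_B(z\otimes_{A_2}[\partial_{A_2}])$, with (ii), (iii), (iv), (vi) obtained from remark \ref{rem-tensor} and proposition \ref{prop-bifunctoriality} plus the naturality of $\BB_\bullet$ (corollary \ref{cor-suspension}, remark \ref{rem-fonct}), and (v) from the inverse relation between $\BB_\bullet$ and the suspension morphism coming from corollary \ref{cor-suspension} and proposition \ref{prop-toeplitz}. The only blemish is the auxiliary identity in (iv), which should read $(Sg)_*[\partial_{A_2}]=g^*[\partial_{A'_2}]$ rather than $g_*[\partial_{A_2}]=(Sg)^*[\partial_{A'_2}]$; the displayed identity $g_*(z)\otimes_{A'_2}[\partial_{A'_2}]=(Sg)_*\bigl(z\otimes_{A_2}[\partial_{A_2}]\bigr)$ that you actually use is the correct one.
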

\begin{proof}Since $\BB_{A_2\ts B}$ is a right $(\lambda,h)$-inverse for
$\DD_{SA_2\ts B,CA_2\ts B}$, it induces in $K$-theory a right inverse (indeed an inverse)
for the (degree $1$) boundary map
$$\partial_{SA_2\ts B,CA_2\ts B}:K_*(A_2\ts B)\to K_*(SA_2\ts B).$$
 But since $\TT_B(z\ts_{A_2}[\partial_{SA_2\ts B,CA_2\ts B}])$ induces in $K$-theory
 right multiplication by $z\ts_{A_2}[\partial_{SA_2\ts B,CA_2\ts B}]$, we eventually get that
$\TT_B(z\ts_{A_2}[\partial_{SA_2\ts B,CA_2\ts B}])$ induced in $K$-theory the composition
$$K_*(A_1\ts B)\stackrel{\ts_{A_1\ts B}\tau_B(z)}{\lto}K_*(A_2\ts B)\stackrel{\partial_{SA_2\ts B,CA_2\ts B}}{\lto}
K_*(SA_2\ts B)$$ and hence we get the first point.

Point { (ii)} is a consequence of  remark
\ref{rem-tensor}. Point { (iii)} is a  consequence  of  proposition \ref{prop-bifunctoriality}. Point {(iv)} is a consequence of
proposition \ref{prop-bifunctoriality} and of the  naturality of
$\BB_\bullet$ (see remark \ref{rem-fonct} and corollary \ref{cor-suspension}),
point { (v)}  holds by definition of $\BB_\bullet$. Point (vi) is a consequence of point  (iii) of remark \ref{rem-tensor}.
\end{proof}
We end this section by proving the compatibility of $\TT_B$
with Kasparov product.

\begin{theorem}\label{thm-product-tensor} There exists a control pair $(\lambda,h)$ such that the following holds :

  let  $A_1,\,A_2$ and $A_3$  be $C^*$-algebras and let $B$ be a  filtered $C^*$-algebra. Then for any
$z$ in $KK_*(A_1,A_2)$ and  any  $z'$ in
$KK_*(A_2,A_3)$, we have
$$\TT_B(z\ts_{A_2} z')\aeq  \TT_B(z') \circ\TT_B(z).$$
\end{theorem}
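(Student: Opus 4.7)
The plan is to establish the formula first when both $z$ and $z'$ are odd (degree one) classes, and then derive the three remaining parity combinations from this special case by unfolding the definition of $\TT_B$ on even classes. Each reduction step only degrades the control pair by a uniform factor, so the final $(\lambda,h)$ will depend only on the already fixed $(\alpha_\TT,k_\TT)$, $(\alpha_\BB,k_\BB)$ and $(\alpha_\DD,k_\DD)$.

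For the odd-odd case, represent $z\in KK_1(A_1,A_2)$ and $z'\in KK_1(A_2,A_3)$ by $K$-cycles $(\pi,T,\H\ts A_2)$ and $(\pi',T',\H'\ts A_3)$, and form the two associated filtered semi-split extensions $0\to \K(\H)\ts A_2\ts B\to E^{(\pi,T)}\ts B\to A_1\ts B\to 0$ and $0\to \K(\H')\ts A_3\ts B\to E^{(\pi',T')}\ts B\to A_2\ts B\to 0$ from the proof of Proposition~\ref{prop-tensor}. The idea is to build a third filtered $C^*$-algebra $\widetilde{E}$, obtained by pulling back the first extension along the second, and which fits into a commutative diagram of filtered semi-split extensions. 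Then the iterated controlled boundary along the two stages of this tower computes $\TT_B(z')\circ\TT_B(z)$ up to Morita equivalence, while the global extension $0\to\K(\H\ts\H')\ts A_3\ts B\to\widetilde{E}\to A_1\ts B\to 0$ realizes the extension classically associated with the Kasparov product $z\ts_{A_2}z'$ of two odd classes. Applying $\BB$ and the defining formula of $\TT_B$ on even classes then identifies the global boundary with $\TT_B(z\ts_{A_2}z')$. The equality of the two controlled morphisms up to $(\lambda,h)$ follows from a filtered diagram chase invoking Proposition~\ref{prop-boundary-suspension} (compatibility of $\DD$ with suspension), Lemma~\ref{lemma-mapping-connes} (mapping cone identification), and the naturality of $\DD$ from Remark~\ref{rem-fonct}.

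For the other parity combinations, unfold the definition of $\TT_B$ on even classes. For instance, if $z$ is odd and $z'$ is even, write $\TT_B(z')=\BB_{A_3\ts B}\circ\TT_B(z'\ts_{A_3}[\partial_{A_3}])$, apply the odd-odd case already treated to the pair $(z,z'\ts_{A_3}[\partial_{A_3}])$, and use associativity of the Kasparov product $(z\ts_{A_2} z')\ts_{A_3}[\partial_{A_3}]=z\ts_{A_2}(z'\ts_{A_3}[\partial_{A_3}])$ to recombine. The cases even-odd and even-even are handled analogously by introducing the $[\partial_{A_2}]$ and $[\partial_{A_3}]$ factors; the telescoping relies on the fact that $\BB_{\bullet}$ is a right $(\lambda,h)$-inverse of $\DD^1_{\bullet}$ per Corollary~\ref{cor-suspension}.

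The main obstacle is the construction and analysis of $\widetilde{E}$: one must show that it is a filtered $C^*$-algebra admitting a semi-split filtered cross-section, and that the resulting iterated boundaries are controllably compatible with the single boundary of the composed extension. In usual $KK$-theory, the identification of the Kasparov product of two odd classes with such a double extension is classical (via Cuntz's quasihomomorphism description, or via the mapping-cone realization of products of extensions), but here it must be carried out at the level of filtered $C^*$-algebras with explicit propagation control. Once the filtered realization of $\widetilde{E}$ is in place and its semi-split section is shown to preserve the filtration, the remaining diagram chase is routine modulo repeated invocation of the compatibility results for $\DD$ and $\BB$ from Sections~\ref{sec-exact-sequence} and~\ref{subsection-tensorisation}, and the final control pair is obtained by composing the fixed universal pairs $(\alpha_\TT,k_\TT)$, $(\alpha_\BB,k_\BB)$ and $(\alpha_\DD,k_\DD)$.
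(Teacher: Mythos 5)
Your plan rests on a central step that does not work as stated, and it also inverts the logical order in a way that risks circularity. The claim that the Kasparov product of two odd classes is ``realized by the global extension $0\to\K(\H\ts\H')\ts A_3\ts B\to\widetilde{E}\to A_1\ts B\to 0$'' cannot be right on degree grounds alone: $z\ts_{A_2}z'$ is an \emph{even} class, while the controlled boundary of any single semi-split filtered extension is a morphism of odd degree, so no single extension of $A_1\ts B$ by $\K(\H\ts\H')\ts A_3\ts B$ can have $\TT_B(z\ts_{A_2}z')$ as its boundary. What you actually need is that the \emph{iterated} controlled boundary of the two-stage tower agrees, up to a universal control pair, with $\TT_B(z\ts_{A_2}z')$ as defined by $\BB_{A_3\ts B}\circ\TT_B\bigl((z\ts_{A_2}z')\ts_{A_3}[\partial_{A_3}]\bigr)$; this is not a routine diagram chase but is essentially the content of the theorem (already in ordinary $KK$-theory the statement that the composition of two connecting maps is multiplication by the product of the corresponding odd classes, compatibly with Bott, is a genuine theorem). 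Moreover the pullback $\widetilde{E}$ itself is not immediate: the ideal of the first extension is $\K(\H)\ts A_2\ts B$, not $A_2\ts B$, so there is no filtered $*$-homomorphism along which to pull back the second extension without first tensoring it by $\K(\H)$ and making Morita-type identifications with propagation control — exactly the place where the real work sits and which your proposal declares routine. Finally, your reductions of the mixed and even parity cases from the odd-odd case use, implicitly, that $\BB_{A_3\ts B}\circ\TT_B(w\ts_{A_3}[\partial_{A_3}])\stackrel{(\lambda,h)}{\sim}\TT_B(w)$ for \emph{odd} $w$ (to identify the right-hand side with the original odd definition of $\TT_B(z\ts_{A_2}z')$). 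In the paper this compatibility is a consequence of the controlled Bott periodicity theorem, whose proof in turn uses the even case of the present theorem; Corollary~\ref{cor-suspension} and Proposition~\ref{prop-toeplitz} by themselves give only partial inverse relations between $\BB$ and $\DD$, not this identity for arbitrary odd classes. You would therefore have to supply an independent controlled proof of it, or your argument is circular.

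For contrast, the paper's proof goes the other way around and avoids double extensions entirely. When $z$ (or $z'$) is even, it invokes Lafforgue's factorization (\cite[Lemma 1.6.9]{laff-inv}): there are $*$-homomorphisms $\theta:A_4\to A_1$ and $\eta:A_4\to A_2$ with $[\theta]$ $KK$-invertible and $z=\eta_*([\theta]^{-1})$, so that multiplication by $z$ reduces to functoriality of $\TT_B$ under $*$-homomorphisms (Theorem~\ref{thm-tensor}(iii),(iv),(v) and Remark~\ref{rem-composition}), which is already established with explicit control in Proposition~\ref{prop-bifunctoriality}. The odd-odd case is then obtained from the even case by inserting $[\partial_{A_2}]\ts_{SA_2}[\partial_{A_2}]^{-1}$ and using the controlled Bott periodicity statement proved there (Theorem~\ref{theo-bott}), whose proof only needs the even case. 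If you want to salvage your approach, you would need to carry out the filtered double-extension construction in detail and prove, with explicit control pairs, the compatibility of iterated boundaries with the even-case definition of $\TT_B$ — at which point you will have reproved controlled Bott periodicity by hand; the factorization route is substantially shorter.
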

\begin{proof}
We first deal with the case $z$ even. According to
\cite[Lemma 1.6.9]{laff-inv}, there exists a $C^*$-algebra $A_4$
and homomorphisms $\theta:A_4 \to A_1$ and $\eta:A_4\to A_2$ such that
\begin{itemize}
\item the element $[\theta]$ of $KK_*(A_4,A_1)$ induced by $\theta$ is
   invertible.
\item $z=\eta_*([\theta]^{-1})$.
\end{itemize} Since $\theta_*([\theta]^{-1})=[Id_{A_1}]$ in $KK_*(A_1,A_1)$, we get in view of remark \ref{rem-composition}
and of points  (iii), (iv) and (v) of theorem \ref{thm-tensor} that
$$\TT_B(z\ts_{A_2} z')\aeq \TT_B(\theta^*(z\ts_{A_2} z'))\circ \TT_B([\theta]^{-1}),$$
with $(\lambda,h)=(\alpha_\TT^2,k_\TT*k_\TT)$. But by bi-functoriality of $KK$-theory, we have
$\theta^*(z\ts_{A_2} z')=\eta^*(z')$  and then the result is a consequence of points (iii) and  (iv)  of theorem \ref{thm-tensor}.
We can proceed similarly when $z'$ is even. Let us prove now the result when $z$ and $z'$ are odd.
 Then $[\partial_{A_2}]=[\partial_{SA_2,CA_2}]$ is an invertible element in $KK_1(A_2,SA_2)$ and
$z\ts_{A_2} z'=z\ts_{A_2}[\partial_{A_2}]\ts_{SA_2}[\partial_{A_2}]^{-1}\ts_{A_2 }z'$ and hence using the even case, we get that
\begin{equation}\label{equation-prod1}
 \TT_B(z\ts_{A_2} z')\aeq  \TT_B([\partial_{A_2}]^{-1}\ts_{A_2}z') \circ\TT_B(z\ts_{A_2}[\partial_{A_2}]).\end{equation}
But
\begin{eqnarray}
 \nonumber  \TT_B([\partial_{A_2}]^{-1}\ts_{A_2}z') &=& \BB_{A_3\ts B}
\circ \TT_B([\partial_{A_2}]^{-1}\ts_{A_2}z'\ts_{A_3}[\partial_{A_3}])\\
&\aeqp& \BB_{A_3\ts B}
\circ \TT_B(z'\ts_{A_3}[\partial_{A_3}])\circ\TT_B([\partial_{A_2}]^{-1})\label{equation-prod2}
\end{eqnarray}
for
some control pair $(\lambda',h')$, depending only on $(\lambda,h)$ and $(\alpha_\TT,k_\TT)$,
 where  equation  (\ref{equation-prod2}) holds by the even
case applied to $z'\ts_{A_3}[\partial_{A_3}]$ and  $[\partial_{A_2}]^{-1}$. Hence, for a control pair $(\lambda'',h'')$-depending only
on $(\lambda,h)$, we get applying  the even case to $[\partial_{A_2}]^{-1}$
and $z\ts_{A_2}[\partial_{A_2}]$ that
\begin{equation}\label{equation-prod3}
  \TT_B(z\ts_{A_2} z')\stackrel{(\lambda'',h'')}{\sim}  \BB_{A_3\ts B}
\circ \TT_B(z'\ts_{A_3}[\partial_{A_3}]) \circ\TT_B(z).
\end{equation}In view of this equation, we deduce the odd case from  the controlled Bott periodicity,
which will be proved in the next lemma: if we set $[\partial]=[\partial_{C_0(0,1),C_0(0,1]}]\in KK_1(\C,C_0(0,1))$, then
there exists a controlled $(\alpha,k)$ such that $\TT_A([\partial]^{-1})$ is an  $(\alpha,k)$-inverse for
$\DD_{A}$ for any filtered $C^*$-algebra $A$.
 Indeed, from this claim and since for some control pair $(\alpha',k')$, the $(\alpha_\BB,k_\BB)$-controlled morphism $\BB_A$
is for every filtered $C^*$-algebra $A$ a right  $(\alpha',k')$-inverse for $\TT_A([\partial])$, we get that
$$\TT_A([\partial]^{-1})\stackrel{(\alpha'',k'')}{\sim}\BB_A$$ for some controlled pair $(\alpha'',k'')$
depending only on  $(\alpha',k')$ and $(\alpha_\TT,k_\TT)$.
Noticing by using point (vi) of theorem \ref{thm-tensor}, that
 $\TT_{A_3\ts B}([\partial]^{-1})=\TT_{B}([\partial_{A_3}]^{-1}) $, the proof of the theorem in the odd case is
then  by equation   (\ref{equation-prod3})  a consequence of the even case applied to
$[\partial_{A_3}]^{-1}$ and $z'\ts_{A_3}[\partial_{A_3}]$
\end{proof}

\subsection{The controlled  Bott isomorphism}
We prove in this subsection a controlled version of Bott periodicity. The proof use the even case of
theorem \ref{thm-product-tensor}
  and is needed  for the proof of the odd case. Let $A=(A_r)_{r>0}$ be a filtered $C^*$-algebra and let us assume that $A_r$ is
closed for every positive number $r$.
 Let us denote for short as before  $\DD_{SA,CA}$ by $\DD_{A}$ and $[\partial_{SA,CA}]$ by $[\partial_{A}]$ for any filtered
$C^*$-algebra $A$  and let us set
$[\partial]=[\partial_\C]$.
\begin{theorem}\label{theo-bott}
There exists a control pair  $(\alpha,k)$ such that for every filtered $C^*$-algebra $A$, then
$\TT_A([\partial]^{-1})$ is an  $(\alpha,k)$-inverse for  $\DD_{A}$.
\end{theorem}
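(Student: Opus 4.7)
The plan is to identify $\TT_A([\partial]^{-1})$ (up to controlled equivalence) with the morphism $\BB_A$ introduced at the start of Section~\ref{subsection-tensorisation}, and then to check the resulting morphism is a two-sided controlled inverse of $\DD_A$. First I would note, via remark~\ref{rem-tensor}(ii) applied to $0\to SA\to CA\to A\to 0$, that $\DD_A=\TT_A([\partial])$. Next, I would choose the Toeplitz $K$-cycle as a representative of $[\partial]^{-1}\in KK_1(S\C,\C)$, arranged so that its associated filtered extension from the proof of proposition~\ref{prop-tensor} is, after tensoring with $A$, the Toeplitz extension $0\to\K(\ell^2(\N))\otimes A\to\T_0\otimes A\to SA\to 0$ used to define $\BB_A^1$. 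With this choice $\TT_A([\partial]^{-1})=\MM_A^{-1}\circ\DD_{\K\otimes A,\T_0\otimes A}$, and in particular its odd component $\TT_A([\partial]^{-1})^1$ literally equals $\BB_A^1$.

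The heart of the proof is to show that the even component $\TT_A([\partial]^{-1})^0:\K_0(SA)\to\K_1(A)$ is a right controlled inverse of $\DD_A^1$; combined with the two-sided inverse $\BB_A^0$ supplied by corollary~\ref{cor-suspension}, uniqueness of controlled inverses then forces $\TT_A([\partial]^{-1})^0\sim\BB_A^0$ for a universal control pair. The crucial computation uses naturality of the Morita equivalence (remark~\ref{rem-morita-natural}) against the suspension boundary, together with proposition~\ref{prop-boundary-suspension} applied to the Toeplitz extension, whose suspension is canonically the Toeplitz extension for $SA$:
\begin{align*}
\DD_A^1\circ\TT_A([\partial]^{-1})^0
 &= \MM_{SA}^{-1}\circ\DD^1_{\K\otimes A}\circ\DD^0_{\K\otimes A,\T_0\otimes A}\\
 &\sim \MM_{SA}^{-1}\circ\DD^1_{\K\otimes SA,\T_0\otimes SA}\circ\DD^0_{SA}\\
 &= \BB^1_{SA}\circ\DD^0_{SA}\\
 &\sim \Id_{\K_0(SA)},
\end{align*}
with the last $\sim$ coming from proposition~\ref{prop-toeplitz} applied at the level of $SA$ together with $\DD_{SA}^0\sim\ZZ_{SA}$.

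A dual calculation using the second diagram of proposition~\ref{prop-boundary-suspension} gives $\DD^0_A\circ\BB^1_A\sim\TT_{SA}([\partial]^{-1})^0\circ\DD^1_{SA}\sim\Id_{\K_1(SA)}$, where the final $\sim$ follows by applying the previous step at the level of $SA$ and invoking corollary~\ref{cor-suspension}. Together with $\BB^1_A\circ\DD^0_A\sim\Id_{\K_0(A)}$ (proposition~\ref{prop-toeplitz}) and $\BB^0_A\circ\DD^1_A\sim\Id_{\K_1(A)}$ (corollary~\ref{cor-suspension}), all four compositions needed to make $\TT_A([\partial]^{-1})\sim\BB_A$ into a two-sided $(\alpha,k)$-inverse of $\DD_A$ are verified with control pairs depending only on the universal ones appearing above. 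I expect the main obstacle to be the careful bookkeeping required to identify the suspension of the Toeplitz extension with the Toeplitz extension for $SA$ (so that proposition~\ref{prop-boundary-suspension} applies exactly as claimed) and to check that the Toeplitz $K$-cycle can be chosen so that the generic construction of proposition~\ref{prop-tensor} reproduces the Toeplitz extension on the nose.
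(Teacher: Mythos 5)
Your proposal is correct, but it takes a genuinely different route from the paper. The paper introduces the auxiliary even class $z=[\partial]\ts_{S}[\partial_{S}]\in KK_0(\C,S^2)$ and proves the controlled double-suspension identity $\DD_{SA}\circ\DD_A\sim\TT_A(z)$; it then deduces left invertibility of $\DD_A$ by composing with $\TT_A([\partial]^{-1})$ and invoking the even case of theorem \ref{thm-product-tensor} together with Kasparov-product associativity (e.g. $[\partial]^{-1}\ts_\C z=[\partial_S]$, $[\partial]=z\ts_{S^2}[\partial_S]^{-1}$, and $[\partial_A]\ts_{SA}\tau_{SA}(z)=\tau_A(z)\ts[\partial_{S^2A}]$, the last combined with the invertibility of $\DD^1_{S^2A}$ from corollary \ref{cor-suspension} to handle the odd component), and finally gets right invertibility by applying the left-inverse statement at the level of $SA$. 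You avoid $z$, theorem \ref{thm-product-tensor} and all $KK$-product manipulations: you observe that both components of $\TT_A([\partial]^{-1})$ are, up to Morita equivalence, controlled boundary maps of the Toeplitz extension, and you shuttle between the levels $A$ and $SA$ using proposition \ref{prop-boundary-suspension} (suspension-compatibility of controlled boundaries, proved via mapping cones) and Morita naturality, closing the argument with proposition \ref{prop-toeplitz}, corollary \ref{cor-suspension} and uniqueness of controlled inverses; this also yields the cleaner identification $\TT_A([\partial]^{-1})\sim\BB_A$ for a universal control pair. Your two caveats are real but harmless at the paper's level of rigor: the identification of the extension $E^{(\pi,T)}$ built from the Toeplitz $K$-cycle with $0\to\K(\ell^2(\N))\ts A\to\T_0\ts A\to SA\to 0$ is a morphism of semi-split filtered extensions handled by remark \ref{rem-fonct} (the paper makes exactly this identification implicitly when it invokes proposition \ref{prop-toeplitz} in its own proof), and proposition \ref{prop-toeplitz} is stated for unital algebras yet must be applied to suspensions — a gap the paper itself glosses over when asserting that $\BB_A$ is a right inverse of $\DD_A$ for arbitrary filtered $A$. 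What each approach buys: the paper's route meshes with the inductive structure of section 4 (the even product theorem is available before Bott, and Bott is then fed back into the odd case) and produces the controlled periodicity statement $\DD_{SA}\circ\DD_A\sim\TT_A(z)$ as a by-product; your route is more elementary, stays entirely within the controlled exact-sequence machinery, and makes the relation between $\TT_A([\partial]^{-1})$ and the explicitly constructed $\BB_A$ transparent.
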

\begin{proof}
Consider  the even element  $z=[\partial]\ts_{S}[\partial_{S}]$ of  $KK_*(\C,S^2)$, where  $S=C_0(0,1)$
and $S^2=SS$. The lemma is a consequence of the following
claim: there exists a control pair  $(\lambda,h)$  such that $\DD_{SA}\circ \DD_A\aeq \TT_A(z)$ for any $C^*$-algebra $A$.
Before proving the claim, let us  see how it implies the lemma. Notice first that
by   point (ii) of remark \ref{rem-tensor}, we have  $\DD_A=\TT_A([\partial])$. Since by associativity of Kasparov product 
$[\partial]^{-1}\ts_\C z=[\partial_S]$, we get from
  theorem \ref{thm-product-tensor} applied to the even case, that there exists a control pair $(\lambda',h')$ such that for any filtered
$C^*$-algebra $A$, then $\TT_A(z)\circ \TT_A([\partial]^{-1})\circ \DD_A\aeq \DD_{SA}\circ \DD_A$. Using the claim and since
$z$ is an invertible element of $KK_*(\C,S^2)$, we obtain  from theorem  \ref{thm-product-tensor} applied to the even  case that there exists
 a control
pair $(\alpha,k)$ such that $\TT_A([\partial]^{-1})$ is a left $(\alpha,k)$-inverse for $\DD_A$. Using associativity of the Kasparov
product, we see that $[\partial]=z\ts_{S^2}[\partial_S]^{-1}$. Then applying twice theorem  \ref{thm-product-tensor}, on one hand
to $[\partial]^{-1}$  and $z\ts_{S^2}[\partial_S]^{-1}$ and on the other hand to $[\partial]^{-1}\ts z$ and $[\partial_S]^{-1}$, we get that
there exists a control
pair $(\alpha',k')$ such that
$\TT_A([\partial])\circ \TT_A([\partial]^{-1})\stackrel{(\alpha',k')}{\sim}\TT_{SA}([\partial]^{-1})\circ \TT_{SA}([\partial])$.
But according to what we have seen before,
$\TT_{SA}([\partial]^{-1})\circ \TT_{SA}([\partial])\stackrel{(\alpha,k)}{\sim}\Id_{\K_*(SA)}$.

\smallskip

Let us now prove the claim.  It is known that up to Morita equivalence, $[\partial_A]^{-1}$ is the element of $KK_1(SA,A)$ corresponding
to the boundary element of the Toeplitz extension
$$0\to\K(\ell^2(\N))\otimes A\to \T _0\otimes A{\to}SA\to 0.$$
Let us respectively denote by $\DD_A^0:\K_0(A)\to\K_1(SA)$   and $\DD_A^1:\K_1(A)\to\K_0(SA)$ the restriction of $\DD_A$ to
$\K_0(A)$ and $\K_1(A)$. According to proposition \ref{prop-toeplitz},  there exists a control pair
$(\lambda',h')$ such that, on even elements
 \begin{equation}\label{equ-even}\TT_A([\partial]^{-1})\circ \DD^0_A\aeqp \Id_{\K_0(A)}.\end{equation}
        Since $[\partial_S]=[\partial]^{-1}\ts z$,   we get by left composition  by  $\TT_A(z)$  in equation (\ref{equ-even}) and  by using
 theorem  \ref{thm-product-tensor} in the even case   that there exists a control pair $(\lambda,h)$ depending only on
$(\lambda',h')$ and such that
that $\DD^1_{SA}\circ \DD^0_A\aeq \TT^0_A(z)$ (here
$\TT^0_A(z):\K_0(A)\to\K_0(S^2A)$  stands for the  restriction of $\TT_A(z)$ to $\K_0(A)$).
For the odd case, we know from corollary \ref{cor-suspension}    that  there exists a control pair
$(\lambda'',h'')$ such that
$\DD^1_{S^2A}:\K_1(S^2A)\to\K_0(S^3A)$ is $(\lambda'',h'')$-invertible.
Using the previous case, and since by associativity
of the Kasparov product, we have
$[\partial_A]\ts_{SA}\tau_{SA}(z)=\tau_A(z)\ts [\partial_{S^2A}]$, we get by applying twice  theorem \ref{thm-product-tensor} in the even case that there
exists a control pair
$(\lambda''',h''')$ such that
$\DD^1_{S^2A}\circ\DD^0_{SA}\circ\DD^1_{A}\stackrel{(\lambda''',h''')}{\sim}\DD^1_{S^2A}\circ\TT^1_A(z)$, where
$\TT^1_A(z):\K_1(A)\to \K_1(S^2A)$ is the restriction of $\TT_A(z)$ to $\K_1(A)$. Since
$\DD^1_{S^2A}:\K_1(S^2A)\to\K_0(S^3A)$ is $(\lambda'',h'')$-invertible, we get the result by remark
\ref{rem-composition}.
\end{proof}

\subsection{The six term $(\lambda,h)$-exact sequence}

Recall from proposition \ref{prop-boundary-suspension} that
there exists a control pair $(\lambda, h)$ such that for any semi-split  extension of
filtered $C^*$-algebras  $0\to J\to A\to A/J\to 0$,  the following  diagrams are   $(\lambda, h)$-commutative:
$$\begin{CD}
\K_0(A/J) @>\DD_{A/J}>> \K_1(SA/J)  \\
         @V\DD_{J,A} VV
         @VV\DD_{SJ,SA}V\\
\K_1(J) @>\DD_{J}>> \K_0(SJ)
\end{CD}
$$
and
$$\begin{CD}
\K_1(A/J) @>\DD_{A/J}>> \K_0(SA/J)  \\
         @V\DD_{J,A} VV
         @VV\DD_{SJ,SA}V\\
\K_0(J) @>\DD_{J}>> \K_1(SJ)
\end{CD}
$$
As a consequence, by using  theorem \ref{theo-bott} and      proposition  \ref{th-long-exact-sequence}, we get
\begin{theorem}\label{thm-six term}  There exists a control  pair $(\lambda,h)$  such that for any  semi-split
extension of filtered    $C^*$-algebras $$0  \longrightarrow J
\stackrel{\jmath}{\longrightarrow} A \stackrel{q}{\longrightarrow}
A/J\longrightarrow 0,$$ with $A_r$ closed for every positive number $r$, then the following six-term sequence is $(\lambda,h)$-exact
$$\begin{CD}
\K_0(J) @>\jmath_*>> \K_0(A)  @>q_*>>\K_0(A/J)\\
    @A\DD_{J,A} AA @.     @V\DD_{J,A} VV\\
\K_1(A/J) @<q_*<< \K_1(A)@<\jmath_*<< \K_1(J)
\end{CD}
$$
\end{theorem}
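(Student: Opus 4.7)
The plan is to close the long exact sequence of Theorem~\ref{th-long-exact-sequence} into a six-term sequence by inserting the controlled boundary $\DD^0_{J,A}\colon \K_0(A/J)\to \K_1(J)$. Since Theorem~\ref{th-long-exact-sequence} already provides universal $(\lambda_0,h_0)$-exactness at the four positions $\K_1(A)$, $\K_1(A/J)$, $\K_0(J)$ and $\K_0(A)$, it suffices to establish $(\lambda,h)$-exactness at the two remaining positions $\K_0(A/J)$ and $\K_1(J)$ for a universal control pair.

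The strategy is to reduce each new exactness assertion to the analogous exactness for the suspended extension $0\to SJ\to SA\to SA/J\to 0$, which is again semi-split filtered and therefore falls under Theorem~\ref{th-long-exact-sequence}. The bridge between the two extensions is provided by Proposition~\ref{prop-boundary-suspension} together with the naturality of the controlled suspension morphism $\DD^*_\bullet$ (Remark~\ref{rem-fonct}(i) applied to the cone extensions and to $\jmath$ and $q$); these combine into a $(\lambda_1,h_1)$-commutative ladder
\[
\begin{CD}
\K_0(A) @>q_*>> \K_0(A/J) @>\DD^0_{J,A}>> \K_1(J) @>\jmath_*>> \K_1(A) \\
@V\DD^0_A VV  @V\DD^0_{A/J} VV @V\DD^1_J VV  @V\DD^1_A VV \\
\K_1(SA) @>(Sq)_*>> \K_1(SA/J) @>\DD^1_{SJ,SA}>> \K_0(SJ) @>(S\jmath)_*>> \K_0(SA)
\end{CD}
\]
whose bottom row is $(\lambda_0,h_0)$-exact at its two middle positions and whose vertical arrows are all $(\alpha,k)$-invertible for a universal $(\alpha,k)$ by Corollary~\ref{cor-suspension} combined with Theorem~\ref{theo-bott}.

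With this setup the proof is a diagram chase. For exactness at $\K_0(A/J)$, composing $\DD^0_{J,A}\circ q_*$ with the controlled isomorphism $\DD^1_J$ and applying the middle and left squares reduces it to $\DD^1_{SJ,SA}\circ(Sq)_*\circ \DD^0_A$, which vanishes by the long sequence for the suspended extension; hence $\DD^0_{J,A}\circ q_*\aeq 0$. Conversely, if $y\in K_0^{\eps,r}(A/J)$ lies in the controlled kernel of $\DD^0_{J,A}$, then $\DD^0_{A/J}(y)$ lies in the controlled kernel of $\DD^1_{SJ,SA}$, hence by bottom-row exactness is the image under $(Sq)_*$ of some $z\in \K_1(SA)$, and the controlled preimage $w=(\DD^0_A)^{-1}(z)\in \K_0(A)$ satisfies $\DD^0_{A/J}(q_*(w))\aeq \DD^0_{A/J}(y)$ by the left square, so injectivity of $\DD^0_{A/J}$ yields $q_*(w)\aeq y$. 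A symmetric chase in the right half of the ladder, using bottom-row exactness at $\K_0(SJ)$, establishes exactness at $\K_1(J)$. The principal bookkeeping obstacle is to track how each invocation of a controlled commutativity or of a controlled inverse enlarges the exponent and the propagation; Remark~\ref{rem-composition} ensures that the resulting universal $(\lambda,h)$ depends only on $(\lambda_0,h_0)$, $(\lambda_1,h_1)$ and $(\alpha,k)$, and is therefore independent of the extension.
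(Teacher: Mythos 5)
Your proposal is correct and follows essentially the same route as the paper: the paper's proof of Theorem \ref{thm-six term} is precisely the combination of the $(\lambda,h)$-commutative squares of Proposition \ref{prop-boundary-suspension} (plus naturality of the suspension boundary from Remark \ref{rem-fonct}), the controlled invertibility of the suspension morphisms from Theorem \ref{theo-bott} and Corollary \ref{cor-suspension}, and Theorem \ref{th-long-exact-sequence} applied to the original and the suspended extension. You have merely made explicit the controlled diagram chase and the bookkeeping of control pairs that the paper leaves implicit.
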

\begin{remark}
Let us consider with notations of section \ref{subsection-mapping-cones} the semi-split extension of filtered $C^*$-algebras
\begin{equation}\label{equ-puppe-extension} 0\to SA/J\stackrel{\phi_q}{\to}C_q \stackrel{\pi_1} {\to}A\to 0,\end{equation}where 
$\pi_1:C_q\to A$ is the projection on the first factor of $C_q$. Since  we have a 
semi-split extension of filtered algebras $0\to J\stackrel{e_j}{\to}C_q\stackrel{\pi_2}{\to}A/J[0,1)\to 0,$ and since $A/J[0,1)$ 
is a contractible filtered $C^*$-algebra, we see  in view of theorem  \ref{thm-six term} that $e_{j,*}:\K_*(J)\to \K_*(C_q)$
 is a controlled isomorphism. It is then plain to check that up to the controlled isomorphism $e_{j,*}$ and 
$\DD_{A/J}:\K_*(SA/J)\to\K_*(A/J)$, we get from the semi-split extension of filtered $C^*$-algebras
of equation (\ref{equ-puppe-extension}) (for a possibly different control pair) the   controlled six-term exact sequence of theorem
 \ref{thm-six term}.
\end{remark}

If we apply theorem  \ref{thm-six term} to a filtered and split extension, we get:
\begin{corollary}\label{cor-split-ext}
 There exists a control pair $(\lambda,h)$ such that for every split extension of filtered $C^*$-algebra
$0\to J\to A\to A/J\to 0$, with $A_r$ closed for every positive number $r$ and any filtered split cross-section $s:A/J\to A$, then
$$\K_*(J)\oplus\K_*(A/J)\longrightarrow \K_*(A);\, (x,y)\mapsto \jmath_*(x)+s_*(y)$$ is $(\lambda,h)$-invertible.
\end{corollary}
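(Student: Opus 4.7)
The plan is to reduce the statement to the controlled six-term exact sequence of Theorem \ref{thm-six term}. The key first observation is that, by Remark \ref{rem-fonct}(ii), the existence of a filtered split cross-section $s: A/J \to A$ forces the controlled boundary $\DD_{J,A}$ to vanish. Combined with Theorem \ref{thm-six term}, this yields, for a universal control pair $(\lambda_0, h_0)$ and for $i \in \{0, 1\}$, controlled short exact sequences
\begin{equation*}
0 \longrightarrow \K_i(J) \xrightarrow{\jmath_*} \K_i(A) \xrightarrow{q_*} \K_i(A/J) \longrightarrow 0,
\end{equation*}
where $\jmath_*$ is $(\lambda_0, h_0)$-injective and $q_*$ is $(\lambda_0, h_0)$-surjective.

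Let $\Phi: \K_*(J) \oplus \K_*(A/J) \to \K_*(A)$ denote the candidate morphism $(x,y) \mapsto \jmath_*(x) + s_*(y)$; it is controlled because $\jmath$ and $s$ are both filtered homomorphisms (Example \ref{example-controlled-morphism}(i)). To construct a controlled inverse $\Psi$, I would proceed pointwise: given a class $z$ at control $(\eps, r)$, set $y = q_*^{\eps, r}(z)$ and examine $z - s_*^{\eps, r}(y)$. Since $q \circ s = \mathrm{Id}_{A/J}$ as a filtered homomorphism, the class $q_*(z - s_*(y))$ vanishes at a controlled level; by the $(\lambda_0, h_0)$-exactness at $\K_*(A)$ there exists a class $x$ in $\K_*(J)$, at an appropriately inflated control, such that $\jmath_*(x)$ matches $z - s_*(y)$. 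The $(\lambda_0, h_0)$-injectivity of $\jmath_*$ — upgraded via Remark \ref{rem-inj-surj-inv}(i) in the $K_1$ case to ensure compatibility with the semigroup structure — makes $x$ uniquely determined modulo a further controlled loss. Setting $\Psi(z) = (x, y)$ defines a family of semigroup homomorphisms, additive because the lifting $x$ may be chosen additively by uniqueness modulo control.

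It remains to verify that $\Psi$ furnishes a two-sided controlled inverse for $\Phi$. On one hand, $\Phi \circ \Psi(z) = \jmath_*(x) + s_*(y) = z$ holds at the control level dictated by the construction. On the other hand, for $(x_0, y_0)$ in $\K_*(J) \oplus \K_*(A/J)$, one checks that $q_* \circ \Phi(x_0, y_0) = y_0$ using $q_* \jmath_* = 0$ and $q_* s_* = \mathrm{Id}$, while $\Phi(x_0, y_0) - s_*(y_0) = \jmath_*(x_0)$ recovers $x_0$ up to control by the controlled injectivity of $\jmath_*$. The principal technical obstacle is the bookkeeping of control pairs: each invocation of controlled exactness or controlled injectivity inflates $\eps$ by a factor depending only on $(\lambda_0, h_0)$ and scales propagation by a function of $\eps$ depending only on $h_0$. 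Packaging these losses through Remark \ref{rem-inj-surj-inv}(iii), which converts controlled injectivity plus controlled surjectivity into a controlled isomorphism, produces a single universal control pair $(\lambda, h)$ realizing the $(\lambda, h)$-invertibility of $\Phi$.
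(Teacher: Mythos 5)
Your argument is correct and follows exactly the route the paper intends: the corollary is presented as an immediate consequence of Theorem \ref{thm-six term}, with Remark \ref{rem-fonct}(ii) killing the controlled boundary $\DD_{J,A}$, and your verification that $(x,y)\mapsto\jmath_*(x)+s_*(y)$ is controlled-injective and controlled-surjective, packaged via Remark \ref{rem-inj-surj-inv}(iii), is precisely the bookkeeping the paper leaves to the reader. The only caveat is that your intermediate pointwise-defined inverse $\Psi$ cannot be asserted to be additive at a fixed control level (liftings are unique only after a further controlled loss), but this is harmless since your final appeal to Remark \ref{rem-inj-surj-inv}(iii) makes the explicit inverse unnecessary.
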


\section{Quantitative $K$-theory for crossed product $C^\ast$-algebras}\label{section-cross-product}

In this section, we study quantitative $K$-theory for crossed product $C^\ast$-algebras and discuss its applications to
 $K$-amenability.

Let $\Ga$    be a  finitely generated group. A
$\Gamma$-$C^*$-algebra is
 a separable $C^*$-algebra equipped with an action of $\Ga$ by
automorphisms. Recall that  the convolution algebra $C_c(\Ga,A)$ of
finitely supported  $A$-valued functions on $\Ga$ admits two canonical $C^*$-completions,
the reduced crossed product  $A\rtr  \Gamma$ and the maximal  crossed product
$A\rtm  \Gamma$. Moreover, there is a canonical epimorphism
 $\lambda_{\Ga,A}:A\rtm  \Gamma\to A\rtr  \Gamma$ which is the identity on $C_c(\Ga,A)$.

\subsection{Lengths and propagation}

Recall that a length on $\Gamma$ is a map $\ell:\Gamma\to\R^+$ such
that
\begin{itemize}
\item $\ell(\gamma)=0$ if and only if $\gamma$ is the identity element  $e$  of $\Gamma$;
\item  $\ell(\gamma\gamma')\lq\ell(\gamma)+\ell(\gamma')$ for all
  element $\gamma$ and $\gamma'$ of $\Gamma$.
\item $\ell(\gamma)=\ell(\gamma^{-1})$.
\end{itemize}
In what follows, we will assume that $\ell$ is a word length arising from a finite generating symmetric  set $S$, i.e
$\ell(\gamma)=\inf\{d\text{ such that }\ga=\ga_1\cdots\ga_d\text{ with } \ga_1,\ldots,\ga_d\text{ in }S\}$.
Let us denote by $B(e,r)$ the
ball centered at the neutral  element of $\Ga$ with  radius $r$, i.e
$B(e,r)=\{\gamma\in\Gamma\text{ such that }\ell(\gamma)\lq r\}$. For any positive number $r$, we set
$$(A\rtr \Gamma)_r\defi\{f\in C_c(\Ga,A)\text{ with support in
}B(e,r)\}.$$ Then the $C^*$-algebra $A\rtr \Gamma$ is filtered by
$((A\rtr \Gamma)_r)_{r>0}$. In the same way, setting $(A\rtm \Gamma)_r\defi\{f\in C_c(\Ga,A)\text{ with support in
}B(e,r)\}$,
 then the $C^*$-algebra $A\rtm  \Gamma$ is filtered by
$((A\rtm \Gamma)_r)_{r>0}$ (notice that as sets, $ (A\rtr \Gamma)_r=(A\rtm \Gamma)_r$). It is straightforward to check that
two word  lengths give  rise for $A\rtr \Gamma$ (resp. for $A\rtm\Gamma$) to quantitative $K$-theories related by a $(1,c)$-controlled isomorphism
 for a
constant $c$.

For a homomorphism  $f:A\to B$ of $\Ga$-$C^*$-algebras, we denote respectively by $f_{\Ga,red}:A\rtr\Ga\to B\rtr\Ga$ and
$f_{\Ga,max}:A\rtm\Ga\to B\rtm\Ga$  the homomorphisms respectively
induced by $f$ on the reduced and on the maximal crossed product.

For any semi-split extension of $\Ga$-$C^*$-algebras
$0  \longrightarrow J
\stackrel{\jmath}{\longrightarrow} A \stackrel{q}{\longrightarrow}
A/J\longrightarrow 0$, we have semi-split extensions of  filtered $C^*$-algebras

$$0  \longrightarrow J\rtr\Ga
\stackrel{\jmath_{\Ga,red}}{\longrightarrow} A\rtr\Ga \stackrel{q_{\Ga,red}}{\longrightarrow}
A/J\rtr\Ga\longrightarrow 0$$

and

$$0  \longrightarrow J\rtm\Ga
\stackrel{\jmath_{\Ga,max}}{\longrightarrow} A\rtm\Ga \stackrel{q_{\Ga,max}}{\longrightarrow}
A/J\rtm\Ga\longrightarrow 0$$and hence, by theorem \ref{thm-six term}, we get:

\begin{proposition}
  There exists a control  pair $(\lambda,h)$  such that for any  semi-split
extension of $\Ga$-$C^*$-algebras    $$0  \longrightarrow J
\stackrel{\jmath}{\longrightarrow} A \stackrel{q}{\longrightarrow}
A/J\longrightarrow 0,$$ the following six-term sequences  are  $(\lambda,h)$-exact
$$\begin{CD}
\K_0(J\rtr\Ga) @>\jmath_{\Ga,red,*}>> \K_0(A\rtr\Ga)  @>q_{\Ga,red,*}>>\K_0(A/J\rtr\Ga)\\
    @A\DD_{J\rtr\Ga,A\rtr\Ga} AA @.     @V\DD_{J\rtr\Ga,A\rtr\Ga} VV\\
\K_1(A/J\rtr\Ga) @<q_{\Ga,red,*}<< \K_1(A\rtr\Ga)@<\jmath_{\Ga,red,*}<< \K_1(J\rtr\Ga)
\end{CD}
$$
and
$$\begin{CD}
\K_0(J\rtm\Ga) @>\jmath_{\Ga,max,*}>> \K_0(A\rtm\Ga)  @>q_{\Ga,max,*}>>\K_0(A/J\rtm\Ga)\\
    @A\DD_{J\rtr\Ga,A\rtm\Ga} AA @.     @V\DD_{J\rtm\Ga,A\rtm\Ga} VV\\
\K_1(A/J\rtm\Ga) @<q_{\Ga,max,*}<< \K_1(A\rtm\Ga)@<\jmath_{\Ga,max,*}<< \K_1(J\rtm\Ga)
\end{CD}
$$
\end{proposition}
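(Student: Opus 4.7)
The plan is to reduce the statement directly to Theorem \ref{thm-six term} applied to the induced crossed product extensions, so the only work is checking that crossing with $\Ga$ (both reduced and maximal) preserves the property of being a semi-split extension of filtered $C^*$-algebras, with respect to the filtrations $((A\rtr\Ga)_r)_{r>0}$ and $((A\rtm\Ga)_r)_{r>0}$ defined above. Note that these filtrations are closed in $A\rtr\Ga$ and $A\rtm\Ga$ respectively (they are the norm closures of the spaces of finitely supported functions on $B(e,r)$), so the standing hypothesis ``$A_r$ closed for every $r$'' in Theorem \ref{thm-six term} holds.

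First I would verify  that $\jmath_{\Ga,red}$, $q_{\Ga,red}$ and their maximal analogues are filtered homomorphisms. This is immediate from the construction: on the dense subalgebras $C_c(\Ga,J)\subset C_c(\Ga,A)\subset J\oplus C_c(\Ga,A/J)$, both $\jmath_{\Ga,\bullet}$ and $q_{\Ga,\bullet}$ act pointwise via $\jmath$ and $q$, hence preserve the support condition defining propagation $\lq r$. By continuity, they send the $r$-th filtration level to the $r$-th filtration level. Exactness at the level of $C^*$-algebras is a standard fact for the reduced and maximal crossed products of a semi-split extension (the maximal case uses exactness of $\rtm\Ga$ for semi-split extensions; the reduced case follows from the fact that a completely positive cross-section gives rise to a completely positive, and in particular completely bounded, splitting at the reduced level).

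Second, I would lift the completely positive filtered cross-section $s:A/J\to A$ to completely positive filtered cross-sections of the two extensions. Define $s_\Ga:C_c(\Ga,A/J)\to C_c(\Ga,A)$ pointwise by $s_\Ga(f)(\ga)=s(f(\ga))$. This is completely positive at the algebraic level, and standard functoriality of crossed products for completely positive maps (for the reduced case, using  the regular covariant representation on $\ell^2(\Ga, A)$; for the maximal case, using universality once complete positivity is established) shows that $s_\Ga$ extends to completely positive maps
\begin{equation*}
s_{\Ga,red}:A/J\rtr\Ga\to A\rtr\Ga,\qquad s_{\Ga,max}:A/J\rtm\Ga\to A\rtm\Ga
\end{equation*}
that are cross-sections of $q_{\Ga,red}$ and $q_{\Ga,max}$ respectively. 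Since $s((A/J)_r)\subset A_r$ and $s_\Ga$ preserves support, any function in the $r$-th filtration level on the source is sent to a function supported in $B(e,r)$ with values in $A_r$, hence into the corresponding $r$-th filtration level on the target.

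Having established that both induced sequences are semi-split extensions of filtered $C^*$-algebras, the  two diagrams in the statement are obtained by direct application of Theorem \ref{thm-six term} to each of them, with a control pair $(\lambda,h)$ that is universal (independent of the extension and of $\Ga$). The main technical point to verify carefully  is the construction of the completely positive cross-section at the level of the maximal crossed product, since there one must check that $s_\Ga$ indeed extends continuously to the maximal $C^*$-norm; but this follows from the fact that completely positive maps between $C^*$-algebras are completely bounded with norm equal to $\|s\|$, combined with the universal property of $\rtm\Ga$.
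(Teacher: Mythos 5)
Your proposal is correct and follows essentially the same route as the paper: the paper simply observes that the reduced and maximal crossed-product sequences are again semi-split extensions of filtered $C^*$-algebras (with the filtration by functions supported in $B(e,r)$, which is closed) and then invokes Theorem \ref{thm-six term}. The extra details you supply -- that $\jmath_{\Ga,\bullet}$, $q_{\Ga,\bullet}$ are filtered and that the pointwise lift $s_\Ga$ of the completely positive filtered cross-section extends to completely positive filtered cross-sections at both the reduced and maximal level -- are exactly the routine verifications the paper leaves implicit.
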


\subsection{Kasparov transformation}
In this subsection we see how a slight modification of the argument used in section \ref{subsection-tensorisation} allowed to define a controlled version of the Kasparov transformation
compatible with Kasparov product.

Notice first that every
 element $z$ of $KK^\Ga_*(A,B)$ can be represented by a
$K$-cycle, $(\pi,T,\H\otimes B)$, where
\begin{itemize}
\item $\H$ is a separable Hilbert
space;
\item the right Hilbert $B$-module $\H\otimes B$ is acted upon by
  $\Gamma$;
\item  $\pi$ is an equivariant
representation of $A$ in the algebra $\L(\H\ts B)$ of adjointable operators on
$\H\otimes B$;
\item $T$ is a self-adjoint operator on $\H\otimes B$ satisfying  the $K$-cycle conditions, i.e. $[T,\pi(a)]$,
  $\pi(a)(T^2-\Id_{\H\otimes B})$ and  $\pi(a)(\gamma(T)-T)$ belongs to
  $\K(\H)\otimes B$, for every $a$ in $A$ and $\gamma\in\Gamma$.
\end{itemize}
Let $T_\Ga=T\otimes_{B}\Id_{B\rtr\Gamma}$ be the adjointable element  of $(\H\otimes B)\otimes_B
B\rtr\Gamma\cong \H\otimes B\rtr\Gamma$ induced by $T$ and let  $\pi_\Gamma$ be the
representation of $A\rtr\Gamma$ in the algebra
$\mathcal{L}(\H\otimes B\rtr\Gamma)$ of
adjointable operators of $\H\otimes B\rtr\Gamma$ induced by $\pi$. Then $(\pi_\Ga,T_\Ga,\H\otimes B\rtr\Ga)$ is a
$A\rtr\Ga$-$B\rtr\Ga$-$K$-cycle and the Kasparov transform \cite{kas} of $z$   is the class $J_\Ga^{red}(z)$ of this $K$-cycle in
$KK_*(A\rtr\Ga,B\rtr\Ga)$.
In the odd case, let us set $P=\frac{\Id_{\H\otimes B}+T}{2}$. Then $P$ induces an adjointable
operator
$P_\Gamma=P\otimes_{B}\Id_{B\rtr\Gamma}$  of $(\H\otimes B)\otimes_B
B\rtr\Gamma\cong \H\otimes B\rtr\Gamma$. Let us define
$$E^{(\pi,T)}=\{(x,y)\in A\rtr\Gamma\oplus \mathcal{L}(\H\otimes B\rtr\Gamma)\text{ such
  that } P_\Gamma\cdot \pi_\Gamma(x) \cdot P_\Gamma-y \in \mathcal{K}(\H)\otimes
B\rtr\Gamma\}.$$ Since $P_\Gamma$ has no propagation, the
$C^*$-algebra $E^{(\pi,T)}$ is filtered by $(E^{(\pi,T)}_r)_{r>0}$ with $$E^{(\pi,T)}_r=\{(x,\,P_\Gamma\cdot \pi_\Gamma(x) \cdot
P_\Gamma+y);\, x\in (A\rtr\Gamma)_r\text{ and } y\in \mathcal{K}(\H)\otimes
(B\rtr\Gamma)_r\}.$$
The extension of $C^*$-algebras
$$0\longrightarrow \mathcal{K}(\H)\otimes
B\rtr\Gamma\longrightarrow E^{(\pi,T)} \longrightarrow
A\rtr\Gamma\longrightarrow 0$$ is filtered semi-split
by the cross-section $$s:A\rtr\Gamma\to E^{(\pi,T)};\, x \mapsto (x,P_\Gamma \cdot\pi_\Gamma(x)
\cdot P_\Gamma).$$
Let us show that  $\DD_{\mathcal{K}(\H)\otimes
B\rtr\Gamma,E^{(\pi,T)}}$  only depends on the class of
$(\pi,T,\H\otimes B)$ in $KK_1^\Gamma(A,B)$.
Assume that $(\pi,T,\H\otimes B[0,1])$ is a $\Gamma$-equivariant $A$-$B[0,1]$-$K$-cycle
 providing a homotopy between two $\Gamma$-equivariant
$A$-$B$-$K$-cycles $(\pi_0,T_0,\H\otimes B)$ and $(\pi_1,T_1,\H\otimes
B)$.
 For $t\in[0,1]$ we denote by
\begin{itemize}
 \item $e_t:B[0,1]\rtr\Gamma\to B\rtr\Gamma$ the evaluation at $t$;
 \item $F_t\in \mathcal{L}(\H\otimes B\rtr\Gamma)$ the fiber at $t$ of
   an  operator $F\in\mathcal{L}(\H\otimes B[0,1]\rtr\Gamma)$;
\item $\pi_{\Gamma,t}$ the representation of $\AG$ induced by
  $\pi_\Gamma$ at the fiber $t$;
\item $s_t:\AG\to E^{(\pi_t,T_t)};\, x\mapsto (x,P_{\Gamma,t}\cdot
  \pi_{\Gamma,t} \cdot P_{\Gamma,t})\quad$ (with $P=\frac{T+1}{2}$);
\end{itemize}

Then the homomorphism
 $E^{(\pi,T)}\to E^{(\pi_t,T_t)};\, (x,y)\mapsto (x,y_t)$
satisfies the conditions of remark \ref{rem-fonct} (with  $s:A\rtr\Gamma\to E^{(\pi,T)};\, x \mapsto (x,P_\Gamma \cdot\pi_\Gamma(x)
\cdot P_\Gamma)$ and $s_t:\AG\to E^{(\pi_t,T_t)}$)  and thus
we get that
$$(\Id_{\mathcal{K}(\H)}\otimes e_t
)_*\circ\DD_{\mathcal{K}(\H)\otimes
  B[0,1]\rtr\Gamma,E^{(\pi,T)}}=\DD_{\mathcal{K}(\H)\otimes B\rtr\Gamma,E^{(\pi_t,T_t)}},$$ and according
to lemma \ref{lem-contractible}, we deduce
 that $$\DD_{\mathcal{K}(\H)\otimes
B\rtr\Gamma,E^{(\pi_0,T_0)}}=\DD_{\mathcal{K}(\H)\otimes
B\rtr\Gamma,E^{(\pi_1,T_1)}}.$$ This shows  that for a $\Gamma$-equivariant $A$-$B$-$K$-cycles $(\pi,T,\H\otimes B)$,
then $\DD_{\mathcal{K}(\H)\otimes
B\rtr\Gamma,E^{(\pi,T)}}$ depends only on the class $z$ of
$(\pi,T,\H\otimes B)$ in $KK_1^\Gamma(A,B)$. Eventually, if we  define
$$\JR(z)=\MM_{B\rtr\Gamma}^{-1}\circ \DD_{\mathcal{K}(\H)\otimes
B\rtr\Gamma,E^{(\pi,T)}},$$ where
\begin{itemize}
\item $(\pi,T,\H\otimes B)$ is any
$\Gamma$-equivariant $A$-$B$-$K$-cycles representing $z$;
\item $\MM_{B\rtr\Gamma}$ is  the Morita equivalence (see example \ref{example-controlled-morphism}).
\end{itemize}
 we get as in section \ref{subsection-tensorisation}
\begin{proposition}\label{prop-kas}
Let $A$ and $B$ be $\Gamma$-$C^*$-algebras.
 Then for  any element $z$ of
$KK^\Ga_1(A,B)$, there is a odd degree $(\alpha_\DD,k_\DD)$-controlled morphism
$$\JR(z)=(J_{\Ga}^{red,\eps,r}(z))_{0<\eps<\frac{1}{4\alpha_\DD},r>0}:\K_*(\AG)\to\K_*(B\rtr\Gamma)$$ such that
\begin{enumerate}
 \item $\JR(x)$ induces in $K$-theory the right multiplication by $J_{\Ga}^{red}(z)$;
\item  $\JR$ is additive, i.e
  $$\JR(z+z')=\JR(z)+\JR(z').$$
\item Let $
A'$ be a $\Gamma$-$C^*$-algebra and  let $f:A\to A'$ be a homomorphism $\Gamma$-$C^*$-algebras, then
$$\JR(f^*(z))=
\JR(z)\circ f_{\Ga,red,*}$$ for any $z$ in $KK_1^\Gamma(A',B)$.
\item Let $B'$ be a $\Gamma$-$C^*$-algebra and  let $g:B\to B'$ be a homomorphism of $\Gamma$-$C^*$-algebras, then
$$\JR(g_*(z))=g_{\Ga,red,*}\circ
\JR(z)$$ for any $z$ in $KK_1^\Gamma(A,B)$.
\item If $$0\to J\to A\to A/J\to 0$$ is a semi-split exact sequence of
  $\Ga$-$C^*$-algebras, let $[\partial_{J,A}]$ be the element of
  $KK^\Ga_1(A/J,J)$ that implements the boundary map
  $\partial_{J,A}$. Then we have
$$\JR([\partial_{J,A}])=\D_{J\rtr\Ga,A\rtr\Ga}.$$
\end{enumerate}
\end{proposition}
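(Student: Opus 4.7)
The construction of $\JR(z)$ as a $(\alpha_\DD,k_\DD)$-controlled morphism of odd degree is essentially already carried out in the paragraphs preceding the statement: represent $z\in KK_1^\Ga(A,B)$ by an odd $\Ga$-equivariant $K$-cycle $(\pi,T,\H\otimes B)$, form the filtered semi-split extension
$$0\to\K(\H)\ts B\rtr\Ga\to E^{(\pi,T)}\to A\rtr\Ga\to 0,$$
and set $\JR(z)=\MM_{B\rtr\Ga}^{-1}\circ \DD_{\K(\H)\ts B\rtr\Ga,E^{(\pi,T)}}$. The homotopy-invariance argument given there shows this depends only on the class of $z$, and proposition \ref{prop-bound} ensures we get a $(\alpha_\DD,k_\DD)$-controlled morphism. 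Property (i) is then the standard description, used already in the proof of proposition \ref{prop-tensor}, of the boundary of $E^{(\pi,T)}$ as right Kasparov product with the Kasparov transform $J_\Ga^{red}(z)$, once we identify $K_*(\K(\H)\ts B\rtr\Ga)\cong K_*(B\rtr\Ga)$ via Morita equivalence.

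For (ii), representing $z+z'$ by the orthogonal direct sum cycle $(\pi\oplus\pi',T\oplus T',(\H\oplus\H)\otimes B)$ yields an extension that splits as a sum compatible with the controlled boundary map, so $\JR(z+z')=\JR(z)+\JR(z')$ follows from additivity of $\DD_{\bullet,\bullet}$. For (iii), given $f:A\to A'$ and a $K$-cycle $(\pi,T,\H\otimes B)$ representing $z\in KK_1^\Ga(A',B)$, the homomorphism $(x,y)\mapsto(f_{\Ga,red}(x),y)$ gives a map of extensions between $E^{f^*(\pi,T)}$ and $E^{(\pi,T)}$ that intertwines the canonical filtered semi-split cross-sections, so remark \ref{rem-fonct} applies verbatim. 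Property (v) is proved as in remark \ref{rem-tensor}(ii): when $z=[\partial_{J,A}]$ is the equivariant boundary class, $E^{(\pi,T)}$ is canonically (and filtered-) isomorphic to a Morita stabilization of $A\rtr\Ga$ sitting over $A\rtr\Ga$ with kernel $\K(\H)\ts J\rtr\Ga$, so $\DD_{\K(\H)\ts J\rtr\Ga,E^{(\pi,T)}}$ is $\MM_{J\rtr\Ga}\circ\DD_{J\rtr\Ga,A\rtr\Ga}$.

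The main obstacle is property (iv), naturality in $B$. I would follow the strategy of proposition \ref{prop-bifunctoriality}: first replace $(\pi,T,\H\otimes B)$ by the equivalent cycle $(0\oplus 0\oplus\pi,\Id\oplus\Id\oplus T,(\H_1\oplus\H_2\oplus\H_3)\otimes B)$; then use an equivariant version of Kasparov stabilization to identify $\H_2\otimes B'\oplus\H_3\otimes B\otimes_B B'$ with $\H\otimes B'$ as a right $B'\rtr\Ga$-Hilbert module. The subtle point, compared to the non-equivariant case, is that the stabilization isomorphism must be chosen compatibly with the diagonal $\Ga$-actions so that it descends to an isomorphism of the induced right Hilbert modules over $B'\rtr\Ga$; once this is done, the map $g_E:E^{(\pi,T)}\to E^{g_*(\pi,T)};(x,y)\mapsto(x,P''_\Ga\pi''_\Ga(x)P''_\Ga+(\Id_{\K(\H')}\ts g_{\Ga,red})(y-P'_\Ga\pi'_\Ga(x)P'_\Ga))$ fits in a commutative diagram of extensions whose kernel map restricted to a corner is $\Id_{\K(\H_1)}\ts g_{\Ga,red}$, and the argument concludes by remark \ref{rem-fonct} and the naturality of Morita equivalence (remark \ref{rem-morita-natural}).
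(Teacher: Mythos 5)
Your proposal is correct and follows essentially the same route as the paper, which at this point simply invokes the construction of the preceding paragraphs and the arguments of section \ref{subsection-tensorisation} (propositions \ref{prop-tensor}, \ref{prop-bifunctoriality} and remark \ref{rem-tensor}) transposed to the crossed-product setting. Your extra remark that the Kasparov stabilization used for naturality in $B$ must be taken $\Ga$-equivariantly is exactly the point implicitly used by the paper, and it is covered by Kasparov's equivariant stabilization theorem.
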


We can now define  $\JR$ for even element in the following way.
Set $\alpha_\JJ=\alpha_\TT\alpha_\DD$ and $k_\JJ=k_\TT*k_\DD$. If
 $A$ and $B$ are
$\Gamma$-$C^*$-algebra and if  $z$ is an element in $KK_0^\Gamma(A,B)$, then we set with notation
of section \ref{subsection-tensorisation}
$$\JR(z)=(J_{\Ga}^{red,\eps,r}(z))_{0<\eps<\frac{1}{4\alpha_\TT},r}\defi \TT_{B\rtr\Ga}([\partial]^{-1})\circ
\JR(z\otimes_{B}[\partial_{SB}]).$$
According to theorem \ref{theo-bott}, there exists a control pair $(\lambda,h)$ such that for any $\Ga$-$C^*$-algebra $A$, then
 $\JR([Id_A])\aeq \Id_{\K_*(A\rtr\Ga)}$.
Up to compose with $\iota_*^{\alpha_\DD\eps,\alpha_\JJ\eps,k_{\DD,\eps}r,k_{\JJ,\eps}r}$, we can assume indeed that
 $\JR(\bullet)$ is also, in the  odd case a  $(\alpha_\JJ,k_\JJ)$-controlled morphism. As for theorem \ref{thm-tensor}, we get.

\begin{theorem}\label{thm-kas}
Let $A$ and $B$ be $\Gamma$-$C^*$-algebras.
\begin{enumerate}
\item For any element  $z$  of
$KK^\Ga_*(A,B)$, then
$$\JR(z): \K_*(\AG)\to
\K_*(B\rtr\Gamma)$$
is a $(\alpha_\JJ,k_\JJ)$-controlled morphism of same degree as $z$ that induces in $K$-theory right multiplication by
$J_\Ga^{red}(z)$.
\item  For any $z$ and
  $z'$ in $KK_*^\Gamma(A,B)$, then
  $$\JR(z+z')=\JR(z)+\JR(z').$$

\item For any $\Gamma$-$C^*$-algebra $A'$, any homomorphism  $f:A\to A'$  of $\Gamma$-$C^*$-algebras and  any $z$ in $KK_*^\Gamma(A',B)$, then
$\JR(f^*(z))=
\JR(z)\circ f_{\Ga,*}$.
\item For any $\Gamma$-$C^*$-algebra $B'$, any homomorphism $g:B\to B'$ of
  $\Gamma$-$C^*$-algebras and  any
  $z$ in $KK_*^\Gamma(A,B)$, then $\JR(g_*(z))=g_{\Ga,*}\circ
\JR(z)$.
\end{enumerate}
\end{theorem}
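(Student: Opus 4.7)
The plan is to reduce the even case of all four assertions to the corresponding statements for odd elements already established in proposition \ref{prop-kas}, by exploiting the definition $\JR(z) \defi \TT_{B\rtr\Ga}([\partial]^{-1}) \circ \JR(z \otimes_{B} [\partial_B])$ together with the formal properties of $\TT_\bullet$ collected in theorem \ref{thm-tensor}. The key structural input is theorem \ref{theo-bott}, which guarantees that $\TT_{B\rtr\Ga}([\partial]^{-1})$ is, up to a universal control pair, an inverse of the controlled suspension morphism $\DD_{B\rtr\Ga}$; composition with it will transport properties of the odd Kasparov transform on $z \otimes_B [\partial_B]$ back to properties of $\JR(z)$.

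For (i), the odd $\JR(z \otimes_B [\partial_B])$ induces in $K$-theory right multiplication by $J_\Ga^{red}(z \otimes_B [\partial_B]) = J_\Ga^{red}(z) \otimes_{B\rtr\Ga} J_\Ga^{red}([\partial_B])$, where $J_\Ga^{red}([\partial_B])$ is the boundary class of the semi-split extension $0 \to SB\rtr\Ga \to CB\rtr\Ga \to B\rtr\Ga \to 0$. Since $\TT_{B\rtr\Ga}([\partial]^{-1})$ induces right multiplication by the inverse of that boundary class, associativity of Kasparov product yields right multiplication by $J_\Ga^{red}(z)$, and the stated control pair $(\alpha_\JJ,k_\JJ)=(\alpha_\TT\alpha_\DD,k_\TT*k_\DD)$ is immediate from the composition. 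Point (ii) follows from bilinearity of the Kasparov product, additivity of odd $\JR$ (proposition \ref{prop-kas}(ii)) and additivity of $\TT$ (theorem \ref{thm-tensor}(ii)). For (iii), bi-functoriality of Kasparov product gives $f^*(z) \otimes_B [\partial_B] = f^*(z \otimes_B [\partial_B])$ whenever $f:A\to A'$ is $\Ga$-equivariant, so proposition \ref{prop-kas}(iii) applied to the odd class $z \otimes_B [\partial_B]$ produces $\JR(f^*(z) \otimes_B [\partial_B]) = \JR(z \otimes_B [\partial_B]) \circ f_{\Ga,red,*}$, and left-composition with $\TT_{B\rtr\Ga}([\partial]^{-1})$ concludes.

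The main obstacle is (iv). Given a $\Ga$-equivariant $g:B\to B'$, naturality of the suspension boundary combined with associativity of the Kasparov product yields $g_*(z) \otimes_{B'} [\partial_{B'}] = g_{S,*}(z \otimes_B [\partial_B])$, where $g_S:SB\to SB'$ is the suspension of $g$. Proposition \ref{prop-kas}(iv) then gives $\JR(g_*(z) \otimes_{B'} [\partial_{B'}]) = g_{S,\Ga,red,*} \circ \JR(z \otimes_B [\partial_B])$. To finish one needs a controlled naturality identity of the form
\[
\TT_{B'\rtr\Ga}([\partial]^{-1}) \circ g_{S,\Ga,red,*} \aeq g_{\Ga,red,*} \circ \TT_{B\rtr\Ga}([\partial]^{-1}),
\]
which is not recorded as such in theorem \ref{thm-tensor} (that theorem controls naturality in the $C^*$-algebra arguments $A_1,A_2$ of $\TT$, not in the filtered subscript). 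The compatibility is nevertheless built into the construction of $\TT$ in proposition \ref{prop-tensor}: the auxiliary extension $0 \to \K(\H) \ts A_2 \ts D \to E^{(\pi,T)} \to A_1\ts D \to 0$, with $A_1=S,\,A_2=\C$ and a fixed Kasparov module for $[\partial]^{-1}$, is functorial in filtered homomorphisms of $D$, so remark \ref{rem-fonct} together with the naturality of the Morita controlled morphism (remark \ref{rem-morita-natural}) delivers the desired identity up to the chosen control pair, completing the argument.
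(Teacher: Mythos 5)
Your proposal is correct and follows essentially the paper's own route: the paper disposes of Theorem \ref{thm-kas} with the remark that it is obtained ``as for theorem \ref{thm-tensor}'', i.e.\ exactly your reduction of the even case to proposition \ref{prop-kas} through the defining formula $\JR(z)=\TT_{B\rtr\Ga}([\partial]^{-1})\circ\JR(z\otimes_B[\partial_B])$, multiplicativity of the descent on the $K$-theory level for (i), and bifunctoriality of the Kasparov product for (ii)--(iv). The only ingredient the paper leaves implicit is precisely the one you isolate in (iv), the naturality of $\TT_\bullet([\partial]^{-1})$ in the filtered variable, and your derivation from the functoriality of the extension $0\to\K(\H)\ts A_2\ts D\to E^{(\pi,T)}\to A_1\ts D\to 0$ together with remarks \ref{rem-fonct} and \ref{rem-morita-natural} is the intended one; note that it actually gives strict equality (the comparison homomorphism over $g_{\Ga,red}$ intertwines the canonical completely positive sections, so remark \ref{rem-fonct} applies as an equality, and Morita naturality is an equality), so you recover the stated identity $\JR(g_*(z))=g_{\Ga,*}\circ\JR(z)$ and not merely an $\aeq$-relation.
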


Using the same argument as in the proof of theorem \ref{thm-product-tensor}, we see that $\JR$ is compatible with Kasparov
products.

\begin{theorem}\label{thm-product} There exists a control pair $(\lambda,h)$ such that the following holds:
for every  $\Ga$-$C^*$-algebras $A,\,B$ and $D$, any elements $z$ in
$KK_*^\Gamma(A,B)$ and   $z'$ in
$KK_*^\Gamma(B,D)$, then
$$\JR(z\otimes_B z')\aeq \JR(z')\circ \JR(z).$$
\end{theorem}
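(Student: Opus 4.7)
The plan is to imitate the proof of Theorem \ref{thm-product-tensor}, substituting $\JR$ for $\TT_B$ and using the fact (Proposition \ref{prop-kas}(v)) that the controlled Kasparov transform of the equivariant boundary class of a semi-split extension of $\Ga$-$C^*$-algebras coincides with the controlled boundary morphism of the corresponding filtered crossed-product extension. The compatibility with Kasparov products is proved first when at least one of $z, z'$ is even, and then bootstrapped to the odd--odd case via equivariant Bott periodicity.

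For $z \in KK^\Ga_0(A,B)$, I would invoke the equivariant analogue of \cite[Lemme 1.6.9]{laff-inv} to factor $z = \eta_*([\theta]^{-1})$, where $\theta : E \to A$ and $\eta : E \to B$ are equivariant $*$-homomorphisms and $[\theta] \in KK^\Ga_0(E, A)$ is invertible. Since $\theta_*([\theta]^{-1}) = [Id_A]$ and $\JR([Id_A]) \aeq \Id_{\K_*(A \rtr \Ga)}$ (the normalization recorded just before Theorem \ref{thm-kas}), items (iii) and (iv) of Theorem \ref{thm-kas} combined with Remark \ref{rem-composition} yield
$$\JR(z \otimes_B z') \aeq \JR(\theta^*(z \otimes_B z')) \circ \JR([\theta]^{-1})$$
for a control pair depending only on $(\alpha_\JJ, k_\JJ)$; bifunctoriality of equivariant $KK$-theory gives $\theta^*(z \otimes_B z') = \eta^*(z')$, and items (iii) and (iv) of Theorem \ref{thm-kas} then conclude. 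The case where $z'$ is even is handled symmetrically.

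For the remaining odd--odd case, I would use that
$$z \otimes_B z' = (z \otimes_B [\partial_B]) \otimes_{SB} ([\partial_B]^{-1} \otimes_B z'),$$
where $[\partial_B] \in KK^\Ga_1(B, SB)$ is the $KK^\Ga$-invertible suspension boundary class of the cone extension $0 \to SB \to CB \to B \to 0$. Each factor on the right is even, so the previously established case applies. The resulting composition features $\JR([\partial_B])$ and $\JR([\partial_B]^{-1})$, and Proposition \ref{prop-kas}(v) identifies $\JR([\partial_B])$ with the controlled boundary morphism $\DD_{SB \rtr \Ga, CB \rtr \Ga}$; Theorem \ref{theo-bott} applied to the filtered $C^*$-algebra $B \rtr \Ga$ then provides a controlled inverse of this boundary morphism. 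Unfolding the definition of $\JR$ on even classes through $\BB$ and applying the even case once more collapses the composition to $\JR(z') \circ \JR(z)$, exactly as in the odd--odd part of the proof of Theorem \ref{thm-product-tensor}.

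The main difficulty is the usual control-pair bookkeeping: every appeal to the even case, to Remark \ref{rem-composition}, and to controlled Bott periodicity enlarges the control pair, and one must verify that the final $(\lambda,h)$ depends only on $(\alpha_\JJ, k_\JJ)$ and the Bott control pair of Theorem \ref{theo-bott}, and not on $A, B, D, z$ or $z'$. A secondary, minor concern is the availability of the equivariant Lafforgue factorization, which is a standard consequence of Cuntz's quasi-homomorphism picture of $KK^\Ga$.
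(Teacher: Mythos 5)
Your proposal is correct and is essentially the paper's own argument: the paper proves Theorem \ref{thm-product} by the single remark that the proof of Theorem \ref{thm-product-tensor} carries over with $\JR$ in place of $\TT_B$, which is exactly what you spell out (the equivariant Lafforgue factorization $z=\eta_*([\theta]^{-1})$ for the case where one factor is even, then the decomposition $z\otimes_B z'=(z\otimes_B[\partial_B])\otimes_{SB}([\partial_B]^{-1}\otimes_B z')$ together with Proposition \ref{prop-kas}(v) and Theorem \ref{theo-bott} applied to the crossed product, playing the role that remark \ref{rem-tensor}(ii), theorem \ref{thm-tensor}(vi) and controlled Bott periodicity play in the odd--odd step of Theorem \ref{thm-product-tensor}). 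The uniformity of the control pair and the equivariant form of Lafforgue's lemma are handled exactly as in the non-equivariant proof, so no new idea is required.
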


We can perform a similar construction for maximal cross products.
\begin{theorem}\label{thm-kasp-max}
Let $A$ and $B$ be $\Gamma$-$C^*$-algebras.
\begin{enumerate}
\item For any element  $z$  of
$KK^\Ga_*(A,B)$, there exists a $(\alpha_\JJ,k_\JJ)$-controlled morphism
$$\JM(z)=(J_{\Ga}^{max,\eps,r}(z))_{0<\eps<\frac{1}{4\alpha_\JJ},r}: \K_*(A\rtm\Ga)\to
\K_*(B\rtm\Gamma)$$
with  same degree as $z$ that induces in $K$-theory right multiplication by
$J^{max}_\Ga(z)$ and such that $\lambda_{\Ga,B,*}\circ \JM(z)=\JR(z)\circ\lambda_{\Ga,A,*}$.
\item  For any $z$ and
  $z'$ in $KK_*^\Gamma(A,B)$, then
  $$\JM(z+z')=\JM(z)+\JM(z').$$
\item For any $\Gamma$-$C^*$-algebra $A'$, any homomorphism  $f:A\to A'$  of $\Gamma$-$C^*$-algebras and  any $z$ in $KK_*^\Gamma(A',B)$, then
$\JM(f^*(z))=
\JM(z)\circ f_{\Ga,max,*}$.
\item For any $\Gamma$-$C^*$-algebra $B'$, any homomorphism $g:B\to B'$ of
  $\Gamma$-$C^*$-algebras and  any
  $z$ in $KK_*^\Gamma(A,B)$, then $\JM(g_*(z))=g_{\Ga,max,*}\circ
\JM(z)$.

\end{enumerate}
Moreover, there exists a controlled pair $(\lambda,h)$ such that,
\begin{itemize}
 \item for any $\Ga$ algebra $A$, then $\JM([Id_A])\aeq \Id_{\K_*(A\rtm\Ga)}$;
\item For any semi-split extension of $\Ga$ algebras $0\to J\to A\to A/J\to 0$, then $\JM([\partial_{J,A}])\aeq \DD_{J,A}$.
\end{itemize}
\end{theorem}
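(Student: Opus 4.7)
The plan is to mirror the construction of $\JR$ step by step, replacing the reduced crossed product by its maximal analogue throughout, and then to deduce the compatibility with $\lambda_\Ga$ from the naturality of all constructions with respect to the canonical $*$-homomorphism $\lambda_{\Ga,\bullet}: \bullet\rtm\Ga \to \bullet\rtr\Ga$.

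First, for the odd case, let $z \in KK_1^\Ga(A,B)$ be represented by a $\Ga$-equivariant $K$-cycle $(\pi, T, \H\otimes B)$. The covariance of $(\pi, T)$ under the $\Ga$-action gives, via the universal property of $\bullet\rtm\Ga$, a representation $\pi_{\Ga,max}: A\rtm\Ga \to \L(\H\otimes B\rtm\Ga)$, while $P = (T+\Id)/2$ induces the adjointable operator $P_{\Ga,max} = P\otimes_B \Id_{B\rtm\Ga}$ on the internal tensor product $(\H\otimes B)\otimes_B (B\rtm\Ga) \cong \H\otimes B\rtm\Ga$. Form
$$E^{(\pi,T)}_{max} = \{(x,y) \in A\rtm\Ga \oplus \L(\H\otimes B\rtm\Ga) : P_{\Ga,max}\,\pi_{\Ga,max}(x)\,P_{\Ga,max} - y \in \K(\H)\otimes B\rtm\Ga\},$$
which is filtered by the obvious analogue of the filtration on $E^{(\pi,T)}$ (since $P_{\Ga,max}$ carries no propagation) and fits in a filtered semi-split extension
$$0 \to \K(\H)\otimes B\rtm\Ga \to E^{(\pi,T)}_{max} \to A\rtm\Ga \to 0$$
with semi-split filtered cross-section $x \mapsto (x, P_{\Ga,max}\,\pi_{\Ga,max}(x)\, P_{\Ga,max})$. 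Define $\JM(z) = \MM_{B\rtm\Ga}^{-1} \circ \DD_{\K(\H)\otimes B\rtm\Ga, E^{(\pi,T)}_{max}}$. Homotopy invariance in $z$ is established by the evaluation argument already used in Proposition \ref{prop-kas}, applied to $B[0,1]\rtm\Ga$.

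For the even case, set $\JM(z) = \TT_{B\rtm\Ga}([\partial]^{-1}) \circ \JM(z \otimes_B [\partial_{SB}])$ and use Theorem \ref{theo-bott} to identify $\JM([Id_A])$ with the identity up to a universal control pair. Additivity (ii) and the functorialities (iii)--(iv), and the boundary compatibility $\JM([\partial_{J,A}]) \aeq \DD_{J\rtm\Ga,A\rtm\Ga}$, are then obtained exactly as in the reduced case (compare Proposition \ref{prop-bifunctoriality} and Theorem \ref{thm-kas}), using that $f_{\Ga,max}$ and $g_{\Ga,max}$ intertwine the natural semi-split filtered cross-sections of the corresponding maximal $E$-extensions, together with the compatibility of $\JM$ with Kasparov product that is transferred from Theorem \ref{thm-product-tensor} via the Bott construction.

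Finally, the identity $\lambda_{\Ga,B,*} \circ \JM(z) = \JR(z) \circ \lambda_{\Ga,A,*}$ comes from the observation that $\lambda_{\Ga,A}$ and $\Id_{\K(\H)} \otimes \lambda_{\Ga,B}$ together induce a filtered $*$-homomorphism $E^{(\pi,T)}_{max} \to E^{(\pi,T)}$ which makes the two semi-split extensions into a commutative diagram and intertwines the filtered cross-sections, so that Remark \ref{rem-fonct} gives the desired identity on $\K_1$; the $\K_0$-part follows by definition of $\JM$ on even classes together with the naturality of $\BB$ and $\TT$. The main technical point to be verified is the existence and adjointability of $P_{\Ga,max}$ on the \emph{maximal} Hilbert module and the well-definedness of $\pi_{\Ga,max}$ as a $*$-representation of $A\rtm\Ga$; both reduce to the universal property of the maximal crossed product applied to the covariant pair coming from the equivariant $K$-cycle, after which the filtered structure and all controlled-morphism estimates carry over verbatim from the reduced case because $(A\rtm\Ga)_r = (A\rtr\Ga)_r$ as subsets of $C_c(\Ga, A)$.
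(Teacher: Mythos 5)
Your proposal is correct and follows essentially the same route as the paper, which offers no separate proof of this theorem but simply observes that the reduced construction of $\JR$ carries over verbatim to the maximal crossed product; your construction of $E^{(\pi,T)}_{max}$, the definition $\MM_{B\rtm\Gamma}^{-1}\circ \DD_{\K(\H)\otimes B\rtm\Gamma,E^{(\pi,T)}_{max}}$, and the even-case reduction via $[\partial]^{-1}$ are exactly the intended adaptation. Your commutative-diagram argument comparing the maximal and reduced extensions (intertwining the cross-sections and invoking remark \ref{rem-fonct}) to get $\lambda_{\Ga,B,*}\circ \JM(z)=\JR(z)\circ\lambda_{\Ga,A,*}$ is also the mechanism the paper itself deploys later, in the proof of the $K$-amenability theorem, via the homomorphisms between $E^{(\pi,T)}_{max}$, $E^{(\pi,T)}_{red,max}$ and $E^{(\pi,T)}_{red}$.
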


\begin{theorem}\label{thm-product-max} There exists a control pair $(\lambda,h)$ such that the following holds:
for every  $\Ga$-$C^*$-algebras $A,\,B$ and $D$, any elements $z$ in
$KK_*^\Gamma(A,B)$ and   $z'$ in
$KK_*^\Gamma(B,D)$, then
$$\JM(z\otimes_B z')\aeq \JM(z')\circ \JM(z).$$
\end{theorem}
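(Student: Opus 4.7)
The plan is to transpose, line by line, the strategy of Theorem~\ref{thm-product-tensor}, substituting $\JM$ for $\TT_\bullet$ and invoking Theorem~\ref{thm-kasp-max} in place of Theorem~\ref{thm-tensor}. No essentially new ideas are needed; the only extra ingredient beyond Theorem~\ref{thm-kasp-max} is controlled Bott periodicity, which applies here because $B\rtm\Ga$ is itself a filtered $C^*$-algebra whose filtration consists of closed subspaces, so that Theorem~\ref{theo-bott} is directly available.

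First I would dispose of the case where $z\in KK_0^\Ga(A,B)$ is even. By the equivariant version of Lafforgue's factorization lemma (\cite[Lemma~1.6.9]{laff-inv}), there exist a $\Ga$-$C^*$-algebra $A'$ and $\Ga$-equivariant homomorphisms $\theta:A'\to A$ and $\eta:A'\to B$ such that $[\theta]\in KK_0^\Ga(A',A)$ is invertible and $z=\eta_*([\theta]^{-1})$. Combining $\JM([Id_{A'}])\aeq \Id_{\K_*(A'\rtm\Ga)}$ with points~(iii) and (iv) of Theorem~\ref{thm-kasp-max} and Remark~\ref{rem-composition}, one obtains exactly as in the proof of Theorem~\ref{thm-product-tensor} that
$$\JM(z\otimes_B z')\aeq \JM(\theta^*(z\otimes_B z'))\circ \JM([\theta]^{-1})=\JM(\eta^*(z'))\circ \JM([\theta]^{-1})\aeq \JM(z')\circ \JM(z),$$
with a universal control pair depending only on $(\alpha_\JJ,k_\JJ)$. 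The symmetric argument settles the case where $z'$ is even.

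To handle the remaining case where both $z$ and $z'$ are odd, I would use the factorization
$$z\otimes_B z'=(z\otimes_B[\partial_B])\otimes_{SB}([\partial_B]^{-1}\otimes_B z'),$$
where $[\partial_B]\in KK_1^\Ga(B,SB)$ is the invertible equivariant boundary class of the cone extension $0\to SB\to CB\to B\to 0$. Both tensor factors on the right-hand side are now even, so two applications of the even case reduce the claim to showing that $\JM([\partial_B]^{-1})$ is a controlled two-sided inverse of $\JM([\partial_B])$. By the last assertion of Theorem~\ref{thm-kasp-max} we have $\JM([\partial_B])\aeq \DD_{SB\rtm\Ga,CB\rtm\Ga}$, and Theorem~\ref{theo-bott} applied to the filtered $C^*$-algebra $B\rtm\Ga$ furnishes precisely such a controlled inverse; one further application of the even case identifies this inverse with $\JM([\partial_B]^{-1})$ up to a universal controlled equivalence.

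The main obstacle is bookkeeping: each invocation of the even case composes control pairs, and one must verify that the resulting pair in the final statement depends only on $(\alpha_\JJ,k_\JJ)$ and on the universal pair from controlled Bott periodicity, not on $A$, $B$, $D$, $z$, or $z'$. This is formally identical to the bookkeeping already carried out in Theorem~\ref{thm-product-tensor}; the only maximal-specific input is that the constructions of Theorem~\ref{thm-kasp-max} and Theorem~\ref{theo-bott} are uniform in the underlying $\Ga$-$C^*$-algebra, which is built into their statements.
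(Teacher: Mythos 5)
Your even case and the overall shape of the odd case do follow the argument the paper intends (it gives no separate proof of this theorem and simply transposes the proof of Theorem \ref{thm-product-tensor}), and the reduction you perform is sound as far as it goes: the even case applied to the pairs $\big(z\ts_B[\partial_B],[\partial_B]^{-1}\big)$ and $\big([\partial_B],[\partial_B]^{-1}\ts_B z'\big)$ gives $\JM(z)\sim\JM([\partial_B]^{-1})\circ\JM(z\ts_B[\partial_B])$ and $\JM(z')\sim\JM([\partial_B]^{-1}\ts_B z')\circ\JM([\partial_B])$, so everything indeed reduces to the cancellation $\JM([\partial_B])\circ\JM([\partial_B]^{-1})\sim \Id$ up to a universal control pair.

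The gap is in how you establish that cancellation. The last bullet of Theorem \ref{thm-kasp-max} gives $\JM([\partial_B])\sim\DD_{SB\rtm\Ga,CB\rtm\Ga}$, and Theorem \ref{theo-bott} applied to $B\rtm\Ga$ produces a controlled inverse of this boundary, namely $\TT_{B\rtm\Ga}([\partial]^{-1})$; but your claim that ``one further application of the even case identifies this inverse with $\JM([\partial_B]^{-1})$'' is not justified. The even case only relates $\JM$ of a Kasparov product to a composition of $\JM$'s when one factor is even, and both relevant products $[\partial_B]\ts_{SB}[\partial_B]^{-1}$ and $[\partial_B]^{-1}\ts_B[\partial_B]$ are products of two odd classes -- exactly the case being proved -- so any such use is circular; moreover the even case can never compare a $\JM$ with a $\TT_{B\rtm\Ga}$. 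What is actually needed is the crossed-product analogue of point (vi) of Theorem \ref{thm-tensor}, i.e.\ an identification of $\JM$ of a class induced from trivial $\Ga$-action with the corresponding tensorization morphism, at least $\JM([\partial_B]^{-1})\sim\TT_{B\rtm\Ga}([\partial]^{-1})$. This does not follow formally from Theorems \ref{thm-kasp-max} and \ref{theo-bott}; it has to be extracted from the constructions, e.g.\ by representing $[\partial_B]^{-1}$ by the ($\Ga$-trivial) Toeplitz extension tensored with $B$, applying the last bullet of Theorem \ref{thm-kasp-max} to identify $\JM([\partial_B]^{-1})$, up to Morita equivalence, with the controlled boundary of $0\to\K(\ell^2(\N))\ts B\rtm\Ga\to\T_0\ts B\rtm\Ga\to SB\rtm\Ga\to 0$, and then invoking Proposition \ref{prop-toeplitz} and Theorem \ref{theo-bott} for the filtered algebra $B\rtm\Ga$. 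Note that the paper's own proof of Theorem \ref{thm-product-tensor} explicitly needs point (vi) of Theorem \ref{thm-tensor} at precisely the corresponding step, so this ingredient cannot be waved away; once it is supplied, your argument closes.
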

\subsection{Application to $K$-amenability}
The original  definition  of $K$-amenability is due to J. Cuntz \cite{cuntz}. For our purpose, it is more convenient
to use the equivalent definition given by P. Julg and A. Valette in \cite{julgvalette}.
If $\Ga$ is a discrete group, let us denote by $1_\Ga$ the class in  $KK_0^\Ga(\C,\C)$ of the $K$-cycle $(Id_\C,0,\C)$, where
$\C$ is provided with the trivial action on $\Ga$.
\begin{definition}
 Let $\Ga$ be a discrete group. Then $\Ga$ is $K$-amenable if $1_\Ga$ can be represented by a $K$-cycle such that
the action of $\Ga$ on the underlying Hilbert space is weakly contained in the regular representation.
\end{definition}
(The previous  definition indeed also makes sense for locally compact groups.)
\begin{example}
 Amenable groups are obviously $K$-amenable. Typical example  on non-amenable $K$-amenable groups are free groups
 \cite{cuntz}.
More generally, J. L. Tu proved in \cite{tumoy} that group which  satisfies the strong Baum-Connes conjecture (i.e with $\ga=1$) are
$K$-amenable. Examples of such group are groups with the Haagerup property \cite{hk} and fundamental groups of compact and
oriented $3$-manifolds \cite{mop}.
\end{example}

For a $\Ga$-$C^*$-algebra $B$ and an element $T$ of $\L(\H\ts B)$, where $\H$ is a separable Hilbert space, let us set
$T_{\Ga,max}=T\ts_B Id_{B\rtm\Ga}$ and $T_{\Ga,red}=T\ts_B Id_{B\rtr\Ga}$.
If $A$ is a $\Ga$-$C^*$-algebra and $\pi:A\to  \L(\H\ts B)$ is a $\Ga$-equivariant representation,
let $\pi_{\Ga,red}:A\rtr\Ga\to  \L(\H\ts B\rtr\Ga)$ and
$\pi_{\Ga,max}:A\rtm\Ga\to  \L(\H\ts B\rtm\Ga)$ be respectively the reduced and the maximal  representation induced by $\pi$.
Then, we have the following (compare with the proof of \cite[proposition 3.4]{julgvalette}).
\begin{proposition}\label{prop-Kamenable}
 Let $\Ga$ be a $K$-amenable discrete group and let $A$ and $B$ be $\Ga$-$C^*$-algebras. 
Then any elements of $KK_*^\Ga(A,B)$
 can be represented by a $K$-cycle $(\pi,T,\H\ts B)$ such that the homomorphism
$\pi_{\Ga,max}:A\rtm\Ga \to \L(\H\ts B\rtm\Ga)$ factorises
through the homomorphism $\lambda_{\Ga,A}: A\rtm\Ga\to A\rtr\Ga$, i.e there exists a homomorphism
 $$\pi_{\Ga,red,max}:A\rtr\Ga\to  \L(\H\ts B\rtm\Ga)$$ such that
 $$\pi_{\Ga,max}=\pi_{\Ga,red,max}\circ\lambda_{\Ga,A}.$$
As a consequence, for any $\Ga$-$C^*$-algebra $A$, then $$\lambda_{\Ga,A,*}:K_*(A\rtm\Ga)\to K_*(A\rtr\Ga)$$
 is an isomorphism \cite{cuntz}.
\end{proposition}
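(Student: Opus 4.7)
The strategy is to adapt the Julg--Valette argument using Fell's absorption principle. By $K$-amenability, represent $1_\Ga \in KK_0^\Ga(\C,\C)$ by a $K$-cycle $(\rho, S, \mathcal{H}_0)$ in which $\rho:\Ga\to\mathcal{L}(\mathcal{H}_0)$ is weakly contained in the left regular representation $\lambda_\Ga$. For $z \in KK_*^\Ga(A,B)$ with representative $(\pi_0, T_0, \H \otimes B)$ having $\Ga$-action $U_0$, the equality $z = 1_\Ga \otimes_\C z$ yields, via the external Kasparov product, a new representative $(\pi, T, \mathcal{H}_0 \otimes \H \otimes B)$ of $z$ in which $A$ acts by $1\otimes\pi_0$ and $\Ga$ acts on $\mathcal{H}_0 \otimes \H$ by the diagonal $\rho \otimes U_0$.

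The key step is the factorization claim. The representation $\pi_{\Ga,\max}$ of $A\rtm\Ga$ acts on the $(B\rtm\Ga)$-module $(\mathcal{H}_0\otimes\H\otimes B)\otimes_B (B\rtm\Ga)$, with $\Ga$ acting by $\rho\otimes U_0$ tensored with left multiplication on $B\rtm\Ga$. Fell's absorption principle, realized by the unitary $W(\xi\otimes b\delta_g) = \rho_g\xi\otimes b\delta_g$, intertwines $\rho\otimes(\text{left mult.})$ with $1_{\mathcal{H}_0}\otimes(\text{left mult.})$. Combined with the weak containment $\rho\prec\lambda_\Ga$, this shows that the covariant pair $(\pi,\rho\otimes U_0)$ defines a representation of $A\rtm\Ga$ on the induced module which is weakly equivalent to one whose $\Ga$-part comes only from left multiplication on $B\rtm\Ga$. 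By the defining property of the reduced crossed product, such representations annihilate $\ker\lambda_{\Ga,A}$, giving the required factorization $\pi_{\Ga,\max} = \pi_{\Ga,\mathrm{red},\max}\circ\lambda_{\Ga,A}$.

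For the consequence, apply the first part to $[Id_A] \in KK_0^\Ga(A,A)$: the factored $K$-cycle defines a class $\alpha \in KK_0(A\rtr\Ga, A\rtm\Ga)$ whose pullback along $\lambda_{\Ga,A}$ equals $J_\Ga^{\max}([Id_A]) = [Id_{A\rtm\Ga}]$ by construction and theorem~\ref{thm-kasp-max}, and whose pushforward along $\lambda_{\Ga,A}$ equals $J_\Ga^{\mathrm{red}}([Id_A]) = [Id_{A\rtr\Ga}]$ by theorem~\ref{thm-kas}, since $(A\rtm\Ga)\otimes_{\lambda_{\Ga,A}}(A\rtr\Ga) \cong A\rtr\Ga$. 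Hence left Kasparov product with $\alpha$ is a two-sided inverse to $\lambda_{\Ga,A,*}$ at the level of $K$-theory.

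The principal obstacle is the Fell absorption step: converting the weak containment $\rho\prec\lambda_\Ga$ into a clean factorization of a concrete representation of $A\rtm\Ga$ through $A\rtr\Ga$, while respecting both the covariance relation and the compact perturbations implicit in the $K$-cycle operator $T$, requires careful bookkeeping of the absorption unitary on the tensored Hilbert module.
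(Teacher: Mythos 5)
Your overall strategy is the intended one: the paper gives no proof of this proposition and simply points to \cite[Proposition 3.4]{julgvalette} and \cite{cuntz}, and your first and last paragraphs (writing $z=1_\Ga\ts_\C z$ with a $K$-amenability cycle $(\rho,S,\H_0)$, $\rho\prec\lambda_\Ga$, so that the new cycle carries the action $\rho\ts U_0$; then descending $[Id_A]$ to produce a class $\alpha\in KK_0(A\rtr\Ga,A\rtm\Ga)$ inverting $\lambda_{\Ga,A,*}$) follow that route correctly. The gap is in the key factorization step. The map $W(\xi\ts b\delta_g)=\rho_g\xi\ts b\delta_g$ is not an operator on the Hilbert $B\rtm\Ga$-module $\H_0\ts(B\rtm\Ga)$: it is not right $B\rtm\Ga$-linear (compare $W\big((\xi\ts b\delta_g)\cdot c\delta_h\big)=\rho_{gh}\xi\ts b\,g(c)\delta_{gh}$ with $W(\xi\ts b\delta_g)\cdot c\delta_h=\rho_{g}\xi\ts b\,g(c)\delta_{gh}$), nor does it preserve the $B\rtm\Ga$-valued inner product, so no such ``absorption unitary'' exists on the descent module. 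Moreover the criterion you extract from it is not the right one: a representation of $A\rtm\Ga$ whose $\Ga$-part ``comes only from left multiplication on $B\rtm\Ga$'' need not annihilate $\ker\lambda_{\Ga,A}$ --- taking $B=A$, $\H=\C$ and $\pi$ the left regular representation of $A$ on itself, this is the canonical (injective) representation of $A\rtm\Ga$ in its multiplier algebra. Note also that your argument as written never makes essential use of $\rho\prec\lambda_\Ga$; if it were valid it would show that every maximal descent representation factors through the reduced crossed product, i.e.\ $\lambda_{\Ga,A}$ is injective for every group, which is false.

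What is actually needed, and is the content of the Julg--Valette argument, has two separate steps. First, when $\rho$ is the regular representation itself, a genuine Fell-type absorption is available because $\ell^2(\Ga)$ carries a $\Ga$-indexed orthonormal basis: one trivializes the diagonal action fiberwise over that basis and identifies the descent covariant pair on $\ell^2(\Ga)\ts\H\ts(B\rtm\Ga)$ with a regular (induced) covariant pair, whose integrated form factors through $A\rtr\Ga$ by the very definition of the reduced crossed product. Second, one passes from $\lambda_\Ga$ to a representation $\rho$ merely weakly contained in $\lambda_\Ga$ not by a unitary equivalence but by an approximation argument: the matrix coefficients of $\rho$ are limits, uniformly on finite subsets of $\Ga$, of positive type functions associated with $\lambda_\Ga$, and the corresponding completely positive multipliers of $C_c(\Ga,A)$ yield the norm estimate $\|\pi_{\Ga,max}(f)\|\lq\|\lambda_{\Ga,A}(f)\|$ for $f\in C_c(\Ga,A)$, which is exactly the factorization statement. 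This norm estimate is where the weak containment does its work; it is precisely the part your proposal defers to ``careful bookkeeping'', and it cannot be replaced by the unitary you wrote down.
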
We have the following analogous result for quantitative $K$-theory.
\begin{theorem}
 There exists a control pair $(\lambda,h)$ such that 
$$\lambda_{\Ga,A,*}:\K_*(A\rtm\Ga)\to \K_*(A\rtr\Ga)$$ is a $(\lambda,h)$-isomorphism 
for every $\Ga$-$C^*$-algebra $A$.
\end{theorem}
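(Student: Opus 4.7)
The plan is to use $K$-amenability to construct a controlled inverse $\Lambda_A: \K_*(A\rtr\Ga)\to\K_*(A\rtm\Ga)$ directly, with a universal control pair. By Proposition \ref{prop-Kamenable} applied to $[Id_A] \in KK_*^\Ga(A,A)$, I fix a $K$-cycle $(\pi,T,\H\ts A)$ representing $[Id_A]$ together with a homomorphism $\pi_{\Ga,red,max}: A\rtr\Ga \to \L(\H\ts A\rtm\Ga)$ satisfying $\pi_{\Ga,max} = \pi_{\Ga,red,max}\circ\lambda_{\Ga,A}$. Mimicking the Kasparov transform construction of Proposition \ref{prop-kas}, I form the semi-split extension of filtered $C^*$-algebras
$$0 \to \K(\H)\ts A\rtm\Ga \to E \to A\rtr\Ga \to 0,$$
where $E$ consists of pairs $(x,y)\in A\rtr\Ga \oplus \L(\H\ts A\rtm\Ga)$ with $P_{\Ga,max}\pi_{\Ga,red,max}(x)P_{\Ga,max}-y \in \K(\H)\ts A\rtm\Ga$ and $P_{\Ga,max}=(Id+T_{\Ga,max})/2$. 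Composing the controlled boundary with $\MM_{A\rtm\Ga}^{-1}$, and applying the even-case modification of Section \ref{subsection-tensorisation} involving $\BB$ and $[\partial]$, yields the desired controlled morphism $\Lambda_A$ with a control pair independent of $A$.

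For $\Lambda_A\circ\lambda_{\Ga,A,*}\aeq\Id$, I observe that pulling the extension $E$ back along $\lambda_{\Ga,A}:A\rtm\Ga\to A\rtr\Ga$ produces, thanks to the identity $\pi_{\Ga,red,max}\circ\lambda_{\Ga,A}=\pi_{\Ga,max}$, precisely the extension used to construct $\JM([Id_A])$ in Theorem \ref{thm-kasp-max}. By the naturality of the controlled boundary map (Remark \ref{rem-fonct}), $\Lambda_A\circ\lambda_{\Ga,A,*}$ then coincides with $\JM([Id_A])$, which by Theorem \ref{thm-kasp-max} is controlled equivalent to $\Id_{\K_*(A\rtm\Ga)}$.

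For $\lambda_{\Ga,A,*}\circ\Lambda_A\aeq\Id$, I push the extension $E$ forward along $Id_{\K(\H)}\ts\lambda_{\Ga,A}:\K(\H)\ts A\rtm\Ga\to\K(\H)\ts A\rtr\Ga$. Tensoring the right-Hilbert module $\H\ts A\rtm\Ga$ with $A\rtr\Ga$ over $A\rtm\Ga$ gives $\H\ts A\rtr\Ga$, and the representation $\pi_{\Ga,red,max}$ transforms into the reduced representation $\pi_{\Ga,red}$ induced by $\pi$. The pushed-out extension is therefore the one defining $\JR([Id_A])$, so by naturality of Morita equivalence (Remark \ref{rem-morita-natural}) and of the boundary map, $\lambda_{\Ga,A,*}\circ\Lambda_A\aeq\JR([Id_A])\aeq\Id_{\K_*(A\rtr\Ga)}$ using Theorem \ref{thm-kas}.

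The hard part will be ensuring that all control pairs are uniform, i.e.\ that the composite $(\lambda,h)$ provided by Steps 2 and 3 depends only on the universal constants appearing in the Kasparov transforms $\JM,\JR$, in the Morita equivalence and in the Bott trick, rather than on $A$ or on the particular $K$-cycle chosen in Step~1. This should follow by direct inspection of the proofs of Propositions \ref{prop-kas} and \ref{prop-morita}, together with Remark \ref{rem-fonct}, whose constants are visibly independent of the algebras involved; the conclusion then follows from this uniformity and the definition of a $(\lambda,h)$-isomorphism.
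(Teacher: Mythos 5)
Your overall strategy --- use $K$-amenability to build a ``mixed'' boundary morphism out of $\K_*(A\rtr\Ga)$ landing in a maximal crossed product, and check that it is a two-sided controlled inverse of $\lambda_{\Ga,A,*}$ by pulling the defining extension back along $\lambda_{\Ga,A}$ and pushing it forward along $Id_{\K(\H)}\ts\lambda$, invoking remark \ref{rem-fonct} --- is exactly the strategy of the paper. The gap is in your first step. The class $[Id_A]$ is \emph{even}, and the compression construction $x\mapsto P_{\Ga,max}\cdot\pi_{\Ga,red,max}(x)\cdot P_{\Ga,max}$ of propositions \ref{prop-tensor} and \ref{prop-kas} only makes sense for, and only computes the boundary morphism attached to, an \emph{odd} $K$-cycle; applied to a cycle representing $[Id_A]$ it does not produce a controlled morphism inducing (an inverse of) the identity. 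Moreover the ``even-case modification involving $\BB$ and $[\partial]$'' cannot be tacked on afterwards: in the paper's framework $\JM([Id_A])$ and $\JR([Id_A])$ are \emph{not} defined by any extension attached to a cycle for $[Id_A]$, but by the extension attached to an odd cycle representing $[Id_A]\ts_A[\partial_A]=[\partial_A]\in KK_1^\Ga(A,SA)$, followed by composition with the Bott-type morphisms. Hence your key identifications (``the pulled-back extension is the one defining $\JM([Id_A])$'', ``the pushed-out extension is the one defining $\JR([Id_A])$'') are not meaningful as stated, and the $K$-amenability factorization has been invoked for the wrong cycle: it is needed for a cycle representing $[\partial_A]$, with coefficients $SA$, not for one representing $[Id_A]$.

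The repair is to run your argument one suspension down, which is precisely what the paper does: apply proposition \ref{prop-Kamenable} to the invertible odd class $[\partial_A]\in KK_1^\Ga(A,SA)$, choose a factorizable cycle $(\pi,T,\H\ts SA)$, and form the three extensions $E^{(\pi,T)}_{red}$, $E^{(\pi,T)}_{red,max}$, $E^{(\pi,T)}_{max}$ with ideals $\K(\H)\ts SA\rtr\Ga$, resp.\ $\K(\H)\ts SA\rtm\Ga$, and quotients $A\rtr\Ga$, resp.\ $A\rtm\Ga$. Your pullback and pushforward maps (the paper's $f_1$ and $f_2$) then do satisfy the hypotheses of remark \ref{rem-fonct} and yield, after Morita equivalence, $\DD'_A\circ\lambda_{\Ga,A,*}=\JM([\partial_A])$ and $\lambda_{\Ga,SA,*}\circ\DD'_A=\JR([\partial_A])$, where $\DD'_A=\MM_{SA\rtm\Ga}^{-1}\circ\DD_{SA\rtm\Ga,E^{(\pi,T)}_{red,max}}$. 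Setting $\Lambda_A=\JM([\partial_A]^{-1})\circ\DD'_A$ and using theorems \ref{thm-kasp-max}, \ref{thm-product-max}, \ref{thm-kas}, \ref{thm-product} (together with theorem \ref{theo-bott} for $\JR([Id_A])$ being controlled equivalent to the identity) gives the two-sided controlled inverse; all control pairs entering these statements are universal, which also settles the uniformity issue you postponed to the end.
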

\begin{proof}
 Let  $(\pi,T,\H\ts SA)$ be a $\Ga$-equivariant $K$-cycle as in proposition
 \ref{prop-Kamenable} representing the element  $[\partial_A]$ of $KK_1^\Ga(A,SA)$  corresponding to the extension
$$0\to SA\to CA\to A\to0.$$ Let then choose  $\pi_{\Ga,A,red,max}:A\rtr\Ga\to  \L(\H\ts B\rtm\Ga)$ such that
 $\pi_{\Ga,max}=\pi_{\Ga,red,max}\circ \lambda_{\Ga,A}$. Let us set $P=\frac{T+Id_{\H\ts SA}}{2}$ and then define
\begin{equation*}\begin{split} E^{(\pi,T)}_{red}=\{(x,y)\in A\rtr\Gamma&\oplus \mathcal{L}(\H\otimes SA\rtr\Gamma)\text{ such that }\\
  &P_{\Gamma,red}\cdot \pi_{\Gamma,red}(x) \cdot P_{\Gamma,red}-y \in \mathcal{K}(\H)\otimes
SA\rtr\Gamma\}, \end{split}\end{equation*}
 \begin{equation*}\begin{split}
E^{(\pi,T)}_{max}=\{(x,y)\in A\rtm\Gamma&\oplus \mathcal{L}(\H\otimes SA\rtm\Gamma)\text{ such that }\\&P_{\Ga,max}\cdot \pi_{\Gamma,max}(x) \cdot P_{\Gamma,max}-y \in \mathcal{K}(\H)\otimes
SA\rtm\Gamma\}\end{split}\end{equation*} and
\begin{equation*}\begin{split}  E^{(\pi,T)}_{red,max}=\{(x,y)\in &A\rtr\Gamma\oplus \mathcal{L}(\H\otimes SA\rtm\Gamma)
\text{ such that }\\&P_{\Gamma,max}\cdot \pi_{\Gamma,red,max}(x) \cdot P_{\Gamma,max}-y \in \mathcal{K}(\H)\otimes
A\rtm\Gamma\}\end{split}\end{equation*}Then $E^{(\pi,T)}_{red}$, $E^{(\pi,T)}_{max}$ and $E^{(\pi,T)}_{red,max}$ are respectively filtered by
$$\{(x,\,P_{\Gamma,red}\cdot \pi_{\Gamma,red}(x) \cdot
P_{\Gamma,red}+y);\, x\in A\rtr\Gamma_r\text{ and } y\in \mathcal{K}(\H)\otimes
SA\rtr\Gamma_r\},$$
$$\{(x,\,P_{\Gamma,max}\cdot \pi_{\Gamma,max}(x) \cdot
P_{\Gamma,max}+y);\, x\in SA\rtm\Gamma_r\text{ and } y\in \mathcal{K}(\H)\otimes
SA\rtm\Gamma_r\}$$ and
$$\{(x,\,P_{\Gamma,max}\cdot \pi_{\Gamma,red,max}(x) \cdot
P_{\Gamma,max}+y);\, x\in A\rtr\Gamma_r\text{ and } y\in \mathcal{K}(\H)\otimes
SA\rtm\Gamma_r\}.$$
Moreover, the extension of $C^*$-algebras
\begin{equation*}0\longrightarrow \mathcal{K}(\H)\otimes
SA\rtr\Gamma\longrightarrow E^{(\pi,T)}_{red} \longrightarrow
A\rtr\Gamma\longrightarrow 0,\end{equation*}
\begin{equation*}\label{equ-ext-red-max}0\longrightarrow \mathcal{K}(\H)\otimes
SA\rtm\Gamma\longrightarrow E^{(\pi,T)}_{max} \longrightarrow
A\rtm\Gamma\longrightarrow 0\end{equation*} and
\begin{equation*}0\longrightarrow \mathcal{K}(\H)\otimes
SA\rtm\Gamma\longrightarrow E^{(\pi,T)}_{red,max} \longrightarrow
A\rtr\Gamma\longrightarrow 0\end{equation*} provided by the projection on the first factor are respectively
semi-split by the filtered cross-sections
$$s_{red}:A\rtr\Gamma\to E^{(\pi,T)}_{red};\, x \mapsto (x,P_{\Gamma,red} \cdot\pi_{\Gamma,red}(x)
\cdot P_{\Gamma,red}),$$
$$s_{max}:A\rtm\Gamma\to E^{(\pi,T)}_{max};\, x \mapsto (x,P_{\Gamma,max} \cdot\pi_{\Gamma,max}(x)
\cdot P_{\Gamma,max})$$ and
$$s_{red,max}:A\rtr\Gamma\to E^{(\pi,T)}_{max};\, x \mapsto (x,P_{\Gamma,max} \cdot\pi_{\Gamma,red,max}(x)
\cdot P_{\Gamma,max}).$$ Let us set $$f_1:E^{(\pi,T)}_{max}\to E^{(\pi,T)}_{red,max}: (x,y)\mapsto (\lambda_{\Ga,A,*}(x),y)$$
 and
 $$f_2:E^{(\pi,T)}_{red,max}\to E^{(\pi,T)}_{red}:(x,y)\mapsto (x,y\ts_{A\rtm\Ga}Id_{A\rtr\Ga}).$$
The the three above extensions fit in a commutative diagram
$$
\begin{CD}
0 @>>>   \mathcal{K}(\H)\otimes
SA\rtm\Gamma@>>>E^{(\pi,T)}_{max}  @>>> A\rtm\Ga @>>>  0\\
 @.         @V=VV     @V f_1VV         @VV\lambda_{\Ga,A} V\\
 0 @>>>   \mathcal{K}(\H)\otimes
SA\rtm\Gamma@>>>E^{(\pi,T)}_{red,max}  @>>> A\rtr\Ga @>>>  0\\
 @.         @V\lambda_{\Ga,\mathcal{K}(\H)\otimes
SA}VV     @Vf_2VV         @VV= V\\
0 @>>>   \mathcal{K}(\H)\otimes
SA\rtr\Gamma@>>>E^{(\pi,T)}_{red}  @>>> A\rtr\Ga @>>>  0
\end{CD}
$$ which satisfy the conditions of remark \ref{rem-fonct} relatively to $s_{red}$, $s_{max}$ and $s_{red,max}$, and hence
we deduce
\begin{equation}\label{equ-red-max1}
\DD_{ \mathcal{K}(\H)\otimes SA\rtm\Gamma,E^{(\pi,T)}_{red,max}}\circ \lambda_{A,\Ga,*}=
\DD_{ \mathcal{K}(\H)\otimes SA\rtm\Gamma,E^{(\pi,T)}_{max}}
\end{equation} and
\begin{equation}\label{equ-red-max2}
\lambda_{ \mathcal{K}(\H)\otimes SA,\Ga,*}\circ \DD_{ \mathcal{K}(\H)\otimes SA\rtm\Gamma,E^{(\pi,T)}_{red,max}}
=\DD_{ \mathcal{K}(\H)\otimes SA\rtr\Gamma,E^{(\pi,T)}_{red}}
\end{equation}

Let us set then $$\DD'_A=\MM_{SA\rtm\Ga}^{-1}\circ \DD_{SA\rtm\Gamma,E^{(\pi,T)}_{red,max}}:\K_*(A\rtr\Ga)\to\K_*(SA\rtm\Ga).$$
Since we have by definition of the quantitative Kasparov transformation the equalities  $$\JJ_{\Ga,red}([\partial_A])=\MM_{SA\rtr\Ga}^{-1}\circ \DD_{SA\rtr\Gamma,E^{(\pi,T)}_{red}}$$ and
$$\JJ_{\Ga,max}([\partial_A])=\MM_{SA\rtm\Ga}^{-1}\circ \DD_{SA\rtm\Gamma,E^{(\pi,T)}_{max}},$$ we deduce  by 
 using equations (\ref{equ-red-max1}) and (\ref{equ-red-max2}),  theorems \ref{thm-kas}, \ref{thm-kasp-max},
\ref{thm-product} and \ref{thm-product-max} and naturality of Morita equivalence, that there exists a control pair $(\lambda,h)$ such that
$\JJ_{\Ga,max}([\partial_A]^{-1})\circ \DD'_A$ is a $(\alpha,h)$-inverse for $\lambda_{\Ga,A,*}$.
\end{proof}

\section{The quantitative Baum-Connes conjecture}
In this section, we formulate a quantitative version for the Baum-Connes
conjecture and we prove  it  for a large class of groups.
\subsection{The Rips complex}
Let $\Ga$ be a finitely generated group equipped with a lenght $\ell$ arising from a finite and symmetric generating set. Recall
that for any positive number $d$, then the $d$-Rips complex  $P_d(\Ga)$ is the set
of finitely supported probability measures  on $\Gamma$ with support of diameter
less than $d$ for the distance induced by $\ell$. We equip $P_d(\Ga)$
with the distance induced by the norm
$\|h\|=\sup\{\|h(\ga)\|;\,\ga\in\Gamma\}$ for $h\in C_0(\Gamma,\C)$. Since  $\ell$  is a proper function, i.e. $B(e,r)$ is finite for
every  positive number $r$, we see that  $P_d(\Ga)$ is a finite dimension and locally finite
simplicial complexe and the action of $\Ga$ by left translations is
simplicial, proper  and cocompact. Let us denote by
\begin{itemize}
\item  $V_d(\Gamma)$ the
closed subset of elements  of  $P_d(\Ga)$ with support in $B{(e,d)}$.
\item  $W_d(\Gamma)$ the
closed subset of elements  of  $P_d(\Ga)$ with support in $B{(e,2d)}$;
\end{itemize}
Then $V_d(\Gamma)$ is a  compact subset of $W_d(\Gamma)$ and contains
a fundamental domain for the action of
$\Gamma$ on $P_d(\Ga)$.
\begin{lemma}
The compact  $V_d(\Gamma)$ is contained  in the interior of
$W_d(\Gamma)$.
\end{lemma}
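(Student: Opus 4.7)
The plan is to show that any $h\in V_d(\Ga)$ admits an open sup-norm ball of radius $\epsilon>0$ (for some $\epsilon$ depending on $h$) which is entirely contained in $W_d(\Ga)$.

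First I would fix $h\in V_d(\Ga)$. Since $h$ is a probability measure on $\Ga$ with finite support contained in $B(e,d)$, the quantity
$$m_h=\min\{h(\ga)\,:\,\ga\in\operatorname{supp}(h)\}$$
is strictly positive. I would then take $\epsilon=m_h/2$, and consider an arbitrary $h'\in P_d(\Ga)$ with $\|h-h'\|<\epsilon$.

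The key observation is that for every $\ga\in\operatorname{supp}(h)$, the inequality $h'(\ga)\gq h(\ga)-\epsilon\gq m_h/2>0$ forces $\ga\in\operatorname{supp}(h')$; thus $\operatorname{supp}(h)\subset\operatorname{supp}(h')$. Pick any $\ga_0\in\operatorname{supp}(h)\subset B(e,d)$. Since $h'\in P_d(\Ga)$, its support has diameter at most $d$, so for every $\ga\in\operatorname{supp}(h')$ we have $d(\ga,\ga_0)\lq d$, whence by the triangle inequality
$$\ell(\ga)\lq \ell(\ga_0)+d(\ga_0,\ga)\lq d+d=2d,$$
meaning $\operatorname{supp}(h')\subset B(e,2d)$ and therefore $h'\in W_d(\Ga)$.

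This shows the open ball of radius $m_h/2$ around $h$ in $P_d(\Ga)$ lies in $W_d(\Ga)$, so $h$ belongs to the interior of $W_d(\Ga)$. There is no real obstacle here: everything reduces to the fact that the finiteness of $\operatorname{supp}(h)$ gives a strictly positive lower bound on the nonzero values of $h$, which in turn guarantees that small sup-norm perturbations cannot shrink the support; combined with the uniform diameter constraint built into $P_d(\Ga)$, this automatically keeps the support within $B(e,2d)$.
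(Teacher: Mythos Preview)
Your proof is correct and follows essentially the same approach as the paper's: pick a point in the support of $h$, use its (positive) mass to bound the radius of a sup-norm ball, and then use the diameter constraint on elements of $P_d(\Ga)$ together with the triangle inequality to force the support of any nearby $h'$ into $B(e,2d)$. The only difference is cosmetic: the paper picks a single $\gamma$ in $\operatorname{supp}(h)$ and uses $h(\gamma)$ as the radius, whereas you take the minimum $m_h/2$ over the whole support---but since only one surviving support point is needed to anchor the triangle inequality, the paper's version is marginally more economical.
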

\begin{proof}
Let $h$ be an element in $V_d(\Gamma)$ and choose an element  $\gamma$ in $B{(e,d)}$ such that
$h(\gamma)>0$.
Then if $g$ is an element of $P_d(\Ga)$ such that $\|g-h\|<h(\gamma)$,
we get that $g(\gamma)\neq 0$ and thus every element $\gamma'$ of the
support of $g$ satisfies $\ell(\gamma^{-1}\gamma')<d$. Hence $g$
belongs to $W_d(\Gamma)$.
\end{proof}
\begin{lemma}\label{lem-cutoff}
There is a continuous function $\phi:P_d(\Ga)\to[0,1]$
compactly  supported   in  $W_d(\Gamma)$ such that $$\sum_{\gamma\in\Gamma}\gamma(\phi)=1.$$
\end{lemma}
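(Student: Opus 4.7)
The plan is to build $\phi$ as a normalized bump function, exploiting the previous lemma together with properness and cocompactness of the $\Gamma$-action on $P_d(\Gamma)$.

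First I would produce an auxiliary function $\psi\colon P_d(\Ga)\to[0,1]$ which is continuous, compactly supported in $W_d(\Gamma)$, and satisfies $\psi>0$ on $V_d(\Gamma)$. Since the previous lemma shows $V_d(\Gamma)$ is a compact subset of the interior of $W_d(\Gamma)$, and since $P_d(\Gamma)$ is a locally finite simplicial complex (hence a normal locally compact Hausdorff space), Urysohn's lemma produces such a $\psi$.

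Next I would form the sum $\Psi = \sum_{\gamma\in\Gamma}\gamma(\psi)$. Properness of the action of $\Gamma$ on $P_d(\Gamma)$ implies that for every compact subset $K\subset P_d(\Gamma)$ only finitely many translates $\gamma(\operatorname{supp}\psi)$ meet $K$; hence the sum is locally finite and defines a continuous $\Gamma$-invariant function on $P_d(\Gamma)$. Since $V_d(\Gamma)$ contains a fundamental domain for the $\Gamma$-action, every point $x\in P_d(\Gamma)$ has the form $\gamma\cdot v$ with $v\in V_d(\Gamma)$, so $\Psi(x)\geq \gamma(\psi)(x)=\psi(v)>0$. Thus $\Psi$ is continuous and strictly positive on $P_d(\Gamma)$.

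Finally I would set $\phi = \psi/\Psi$. It is continuous, takes values in $[0,1]$, and is compactly supported in $W_d(\Gamma)$ since $\psi$ is. By $\Gamma$-invariance of $\Psi$,
\[
\sum_{\gamma\in\Gamma}\gamma(\phi)(x)=\frac{1}{\Psi(x)}\sum_{\gamma\in\Gamma}\gamma(\psi)(x)=1
\]
for every $x\in P_d(\Gamma)$, which is the desired identity. There is no substantial obstacle here; the only point demanding care is the local finiteness of the sum $\Psi$, which rests on properness of the $\Gamma$-action, a property already built into the description of $P_d(\Gamma)$ as a proper cocompact simplicial $\Gamma$-complex.
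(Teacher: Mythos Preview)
Your proof is correct and follows essentially the same route as the paper: construct a bump function $\psi$ supported in $W_d(\Gamma)$ and positive on $V_d(\Gamma)$ (the paper takes $\psi\equiv 1$ on $V_d(\Gamma)$, but your weaker condition $\psi>0$ suffices), then normalize by the $\Gamma$-invariant sum $\sum_{\gamma}\gamma(\psi)$, which is locally finite by properness and strictly positive because $V_d(\Gamma)$ contains a fundamental domain.
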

\begin{proof}
Let $\psi:P_d(\Ga)\to[0,1]$ a continuous function compactly supported
in  the interior of
$W_d(\Gamma)$ and such that $\psi(x)=1$ if $x$ belongs to
$V_d(\Gamma)$.
Since $V_d(\Gamma)$ contains a fundamental domain for the action  of $\Ga$ on $P_d(\Ga)$, we get that
$\sum_{\gamma\in\Gamma}\psi(\gamma x)>0$ for all $x$ in $P_d(\Ga)$ (notice that the sum
 $\sum_{\gamma\in\Gamma}\psi(\gamma x)$
is locally finite).  We define then
$\phi(x)=\frac{\psi(x)}{\sum_{\gamma\in\Gamma}\psi(\gamma x)}$  for any $x$ in $P_d(\Ga)$.
\end{proof}
Let us define $s_{\Gamma,d}$ as the cardinality of the finite set
$$\{\gamma\in\Gamma\text{ such that }\gamma W_d(\Gamma)\cap
W_d(\Gamma)\neq\emptyset\}.$$
Then for   any function $\phi$ as in lemma \ref{lem-cutoff}, the
function
$$e_\phi:\Gamma\to C_0(P_d(\Ga));\, \gamma\mapsto\sum_{\gamma\in\Gamma}\phi^{1/2}\gamma(\phi^{1/2})$$
 is a projection of
 $C_0(P_d(\Ga))\rtr\Gamma$ with  propagation  less than $s_{\Gamma,d}$. Moreover,
 since the set of function satisfying the condition of lemma
 \ref{lem-cutoff} is an affine space, we get that for any positive
 number $\eps$ and $r$ with $\eps <1/4$ and $r\gq s_{\Gamma,d}$, the class
$$[e_\phi,0]_{\eps,r}\in K_0^{\eps,r}(C_0(P_d(\Ga))\rtr\Gamma)$$ does
 not depend on the chosen  function $\phi$. Let us set then $r_{\Gamma,d,\eps}=k_{\JJ,\eps/\alpha_J}s_{\Gamma,d}$.
Recall that $k_\JJ$ can be chosen non increasing and in this case,  $r_{\Gamma,d,\eps}$ is non decreasing in
 $d$ and non increasing in $\eps$.
\begin{definition}\label{def-quantitative-assembly-map}
For any $\Gamma$-$C^*$-algebra $A$ and any positive numbers $\eps$, $r$ and
$d$ with $\eps<1/4$ and
$r\gq r_{\Gamma,d,\eps}$, we define the quantitative assembly map
\begin{eqnarray*}
\mu_{\Gamma,A,*}^{\eps,r,d}: KK_*^\Gamma(C_0(P_d(\Ga)),A)&\to&
K_*^{\eps,r}(\AG)\\
z&\mapsto&\big(J_\Gamma^{red,\frac{\eps}{\alpha_J},\frac{r}{k_{J,{\eps}/{\alpha_J}}}}(z)\big)\left([e_\phi,0]_{\frac{\eps}{\alpha_J},\frac{r}{k_{J,{\eps}/{\alpha_J}}}}\right).
\end{eqnarray*}
\end{definition}
Then according to  theorem \ref{thm-kas}, the map  $\mu_{\Gamma,A}^{\eps,r,d}$
is a  homomorphism of   groups  (resp. semi-groups) in even (resp. odd) degree. For any positive numbers
 $d$ and $d'$ such that $d\lq d'$, we denote by
$q_{d,d'}:C_0(P_{d'}(\Ga))\to C_0(P_d(\Ga))$ the homomorphism induced by the restriction
  from $P_{d'}(\Ga)$ to $P_d(\Ga)$. It is straightforward to check that if
  $d$, $d'$ and  $r$  are positive numbers such that $d\lq d'$ and
  $r\gq r_{\Gamma,d',\eps}$, then
$\mu_{\Gamma,A}^{\eps,r,d}=\mu_{\Gamma,A}^{\eps,r,d'}\circ
q_{d,d',*}$. Moreover, for every positive numbers
$\eps,\,\eps',\,d,\,r$ and $r'$ such that $\eps\lq\eps'\lq
1/4$, $r_{\Ga,d,\eps}\lq r$, $r_{\Ga,d,\eps'}\lq r'$, and
$r<r'$, we get by definition of a controlled morphism that
\begin{equation}\label{equation-assembly}
\iota^{\eps,\eps',r,r'}_*\circ \mu_{\Ga,A,*}^{\eps,r,d}=\mu_{\Ga,A,*}^{\eps',r',d}.
\end{equation}
Furthermore, the quantitative assembly maps are natural in the $\Ga$-$C^*$-algebra, i.e.
if $A$ and $B$ are $\Ga$-$C^*$-algebras and if $\phi:A\to B$ is a
$\Ga$-equivariant homomorphism, then
$$\phi_{\Ga,red,*,\eps,r}\circ\mu_{\Gamma,A,*}^{\eps,r,d}=\mu_{\Gamma,B,*}^{\eps,r,d}\circ\phi_*$$
 for every positive numbers $r$ and $\eps$ with $r\gq
r_{\Gamma,d,\eps}$ and $\eps<1/4$.
These quantitative assembly maps are related to the usual assembly
maps in the following way: recall from \cite{BCH} that there is  a bunch of assembly maps  with
coefficients in a $\Ga$-$C^*$-algebra $A$   defined by
\begin{eqnarray*}
\mu_{\Gamma,A,*}^{d}: KK_*^\Gamma(C_0(P_d(\Ga)),A)&\to&
K_*(\AG)\\
z&\mapsto&[e_\phi]\otimes_{C_0(P_d(\Ga))\rtimes\Gamma}J_\Gamma(z).
\end{eqnarray*}
For every positive numbers $r$ and $\eps$ with $r\gq
r_{\Gamma,d,\eps}$ and $\eps<1/4$, we have
\begin{equation}\label{equ-compatibity-assembly-maps} \iota^{\eps,r}_*\circ
\mu_{\Gamma,A,*}^{\eps,r,d}=\mu_{\Gamma,A,*}^{d}.\end{equation}

 Recall that since
$\mu_{\Gamma,A,*}^{d'}\circ q_{d,d',*} =\mu_{\Gamma,A,*}^{d}$ for
all positive numbers $d$ and $d'$ with $d\lq d'$, the family of assembly maps
$( \mu_{\Gamma,A}^{d})_{d>0}$ gives rise to a homomorphism
$$\mu_{\Ga,A,*}:\lim_{d>0}KK_*^\Gamma(C_0(P_d(\Ga)),A)\longrightarrow
K_*(\AG)$$ called the Baum-Connes assembly map.


\subsection{Quantitative statements}
Let us consider for a  $\Ga$-$C^*$-algebra $A$ and positive numbers
$d,d',r,r',\eps$ and $\eps'$ with $d\lq d'$, $\eps'\lq\eps< 1/4$,  $r_{\Gamma,d,\eps}\lq
r$ and  $ r'\lq
r$ the following statements:
\begin{description}
\item[$QI_{\Ga,A,*}(d,d',r,\eps)$]  for any element $x$ in
  $KK_*^\Gamma(C_0(P_d(\Ga)),A)$, then
  $\mu_{\Gamma,A,*}^{\eps,r,d}(x)=0$
  in $K_*^{\eps,r}(\AG)$ implies that
  $q_{d,d'}^*(x)=0$ in $KK_*^\Gamma(C_0(P_{d'}(\Ga)),A)$.
\item[$QS_{\Ga,A,*}(d,r,r',\eps,\eps')$] for every $y$
  in  $K_*^{\eps',r'}(\AG)$, there exists an element $x$ in
  $KK_*^\Gamma(C_0(P_d(\Ga)),A)$ such that
$$\mu_{\Gamma,A,*}^{\eps,r,d}(x)=\iota_*^{\eps',\eps,r',r}(y).$$
\end{description}
Using equation (\ref{equ-compatibity-assembly-maps}) and remark \ref{rem-hom} we get
\begin{proposition}
Assume that for all  positive number $d$ there exists  a positive number $\eps$ with $\eps<1/4$
for which the following holds:

\medskip

for any positive number  $r$ with $r\gq r_{\Ga,d,\eps}$, there
exists a positive number $d'$ with $d'\gq d$ such that
$QI_{\Ga,A}(d,d',r,\eps)$ is satisfied.

\medskip

Then $\mu_{\Ga,A,*}$ is one-to-one.
\end{proposition}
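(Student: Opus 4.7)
The plan is a straightforward diagram chase through the compatibility relations between the usual and quantitative assembly maps. Suppose $\xi\in\ds\lim_{d>0}KK_*^\Gamma(C_0(P_d(\Ga)),A)$ satisfies $\mu_{\Ga,A,*}(\xi)=0$, and pick a representative $x\in KK_*^\Gamma(C_0(P_d(\Ga)),A)$ for some $d>0$. Applying the hypothesis to $d$ produces a tolerance $\eps_0\in(0,1/4)$ such that for every $r\gq r_{\Ga,d,\eps_0}$ there is some $d'\gq d$ for which $QI_{\Ga,A,*}(d,d',r,\eps_0)$ holds. The goal is to produce such an $r$ at which $\mu_{\Ga,A,*}^{\eps_0,r,d}(x)$ vanishes.

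Let $\lambda>1$ be the universal constant provided by remark \ref{rem-hom}. Choose $\eps\in(0,1/(4\lambda))$ small enough that $\lambda\eps\lq\eps_0$, and pick any $r\gq r_{\Ga,d,\eps}$. By equation (\ref{equ-compatibity-assembly-maps}),
$$\iota_*^{\eps,r}\bigl(\mu_{\Ga,A,*}^{\eps,r,d}(x)\bigr)=\mu_{\Ga,A,*}^d(x)=0$$
in $K_*(A\rtr\Ga)$. Remark \ref{rem-hom} then supplies some $r'\gq r$ with
$$\iota_*^{\eps,\lambda\eps,r,r'}\bigl(\mu_{\Ga,A,*}^{\eps,r,d}(x)\bigr)=0$$
in $K_*^{\lambda\eps,r'}(A\rtr\Ga)$, and by equation (\ref{equation-assembly}) this class equals $\mu_{\Ga,A,*}^{\lambda\eps,r',d}(x)$. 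Applying (\ref{equation-assembly}) once more, with tolerance relaxed from $\lambda\eps$ to $\eps_0$ and propagation relaxed from $r'$ to any $r''\gq\max(r',r_{\Ga,d,\eps_0})$, yields $\mu_{\Ga,A,*}^{\eps_0,r'',d}(x)=0$ in $K_*^{\eps_0,r''}(A\rtr\Ga)$.

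Invoking the hypothesis $QI_{\Ga,A,*}(d,d',r'',\eps_0)$ for the $d'\gq d$ that it provides, one concludes $q_{d,d',*}(x)=0$ in $KK_*^\Ga(C_0(P_{d'}(\Ga)),A)$, so $\xi=0$ in the inductive limit. I do not expect any real obstacle here. The only subtlety worth flagging is that the hypothesis fixes $\eps_0$ in advance (depending on $d$), whereas remark \ref{rem-hom} can only transfer a vanishing in $K_*$ to a vanishing at some enlarged tolerance $\lambda\eps$; this is precisely why we start from a small $\eps$ with $\lambda\eps\lq\eps_0$ and relax twice. Exactly the same argument would work for the odd case in view of the semigroup version of remark \ref{rem-hom}, and no use of the group structure of $K_1^{\eps,r}$ is required.
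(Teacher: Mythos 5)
Your proof is correct and is essentially the paper's own argument: the paper derives this proposition directly from the compatibility relation $\iota^{\eps,r}_*\circ\mu_{\Gamma,A,*}^{\eps,r,d}=\mu_{\Gamma,A,*}^{d}$ and remark \ref{rem-hom}, which is exactly the diagram chase you carry out (starting from a small $\eps$ with $\lambda\eps\lq\eps_0$ and relaxing via equation (\ref{equation-assembly})). The only cosmetic point is to take the final propagation $r''$ strictly larger than $r'$ so that the stated hypotheses of equation (\ref{equation-assembly}) apply verbatim.
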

We can also easily prove the following:
\begin{proposition}
Assume that there exists  a positive number $\eps'$ with $\eps'<1/4$
such that the following holds:

\medskip

for any  positive number $r'$ , there exist
positive numbers $\eps,d$ and  $r$ with $\eps'\lq\eps<1/4$, $
r_{\Ga,d,\eps}\lq r$ and $r'\lq  r$ such that $QS_{\Ga,A}(d,r,r',\eps,\eps')$ is
true.

\medskip

Then  $\mu_{\Ga,A,*}$ is onto.
\end{proposition}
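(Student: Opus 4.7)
The plan is to reduce the statement to a standard diagram chase using the density of $\bigcup_r (A\rtr\Ga)_r$ in $A\rtr\Ga$. More precisely, the hypothesis $QS_{\Ga,A,*}(d,r,r',\eps,\eps')$ tells us that every class at filtration $(\eps',r')$ lifts (after relaxing controls) through some $\mu_{\Ga,A,*}^{\eps,r,d}$, and together with the compatibility relation (\ref{equ-compatibity-assembly-maps}) between the quantitative assembly maps and the usual Baum--Connes assembly map this will give surjectivity of $\mu_{\Ga,A,*}$ onto $K_*(A\rtr\Ga)$.

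The first step is a lifting lemma: for the fixed $\eps'\in(0,1/4)$ supplied by the hypothesis, every $z \in K_*(A\rtr\Ga)$ arises as $\iota_*^{\eps',r'}(y)$ for some $r'>0$ and some $y \in K_*^{\eps',r'}(A\rtr\Ga)$. In the even case this comes from approximating a representative projection $p\in M_n(\widetilde{A\rtr\Ga})$ by a selfadjoint element $q\in M_n((A\rtr\Ga)_{r'}+\C)$ and invoking Lemma \ref{lemma-almost-closed} to identify $[\kappa_0(q)]$ with $[p]$ in $K_0$; in the odd case one approximates a unitary lift similarly and uses the analogous point of Lemma \ref{lemma-almost-closed}. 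This uses only that $\bigcup_r (A\rtr\Ga)_r$ is dense in $A\rtr\Ga$, which holds by definition of a filtered $C^*$-algebra.

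Next, given $z\in K_*(A\rtr\Ga)$, choose $r'$ and $y\in K_*^{\eps',r'}(A\rtr\Ga)$ as above, then apply the hypothesis to this $r'$ to produce $\eps,d,r$ with $\eps'\lq\eps<1/4$, $r_{\Ga,d,\eps}\lq r$, $r'\lq r$, together with an element $x\in KK_*^\Gamma(C_0(P_d(\Ga)),A)$ satisfying
\[
\mu_{\Gamma,A,*}^{\eps,r,d}(x)=\iota_*^{\eps',\eps,r',r}(y) \quad \text{in } K_*^{\eps,r}(A\rtr\Ga).
\]
Applying $\iota_*^{\eps,r}$ and using (\ref{equ-compatibity-assembly-maps}) together with the transitivity $\iota_*^{\eps,r}\circ \iota_*^{\eps',\eps,r',r}=\iota_*^{\eps',r'}$, we obtain
\[
\mu_{\Ga,A,*}^{d}(x)=\iota_*^{\eps,r}\circ\mu_{\Gamma,A,*}^{\eps,r,d}(x)=\iota_*^{\eps',r'}(y)=z.
\]
Hence the image of $x$ in $\lim_d KK_*^\Gamma(C_0(P_d(\Ga)),A)$ is mapped to $z$ under $\mu_{\Ga,A,*}$, proving surjectivity.

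There is really no hard step here: the quantitative surjectivity hypothesis is tailor-made so that, once one disposes of the trivial lifting of $K$-theory classes from the filtration, the compatibility (\ref{equ-compatibity-assembly-maps}) does the rest. The only point requiring a moment's care is ensuring that the approximation in the lifting step can be arranged with the prescribed $\eps'$ (rather than an $\eps'$ depending on $z$); this is handled exactly as in Remark \ref{rem-hom}, since one can always shrink the approximation error and then invoke Lemma \ref{lemma-almost-closed} to land in $\pe_n(A\rtr\Ga)$ (resp.\ $\ue_n(\widetilde{A\rtr\Ga})$) for the prescribed $\eps'$.
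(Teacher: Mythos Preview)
Your argument is correct and is exactly the intended one; the paper omits the proof entirely (it merely says ``We can also easily prove the following''), and your diagram chase---lift $z$ to $K_*^{\eps',r'}(A\rtr\Ga)$ using density and Lemma~\ref{lemma-almost-closed}, apply $QS_{\Ga,A}(d,r,r',\eps,\eps')$, then descend via equation~(\ref{equ-compatibity-assembly-maps})---is precisely what is meant.
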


The following results provide  numerous
examples of finitely generated groups that satisfy the quantitative statements.

\begin{theorem}\label{thm-quantBC-injectivity} Let $A$ be a $\Ga$-$C^*$-algebra. Then the following assertions are equivalent:
\begin{enumerate}
\item $\mu_{\Ga,\ell^\infty(\N,\K(\H)\otimes A),*}$ is one-to-one,
\item  For any positive numbers $d$,
  $\eps$ and $r\gq r_{\Ga,d,\eps}$ with $\eps<1/4$ and  $r\gq r_{\Ga,d}$, there
exists   a positive number $d'$ with $d'\gq d$ for which
$QI_{\Ga,A}(d,d',r,\eps)$ is satisfied.
\end{enumerate}
\end{theorem}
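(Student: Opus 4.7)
The plan is to prove both directions by exploiting an isometric embedding
$$\Phi : B \rtr \Ga \hookrightarrow \ell^\infty(\N, \K(\H)\otimes A \rtr \Ga),$$
where $B=\ell^\infty(\N,\K(\H)\otimes A)$. Such an embedding is induced by evaluation at each $n$ and is isometric because the regular representation of $B\rtr\Ga$ on the Hilbert $B$-module $B\otimes\ell^2(\Ga)\cong\ell^\infty(\N,\K(\H)\otimes A\otimes\ell^2(\Ga))$ decomposes fibrewise; hence the norm on elements of propagation $\leqslant r$ is given by $\sup_n$ of the slice norms. This gives a one-to-one correspondence between $\eps$-$r$-projections (resp.\ unitaries) in $B\rtr\Ga$ and uniformly bounded sequences of $\eps$-$r$-projections (resp.\ unitaries) in $\K(\H)\otimes A\rtr\Ga$ of propagation $\leqslant r$, together with the analogous statement for homotopies.

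For $(1)\Rightarrow(2)$ I would argue by contraposition. Suppose $(d,\eps,r)$ violates $(2)$: for every $d'\geqslant d$ there is $x_{d'}\in KK^\Ga_*(C_0(P_d(\Ga)),A)$ with $\mu_{\Ga,A,*}^{\eps,r,d}(x_{d'})=0$ but $q_{d,d'}^*(x_{d'})\neq 0$. Choose $d_n\nearrow\infty$ and $x_n=x_{d_n}$; using the Morita equivalence $A\cong\K(\H)\otimes A$ in $KK$-theory and the standard construction that assembles a bounded sequence of $K$-cycles into a single $K$-cycle with $\ell^\infty$-coefficients, build $x\in KK^\Ga_*(C_0(P_d(\Ga)),B)$ whose $n$th slice $e_{n,*}(x)$ is $x_n$. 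By the naturality of $\mu^{\eps,r,d}_{\Ga,\bullet,*}$ stated above the theorem, each slice of $\mu^{\eps,r,d}_{\Ga,B,*}(x)$ equals $\mu^{\eps,r,d}_{\Ga,A,*}(x_n)=0$ in $K_*^{\eps,r}(A\rtr\Ga)$; the existence of a null-homotopy for each slice, combined with Proposition~\ref{prop-bounded-hom} to Lipschitz-ify with a \emph{universal} constant $C$, allows me to assemble via $\Phi$ a single $C$-Lipschitz null-homotopy in $K_*^{\eps,r'}(B\rtr\Ga)$ for a slightly enlarged $r'$. Applying $\iota_*^{\eps,r'}$ yields $\mu^{d}_{\Ga,B,*}(x)=0$ in $K_*(B\rtr\Ga)$. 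Conversely, for any $d'\geqslant d$, pick $n$ with $d_n\geqslant d'$: since $q_{d,d_n}^*(x_n)=q_{d',d_n}^*\circ q_{d,d'}^*(x_n)\neq 0$, we must have $q_{d,d'}^*(x_n)\neq 0$, hence $q_{d,d'}^*(x)\neq 0$. Thus $x$ gives a nonzero class in $\lim_{d'}KK^\Ga_*(C_0(P_{d'}(\Ga)),B)$ killed by $\mu_{\Ga,B,*}$, contradicting~$(1)$.

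For $(2)\Rightarrow(1)$, I would apply the earlier injectivity proposition, but with coefficients $B$ rather than $A$. Given $z\in KK^\Ga_*(C_0(P_d(\Ga)),B)$ with $\mu^d_{\Ga,B,*}(z)=0$, Remark~\ref{rem-hom} (comparison of quantitative and ordinary $K$-theory) produces $\eps',r'$ for which $\mu^{\eps',r',d}_{\Ga,B,*}(z)=0$ in $K_*^{\eps',r'}(B\rtr\Ga)$. Slicing gives $\mu^{\eps',r',d}_{\Ga,A,*}(z_n)=0$ for every $n$, where $z_n=e_{n,*}(z)$. Applying $(2)$ with parameters $(d,\eps',r')$ produces a \emph{single} $d'\geqslant d$ for which $q_{d,d'}^*(z_n)=0$ in $KK^\Ga_*(C_0(P_{d'}(\Ga)),A)$ for every $n$. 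The remaining step is to deduce $q_{d,d''}^*(z)=0$ in $KK^\Ga_*(C_0(P_{d''}(\Ga)),B)$ for some $d''\geqslant d'$ from the vanishing of all slices.

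The main obstacle is this last slice-detection step: one needs an $\ell^\infty$-stabilization result saying that, for the cocompact proper $\Ga$-complex $P_{d'}(\Ga)$, the natural map $KK^\Ga_*(C_0(P_{d'}(\Ga)),\ell^\infty(\N,\K(\H)\otimes A))\to\prod_n KK^\Ga_*(C_0(P_{d'}(\Ga)),\K(\H)\otimes A)$ has vanishing kernel on elements pulled back from $KK^\Ga_*(C_0(P_d(\Ga)),\ell^\infty(\N,\K(\H)\otimes A))$ after further restriction in $d$. I would approach this by first reducing to the simplicial skeleton filtration of $P_{d'}(\Ga)$, where $C_0(P_{d'}(\Ga))$ is constructed from finitely many equivariant cells, and then exploiting that for finite $\Ga$-CW data the $\ell^\infty$ assembly does commute with taking $KK$-classes. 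In comparison, direction $(1)\Rightarrow(2)$ rests on the two tools above (isometry of $\Phi$ plus Lipschitz assembly of homotopies) and is conceptually more straightforward.
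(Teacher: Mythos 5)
Your overall architecture is the same as the paper's: for $(1)\Rightarrow(2)$ you argue by contraposition, assemble the counterexamples $x_n$ into a single class with coefficients in $\ell^\infty(\N,\K(\H)\otimes A)$, and use the universal Lipschitz constant of Proposition \ref{prop-bounded-hom} to glue the slicewise null-homotopies into a single homotopy over the product algebra; for $(2)\Rightarrow(1)$ you pass to quantitative vanishing via Remark \ref{rem-hom}, slice, apply $QI_{\Ga,A}(d,d',r,\eps)$, and then must climb back from slicewise vanishing in $KK^\Ga_*(C_0(P_{d'}(\Ga)),A)$ to vanishing of the class with $\ell^\infty$-coefficients. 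This is exactly the route taken in the paper.

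The gap is the step you yourself flag as the main obstacle, and it is genuine: the slice-detection statement is not something you can leave as a plan. The paper settles it by invoking \cite[Proposition 3.4]{oy}, which says that slicing gives an isomorphism
$$KK^\Ga_*\big(C_0(P_d(\Ga)),\ell^\infty(\N,\K(\H)\otimes A)\big)\stackrel{\cong}{\longrightarrow}\prod_{n\in\N}KK^\Ga_*\big(C_0(P_d(\Ga)),\K(\H)\otimes A\big)\cong KK^\Ga_*\big(C_0(P_d(\Ga)),A\big)^{\N},$$
and note that you actually rely on this result in \emph{both} directions: your ``standard construction that assembles a bounded sequence of $K$-cycles into a single $K$-cycle with $\ell^\infty$-coefficients'' is precisely the surjectivity half of the same statement, and it is not automatic, since the slicewise compactness conditions must be made uniform; this is where properness and cocompactness of the $\Ga$-action on $P_d(\Ga)$ and the stabilization by $\K(\H)$ enter (without the $\K(\H)$ factor, $K$-theoretic functors do not commute with countable products). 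Your proposed reduction to the equivariant skeleta of $P_{d'}(\Ga)$ is essentially how \cite[Proposition 3.4]{oy} is proved, so the plan is sound, but as written it is a sketch of a nontrivial proposition rather than a proof. Once that isomorphism is in hand, no further enlargement $d''\gq d'$ is needed in your final step, and the remainder of your argument coincides with the paper's.
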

\begin{proof}
 Assume that condition (ii) holds.

Let $x$ be an element in some $KK^\Ga_*(C_0(P_d(\Ga)),\ell^\infty(\N,\K(\H)\otimes A))$ such
that
$$\mu_{\Ga,\ell^\infty(\N,\K(\H)\otimes A),*}^{d}(x)=0.$$ Using  equation (\ref{equ-compatibity-assembly-maps}), we get that  $\iota^{\eps',r'}_*(\mu_{\Gamma,A,*}^{\eps',r',d}(x))=0$ for any $\eps'$ in $(0,1/4)$ and $r'\gq r_{\Ga,d,\eps'}$ and hence, by remark
\ref{rem-hom}, we can find $\eps$ and
 $r>r_{\Ga,d,\eps}$ such that $\mu_{\Ga,\ell^\infty(\N,\K(\H)\otimes A),*}^{\eps,r,d}(x)=0$. Recall from \cite[Proposition 3.4]{oy}
that we have an isomorphism
\begin{equation}\label{equ-morita-equiv-khom} KK^\Ga_0(C_0(P_{d}(\Ga)),\ell^\infty(\N,\K(\H)\otimes
A))\stackrel{\cong}{\longrightarrow}KK^\Ga_0(C_0(P_d(\Ga)),A)^\N\end{equation}
induced on the $j$ th factor  and up to the Morita equivalence
$$KK^\Ga_0(C_0(P_d(\Ga)),A)\cong KK^\Ga_0(C_0(P_d(\Ga)),\K(\H)\otimes
A)$$ by the $j$ th projection  $\ell^\infty(\N,
\K(\H)\otimes A)\to
 \K(\H)\otimes A$. Let  $(x_i)_{i\in\N}$  be the element of $KK^\Ga_0(C_0(P_d(\Ga)),A)^\N$
corresponding to $x$ under this identification and let  $d'\gq d$ be a number such that $QI_{\Ga,A}(d,d',r,\eps)$ holds.
Naturality of the quantitative assembly maps implies that $\mu_{\Ga,A,*}^{\eps,r,d}(x_i)=0$ and hence that $q_{d,d',*}(x_i)=0$
in $KK^\Ga_*(C_0(P_{d'}(\Ga)),A)$
for every integer $i$. Using once again the isomorphism of equation (\ref{equ-morita-equiv-khom}), we get that $q_{d,d',*}(x)=0$ in
$KK^\Ga_*(C_0(P_{d'}(\Ga)),\ell^\infty(\N,\K(\H)\otimes A)$ and hence $\mu_{\Ga,\ell^\infty(\N,\K(\H)\otimes A),*}$ is one-to-one.

\medskip

 Let us prove the converse in the even case, the odd case being similar.
Assume that there exists  positive numbers $d,\,\eps$ and $r$
  with
  $\eps<1/4$  and  $r\gq
  r_{\Ga,d,\eps}$ and such that for all $d'\gq d$, the condition
$QI_{\Ga,A}(d,d',r,\eps)$ does not hold. Let us prove that
$\mu_{\Ga,\ell^\infty(\N,\K(\H)\otimes A),*}$ is not one-to-one.
Let $(d_i)_{i\in\N}$ be an increasing and  unbounded sequence of
positive numbers such that $d_i\gq d$ for all integer $i$.  For all
integer $i$, let $x_i$ be an element in $KK^\Ga_0(C_0(P_d(\Ga)),A)$ such
that $\mu_{\Ga,A,*}^{\eps,r,d}(x_i)=0$ in $K_0(\AG)$  and
$q_{d,d_i,*}(x_i)\neq 0$ in
$KK^\Ga_0(C_0(P_{d_i}(\Ga)),A)$.  Let $x$ be the element of  $KK^\Ga_0(C_0(P_{d}(\Ga)),\ell^\infty(\N,
\K(\H)\otimes A))$ corresponding to $(x_i)_{i\in\N}$ under the identification of equation (\ref{equ-morita-equiv-khom}).
Let $(p_i)_{i\in\N}$ be a family of $\eps$-$r$-projections, with $p_i$ in some
$M_{l_i}(\widetilde{\AG})$
and $n$ an integer
  such that $$\mu_{\Ga,\ell^\infty(\N,\K(\H)\otimes
    A),*}^{\eps,r,d}(x)=[(p_i)_{i\in\N},n]_{\eps,r}$$ in
 $K_0^{\eps,r}(\ell^\infty(\N,\K(\H)\otimes
    A)\rtr\Ga)$. By
 naturality of $\mu_{\Ga,\bullet,*}^{\eps,r,d}$, we get that
 $[p_i,n]_{\eps,r}=0$ in $K_0^{\eps,r}(\AG)$ for all integer $i$. We
 see by using
  proposition \ref{prop-bounded-hom} that then
$\iota_*^{\eps,r}([(p_i)_{i\in\N},n])=0$ in $K_0(\ell^\infty(\N,\K(\H)\otimes
    A)\rtr\Ga)$.
We eventually obtain  that
$\mu^{d}_{\Ga,A}(x)=\iota_*^{\eps,r}\circ\mu^{\eps,r,d}_{\Ga,A}(x)=0$.
Since $q_{d,d_i,*}(x)\neq 0$ for every integer $i$, we get that
$\mu_{\Ga,\ell^\infty(\N,\K(\H)\otimes A),*}$ is not one-to-one.
\end{proof}
\begin{theorem}\label{thm-quantBC-surjectivity} There exists $\lambda>1$ such that for any  $\Ga$-$C^*$-algebra, the following
 assertions are equivalent:
\begin{enumerate}
\item $\mu_{\Ga,\ell^\infty(\N,\K(\H)\otimes A),*}$ is onto;
\item For any  positive numbers $\eps$ and $r'$  with $\eps<\frac{1}{4\lambda}$, there exist
positive numbers $d$ and  $r$ with  $
r_{\Ga,d,\eps}\lq r$ and $r'\lq r$ for which  $QS_{\Ga,A}(d,r,r',\lambda\eps,\eps)$
is satisfied.
\end{enumerate}
\end{theorem}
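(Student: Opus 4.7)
\textit{Sketch.} The proof follows the pattern of Theorem~\ref{thm-quantBC-injectivity}. The constant $\lambda$ will be chosen at the outset to dominate the universal scaling $\lambda_0$ of Remark~\ref{rem-hom}, the control pair $(\alpha_\JJ,k_\JJ)$ of the reduced Kasparov transform, and the Lipschitz constant $C$ of Proposition~\ref{prop-bounded-hom}. The two technical ingredients used throughout are the identification
$$KK^\Ga_*(C_0(P_d(\Ga)),\ell^\infty(\N,\K(\H)\ts A))\cong \prod_n KK^\Ga_*(C_0(P_d(\Ga)),A)$$
of \cite[Proposition 3.4]{oy} (as used in the proof of Theorem~\ref{thm-quantBC-injectivity}), together with the naturality of the quantitative assembly map under the coordinate projections $\pi_n\colon\ell^\infty(\N,\K(\H)\ts A)\to\K(\H)\ts A$, followed by Morita equivalence.

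\textit{Proof of (ii)$\Rightarrow$(i).} Given $y\in K_*(\ell^\infty(\N,\K(\H)\ts A)\rtr\Ga)$, lift it to $y'\in K_*^{\eps,r'}$ for some $\eps<\tfrac{1}{4\lambda}$ and $r'$. Via Morita and the projections $\pi_n$, the element $y'$ produces a uniformly bounded sequence $(y'_n)\in\prod_n K_*^{\eps,r'}(A\rtr\Ga)$. Applying (ii) to $(\eps,r')$ yields $d$ and $r$ such that $QS_{\Ga,A,*}(d,r,r',\lambda\eps,\eps)$ holds; pick for each $n$ an element $x_n\in KK^\Ga_*(C_0(P_d(\Ga)),A)$ with
$$\mu^{\lambda\eps,r,d}_{\Ga,A,*}(x_n)=\iota^{\eps,\lambda\eps,r',r}_*(y'_n)\ \text{in}\ K_*^{\lambda\eps,r}(A\rtr\Ga),$$
and assemble the $x_n$ into $x\in KK^\Ga_*(C_0(P_d(\Ga)),\ell^\infty(\N,\K(\H)\ts A))$. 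Naturality of $\mu^{\lambda\eps,r,d}_{\Ga,\bullet,*}$ under each $\pi_n$ gives that $\mu^{\lambda\eps,r,d}_{\Ga,\ell^\infty(\N,\K(\H)\ts A),*}(x)$ and $\iota^{\eps,\lambda\eps,r',r}_*(y')$ coincide on every coordinate. By Proposition~\ref{prop-bounded-hom} the resulting coordinatewise homotopies (of projections in the even case, of unitaries in the odd case) can be taken $C$-Lipschitz for a universal $C$, and therefore assemble into a single homotopy in $\widetilde{\ell^\infty(\N,\K(\H)\ts A)\rtr\Ga}$ at a controlled cost in $\eps$ and $r$. Mapping to $K_*$ and applying equation~(\ref{equ-compatibity-assembly-maps}) yields $\mu^d_{\Ga,\ell^\infty(\N,\K(\H)\ts A),*}(x)=y$, proving (i).

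\textit{Proof of (i)$\Rightarrow$(ii), by contraposition.} Assume $QS_{\Ga,A,*}(d,r,r',\lambda\eps,\eps)$ fails for some $\eps<\tfrac{1}{4\lambda}$ and $r'>0$ and every admissible $(d,r)$; enumerate along unbounded sequences $d_n,r_n\to\infty$, pick witnesses $y_n\in K_*^{\eps,r'}(A\rtr\Ga)$ with $\iota^{\eps,\lambda\eps,r',r_n}_*(y_n)\notin\operatorname{Im}\mu^{\lambda\eps,r_n,d_n}_{\Ga,A,*}$, and assemble them via Morita into $y\in K_*^{\eps,r'}(\ell^\infty(\N,\K(\H)\ts A)\rtr\Ga)$. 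If (i) held, then $\iota_*(y)=\mu^{d_0}_{\Ga,\ell^\infty(\N,\K(\H)\ts A),*}(x)$ in $K_*$ for some $d_0$ and $x=(x_n)_n$. Pick $r_0\gq\max(r',r_{\Ga,d_0,\lambda\eps})$; both sides then agree in $K_*$ as images of elements of $K_*^{\lambda\eps,r_0}$ of the $\ell^\infty$-algebra. Remark~\ref{rem-hom}, applied to the \emph{fixed} algebra $\ell^\infty(\N,\K(\H)\ts A)\rtr\Ga$, produces a universal scaling $\lambda_0$ and a propagation $r_1\gq r_0$, \emph{independent of $n$}, at which $\iota^{\eps,\lambda_0\lambda\eps,r',r_1}_*(y)$ and $\iota^{\lambda\eps,\lambda_0\lambda\eps,r_0,r_1}_*\mu^{\lambda\eps,r_0,d_0}_{\Ga,\ell^\infty,*}(x)$ coincide in $K_*^{\lambda_0\lambda\eps,r_1}$. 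Projecting to the $n$-th factor and using $\mu^{\cdot,\cdot,d_0}=\mu^{\cdot,\cdot,d_n}\circ q^*_{d_0,d_n}$ for $d_n\gq d_0$, every $\iota(y_n)$ lies in $\operatorname{Im}\mu^{\lambda\eps,r_0,d_n}_{\Ga,A,*}$ at level $K_*^{\lambda_0\lambda\eps,r_1}(A\rtr\Ga)$. Choosing $\lambda$ from the beginning large enough that $\lambda_0\lambda\eps$ sits at the level at which the failure of $QS$ has been recorded (equivalently: applying the contraposition to a slightly smaller initial $\eps$), and taking $n$ large enough that $d_n\gq d_0$ and $r_n\gq r_1$, one obtains a contradiction with the choice of $y_n$.

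\textit{Main obstacle.} The principal subtlety, specific to surjectivity, is the simultaneous control of two a priori incompatible scales: the failure of $QS$ is witnessed at a propagation $r_n$ tending to infinity, while the agreement extracted from (i) via Remark~\ref{rem-hom} lives at a fixed (algebra-dependent) propagation $r_1$. Matching them relies on the uniformity of Remark~\ref{rem-hom} across the coordinates of $\ell^\infty(\N,\K(\H)\ts A)\rtr\Ga$, on a diagonal enumeration of the pairs $(d_n,r_n)$, and, in the direction (ii)$\Rightarrow$(i), on the Lipschitz homotopy device of Proposition~\ref{prop-bounded-hom} that allows pointwise controlled equalities to assemble into an equality in the $\ell^\infty$-product. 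All of this dictates the existence of a single universal $\lambda>1$, independent of $\Ga$ and $A$, absorbing $\lambda_0$, $(\alpha_\JJ,k_\JJ)$ and $C$.
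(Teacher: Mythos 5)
Your direction (ii)$\Rightarrow$(i) is essentially the paper's proof: lift to a quantitative class, push to the coordinates of $\ell^\infty(\N,\K(\H)\ts A)\rtr\Ga$ via evaluation and Morita equivalence, solve coordinatewise with $QS$, reassemble through the identification of \cite[Proposition 3.4]{oy}, and use the uniform Lipschitz homotopies of Proposition \ref{prop-bounded-hom} to turn the coordinatewise equalities into one equality in the product before passing to $K_*$; nothing to object to there.

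In the contrapositive direction, however, there is a genuine gap. Your witnesses are chosen so that $\iota_*^{\eps,\lambda\eps,r',r_n}(y_n)$ is \emph{not} in the image of $\mu_{\Ga,A,*}^{\lambda\eps,r_n,d_n}$, i.e.\ the failure is recorded at error level $\lambda\eps$. But you compare $\iota_*^{\eps,\lambda\eps,r',r_0}(y)$ with $\mu_{\Ga,\ell^\infty(\N,\K(\H)\ts A),*}^{\lambda\eps,r_0,d_0}(x)$, two classes that already live at level $\lambda\eps$, and then invoke Remark \ref{rem-hom}; this only yields agreement at level $\lambda_0\lambda\eps$ with $\lambda_0>1$ the constant of that remark. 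Membership in the image of $\mu^{\lambda_0\lambda\eps,r_1,d_n}$ does not contradict non-membership at the finer level $\lambda\eps$, and neither of your proposed remedies repairs this: enlarging $\lambda$ moves the level at which the failure of $QS(d,r,r',\lambda\eps,\eps)$ is recorded to the \emph{same} $\lambda\eps$, so the overshoot by the factor $\lambda_0$ persists, and you cannot ``shrink $\eps$'' since $\eps$ is handed to you by the negation of (ii) rather than being at your disposal. The correct bookkeeping --- and the very reason $QS$ is stated with the asymmetric pair $(\lambda\eps,\eps)$ and $\lambda$ is taken to be the constant of Remark \ref{rem-hom} --- is to compare at level $\eps$: from $\iota_*^{\eps,r'}(y)=\mu^{d_0}_{\Ga,\ell^\infty(\N,\K(\H)\ts A),*}(x)=\iota_*^{\eps,r_0}\bigl(\mu^{\eps,r_0,d_0}_{\Ga,\ell^\infty(\N,\K(\H)\ts A),*}(x)\bigr)$ in $K_*$, apply Remark \ref{rem-hom} to the two classes in $K_*^{\eps,r_0}$ to get agreement at $(\lambda\eps,r_1)$, and then use the compatibility $\iota_*^{\eps,\lambda\eps,r_0,r_1}\circ\mu^{\eps,r_0,d_0}_{\Ga,\bullet,*}=\mu^{\lambda\eps,r_1,d_0}_{\Ga,\bullet,*}$ of equation (\ref{equation-assembly}) together with $\mu^{\cdot,\cdot,d_0}=\mu^{\cdot,\cdot,d_n}\circ q_{d_0,d_n,*}$ before projecting to the $n$-th coordinate; choosing $n$ with $d_n\gq d_0$ and $r_n\gq r_1$ then contradicts the choice of $y_n$ exactly at level $\lambda\eps$, as in the paper.
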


\begin{proof} Choose $\lambda$ as in remark \ref{rem-hom}.
 Assume that condition (ii) holds. Let $z$ be an element in $K_*(\ell^\infty(\N,\K(\H)\otimes A)\rtr\Ga)$
 and let $y$ be an element in
 $K^{\eps,r'}_*(\ell^\infty(\N,\K(\H)\otimes A)\rtr\Ga)$ such that $\iota^{\eps,r'}_*(y)=z$, with $0<\eps<\frac{1}{4\lambda}$ and $r'>0$.
Let $y_i$ be the image of $y$  under the composition
\begin{equation}\label{equ-composition-surj}
 K^{\eps,r'}_*(\ell^\infty(\N,\K(\H)\otimes A)\rtr\Ga)\to
K^{\eps,r'}_*(\K(\H)\otimes\AG)\stackrel{\cong}{\to}
K^{\eps,r'}_*(\AG), \end{equation} where the first map is induced by the
evaluation $\ell^\infty(\N,\K(\H)\otimes A)\longrightarrow
\K(\H)\otimes A$ at $i$ and the second map is the Morita equivalence
of proposition \ref{prop-morita}. Let $d$ and $r$ be numbers with
$r\gq r'$ and  $r\gq r_{\Ga,d,\eps}$ and such that $QS_{\Ga,A}(d,r,r',\lambda\eps,\eps)$ holds. Then  for any integer $i$, 
there exists
a $x_i$ in
$KK^\Ga_*(C_0(P_{d}(\Ga)),A)$ such that $\mu_{\Ga,A,*}^{\lambda\eps,r,d}(x_i)=\iota_*^{\eps,\lambda\eps,r',r}(y_i)$ 
in $K_*^{\eps,r}(A\rtr\Ga)$.
Let $$x\in KK^\Ga_*(C_0(P_{d}(\Ga)),\ell^\infty(\N,\K(\H)\otimes
A))$$ be  the element corresponding to $(x_i)_{i\in\N}$ under the identification of equation (\ref{equ-morita-equiv-khom}). 
By naturality of the
quantitative assembly maps, we get according to
proposition \ref{prop-bounded-hom} and up to replace $\lambda$ by $3\lambda$ (for the odd case)  that $$\mu_{\Ga,\ell^\infty(\N,\K(\H)\otimes
A)),*}^{\lambda\eps,r,d}(x)=\iota_*^{\eps,\lambda\eps,r',r}(y)$$ in $K_*^{\eps,r}(\ell^\infty(\N,\K(\H)\otimes A)\rtr\Ga)$. We have hence
$$\mu_{\Ga,\ell^\infty(\N,\K(\H)\otimes
A)),*}^{d}(x)=\iota_*^{\eps,r'}(y)=z,$$ and therefore $\mu_{\Ga,\ell^\infty(\N,\K(\H)\otimes A),*}$ is onto.

\medskip

Let us prove the converse in the even case, the odd case being similar.
 Assume that there exist positive numbers  $\eps$ and $r'$ with  $\eps<\frac{1}{4\lambda}$ such that for 
all 
  positive numbers $r$ and $d$ with $r\gq r'$ and  $r\gq r_{\Ga,d,\eps}$,
  then $QS_{\Ga,A}(d,r,r',\lambda\eps,\eps)$ does not hold. Let us prove
  then that $\mu_{\Ga,\ell^\infty(\N,\K(\H)\otimes A),*}$ is not onto. Let $(d_i)_{i\in\N}$ and $(r_i)_{i\in\N}$
  be increasing and unbounded sequences of positive numbers such that
  $r_i\gq r_{\Ga,d_i,\lambda\eps}$ and $r_i\gq r'$. Let  $y_i$ be an element in
  $K^{\eps,r'}_0(\AG)$ such that $\iota_*^{\eps,\lambda\eps,r',r_i}(y_i)$
  is not in the range of $\mu_{\Ga,A,*}^{\lambda\eps,r_i,d_i}$. There
  exists an element $y$ in
  $K^{\eps,r'}_0(\ell^\infty(\N,\K(\H)\otimes A)\rtr\Ga)$ such
  that for every integer $i$, the image of $y$ under the composition of equation (\ref{equ-composition-surj})
is $y_i$.
Assume that for some $d'$, there is an $x$ in  $KK^\Ga_0(C_0(P_{d'}(\Ga)),\ell^\infty(\N,\K(\H)\otimes
A))$ such that $\iota_*^{\eps,r'}(y)=\mu_{\Ga,\ell^\infty(\N,\K(\H)\otimes
    A),*}^{d'}(x)$. Using remark  \ref{rem-hom},
  we see that  there exists a positive number $r$ with  $r'\lq r$ and
 $r_{\Ga,d',\lambda\eps}\lq r$ and
  such that $$\iota_*^{\eps,\lambda\eps,r',r}\circ\mu_{\Ga,\ell^\infty(\N,\K(\H)\otimes
    A),*}^{\eps,r',d'}(x)=\iota_*^{\eps,\lambda\eps,r',r}(y).$$ But then, if
  we choose $i$ such that $r_i\gq r$ and $d_i\gq d'$ we get by using
  naturality  of the assembly map and equation (\ref{equation-assembly}) that $\iota_*^{\eps,\lambda\eps,r',r_i}(y_i)$ belongs to the
  image of $\mu_{\Ga,A,*}^{\lambda\eps,r_i,d_i}$, which contradicts our assumption.

\end{proof}

Replacing in the proof of (ii) implies (i)  of theorems  \ref{thm-quantBC-injectivity}  and   \ref{thm-quantBC-surjectivity} the algebra
$\ell^\infty(\N,\K(\H)\otimes A)$ by $\prod_{i\in\N}(\K(\H)\otimes
A_i)$ for a family $(A_i)_{i\in\N}$ of $\Ga$-$C^*$-algebras, we can prove  the
following result.
\begin{theorem}\label{thm-quant-surj}Let $\Ga$ be a discrete group.
\begin{enumerate}
\item Assume that   for any  $\Ga$-$C^*$-algebra
  $A$, the assembly map $\mu_{\Ga,A,*}$ is one-to-one.
    Then for any positive numbers $d$,
  $\eps$ and $r\gq r_{\Ga,d,\eps}$ with $\eps<1/4$ and  $r\gq r_{\Ga,d}$, there
exists   a positive number $d'$ with $d'\gq d$ such that
$QI_{\Ga,A}(d,d',r,\eps)$ is satisfied  for every $\Ga$-$C^*$-algebra
  $A$;
\item Assume that   for any  $\Ga$-$C^*$-algebra
  $A$, the assembly map $\mu_{\Ga,A,*}$ is onto.  Then for some  $\lambda>1$ and for any  positive numbers $\eps$
and $r'$ with  $\eps<\frac{1}{4\lambda}$, there exist
positive numbers $d$ and  $r$ with $
r_{\Ga,d,\eps}\lq r$ and $r'\lq r$ such that  $QS_{\Ga,A}(d,r,r',\lambda\eps,\eps)$
is satisfied for every $\Ga$-$C^*$-algebra
  $A$.
\end{enumerate}
In particular, if  $\Ga$ satisfies the Baum-Connes conjecture with
coefficients, then $\Ga$ satisfies points  {\rm (i)} and {\rm
  (ii)} above.
\end{theorem}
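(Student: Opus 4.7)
The plan is to prove both statements by contrapositive, mimicking the arguments that established the implication (ii)$\Rightarrow$(i) in Theorems \ref{thm-quantBC-injectivity} and \ref{thm-quantBC-surjectivity}, but replacing the single algebra $\ell^\infty(\N,\K(\H)\otimes A)$ by a product $B=\prod_{i\in\N}(\K(\H)\otimes A_i)$ where the $A_i$ are witnesses to the failure of the uniform quantitative statement. The crucial ingredient will be the $KK$-theoretic identification
\[
KK_*^\Ga\bigl(C_0(P_d(\Ga)),\,\textstyle\prod_{i\in\N}\K(\H)\otimes A_i\bigr)\;\cong\;\prod_{i\in\N}KK_*^\Ga\bigl(C_0(P_d(\Ga)),A_i\bigr),
\]
which is the direct analogue of \cite[Proposition 3.4]{oy} for countable products (it holds because $P_d(\Ga)$ is a finite-dimensional, cocompact $\Ga$-simplicial complex, so $C_0(P_d(\Ga))$ has a finite $\Ga$-equivariant resolution by modules of the form $C_0(\Ga/H)\otimes\mathrm{End}(V)$ with $H$ finite).

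For (i), suppose the conclusion fails: there exist $d,\eps,r$ with $\eps<1/4$ and $r\gq r_{\Ga,d,\eps}$ such that for every $d'\gq d$ one can find a $\Ga$-$C^*$-algebra $A_{d'}$ with $QI_{\Ga,A_{d'}}(d,d',r,\eps)$ violated. Choose an increasing unbounded sequence $(d_i)_{i\in\N}$ with $d_i\gq d$, algebras $A_i$ and elements $x_i\in KK_*^\Ga(C_0(P_d(\Ga)),A_i)$ with $\mu_{\Ga,A_i,*}^{\eps,r,d}(x_i)=0$ but $q_{d,d_i,*}(x_i)\neq 0$. Set $B=\prod_i\K(\H)\otimes A_i$ and let $x\in KK_*^\Ga(C_0(P_d(\Ga)),B)$ correspond to $(x_i)_i$. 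By naturality of $\mu_{\Ga,\cdot,*}^{\eps,r,d}$ under the projections $B\to\K(\H)\otimes A_i$, the image $\mu_{\Ga,B,*}^{\eps,r,d}(x)$ vanishes after each projection; as in the proof of Theorem \ref{thm-quantBC-injectivity}, Proposition \ref{prop-bounded-hom} then forces $\iota_*^{\eps,r}\bigl(\mu_{\Ga,B,*}^{\eps,r,d}(x)\bigr)=0$, hence $\mu_{\Ga,B,*}^{d}(x)=0$ by (\ref{equ-compatibity-assembly-maps}). On the other hand $q_{d,d_i,*}(x)\neq 0$ for every $i$, so $x$ represents a nonzero class in $\lim_{d'}KK_*^\Ga(C_0(P_{d'}(\Ga)),B)$, contradicting injectivity of $\mu_{\Ga,B,*}$.

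For (ii), fix $\lambda$ as in Remark \ref{rem-hom} and assume the conclusion fails: some $\eps<\tfrac{1}{4\lambda}$ and $r'$ exist for which, for every admissible pair $(d,r)$, one can produce an algebra $A_{d,r}$ violating $QS_{\Ga,A_{d,r}}(d,r,r',\lambda\eps,\eps)$. Pick increasing unbounded sequences $(d_i)$ and $(r_i)$ with $r_i\gq\max(r',r_{\Ga,d_i,\lambda\eps})$, the corresponding algebras $A_i$ and elements $y_i\in K_0^{\eps,r'}(A_i\rtr\Ga)$ such that $\iota_*^{\eps,\lambda\eps,r',r_i}(y_i)$ is not in the range of $\mu_{\Ga,A_i,*}^{\lambda\eps,r_i,d_i}$. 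Using that an $\eps$-$r'$-projection in the product algebra $B=\prod_i\K(\H)\otimes A_i$ is exactly a uniformly bounded family of $\eps$-$r'$-projections in the factors, together with the Morita equivalence of Proposition \ref{prop-morita}, one assembles an element $y\in K_0^{\eps,r'}(B\rtr\Ga)$ which maps to $y_i$ under the composition $K_0^{\eps,r'}(B\rtr\Ga)\to K_0^{\eps,r'}(\K(\H)\otimes A_i\rtr\Ga)\stackrel{\cong}{\to}K_0^{\eps,r'}(A_i\rtr\Ga)$. If $\mu_{\Ga,B,*}$ were onto, $\iota_*^{\eps,r'}(y)=\mu_{\Ga,B,*}^{d'}(x)$ for some $d'$ and some $x$; by Remark \ref{rem-hom} there is $r\gq\max(r',r_{\Ga,d',\lambda\eps})$ with $\iota_*^{\eps,\lambda\eps,r',r}\circ\mu_{\Ga,B,*}^{\eps,r',d'}(x)=\iota_*^{\eps,\lambda\eps,r',r}(y)$. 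Choosing $i$ with $r_i\gq r$ and $d_i\gq d'$, naturality of the quantitative assembly map and (\ref{equation-assembly}) place $\iota_*^{\eps,\lambda\eps,r',r_i}(y_i)$ in the range of $\mu_{\Ga,A_i,*}^{\lambda\eps,r_i,d_i}$, a contradiction.

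The last sentence about the Baum-Connes conjecture with coefficients is then immediate, since its validity entails that every $\mu_{\Ga,A,*}$ is an isomorphism. The main technical obstacle is verifying the two commutativity-with-products properties used above: the $KK$-theoretic one for the $C_0(P_d(\Ga))$-variable, and the assembly-compatibility with the factorwise projections $B\to\K(\H)\otimes A_i$ at the level of quantitative $K$-theory. The first is standard for finite-dimensional cocompact $\Ga$-$CW$-complexes, while the second follows directly from the construction in Definition \ref{def-quantitative-assembly-map} together with Theorem \ref{thm-kas}, since the quantitative Kasparov transform is natural under equivariant $\ast$-homomorphisms.
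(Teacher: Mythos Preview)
Your proposal is correct and follows precisely the approach the paper indicates: take the contrapositive arguments from the converse directions of Theorems~\ref{thm-quantBC-injectivity} and~\ref{thm-quantBC-surjectivity} and replace $\ell^\infty(\N,\K(\H)\otimes A)$ by the product $\prod_{i\in\N}\K(\H)\otimes A_i$ over the family of witnessing algebras. One small slip: the implication you are mimicking is the \emph{converse} direction (i)$\Rightarrow$(ii) in those theorems (proved there by contrapositive), not (ii)$\Rightarrow$(i); the substance of your argument is nevertheless the right one.
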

Recall from \cite{sty,y1} that if $\Ga$
 coarsely embeds
 in a Hilbert space, then  $\mu_{\Ga,A,*}$ is one-to-one for every
 $\Ga$-$C^*$-algebra $A$. Hence we get:
\begin{corollary}
If $\Ga$
 coarsely embeds
 in a Hilbert space,  then for any positive numbers $d$,
  $\eps$ and $r\gq r_{\Ga,d,\eps}$ with $\eps<1/4$ and  $r\gq r_{\Ga,d}$, there
exists   a positive number $d'$ with $d'\gq d$ such that
$QI_{\Ga,A}(d,d',r,\eps)$ is satisfied  for every $\Ga$-$C^*$-algebra
  $A$;
\end{corollary}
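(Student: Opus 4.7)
The statement is an immediate corollary of Theorem~\ref{thm-quant-surj}(i) combined with the Higson--Kasparov--Skandalis--Tu--Yu injectivity theorem cited in the paragraph just above. So my plan is essentially to verify that the hypothesis of Theorem~\ref{thm-quant-surj}(i) is satisfied under the coarse embeddability assumption, and then transcribe the conclusion.

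More precisely, first I would recall from \cite{sty,y1} that if $\Ga$ coarsely embeds into a Hilbert space, then the Baum--Connes assembly map with coefficients $\mu_{\Ga,A,*}:\lim_{d>0}KK_*^\Ga(C_0(P_d(\Ga)),A)\to K_*(A\rtr\Ga)$ is injective for \emph{every} $\Ga$-$C^*$-algebra $A$. This is the nontrivial input, but since it is a result we are allowed to cite, there is nothing to prove here.

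Next I would invoke Theorem~\ref{thm-quant-surj}(i): under precisely the hypothesis ``$\mu_{\Ga,A,*}$ is one-to-one for every $\Ga$-$C^*$-algebra $A$'', the conclusion is exactly the statement of the corollary, namely that for every $d>0$, $\eps\in(0,1/4)$ and $r\gq r_{\Ga,d,\eps}$ there exists $d'\gq d$ such that $QI_{\Ga,A}(d,d',r,\eps)$ holds uniformly in $A$.

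Since both ingredients are already available, the proof is a one-line combination, and there is no real obstacle; the substantive work has already been carried out in the proof of Theorem~\ref{thm-quantBC-injectivity} (which feeds into Theorem~\ref{thm-quant-surj}(i) by replacing $\ell^\infty(\N,\K(\H)\ts A)$ with an appropriate product $\prod_{i\in\N}(\K(\H)\ts A_i)$ over all coefficient algebras, allowing one to extract a uniform $d'$ valid for every $A$ simultaneously). Thus the proof proposal is simply: apply \cite{sty,y1} to get injectivity of $\mu_{\Ga,A,*}$ for every $\Ga$-$C^*$-algebra $A$, then apply Theorem~\ref{thm-quant-surj}(i).
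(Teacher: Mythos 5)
Your proposal is correct and is exactly the paper's argument: the corollary is stated immediately after recalling from \cite{sty,y1} that coarse embeddability into Hilbert space gives injectivity of $\mu_{\Ga,A,*}$ for every $\Ga$-$C^*$-algebra $A$, and the conclusion is then precisely Theorem~\ref{thm-quant-surj}(i). Your parenthetical remark about how Theorem~\ref{thm-quant-surj}(i) is obtained (running the proof of Theorem~\ref{thm-quantBC-injectivity} with $\prod_{i\in\N}(\K(\H)\ts A_i)$ in place of $\ell^\infty(\N,\K(\H)\ts A)$ to get a $d'$ uniform in $A$) also matches the paper.
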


The quantitative assembly maps admit  maximal versions
defined with notations of definition \ref{def-quantitative-assembly-map}
for any $\Gamma$-$C^*$-algebra $A$ and any positive number $\eps$, $r$ and
$d$ with $\eps<1/4$ and
$r\gq r_{\Gamma,d,\eps}$, as
\begin{eqnarray*}
\mu_{\Gamma,A,max,*}^{\eps,r,d}: KK_*^\Gamma(C_0(P_d(\Ga)),A)&\to&
K_*^{\eps,r}(A\rtm\Ga)\\
z&\mapsto&
\big(J_\Gamma^{max,\frac{\eps}{\alpha_J},\frac{r}{k_{J,{\eps}/{\alpha_J}}}}(z)\big)
\left([e_\phi,0]_{\frac{\eps}{\alpha_J},\frac{r}{k_{J,{\eps}/{\alpha_J}}}}\right).
\end{eqnarray*}
 As  in the reduced case, we have using
the same notations
\begin{itemize}
\item for any positive number $d$ and $d'$ such that $d\lq d'$, then
$$\mu_{\Gamma,A,{max},*}^{\eps,r,d}=\mu_{\Gamma,A,{max},*}^{\eps,r,d'}\circ
q_{d,d',*}.$$
\item for every positive numbers
$\eps,\,\eps',\,d,\,r$ and $r'$ such that $\eps\lq\eps'\lq
1/4$, $r_{\Ga,d,\eps}\lq r$, $r_{\Ga,d,\eps'}\lq r'$, and
$r<r'$, then
\begin{equation*}
\iota^{\eps,\eps',r,r'}_*\circ \mu_{\Ga,A,{max},*}^{\eps,r,d}=\mu_{\Ga,A,{max},*}^{\eps',r',d}.
\end{equation*}
\item the maximal quantitative assembly maps are natural in the
  $\Ga$-$C^*$-algebras.
\end{itemize}
Moreover,   by theorem \ref{thm-kasp-max}(i),    the maximal quantitative assembly maps are compatible with the
reduced  ones, i.e
$\mu_{\Gamma,A,*}^{\eps,r,d}=\lambda_{\Ga,A,*}^{\eps,r}\circ\mu_{\Gamma,A,{max},*}^{\eps,r,d}$.
The surjectivity of the Baum-Connes assembly map $\mu_{\Ga,A,*}$  implies that 
the map $$\lambda_{\Ga,A,*}:K_*(A\rtm\Ga)\to K_*(A\rtr\Ga)$$ is onto. We have a similar statement in the setting of quantitative $K$-theory.

\begin{theorem} There exists $\lambda>1$ such the following holds : let $\Ga$ be a discrete group and
   assume  that   for any  $\Ga$-$C^*$-algebra
  $A$, the assembly map $\mu_{\Ga,A,*}$ is onto. Then for any positive
  numbers $\eps$ and $r$, with $\eps<\frac{1}{4\lambda}$, there exists a positive
  number  $r'$ with $r'\gq r$ such that
\begin{itemize}
\item for any $\Ga$-$C^*$-algebra $A$;
\item for any $x$ in  $K_*^{\eps,r}(A\rtr\Ga)$,
\end{itemize}
there exists $y$ in
$K_*^{\lambda\eps,r'}(A\rtm\Ga)$ such that
$\lambda_{\Ga,A,*}^{\lambda\eps,r'}(y)=\iota_*^{\eps,\lambda\eps,r,r'}(x)$.
\end{theorem}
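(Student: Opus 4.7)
The plan is to deduce this quantitative lifting result by combining two pieces of machinery already in place: the uniform quantitative surjectivity of the reduced Baum-Connes assembly map, which is Theorem~\ref{thm-quant-surj}(ii), and the factorization $\mu_{\Gamma,A,*}^{\eps,r,d} = \lambda_{\Ga,A,*}^{\eps,r} \circ \mu_{\Gamma,A,max,*}^{\eps,r,d}$ of the reduced quantitative assembly map through the maximal one, noted just before the statement as a consequence of Theorem~\ref{thm-kasp-max}(i). The key idea is that, up to relaxation of the controls, any controlled class $x \in K_*^{\eps,r}(A\rtr\Ga)$ can be realized as the image under $\mu_{\Gamma,A,*}^{\lambda\eps,r',d}$ of some topological class $z \in KK_*^\Ga(C_0(P_d(\Ga)),A)$; then the image $y = \mu_{\Gamma,A,max,*}^{\lambda\eps,r',d}(z)$ on the maximal side is automatically a lift of $x$ under $\lambda_{\Ga,A,*}^{\lambda\eps,r'}$.

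Concretely, I would first fix $\lambda > 1$ as supplied by Theorem~\ref{thm-quant-surj}(ii). Given $\eps \in (0, \tfrac{1}{4\lambda})$ and $r > 0$, I would apply that theorem with its input parameters chosen to be $\eps$ and $r$, obtaining positive numbers $d$ and $r' \gq r$ (with moreover $r' \gq r_{\Ga,d,\eps}$), both depending only on $\eps$ and $r$ and \emph{not} on the $\Ga$-$C^*$-algebra $A$, for which $QS_{\Ga,A,*}(d,r',r,\lambda\eps,\eps)$ holds for every $\Ga$-$C^*$-algebra $A$. Then for any such $A$ and any $x \in K_*^{\eps,r}(A\rtr\Ga)$, the property $QS_{\Ga,A,*}$ produces $z \in KK_*^\Ga(C_0(P_d(\Ga)),A)$ with
\[
\mu_{\Gamma,A,*}^{\lambda\eps,r',d}(z) \;=\; \iota_*^{\eps,\lambda\eps,r,r'}(x).
\]
Setting $y = \mu_{\Gamma,A,max,*}^{\lambda\eps,r',d}(z) \in K_*^{\lambda\eps,r'}(A\rtm\Ga)$ and applying the reduced--maximal compatibility then yields
\[
\lambda_{\Ga,A,*}^{\lambda\eps,r'}(y) \;=\; \lambda_{\Ga,A,*}^{\lambda\eps,r'} \circ \mu_{\Gamma,A,max,*}^{\lambda\eps,r',d}(z) \;=\; \mu_{\Gamma,A,*}^{\lambda\eps,r',d}(z) \;=\; \iota_*^{\eps,\lambda\eps,r,r'}(x),
\]
which is the desired conclusion.

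All the serious work has been done in earlier sections, so there is no genuine obstacle: the maximal quantitative assembly map provides a natural lift (at the level of $KK$-classes) of the reduced one, and the crucial uniformity of the parameters $d$ and $r'$ in $A$ is exactly what Theorem~\ref{thm-quant-surj}(ii) delivers. The same argument works uniformly in even and odd degree, since every construction invoked (the reduced and maximal quantitative Kasparov transforms $\JR$, $\JM$ and the associated quantitative assembly maps) is $\Z_2$-graded and behaves identically in the two parities. If anything required care, it would be tracking that the single constant $\lambda$ extracted from Theorem~\ref{thm-quant-surj}(ii) is enough for the statement here—but by construction it is, since it is precisely the $\lambda$ measuring the loss in norm control for quantitative surjectivity.
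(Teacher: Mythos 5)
Your argument is correct and is exactly the deduction the paper intends (the theorem is stated without proof, immediately after the compatibility $\mu_{\Gamma,A,*}^{\eps,r,d}=\lambda_{\Ga,A,*}^{\eps,r}\circ\mu_{\Gamma,A,max,*}^{\eps,r,d}$ is recorded for this purpose): apply Theorem~\ref{thm-quant-surj}(ii) with input radius $r$ to get $d$ and $r'\gq r$ uniform in $A$, lift $x$ to a class $z$ in $KK_*^\Ga(C_0(P_d(\Ga)),A)$ via $QS_{\Ga,A}(d,r',r,\lambda\eps,\eps)$, and take $y=\mu_{\Gamma,A,max,*}^{\lambda\eps,r',d}(z)$. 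Your side remarks are also on point: the $\lambda$ from Theorem~\ref{thm-quant-surj}(ii) is the universal constant of Remark~\ref{rem-hom} (adjusted for the odd case), and $r'\gq r_{\Ga,d,\eps}\gq r_{\Ga,d,\lambda\eps}$ ensures the quantitative assembly maps at level $(\lambda\eps,r')$ are defined.
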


\section{Further comments}
The definition of quantitative $K$-theory can be extended to the framework of 
filtered Banach algebras, i.e. Banach algebra $A$
equipped with a family
$(A_r)_{r>0}$ of  linear subspaces   indexed by positive numbers such that:
\begin{itemize}
\item $A_r\subset A_{r'}$ if $r\lq r'$;
\item $A_r\cdot A_{r'}\subset A_{r+r'}$;
\item the subalgebra $\ds\bigcup_{r>0}A_r$ is dense in $A$.
\end{itemize}
Since we no more have an  involution, we need  to introduce instead a norm control for almost idempotents. Let $\eps$ be in $(0,1/4)$ 
and let $r$ and $N$ be positive numbers. An element $e$ of $A$ is an $\eps$-$r$-$N$-idempotent if
\begin{itemize}
 \item $e$ is in $A_r$;
\item  $\|e^2-e\|<\eps$;
\item  $\|e\|<N$;
\end{itemize}
Similarly, if $A$ is a unital, an element $x$ in $A$ is called  $\eps$-$r$-$N$-invertible
if 
\begin{itemize}
 \item $x$ is in $A_r$;
\item  $\|x\|<N$;
\item  there exists an element $y$ in $A_r$ such that $\|y\|<N$, $\|xy-1\|<\eps$ and $\|yx-1\|<\eps$.
\end{itemize}

Quantitative $K$-theory can then be defined in the setting of $\eps$-$r$-$N$-idempotents and of $\eps$-$r$-$N$-invertibles.
We obtain in this way a bunch of
abelian semi-groups $(K_*^{\eps,r,N}(A))_{\eps\in(0,1/4),r>,N>1}$. Let us set for  a
 fixed  $N>1$ $$\K_*^N(A)=(K_*^{\eps,r,N}(A))_{\eps\in(0,1/4),r>0}.$$ If $A$ is   a filtered $C^*$-algebra and  $e$   an 
 $\eps$-$r$-$N$-idempotent in $A$, then 
there is an obvious $(1,1)$-controlled morphism $\K_0(A)\to \K_0^N(A)$. 
Approximating $((2e^*-1)(2e-1)+1)^{1/2}e((2e^*-1)(e-1)+1)^{-1/2}$ by using  a
power  serie (compare with the proof of lemma \ref{lem-conjugate}), we get  that for every $N>1$, there exists a control pair
 $(\lambda_N,h_N)$
 such that    $\K_0(A)\to \K_0^N(A)$  is a $(\lambda_N,h_N)$-controlled isomorphism. Using the polar decomposition, we have 
a similar statement in the odd case.

\bibliographystyle{plain}

\begin{thebibliography}{}

\end{thebibliography}


\begin{thebibliography}{10}

\bibitem{A}
Michael   Atiyah.
\newblock Elliptic operators, discrete groups and von {N}eumann algebras.
\newblock In {\em Colloque ``{A}nalyse et {T}opologie'' en l'{H}onneur de
  {H}enri {C}artan ({O}rsay, 1974)}, pages 43--72. Ast\'erisque, No. 32--33.
  Soc. Math. France, Paris, 1976.

\bibitem{BCH}
Paul   Baum, Alain Connes, and Nigel Higson.
\newblock Classifying space for proper actions and {$K$}-theory of group
  {$C^\ast$}-algebras.
\newblock In {\em {$C^\ast$}-algebras: 1943--1993 ({S}an {A}ntonio, {TX},
  1993)}, volume 167 of {\em Contemp. Math.}, pages 240--291. Amer. Math. Soc.,
  Providence, RI, 1994.

\bibitem{C}
Alain Connes.
\newblock A survey of foliations and operator algebras.
\newblock In {\em Operator algebras and applications, {P}art {I} ({K}ingston,
  {O}nt., 1980)}, volume~38 of {\em Proc. Sympos. Pure Math.}, pages 521--628.
  Amer. Math. Soc., Providence, R.I., 1982.

\bibitem{CS}
Alain Connes and Georges Skandalis.
\newblock The longitudinal index theorem for foliations.
\newblock {\em Publ. Res. Inst. Math. Sci.}, 20(6):1139--1183, 1984.

\bibitem{CM}
Alain Connes and Henri Moscovici.
\newblock Cyclic cohomology, the {N}ovikov conjecture and hyperbolic groups.
\newblock {\em Topology}, 29(3):345--388, 1990.

\bibitem{cuntz}
Joachim Cuntz.
\newblock {$K$}-theoretic amenability for discrete groups.
\newblock {\em J. Reine Angew. Math.}, 344:180--195, 1983.

\bibitem{gwy}
Guihua Gong, Qin Wang, and Guoliang Yu.
\newblock Geometrization of the strong {N}ovikov conjecture for residually
  finite groups.
\newblock {\em J. Reine Angew. Math.}, 621:159--189, 2008.

\bibitem{hk}
Nigel Higson and Gennadi  Kasparov.
\newblock {$E$}-theory and {$KK$}-theory for groups which act properly and
  isometrically on {H}ilbert space.
\newblock {\em Invent. Math.}, 144(1):23--74, 2001.

\bibitem{hry}
Nigel Higson, John Roe, and Guoliang Yu.
\newblock A coarse {M}ayer-{V}ietoris principle.
\newblock {\em Math. Proc. Cambridge Philos. Soc.}, 114(1):85--97, 1993.

\bibitem{julgvalette}
Pierre Julg and Alain Valette.
\newblock {$K$}-theoretic amenability for {${\rm SL}_{2}({\bf Q}_{p})$}, and
  the action on the associated tree.
\newblock {\em J. Funct. Anal.}, 58(2):194--215, 1984.

\bibitem{kas}
Gennadi  Kasparov.
\newblock Equivariant {$KK$}-theory and the {N}ovikov conjecture.
\newblock {\em Invent. Math.}, 91(1):147--201, 1988.

\bibitem{laff-inv}
Vincent Lafforgue.
\newblock {$K$}-th\'eorie bivariante pour les alg\`ebres de {B}anach,
  groupo\"\i des et conjecture de {B}aum-{C}onnes. {A}vec un appendice
  d'{H}erv\'e {O}yono-{O}yono.
\newblock {\em J. Inst. Math. Jussieu}, 6(3):415--451, 2007.

\bibitem{mop}
Michel Matthey, Herv{\'e} Oyono-Oyono, and Wolfgang Pitsch.
\newblock Homotopy invariance of higher signatures and 3-manifold groups.
\newblock {\em Bull. Soc. Math. France}, 136(1):1--25, 2008.

\bibitem{oy}
Herv{\'e} Oyono-Oyono and Guoliang Yu.
\newblock {$K$}-theory for the maximal {R}oe algebra of certain expanders.
\newblock {\em J. Funct. Anal.}, 257(10):3239--3292, 2009.

\bibitem{r}
John Roe.
\newblock Coarse cohomology and index theory on complete Riemannian manifolds.
\newblock {\em Mem. Amer. Math. Soc.}, 104(497), 1993.

\bibitem{sty}
Georges Skandalis, Jean-Louis. Tu, and Guoliang Yu.
\newblock The coarse {B}aum-{C}onnes conjecture and groupoids.
\newblock {\em Topology}, 41(4):807--834, 2002.

\bibitem{tumoy}
Jean-Louis Tu.
\newblock La conjecture de {B}aum-{C}onnes pour les feuilletages moyennables.
\newblock {\em $K$-Theory}, 17(3):215--264, 1999.

\bibitem{we}
N. E.   Wegge-Olsen.
\newblock {\em {$K$}-theory and {$C\sp *$}-algebras}.
\newblock Oxford Science Publications. The Clarendon Press Oxford University
  Press, New York, 1993.
\newblock A friendly approach.

\bibitem{y2}
Guoliang Yu.
\newblock The {N}ovikov conjecture for groups with finite asymptotic dimension.
\newblock {\em Ann. of Math. (2)}, 147(2):325--355, 1998.

\bibitem{y1}
Guoliang Yu.
\newblock The coarse {B}aum-{C}onnes conjecture for spaces which admit a
  uniform embedding into Hilbert space.
\newblock {\em Invent. Math.}, 139:201--240, 2000.

\end{thebibliography}

\end{document}